\patchcmd{\ttlh@hang}{\parindent\z@}{\parindent\z@\leavevmode}{}{}
\patchcmd{\ttlh@hang}{\noindent}{}{}{}
\newcommand\eqdef{\coloneqq}
\newcommand\nbd{\nobreakdash-\hspace{0pt}}
\newcommand\idd[1]{\mathrm{id}_{#1}}
\newcommand\bigid[1]{\mathrm{Id}_{#1}}
\newcommand\invrs[1]{#1^{-1}}
\newcommand\after{\circ}
\newcommand\incl{\hookrightarrow}
\newcommand\surj{\to}
\newcommand\restr[2]{{#1}{\raisebox{0pt}{$|_{#2}$}}}
\newcommand\powerset[1]{\mathscr{P}{#1}}
\newcommand\set[1]{\left\{ {#1} \right\}}
\newcommand\order[2]{#2^{(#1)}}
\newcommand{\overbar}[1]{{\mkern 1.5mu\overline{\mkern-1.5mu#1\mkern-1.5mu}\mkern 1.5mu}}
\newcommand\slice[2]{{#1}/{\raisebox{-2pt}{$#2$}}}
\newcommand\cat[1]{\mathbf{#1}}
\newcommand\fun[1]{\mathsf{#1}}
\DeclareMathOperator*{\colim}{colim}
\DeclareMathOperator*{\Ob}{Ob}
\newcommand\Pos{\cat{Pos}}
\newcommand\simplexcat{\Delta}
\newcommand\rdcpx{\cat{RDCpx}}
\newcommand{\atom}{{\scalebox{1.3}{\( \odot \)}}}
\newcommand{\smallatom}{\odot}
\newcommand{\Set}{\cat{Set}}
\newcommand{\sSet}{\cat{sSet}}
\newcommand{\dgmSet}{\atom\Set}
\newcommand{\pt}{\mathbf{1}}
\newcommand\clset[1]{\mathrm{cl}\set{#1}}
\newcommand\maxel[1]{\mathscr{M}\!\mathit{ax}\,#1}
\newcommand\gr[2]{#2_{#1}}
\newcommand\bound[2]{\partial_{#1}^{#2}}
\newcommand\bd[2]{\partial_{#1}^{#2}}
\newcommand\faces[2]{\Delta_{#1}^{#2}}
\newcommand\inter[1]{\mathrm{int}\,#1}
\newcommand\cp[1]{\,{\scriptstyle\#}_{#1}\,}
\newcommand\cpsub[1]{\triangleright_{#1}}
\newcommand\subcp[1]{\prescript{}{#1\!}{\triangleleft}\;}
\newcommand\gencp[1]{\,{\scriptstyle\widehat{\#}}_{#1}\,}
\newcommand\compos[1]{\langle#1\rangle}
\newcommand\celto{\Rightarrow}
\newcommand\eqvto{\overset{\sim}{\Rightarrow}}
\newcommand\submol{\sqsubseteq}
\newcommand\gray{\otimes}
\newcommand\dual[2]{\fun{D}_{#1}{#2}}
\newcommand{\lcyl}[1]{\mathrm{L}_{#1}}
\newcommand{\rcyl}[1]{\mathrm{R}_{#1}}
\newcommand{\hcyl}[1]{\Xi_{#1}}
\newcommand\molec{\mathit{Mol}}
\newcommand\molecin[1]{\slice{\molec}{#1}}
\newcommand\ncat[1]{{#1}\cat{Cat}}
\newcommand\omegacat{\ncat{\omega}}
\DeclareMathOperator{\rd}{Rd}
\DeclareMathOperator{\eqv}{Eqv}
\DeclareMathOperator{\dgn}{Dgn}
\DeclareMathOperator{\nd}{Nd}
\DeclareMathOperator{\cell}{cell}
\DeclareMathOperator{\eqvcell}{eqv}
\DeclareMathOperator{\dgncell}{dgn}
\DeclareMathOperator{\ndcell}{nd}
\newcommand\rev[1]{{#1}^\dag}
\newcommand{\Sd}{\mathrm{Sd}}
\newcommand{\Ex}{\mathrm{Ex}}
\newcommand\arr{\vec{I}}
\renewcommand{\a}{\alpha}
\newcommand{\B}{\mathcal{B}}
\newcommand{\un}{\varepsilon}
\newcommand{\hinv}[1]{\xi_{#1}}
\newcommand\cls[1]{\mathscr{#1}}
\newcommand{\ppair}[2]{[#1, #2]}
\newcommand{\loc}[2]{#1[#2^{-1}]}
\newcommand{\preloc}[2]{#1\set{{#2}^{-1}}}
\newcommand{\selfloc}[1]{\tilde{#1}}
\newcommand{\locarr}{\tilde I}
\newcommand{\rgray}{\fun{\tilde I}}
\newcommand{\mrgray}{\markmol{\fun{\tilde I}}}
\newcommand{\rsigma}{\tilde\sigma}
\newcommand{\riota}{\tilde\iota}
\newcommand{\mriota}{\markmol{\tilde\iota}}
\newcommand{\rGamma}{\tilde\Gamma}
\newcommand{\rpi}{\tilde\pi}
\newcommand{\rnu}{\tilde\nu}
\newcommand{\cyl}{\curvearrowright}
\newcommand{\ppnat}{\mathbin{\Box}}
\newcommand{\mapel}[1]{\iota_{#1}}
\newcommand{\imel}[2]{{#1}_{#2}}
\newcommand{\m}[1]{{\mathsf{m}#1}}
\newcommand{\mdgmSet}{\atom\Set^{\m{}}}
\newcommand{\msSet}{\cat{sSet}^{\m{}}}
\newcommand{\matom}{\markmol{\atom}}
\newcommand{\matomSet}{\matom\Set}
\newcommand{\minmark}[1]{{#1}^{\flat}}
\newcommand{\maxmark}[1]{{#1}^{\sharp}}
\newcommand{\natmark}[1]{{#1}^{\natural}}
\newcommand{\markmol}[1]{{#1}_{\m{}}}
\newcommand{\bdmap}{\partial}
\newcommand{\pgray}{\gray_\mathsf{ps}}
\newcommand{\markarr}{\markmol{\arr}}
\newcommand{\selflocm}[1]{\selfloc{#1}_{\m{}}}
\newcommand{\walkinv}{\Upsilon}
\newcommand{\mwalkinv}{\walkinv_{\m{}}}
\newcommand{\fmwalkinv}{\overbar{\walkinv}_{\m{}}}
\newcommand{\An}{\mathrm{an}}
\newcommand{\Jcomp}{J_{\mathsf{comp}}}
\newcommand{\Jn}[1]{J_{#1}}
\newcommand{\Jhorn}{J_{\mathsf{horn}}}
\newcommand{\Jcoind}{J_{\mathsf{coind}}}
\newcommand{\Jind}{J_{\mathsf{ind}}}
\newcommand{\Jloc}{J_{\mathsf{loc}}}
\newcommand{\Jinv}{J_{\mathsf{inv}}}
\newcommand{\Jat}{J_{\mathsf{at}}}
\newcommand{\C}{\cls{C}}
\newcommand{\oset}[3][0ex]{%
  \mathrel{\mathop{#3}\limits^{
    \vbox to#1{\kern-1.5\ex@
    \hbox{$\scriptstyle#2$}\vss}}}}
\newcommand\qeq{\oset{?}{=}}
\newcommand\inftyn{\texorpdfstring{$(\infty, n)$}{(∞, n)}}
\newcommand\omegatit{\texorpdfstring{$\omega$}{omega}}
\newtheoremstyle{ittheorem}
  {\topsep}   
  {\topsep}   
  {\itshape}  
  {0pt}       
  {\sffamily \itshape \bfseries} 
  { ---}         
  {5pt plus 1pt minus 1pt} 
  {}          
\newtheoremstyle{itdfn}
  {\topsep}   
  {\topsep}   
  {}  
  {0pt}       
  {\sffamily \itshape \bfseries} 
  {}         
  {5pt plus 1pt minus 1pt} 
  {\thmnumber{#2}{\thmnote{\normalfont\ \ %
  {\sffamily(#3)}.}}}          
\newtheoremstyle{itrmk}
  {0.5\topsep}   
  {0.5\topsep}   
  {\normalfont}  
  {0pt}       
  {\sffamily \itshape} 
  { ---}         
  {5pt plus 1pt minus 1pt} 
  {}          
\newtheoremstyle{itexm}
  {0.5\topsep}      
  {0.5\topsep}      
  {\normalfont}     
  {0pt}             
  {\sffamily \itshape \bfseries \color{\mycolor}}        
  {\\}               
  {5pt plus 1pt minus 1pt} 
  {\thmname{#1} \thmnumber{#2}{\thmnote{\normalfont\ \ %
  {\sffamily(#3)}.}}}           
  \renewcommand\@upn{\textit}
\theoremstyle{ittheorem}
\newtheorem{thm}{Theorem}[section]
\newtheorem*{thm*}{Theorem}
\newtheorem{prop}[thm]{Proposition}
\newtheorem*{prop*}{Proposition}
\newtheorem{cor}[thm]{Corollary}
\newtheorem{lem}[thm]{Lemma}
\newtheorem{conj}[thm]{Conjecture}
\newtheorem*{dfn*}{Definition}
\theoremstyle{itdfn}
\newtheorem{dfn}[thm]{}
\theoremstyle{itrmk}
\newtheorem{rmk}[thm]{Remark}
\newtheorem{comm}[thm]{Comment}
\setlist{leftmargin=20pt,itemsep=0pt,parsep=0pt,topsep=1ex}
\renewcommand{\cftsecpagefont}{\mdseries}
\makeatletter \renewcommand{\cftsecfillnum}[1]{%
  {\cftsecleader}\nobreak
  \makebox[\@pnumwidth][\cftpnumalign]{\cftsecpagefont \oldstylenums{#1}}\cftsecafterpnum\par
} \makeatother
\newcommand\runtitle{model structures for diagrammatic \( (\infty, n) \)-categories}
\newcommand\runauthor{chanavat and hadzihasanovic}
\title{Model structures for\\diagrammatic \( (\infty, n) \)-categories}
\author{Cl\'emence Chanavat and Amar Hadzihasanovic}
\institution{Tallinn University of Technology}
\begin{document}

\thispagestyle{empty}
\maketitle 

\noindent\makebox[\textwidth][r]{%
	\begin{minipage}[t]{.7\textwidth}
\small \emph{Abstract.}
	Diagrammatic sets admit a notion of internal equivalence in the sense of coinductive weak invertibility, with similar properties to its analogue in strict $\omega$-categories.
	We construct a model structure whose fibrant objects are diagrammatic sets in which every round pasting diagram is equivalent to a single cell---its weak composite---and propose them as a model of $(\infty, \infty)$-categories.
	For each $n < \infty$, we then construct a model structure whose fibrant objects are those $(\infty, \infty)$-categories whose cells in dimension $> n$ are all weakly invertible.
	We show that weak equivalences between fibrant objects are precisely morphisms that are essentially surjective on cells of all dimensions.
	On the way to this result, we also construct model structures for $(\infty, n)$-categories on marked diagrammatic sets, which split into a coinductive and an inductive case when $n = \infty$, and prove that they are Quillen equivalent to the unmarked model structures when $n < \infty$ and in the coinductive case of $n = \infty$.
	Finally, we prove that the $(\infty, 0)$-model structure is Quillen equivalent to the classical model structure on simplicial sets.
	This establishes the first proof of the homotopy hypothesis for a model of $\infty$-groupoids defined as $(\infty, \infty)$-categories whose cells in dimension $> 0$ are all weakly invertible.
\end{minipage}}

\vspace{20pt}

\makeaftertitle

\normalsize

\noindent\makebox[\textwidth][c]{%
\begin{minipage}[t]{.65\textwidth}
\setcounter{tocdepth}{1}
\tableofcontents
\end{minipage}}

\section*{Introduction}

Let \( n \in \mathbb{N} \cup \set{\infty} \). 
We know that many modern mathematical concepts naturally inhabit \( (\infty, n) \)\nbd categories, the same way that most mathematical concepts inhabit \( 1 \)\nbd categories.
While the latter have an unambiguous accepted definition, the notion of \( (\infty, n) \)\nbd category, as currently understood, is best construed as a web of models that, more or less conjecturally, present the same homotopy theory, or better---in a sense yet to be made rigorous---the same \emph{directed} homotopy theory.
This multiplicity of models has been brought about by the failure of strict \( \omega \)\nbd categories \cite{ara2023polygraphs}---seemingly the most obvious candidate, obtained by iterated self-enrichment of the category of small categories---to account for the geometric aspect of higher categories; that is, the failure to satisfy the \emph{homotopy hypothesis} \cite{simpson1998homotopy}, that (roughly) higher groupoids model all classical homotopy types (``spaces''), in such a way that the tower of \( n \)\nbd truncations of a higher groupoid, where all \( n \)\nbd cells connected by an \( (n+1) \)\nbd cell are identified, models the Postnikov tower of the corresponding space.

At present, the web of models has quite a few disconnected components, with two main clusters.
The first may be called the \emph{geometric} or non-algebraic cluster, and includes Segal-type models \cite{rezk2010cartesian, simpson2009homotopy, paoli2019simplicial} as well as ``shaped'' models, based on \emph{marked} or \emph{stratified} presheaves over certain categories of shapes such as simplices and cubes \cite{verity2008weak, campion2020cubical}.
These are characterised by the fact that composition is not an \emph{operation}, but instead there is a space of candidate ``weak composites'' of a diagram whose existence is promised by a fibrancy condition.
Moreover, all these models presuppose some notion of space, so that each \( (\infty, n) \)\nbd category has some distinguished cells which track homotopies in an underlying space.
Thus, in the geometric models, the notion of space has logical priority over the notion of higher category, and the homotopy hypothesis has a somewhat axiomatic character.

It is in the sub-cluster of Segal-type models that a benchmark has first been put forward for comparing models of \( (\infty, n) \)\nbd category, with the work of Barwick and Schommer-Pries \cite{barwick2020unicity}.
Recently, equivalences have been proven between different shaped models \cite{doherty2023equivalence}, as well as between Segal-type and shaped models \cite{ozornova2022quillen, loubaton2023theory}, so it can be said that most models in the cluster currently meet the accepted standard.

The second main cluster may be called the \emph{algebraic} cluster, and includes the Grothendieck--Maltsiniotis \cite{maltsiniotis2010grothendieck} as well as Batanin--Leinster \cite{batanin1998weak, leinster2004higher} models, and more recently ``type-theoretic'' models \cite{mimram2021globular}.
These models attempt to ``fix'' strict \( \omega \)\nbd categories by postulating that their axioms---unitality, associativity, interchange---do not hold strictly up to equality, but instead up to some specified higher-dimensional cells, sometimes called \emph{coherences} or \emph{coherators}; the coherences themselves have (specified) higher coherences, ensuring that the space of possible composites of a diagram is contractible.
Due to their algebraic nature, these models typically adopt an algebraic notion of internal equivalence in the form of \emph{weak invertibility} or \emph{pseudo-invertibility} \cite{cheng2007omega,ozornova2024equivalence}, and define a higher groupoid as a higher category whose cells are all weakly invertible.
Thus, in the algebraic models, higher categories have logical priority, and the homotopy hypothesis is a conjecture with genuine mathematical content, yet, at present, remains unproven, except in low-dimensional cases \cite{joyal2007weak, henry2023homotopy}.

The algebraic models also form a connected cluster, with equivalences having been proven between most of them \cite{ara2010groupoides, bourke2020iterated, benjamin2024invertible}, although notably only at the 1\nbd categorical level: the homotopy theory of these models is significantly less developed, although progress has been made recently on Batanin--Leinster models \cite{fujii2024omega}.

Finally, a number of computer scientists and mathematicians of computational background have taken an interest in higher categories through the lens of \emph{diagrammatic methods} and \emph{higher-dimensional rewriting} \cite{guiraud2019rewriting}.
In this context, an expressive language for cellular presentations of higher algebraic theories, as well as a strong \emph{pasting theorem} guaranteeing that explicit diagrammatic arguments are sound, are fundamental features of a good model of higher categories.
In the absence of these, higher-dimensional rewriting theorists have resorted to the strict model in spite of its limitations \cite{ara2023polygraphs}, or developed natively diagrammatic models \cite{dorn2018associative, corbyn2024homotopy} whose connection to any of the established models is presently unclear.

In this article, which follows the lead given by the second-named author in the unpublished \cite{hadzihasanovic2020diagrammatic} and builds on \cite{chanavat2024diagrammatic, chanavat2024equivalences}, we propose a model of \( (\infty, n) \)\nbd categories which addresses this concern and, we believe, may act as a ``bridge'' between the different clusters, by sharing some of the good features of each flavour of models:
\begin{itemize}
	\item it is, strictly speaking, a shaped model---although, notably, with no marking, or rather, marking can play at most an ancillary role---so it belongs to the geometric family, and satisfies the homotopy hypothesis;
	\item nevertheless, like algebraic models, it supports a notion of internal equivalence in the sense of weak invertibility, so does not presuppose a notion of space: higher groupoids are higher categories whose cells in dimension \( > 0 \) are all weakly invertible; furthermore, weak equivalences between \( (\infty, n) \)\nbd categories are defined, in algebraic style, as functors that are essentially surjective on cells of each dimension;
	\item since it is built upon the combinatorial theory of pasting diagrams from \cite{hadzihasanovic2024combinatorics}, it natively supports a very rich diagrammatic language, which we would claim is actually an \emph{improvement} for the purposes of higher-dimensional rewriting theory over the language of polygraphs, due to the existence of an explicit combinatorial model of diagrams with associated data structures and computational methods \cite{hadzihasanovic2023data, hadzihasanovic2023higher};
	\item in fact, it \emph{feels very much like} the strict model, due to the ability to form pasting diagrams in a way that satisfies associativity and interchange strictly, with ``weakness'' introduced only at the moment of passing to weak composites (which can usually be delayed), and at the level of \emph{units} needed to ``regularise'' diagrams---more on that in a moment.
\end{itemize}
In spirit, the closest predecessor to our model is the opetopic model of Baez and Dolan \cite{baez1998higher}; the presence of many-to-many cell shapes and algebraic units in our model enables us to algebraicise the opetopic notion of universality and turn it into weak invertibility, which overcomes most of its problems.

We give a brief description of the model.
A diagrammatic set is a presheaf on a shape category \( \atom \) whose objects are called \emph{atoms}.
In a specific sense, for each \( m \in \mathbb{N} \), atoms of dimension \( m \) are the widest class of shapes of higher-categorical cells with the property that, for each \( k \leq m \), both the input (source) and output (target) \( k \)\nbd boundary are regular CW models of closed topological \( k \)\nbd balls.
The category \( \atom \) is a very convenient shape category: it is an Eilenberg--Zilber category, a strict test category, and is closed under all sorts of constructions, including suspensions, Gray products, joins, and duals, which can thus be swiftly extended along colimits to all presheaves.

The idea being that atoms are models of directed cells, a diagrammatic set is a model of a directed cell complex.
Now, atoms and their maps are described combinatorially (in terms of \emph{oriented face posets}), and this description extends to the diagrammatic sets that are \emph{regular} cell complexes, just as it happens for CW complexes.
In particular, these include the ``globular pastings of atoms'', which we call \emph{molecules}, amongst which the \emph{round molecules} are the subclass that can appear in the boundary of a higher-dimensional atom, being a molecule as well as a closed topological ball.
This figure depicts, respectively, a non-round molecule, a round molecule, and an atom in dimension 2:
\[\begin{tikzcd}[sep=small]
	&&&&&&&&&& \bullet \\
	\bullet && \bullet & \bullet & \bullet &&& \bullet && \bullet && \bullet \\
	& \bullet &&&& \bullet &&& \bullet && \bullet \\
	&&&&&& \bullet
	\arrow[curve={height=-6pt}, from=1-11, to=2-12]
	\arrow[""{name=0, anchor=center, inner sep=0}, curve={height=-6pt}, from=2-1, to=2-3]
	\arrow[curve={height=6pt}, from=2-1, to=3-2]
	\arrow[from=2-3, to=2-4]
	\arrow[""{name=1, anchor=center, inner sep=0}, curve={height=-12pt}, from=2-4, to=2-5]
	\arrow[""{name=2, anchor=center, inner sep=0}, curve={height=12pt}, from=2-4, to=2-5]
	\arrow[curve={height=-6pt}, from=2-8, to=3-9]
	\arrow[curve={height=-6pt}, from=2-10, to=1-11]
	\arrow[curve={height=6pt}, from=2-10, to=3-11]
	\arrow[curve={height=6pt}, from=3-2, to=2-3]
	\arrow[""{name=3, anchor=center, inner sep=0}, curve={height=-12pt}, from=3-6, to=2-8]
	\arrow[curve={height=6pt}, from=3-6, to=4-7]
	\arrow[Rightarrow, from=3-11, to=1-11]
	\arrow[curve={height=6pt}, from=3-11, to=2-12]
	\arrow[from=4-7, to=2-8]
	\arrow[""{name=4, anchor=center, inner sep=0}, curve={height=12pt}, from=4-7, to=3-9]
	\arrow[shorten <=3pt, shorten >=3pt, Rightarrow, from=2, to=1]
	\arrow[shorten >=4pt, Rightarrow, from=3-2, to=0]
	\arrow[curve={height=6pt}, shorten <=7pt, Rightarrow, from=4, to=2-8]
	\arrow[curve={height=-6pt}, shorten >=7pt, Rightarrow, from=4-7, to=3]
\end{tikzcd}\]
Given a diagrammatic set \( X \), a \emph{pasting diagram in \( X \)} is a morphism whose domain is a molecule.
A pasting diagram is a \emph{round diagram} if the molecule is round, and a \emph{cell} if it is an atom.
Pasting diagrams can be pasted together at their boundaries just like in strict \( \omega \)\nbd categories, and this operation satisfies all the equations of strict \( \omega \)\nbd categories (and more; see Section \ref{sec:folk}), but there is no way, in general, to turn a pasting diagram into a single cell.

On the other hand, there are ``collapsing'' maps of molecules which strictly decrease dimension, such that pulling back a pasting diagram along a collapse produces a \emph{degenerate} pasting diagram.
In particular, there are ``weak unit'' diagrams living above each pasting diagram.
These weak units and degenerate cells play a fundamental role in restoring the expressiveness lost with the roundness constraint on boundaries of cells: indeed, every pasting diagram can be ``padded with units'' until it becomes round.
\[\begin{tikzcd}[column sep=small, row sep=scriptsize]
	&&&&&&& {{\scriptstyle x}\;\bullet} \\
	\bullet & {{\scriptstyle x}\;\bullet} && \bullet & \leadsto & \bullet & {{\scriptstyle x}\;\bullet} && \bullet \\
	{\text{not round}} && \bullet &&& {\text{round}} && \bullet
	\arrow["g", curve={height=-6pt}, from=1-8, to=2-9]
	\arrow["f", from=2-1, to=2-2]
	\arrow[""{name=0, anchor=center, inner sep=0}, "g", from=2-2, to=2-4]
	\arrow[curve={height=6pt}, from=2-2, to=3-3]
	\arrow[""{name=1, anchor=center, inner sep=0}, "f", curve={height=-12pt}, from=2-6, to=1-8]
	\arrow["f", from=2-6, to=2-7]
	\arrow["x"{description}, from=2-7, to=1-8]
	\arrow[""{name=2, anchor=center, inner sep=0}, "g"{description}, from=2-7, to=2-9]
	\arrow[curve={height=6pt}, from=2-7, to=3-8]
	\arrow[curve={height=6pt}, from=3-3, to=2-4]
	\arrow[curve={height=6pt}, from=3-8, to=2-9]
	\arrow["f"', shorten >=3pt, Rightarrow, from=2-7, to=1]
	\arrow["g"', shorten <=3pt, Rightarrow, from=2, to=1-8]
	\arrow[shorten >=3pt, Rightarrow, from=3-3, to=0]
	\arrow[shorten >=3pt, Rightarrow, from=3-8, to=2]
\end{tikzcd}\]
The key observation made in \cite{chanavat2024equivalences} is that this structure suffices to instantiate the definition of weak invertibility \emph{at the level of round diagrams} in a diagrammatic set, and this determines a subclass of round diagrams, the \emph{equivalences}, which satisfies analogous properties to its counterpart in strict \( \omega \)\nbd categories.
In particular, it determines an equivalence relation \( \simeq \) on parallel round diagrams of the same dimension.
We finally get to our definition.

\begin{dfn*}
	An \emph{\( (\infty, n) \)\nbd category} is a diagrammatic set \( X \) such that
	\begin{enumerate}
		\item every round diagram in \( X \) is equivalent to a cell, and
		\item every cell of dimension \( > n \) in \( X \) is an equivalence.
	\end{enumerate}
\end{dfn*}
\noindent The first conditions says that every round diagram in dimension \( m \), which may be pasted together from multiple \( m \)\nbd cells, can be reduced to a single \( m \)\nbd cell via a weakly invertible \( (m+1) \)\nbd cell; these play, respectively, the role of a \emph{weak composite} and a \emph{compositor}.
The second condition is self-explanatory, and void when \( n = \infty \).
Now, all morphisms of diagrammatic sets preserve equivalences, so in particular they preserve the property of being a weak composite; a \emph{functor} can thus be defined, simply, as a morphism of presheaves between \( (\infty, n) \)\nbd categories.
We may then import the notion of \emph{\(\omega \)\nbd equivalence} from the theory of strict \( \omega \)\nbd categories.
(``Essentially'' means ``up to \( \simeq \)''.)
\begin{dfn*}
A functor of \( (\infty, n) \)\nbd categories is an \emph{\(\omega \)\nbd equivalence} if
    \begin{enumerate}
	    \item it is essentially surjective on \( 0 \)\nbd cells, and
	    \item for all parallel pairs of round \( m \)\nbd dimensional diagrams in its domain, it is essentially surjective on \( (m+1) \)\nbd cells between their images in its codomain.
    \end{enumerate}
\end{dfn*}
\noindent We state our main theorem.

\begin{thm*}
	There exists a model structure on diagrammatic sets whose
	\begin{itemize}
		\item cofibrations are the monomorphisms,
		\item fibrant objects are precisely the \( (\infty, n) \)\nbd categories,
		\item weak equivalences between fibrants are precisely the \( \omega \)\nbd equivalences.
	\end{itemize}
\end{thm*}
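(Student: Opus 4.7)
The plan is to apply Cisinski--Olschok theory to construct a cofibrantly generated model structure on $\atom\Set$, with generating cofibrations the boundary inclusions $\partial U \incl U$ of atoms (whose saturation is the class of all monomorphisms, by Eilenberg--Zilber arguments on $\atom$). The combinatorial core of the proof is to exhibit a set $J$ of generating trivial cofibrations whose right lifting property characterises $(\infty, n)$-categories. The natural candidates are: a \emph{compositor} extension $U \incl C(U)$ for each round molecule $U$, freely adjoining a weak composite cell $v$ together with a compositor equivalence $\compos{U} \celto v$; a family of invertibility extensions adjoining formal inverses and their witnesses to each cell of dimension $> n$; and enough auxiliary maps---horn fillers, localisations of the newly adjoined compositors---to saturate $J$ into a class of anodyne extensions. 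The various families $\Jcomp$, $\Jhorn$, $\Jinv$, $\Jcoind$, $\Jind$, $\Jloc$ alluded to by the notation of the paper presumably correspond to different packagings of this data.

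Verifying the Cisinski hypotheses reduces to checking combinatorial properties of these generating maps, essentially that they are monomorphisms between suitably small objects and that their transfinite pushouts are stable under the intended notion of homotopy. Once the model structure exists, the fibrant characterisation is built into the design of $J$: the right lifting property against the compositor extensions is exactly the existence of weak composites of round diagrams, and the right lifting against the invertibility extensions above dimension $n$ is precisely the requirement that higher cells be equivalences. The converse---that every $(\infty,n)$-category lifts against $J$---uses the analysis of equivalences in \cite{chanavat2024equivalences}, which ensures that the cells adjoined by $J$ are themselves always equivalences, so that the required lifts can be chosen from the weak composition and invertibility structure already present in a fibrant object.

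The principal obstacle is the identification of weak equivalences between fibrants with $\omega$-equivalences, which I would attack via a Whitehead-style theorem. The easier direction shows that a weak equivalence between fibrants is essentially surjective on cells of each dimension: one argues that the model-categorical homotopy relation between parallel maps of fibrants coincides with $\simeq$, after realising a cylinder object through the diagrammatic cylinder $\hcyl{}$, so that bijectivity on homotopy classes unfolds dimension by dimension into the $\omega$-equivalence condition. The harder direction, given an $\omega$-equivalence $f \colon X \to Y$ of fibrants, requires the construction of a homotopy inverse. The strategy is to produce it cell by cell: use essential surjectivity on $0$-cells to choose values on points, then use essential surjectivity on $(m+1)$-cells between previously chosen images to extend inductively to higher dimensions. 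Controlling the coherence of these choices---ensuring that the assembled data genuinely form a morphism of presheaves and genuinely a homotopy inverse---is the central technical difficulty, and will likely proceed by a transfinite induction leveraging the recursive atomic structure of molecules together with the pasting theorem from \cite{hadzihasanovic2024combinatorics}.
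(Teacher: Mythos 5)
Your proposal correctly identifies the framework (Cisinski--Olschok applied to boundary inclusions as a cellular model and a set $J$ of compositor and invertibility extensions, essentially the paper's $\Jcomp \cup \Jn{n}$), but it has two genuine gaps. The first concerns fibrancy. In Olschok's theorem, ``fibrant'' means having the right lifting property against the \emph{saturation} $\An(J)$ of $J$, which is obtained by adjoining all pushout-products $\iota^\a \ppnat m$ with the cellular model and closing under $j \mapsto (\iota^-,\iota^+) \ppnat j$ for the chosen exact cylinder. Your claim that ``the fibrant characterisation is built into the design of $J$'' only establishes that RLP against $J$ itself characterises $(\infty,n)$-categories; it does not show that an $(\infty,n)$-category lifts against all of $\An(J)$, and this is the hard part. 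Relatedly, you never specify the cylinder, which matters: the paper uses the \emph{reversible} cylinder $\rgray = \fun{Loc}(\markarr \pgray \minmark{(-)})$, and the entire detour through marked diagrammatic sets exists precisely to control $\An(J)$ --- one shows that marked horns are closed under the relevant pushout-products (so that in the marked world RLP against the generators already implies fibrancy), characterises the fibrant marked objects as $\natmark{X}$ for $X$ an $(\infty,n)$-category, and then transports fibrancy back along the Quillen adjunction $\minmark{(-)} \dashv \fun{U}$. Your ``auxiliary maps to saturate $J$'' gestures at this but does not supply the mechanism, and a direct saturation argument in the unmarked category is not available.

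The second gap is in the Whitehead-style theorem. Your plan to build a homotopy inverse of an $\omega$-equivalence ``cell by cell'' by choosing preimages dimension by dimension runs into exactly the coherence problem you acknowledge: the chosen preimages must assemble into a morphism of presheaves compatible with all faces and degeneracies, and there is no evident transfinite induction that achieves this. The paper avoids the issue entirely by the mapping-path-space technique imported from the folk model structure on strict $\omega$-categories: one forms the reversible path space $\rGamma Y$ (right adjoint to the cylinder), proves a transport lemma, shows that $f$ is an $\omega$-equivalence if and only if the projection $p_f \colon \rGamma_f \to Y$ from the mapping path space is an acyclic fibration, and then takes a section of $p_f$ (which exists since every object is cofibrant) to obtain a homotopy inverse in one stroke, finishing with 2-out-of-3 for $\omega$-equivalences. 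Without this (or an equivalent device), the harder direction of your argument does not go through.
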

\noindent By definition, an \( (\infty, 0) \)\nbd category in our model is precisely an \( (\infty, \infty) \)\nbd category whose cells in dimension \( > 0 \) are all equivalences.
(No 0\nbd cell can ever be an equivalence.)
We also prove the following result.
\begin{thm*}
	There exist both a left Quillen equivalence and a right Quillen equivalence between the model structure for \( (\infty, 0) \)\nbd categories on diagrammatic sets and the classical model structure on simplicial sets.
\end{thm*}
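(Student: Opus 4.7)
The strategy centers on the canonical inclusion \( i\colon \Delta \hookrightarrow \atom \) realising each simplex as the atom whose oriented face poset has the standard simplicial structure; simplices are a paradigmatic class of atoms, and \( i \) is fully faithful.

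First, I would set up the adjunctions. The inclusion \( i \) induces a triple adjunction \( i_! \dashv i^* \dashv i_* \) between \( \sSet \) and \( \dgmSet \), providing two candidate Quillen pairs: \( (i_! \dashv i^*) \), whose left adjoint points from \( \sSet \) into \( \dgmSet \), and \( (i^* \dashv i_*) \), whose left adjoint points the other way. These are the candidates for, respectively, the ``left'' and the ``right'' Quillen equivalence of the statement. Both \( i_! \) and \( i^* \) preserve monomorphisms, hence cofibrations on either side; what remains of the Quillen-pair verification is that each pair's right adjoint preserves fibrant objects, i.e.\ that \( i^* \) sends an \( (\infty, 0) \)-category to a Kan complex, and that \( i_* \) sends a Kan complex to an \( (\infty, 0) \)-category.

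Second, I would address these two preservation statements. For \( i^* Y \) with \( Y \) an \( (\infty, 0) \)-category: a horn \( \Lambda_k^n \to i^* Y \) corresponds to a simplicial round pasting diagram in \( Y \) missing a face, which by the \( (\infty, 0) \)-category axiom is equivalent to a cell; since all positive-dimensional cells of \( Y \) are equivalences, that cell can be inverted and transported to produce the missing face. For \( i_* K \) with \( K \) a Kan complex: on an atom \( U \), \( i_*K(U) \) is the set of simplicial maps from a canonical simplicial model of \( U \) into \( K \), and the richness of horn-filling in \( K \) directly supplies weak composites of round diagrams and inverses of positive-dimensional cells in \( i_* K \). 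For the Quillen equivalences proper, since \( i \) is fully faithful, the unit of \( (i_! \dashv i^*) \) and the counit of \( (i^* \dashv i_*) \) are both isomorphisms, so it suffices to show the counit \( i_!i^* Y \to Y \) is an \( \omega \)-equivalence for every \( (\infty, 0) \)-category \( Y \), and the unit \( K \to i^*i_*K \) is a weak equivalence for every Kan complex \( K \).

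The first of these statements reduces, via a standard cell-by-cell argument using that cofibrations are monomorphisms, to the geometric input that each representable atom \( U \) is \( \omega \)-equivalent to its simplicial subdivision \( i_! i^* U \); the second then follows from the same subdivision result together with the fully faithfulness of \( i \). This subdivision step is, I expect, the main obstacle: an atom is a contractible regular CW complex, hence classically homotopy equivalent to a point, but transferring this to an \( \omega \)-equivalence of \( (\infty, 0) \)-categories requires an explicit, dimension-by-dimension construction of weak composites for the round diagrams arising in the subdivision and of weak inverses for the cells so produced. This is where the combinatorial theory of atoms, the \( \Sd \)-like operations available on \( \atom \), and the calculus of equivalences from \cite{chanavat2024equivalences} must all be brought to bear; once the subdivision step is in hand, propagation to arbitrary diagrammatic sets proceeds by a standard cofibrantly-generated argument.
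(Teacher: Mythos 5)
Your overall architecture diverges from the paper's, and the divergence exposes a fatal step. The paper does not compare the \( (\infty,0) \)\nbd model structure with \( \sSet \) directly: it first proves (Theorem \ref{thm:cisinski_equal_infty}, via the intermediate ``atomic Cisinski'' model structure and the horn-filling analysis of Section \ref{sec:homotopy}) that the \( (\infty,0) \)\nbd model structure \emph{coincides} with the Cisinski model structure arising from \( \atom \) being a strict test category, and then imports the two Quillen equivalences \( i_\Delta \dashv (-)_\Delta \) and \( \Sd_\smallatom \dashv \Ex_\smallatom \) wholesale from \cite{chanavat2024diagrammatic}. Note in particular that the second equivalence is a subdivision/Ex-style adjunction, \emph{not} the adjunction \( i^* \dashv i_* \) you propose; nothing in the paper suggests that the restriction functor \( i^* = (-)_\Delta \) is left Quillen, and you give no argument that it sends the generating anodyne extensions of \( \dgmSet \) (molecular horns, cylinder pushout-products) to acyclic cofibrations of simplicial sets.

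The concrete gap is your ``geometric input'': the claim that each representable atom \( U \) is weakly equivalent to \( i_!i^*U \), and the reduction of the counit condition to it. Both fail. For a \( 2 \)\nbd dimensional atom \( V \) that is not the oriented \( 2 \)\nbd simplex, every map from an oriented simplex into \( V \) either collapses to dimension \( \leq 1 \) or would have to be an isomorphism onto \( \clset{\top_V} \); hence \( i^*V \) has no non-degenerate \( 2 \)\nbd simplices and \( i_!i^*V \) is the boundary \( \bd{}{}V \), a circle, while \( V \) itself is contractible. So \( i_!i^*V \to V \) is not a weak equivalence, and no ``cell-by-cell'' colimit argument can propagate a statement that is already false on representables. (This is consistent with \( i_\Delta \dashv (-)_\Delta \) being a Quillen equivalence: the derived counit only needs to be a weak equivalence at \emph{fibrant} objects, which have enough fillers to compensate for the missing simplices, and the proof in \cite{chanavat2024diagrammatic} goes through the test-category characterisation of weak equivalences rather than through representables.) The parts of your plan that do survive are the fibrancy transfers: that \( i^*Y \) is Kan for \( Y \) an \( (\infty,0) \)\nbd category is essentially the paper's Lemma \ref{lem:infty_fibrant_iff_cisinski_fibrant}, since simplicial horns of \( \Delta[n] \) are carried by \( i_! \) to atomic horns and \( \maxmark{Y} = \natmark{Y} \) lifts against all marked horns. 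But without the identification with the Cisinski model structure (or some substitute for the test-category input), the Quillen-equivalence halves of both adjunctions remain unproved, and the route you sketch for them cannot be repaired as stated.
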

\noindent To the best of our knowledge, this is the first proof of the homotopy hypothesis for a model in which higher groupoids are defined as higher categories whose cells in dimension \( > 0 \) are all weakly invertible, in direct generalisation of the definition of groupoids as categories whose morphisms are all invertible.


\subsection*{Inductive and coinductive \( (\infty, \infty) \)\nbd categories}

In addition to the \( (\infty, n) \)\nbd model structure on the category of diagrammatic sets, we construct two model structures on the category of \emph{marked diagrammatic sets}, which are to diagrammatic sets what marked simplicial sets are to simplicial sets.
We call these model structures the \emph{inductive} and the \emph{coinductive} \( (\infty, n) \)\nbd model structure.
We then prove that the forgetful functor which forgets the marking is always a Quillen equivalence between the coinductive \( (\infty, n) \)\nbd model structure on marked diagrammatic sets, and the \( (\infty, n) \)\nbd model structure on diagrammatic sets.
Thus, at least in the coinductive case, the marking does not play a fundamental role; this is in contrast to the simplicial and cubical models, where, as far as we know, there is no expectation that unmarked presheaves could support a model of \( (\infty, n) \)\nbd categories.

When \( n < \infty \), we then prove that the inductive and coinductive model structures coincide; but in the case \( n = \infty \), the coinductive model structure is only a left Bousfield localisation of the inductive model structure.
This is in line with the expectations set in \cite{barwick2020unicity}, that there should be a \emph{unique} homotopy theory of \( (\infty, n) \)\nbd categories for \( n < \infty \), but at least two plausible homotopy theories of \( (\infty, \infty) \)\nbd categories.
The two are separated by what we called the \emph{existentialist} and \emph{essentialist} view of equivalences in \cite{chanavat2024equivalences}: whether an equivalence is a cell that \emph{behaves} like an equivalence, or whether it has to be ``distinguished'' as \emph{being} a homotopy in some underlying space.
The rift between the two is typically exemplified by the \( (\infty, \infty) \)\nbd category of cobordisms, where every \( m \)\nbd cobordism is invertible up to an \( (m+1) \)\nbd cobordism, making this an \( (\infty, 0) \)\nbd category in the coinductive sense, but \emph{not} in the inductive sense, where only framed diffeomorphisms and their smooth homotopies are distinguished as ``true'' equivalences.
We note, first, that the inductive notion only makes sense from the geometric perspective where spaces have logical priority, so it is no surprise that unmarked diagrammatic sets would naturally model the coinductive notion; and second, that, as already observed in \cite{loubaton2024inductive}, the natural coinductive notion is not the one predicted in \cite{barwick2020unicity}, which still appears to be in search of an interesting model.

The purpose of introducing marked diagrammatic sets is two-fold.
The first reason is internal: in our model of \( (\infty, n) \)\nbd category, a cell of shape \( U \) is an equivalence if and only if it extends to an explicit localisation \( \selfloc{U} \) of the atom \( U \) at its top-dimensional cell.
This localisation is constructed inductively by attaching a left and a right inverse to \( U \), then attaching two \emph{invertors} one dimension higher, witnessing the invertibility; in turn, these two invertors need to be inverted.
Then, \( \selfloc{U} \) is defined as the colimit of this sequence of cellular extensions.
This provides a concrete cellular model for a ``walking equivalence'' of shape \( U \), in the same ``bi-invertible'' style as the coherent model exhibited for strict \( \omega \)\nbd categories in \cite{hadzihasanovic2024model}, and with the same advantages.
However, when it comes to actual computations, this model forces us to carry around an infinite tower of data, which can be quite cumbersome.
On the other hand, in a fibrant object for the coinductive \( (\infty, n) \) model structure on marked diagrammatic sets, a cell is an equivalence if and only if it is marked; the marking can be seen as a propositional truncation of this infinite tower of data, making computations much more manageable.
As an application, to characterise the fibrant objects of the \( (\infty, n) \)\nbd model structure on diagrammatic sets, we leverage these easier computations in the marked world and import them without much effort to diagrammatic sets via a Quillen equivalence.

The second reason is external: the atom category contains a full and faithful representation of the simplex category, which determines an adjunction between simplicial and diagrammatic sets whose left adjoint is also full and faithful.
This lifts to an adjunction between marked simplicial sets and marked diagrammatic sets.
Having constructed model structures on marked diagrammatic sets thus paves the way towards a comparison with the complicial model \cite{ozornova2020model}, rooting the diagrammatic model firmly into the geometric cluster.


\subsection*{Structure of the article}

\noindent In Section \ref{sec:background}, we review some notions of model category theory and present special cases of the results of \cite{olschok2011left}, which will be used throughout the paper to produce model structures on diagrammatic sets and marked diagrammatic sets.
We do not state the results in their greatest level of generality, but merely the one that applies to all the model structures we wish to construct.

In Section \ref{sec:diagrams}, we start by setting up some terminology about regular directed complexes and diagrams in diagrammatic set, as well as giving some recollection of the theory of equivalences in diagrammatic sets.
We do not give any more background on the theory of atoms, molecules, and regular directed complexes than we have given thus far; instead, we refer the reader to the introductory parts of \cite{chanavat2024equivalences, chanavat2024diagrammatic}, and ultimately, the book \cite{hadzihasanovic2024combinatorics}, in increasing level of detail.
We then move on to marked diagrammatic sets, for which we give two equivalent definitions: one as pairs of a diagrammatic set and a subset of cells containing all degenerate cells, and one as separated presheaves on a site of ``marked atoms''.
Using this equivalent description, we immediately deduce good formal properties for the category of marked diagrammatic sets, such as the fact that it is a locally presentable quasitopos.
We also define a monoidal structure on marked diagrammatic sets, the \emph{pseudo-Gray product}, which marks all pairs of cells both of which have dimension \( > 0 \).
Finally, we define an explicit localisation functor from marked to unmarked diagrammatic sets, freely adding a left and a right weak inverse to each non-degenerate marked cell.
For reasons explained in \cite{hadzihasanovic2024model}, a separate left and right inverse are needed to avoid issues of non-coherence.

In Section \ref{sec:model}, we give our definition of \( (\infty, n) \)\nbd category, and characterise \( (\infty, n) \)\nbd categories in terms of some evident right lifting properties.
Then, we move on to constructing our model structures, starting from marked diagrammatic sets.
We define sets of anodyne extensions, a cellular model, and a cylinder object defined using the pseudo-Gray product with the ``marked arrow''.
We then do the same for unmarked diagrammatic sets, whose cylinder object is the localisation of the marked cylinder object.
For the marked models, we select sets of anodyne extensions that are particularly convenient in that they are \emph{almost} saturated under the conditions that guarantee that they are a generating set of anodyne extensions, with only one easily handled corner case.
For the unmarked models, by contrast, we pick the smallest obvious set.

Section \ref{sec:comparison} contains our main results, comparing and characterising the model structures we have defined.
Section \ref{subsec:ind_coind} concerns the two families of model structures on marked diagrammatic sets.
We characterise the fibrant objects of the coinductive \( (\infty, n) \)\nbd model structure as being precisely the \( (\infty, n) \)\nbd categories with the ``natural'' marking whose marked cells are all and only the equivalences (Theorem \ref{thm:coind_fibrant_characterisation}).
We then show that the coinductive model structure is always a left Bousfield localisation of the inductive (Proposition \ref{prop:ind_bousfield_coind}), and that they actually coincide when \( n < \infty \) (Proposition \ref{prop:n_inductive_equal_n_coinductive}).
Next, in Subsection \ref{subsec:marked_unmarked}, we compare the coinductive \( (\infty, n) \)\nbd model structure on marked diagrammatic sets and the \( (\infty, n) \)\nbd model structure on diagrammatic sets.
We first prove that the adjunction \( \minmark{(-)} \dashv \fun{U} \) between unmarked and marked diagrammatic sets is Quillen, where \( \minmark{(-)} \) only marks degenerate cells and \( \fun{U} \) forgets the marking (Proposition \ref{prop:minmark_is_quillen}).
We use this to characterise the fibrant objects of the \( (\infty, n) \)\nbd model structure as being precisely the \( (\infty, n) \)\nbd categories (Theorem \ref{thm:fibrant_iff_infty_category}), then prove that the adjunction is, in fact, a Quillen equivalence (Theorem \ref{thm:marked_quillen_eq_unmarked}).
Finally, in Section \ref{sub:characterisation_ms_diag_sets}, we characterise the weak equivalences between \( (\infty, n) \)\nbd categories as being precisely the \( \omega \)\nbd equivalences (Theorem 
\ref{thm:w_eq_is_hpty_eq}); this part is essentially an exercise in translation from the folk model structure on strict \( \omega \)\nbd categories \cite{lafont2010folk}, using the formal similarities as well as the foundation laid in \cite{chanavat2024equivalences}.

Section \ref{sec:homotopy} proves the homotopy hypothesis for our model of \( (\infty, 0) \)\nbd categories.
We derive this from our earlier work \cite{chanavat2024diagrammatic}, which established that \( \atom \) is a strict test category, by proving that the \( (\infty, 0) \)\nbd model structure coincides with the ``Cisinski model structure'' obtained abstractly from the general theory of presheaves on test categories (Theorem \ref{thm:cisinski_equal_infty}).

Section \ref{sec:further} serves both as a conclusion and, more importantly, as an overture for future developments.
In it, we propose three conjectures.
The first predicts that the evident adjunction between marked simplicial sets and marked diagrammatic sets will actually be a Quillen equivalence between the \( n \)\nbd complicial model structure and the inductive \( (\infty, n) \)\nbd model structure (Conjecture \ref{conj:complicial_quillen_eq}).
The second predicts a slightly more complicated relation to the folk model structure on strict \( \omega \)\nbd categories.
Indeed, while it possible to define an adjunction between the categories of diagrammatic sets and of strict \( \omega \)\nbd categories, the existence of ``extra equations'' satisfied semantically by the pasting of molecules, and not provable from the axioms of strict \( \omega \)\nbd categories in dimension \( > 3 \), prevents this from being a Quillen adjunction.
Instead, we predict that there is a \emph{span} of right Quillen functors connecting the two, mediated by a category of \emph{stricter} \(\omega\)\nbd categories which satisfy these additional equations (Conjecture \ref{conj:folk_quillen_diag}).
We also suggest that the existence of these extra coherences---which, by our results, are homotopically sound---could have profound implications for the algebraic models, and deserves more scrutiny.
Finally, we predict that the Gray product of diagrammatic sets should be compatible with our model structures, making them monoidal model structures; and that the predicted left adjoint functor from diagrammatic sets to stricter \( \omega \)\nbd categories should be strong monoidal (Conjecture \ref{conj:gray_mon}).

\subsection*{Acknowledgements}

The second-named author was supported by Estonian Research Council grant PSG764.
We thank F\'elix Loubaton for helpful discussions about marked presheaves and the inductive and coinductive models of \( (\infty, \infty) \)\nbd categories.

\section{Background on model structures}  \label{sec:background}

\begin{dfn} [Left and right lifting classes]
    Let \( \C \) be a category and \( S \) be a class of morphisms in \( \C \). 
    We denote by \( l(S) \) and \( r(S) \) the classes of morphisms that have, respectively, the left and right lifting property with respect to all morphisms in \( S \). 
    If \( \C \) has a terminal object \( \pt \), we say that an object \( X \) of \( \C \) has the right lifting property against \( S \) if the unique morphism \( X \to \pt \) does.
\end{dfn}

\begin{dfn} [Cellular model]
    Let \( \C \) be a category. 
    A \emph{cellular model} for \( \C \) is a set \( M \) of monomorphisms such that \( l(r(M)) \) is the class of all monomorphisms of \( \C \).
\end{dfn}

\begin{dfn} [Pushout-product]
    Let \( \C \) be a cocomplete category, \( F, G \colon \C \to \C \) be two endofunctors, and \( \beta \colon F \celto G \) be a natural transformation.
    For all morphisms \( f \colon X \to Y \) in \( \C \), the \emph{pushout-product of \( \beta \) and \( f \)} is the morphism
    \begin{equation*}
        \beta \ppnat f \colon FY \coprod_{FX} GX \to GY
    \end{equation*}
    obtained universally from the naturality square of \( \beta \) at \( f \).
\end{dfn}

\begin{rmk}
	We may always assume that \( \beta \ppnat \idd{X} = \beta_X \).
\end{rmk}

\begin{dfn}[Exact cylinder]
	Let \( \C \) be a locally presentable quasitopos.
	An \emph{exact cylinder on \(\C \)} is an endofunctor \( \fun{I} \) on \( \C \) together with natural transformations
    \[
        (\iota^-, \iota^+) \colon \bigid{\C} \amalg \bigid{\C} \to \fun{I}, 
	\quad \quad
    	\sigma\colon \fun{I} \to \bigid{\C} 
    \] 
    such that
    \begin{enumerate}[align=left]
	   \item[{\crtcrossreflabel{(DH0)}[enum:dh0]}] each component of \( (\iota^-, \iota^+) \) is a monomorphism, and each component of the composite \( \sigma (\iota^-, \iota^+) \) is a codiagonal morphism;
	   \item[{\crtcrossreflabel{(DH1)}[enum:dh1]}] the functor \( \fun{I} \) preserves small colimits and monomorphisms.
    \end{enumerate}
\end{dfn}

\begin{comm}
	Since we assumed \( \C \) to be a locally presentable quasitopos, the notion of exact cylinder and of cartesian cylinder for a weak factorisation system whose left class are monomorphisms \cite[Definition 3.8]{olschok2011left} coincide.
	Indeed, the proof of \cite[Corollary 3.11]{olschok2011left} can be generalized to a locally presentable quasitopos since it has pullback-stable colimits, and by \cite[Proposition 23.8]{wyler1991quasitopos}, whenever we have a diagram
    \[\begin{tikzcd}
	P & B \\
	A & Q \\
	&& X
	\arrow["b", from=1-1, to=1-2]
	\arrow["a", from=1-1, to=2-1]
	\arrow[from=1-2, to=2-2]
	\arrow["y", curve={height=-6pt}, hook, from=1-2, to=3-3]
	\arrow[from=2-1, to=2-2]
	\arrow["x", curve={height=6pt}, hook, from=2-1, to=3-3]
	\arrow["\lrcorner"{anchor=center, pos=0.125, rotate=180}, draw=none, from=2-2, to=1-1]
	\arrow[dashed, from=2-2, to=3-3]
\end{tikzcd}\]
    where \( x \) and \( y \) are monomorphisms, \( P \) is the pullback of \( x \) and \( y \), and \( Q \) is the pushout of \( a \) and \( b \), the dashed map is also a monomorphism.
\end{comm}

\noindent Until the end of the section, we let \( \C \) be a locally presentable quasitopos equipped with 
\begin{enumerate}
	\item an exact cylinder \( (\fun{I}, (\iota^-, \iota^+), \sigma) \), and 
	\item a cellular model \( M \).
\end{enumerate}

\begin{dfn} [Class of anodyne extensions]
	Let \( S \) be a class of morphisms in \( \C \).
	We say that \( S \) is a \emph{class of anodyne extensions} if
	\begin{itemize}
		\item there exists a set \( J \) such that \( S = l(r(J)) \),
		\item for all monomorphisms \( m \) and all \( \a \in \set{+, -} \), \( \iota^\a \ppnat m \) is in \( S \),
		\item \( S \) is closed under the operation \( j \mapsto (\iota^-, \iota^+) \ppnat j \).
	\end{itemize}
	In this case, we call \( J \) a \emph{generating set of anodyne extensions}.
\end{dfn}

\begin{dfn} [Generating anodyne extensions]
	Let \( J \) be a set of monomorphisms in \( \C \).
    	The set of \emph{generating \( J \)-anodyne extensions} is the smallest set \( \An(J) \) which
	\begin{enumerate}
		\item contains \(  J \cup \set{\iota^\a \ppnat m \mid \a \in \set{-, +}, m \in M} \), and 
		\item is closed under the operation \( j \mapsto (\iota^-, \iota^+) \ppnat j \).
	\end{enumerate}
\end{dfn}

\begin{prop} \label{prop:smallest_class_of_anodynes}
	Let \( J \) be a set of monomorphisms in \( \C \).
	Then
	\begin{enumerate}
		\item there exists a smallest class of anodyne extensions containing \( l(r(J)) \),
		\item \( \An(J) \) is a generating set for this class.
	\end{enumerate}
\end{prop}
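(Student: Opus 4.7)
The plan is to identify $l(r(\An(J)))$ as the smallest class of anodyne extensions containing $l(r(J))$, with $\An(J)$ itself serving as a generating set; this settles both parts of the proposition at once.

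First I would verify that $\An(J)$ is genuinely a set and not a proper class. Setting $J_0 \eqdef J \cup \set{\iota^\alpha \ppnat m \mid \alpha \in \set{-, +},\, m \in M}$ and iteratively $J_{n+1} \eqdef J_n \cup \set{(\iota^-, \iota^+) \ppnat j \mid j \in J_n}$, each $J_n$ is a set because the single-argument operation $(\iota^-, \iota^+) \ppnat (-)$ produces one morphism per input, so $\An(J) = \bigcup_{n \in \mathbb{N}} J_n$ is a set. Taking $l(r(-))$ of it thus yields a class of the required form $l(r(J'))$.

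Next I would verify that $l(r(\An(J)))$ satisfies the remaining two closure axioms of a class of anodyne extensions. For the axiom that every $\iota^\alpha \ppnat m$ with $m$ a monomorphism lies in it, the case $m \in M$ is immediate since then $\iota^\alpha \ppnat m \in \An(J)$; the general case follows by writing $m$ as a retract of a transfinite composition of pushouts of elements of $M$ (possible because $M$ is a cellular model) and then using that the operation $\iota^\alpha \ppnat (-)$ commutes with pushouts, transfinite compositions and retracts --- a consequence of axiom \ref{enum:dh1}, which guarantees that $\fun{I}$ preserves small colimits. For closure under $j \mapsto (\iota^-, \iota^+) \ppnat j$, the case $j \in \An(J)$ holds by construction, and the same colimit-compatibility argument lifts this closure to the whole saturation $l(r(\An(J)))$. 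The inclusion $l(r(J)) \subseteq l(r(\An(J)))$ follows at once from $J \subseteq \An(J)$ by antitonicity of $l$ and $r$.

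For minimality, let $S$ be any class of anodyne extensions containing $l(r(J))$. Then $J \subseteq S$; since $S$ contains every $\iota^\alpha \ppnat m$ with $\alpha \in \set{-, +}$ and $m \in M$, and is closed under $(\iota^-, \iota^+) \ppnat (-)$, the inductive minimality built into the definition of $\An(J)$ forces $\An(J) \subseteq S$. Writing $S = l(r(J'))$ for the witness set $J'$, the operator $l \circ r$ is monotone and idempotent, whence $l(r(\An(J))) \subseteq l(r(S)) = S$, concluding the proof. The main obstacle I anticipate is the clean formalisation of the second step, specifically the compatibility of the pushout-product operations $\iota^\alpha \ppnat (-)$ and $(\iota^-, \iota^+) \ppnat (-)$ with the saturation operator $l \circ r$; this is a standard saturation lemma but is the one place where axiom \ref{enum:dh1}, together with the cartesian character of the cylinder discussed in the comment preceding the proposition, visibly does the work.
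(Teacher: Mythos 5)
Your argument is correct and is essentially a reconstruction of the proof the paper relies on: the paper disposes of this proposition by citing Cisinski's Proposition 1.3.13, whose argument is exactly your strategy of showing that $l(r(\An(J)))$ is a class of anodyne extensions (via the compatibility of the Leibniz constructions with the saturation of a set of maps, using \ref{enum:dh1} and the small object argument) and then extracting minimality from the inductive definition of $\An(J)$ together with monotonicity and idempotence of $l \circ r$. Nothing further is needed.
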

\begin{proof}
	This is \cite[Proposition 1.3.13]{cisinski2006prefaisceaux}.
\end{proof}

\begin{dfn} [Naive fibration]
    Let \( J \) be a set of monomorphisms and \( f\colon X \to Y \) a morphism in \( \C \).
    We say that \( f \) is a \emph{\( J \)\nbd naive fibration} if it has the right lifting property against \( \An(J) \).
    We say that an object \( X \) is \emph{\( J \)\nbd fibrant} if the unique morphism \( X \to \pt \) is a \( J \)\nbd naive fibration.
\end{dfn}

\begin{dfn} [Homotopy]
    Let \( f, g \colon X \to Y \) be two parallel morphisms in \( \C \).
    A \emph{homotopy} between \( f \) and \( g \) is a morphism \( \beta \colon \fun{I}X \to Y \) such that \( \beta \after \iota^- = f \) and \( \beta \after \iota^+ = g \).
    We say that \( f \) and \( g \) are \emph{homotopic} if there exists a homotopy between them, and let \( \approx \) denote the equivalence relation that this generates.
\end{dfn}

\begin{rmk}
	The relation \( \approx \) is a congruence with respect to composition of morphisms.
\end{rmk}

\begin{dfn} [Homotopy equivalence]
    Let \( f \colon X \to Y \) be a morphism in \( \C \).
    We say that \( f \) is a \emph{homotopy equivalence} if there exists a morphism \( g \colon Y \to X \) such that \( g \after f \) is homotopic to \( \idd{X} \), and \( f \after g \) is homotopic to \( \idd{Y} \).
\end{dfn}

\begin{thm} \label{thm:olschok_theorem}
    Let \( J \) be a set of monomorphisms in \( \C \).
    Then there exists a cofibrantly generated model structure on \( \C \) where
    \begin{itemize}
        \item the cofibrations are the monomorphisms,
	\item an object is fibrant if and only if it is \( J \)\nbd fibrant,
	\item a morphism with fibrant codomain is a fibration if and only if it is a \( J \)\nbd naive fibration,
        \item a morphism \( f \colon X \to Y \) is a weak equivalence if and only if, for all fibrant objects \( W \), the induced function \(  f^* \colon \slice{\C(Y, W)}{\approx} \to \slice{\C(X, W)}{\approx} \) is a bijection of sets,
        \item a morphism between fibrant object is a weak equivalence if and only if it is a homotopy equivalence,
        \item \( (\fun{I}, (\iota^-, \iota^+), \sigma) \) is a cylinder object for the model structure.
    \end{itemize}
\end{thm}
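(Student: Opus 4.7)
The plan is to invoke the theorem of Olschok \cite{olschok2011left}, which produces precisely this kind of cofibrantly generated model structure from the input data of a locally presentable quasitopos together with an exact cylinder and a cellular model. The Comment following the definition of exact cylinder already bridges the technical difference between Olschok's ``cartesian cylinder with respect to a weak factorisation system'' and our notion of exact cylinder, so all hypotheses match, and an arbitrary set \( J \) of monomorphisms plays the role of Olschok's set of generating anodyne extensions once we pass to \( \An(J) \).

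For the architecture of the argument itself, I would first produce two weak factorisation systems using the small object argument. The cellular model \( M \) generates the weak factorisation system \( (\text{Mono}, r(M)) \), identifying the left class with all monomorphisms, hence the cofibrations. Separately, \( \An(J) \) generates the weak factorisation system \( (l(r(\An(J))), r(\An(J))) \), where, by Proposition \ref{prop:smallest_class_of_anodynes}, the left class is the smallest class of anodyne extensions containing \( l(r(J)) \), and the right class is by definition the class of \( J \)\nbd naive fibrations. Using that \( \fun{I} \) preserves colimits and monomorphisms, one verifies that \( \approx \) is a congruence, so that weak equivalences can consistently be defined abstractly as the morphisms inducing bijections on \( \approx \)-classes of maps into each fibrant object.

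The main obstacle, which is what Olschok actually carries out in detail, is to reconcile these two weak factorisation systems with the class of weak equivalences so defined---that is, to show that \( r(M) \) coincides with fibrations that are weak equivalences, and that the trivial cofibrations coincide with \( l(r(\An(J))) \). The key tool is a lifting argument exploiting the closure of the anodyne class under \( j \mapsto (\iota^-, \iota^+) \ppnat j \) together with the axioms \ref{enum:dh0} and \ref{enum:dh1}, which allows one to construct homotopy inverses over fibrant targets by induction on cellular presentations. Once this is granted, two-out-of-three and closure under retracts for weak equivalences are routine, the characterisation of weak equivalences between fibrants as homotopy equivalences follows by a standard argument factoring morphisms through mapping cylinders built from \( \fun{I} \), and the identification of \( (\fun{I}, (\iota^-, \iota^+), \sigma) \) as a cylinder object for the model structure is immediate from \ref{enum:dh0}.
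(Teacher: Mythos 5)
Your proposal is correct and takes essentially the same route as the paper: the paper's entire proof is a citation of Olschok's results (Theorem 3.16, Corollary 3.24, Lemma 3.30 of \cite{olschok2011left}), with the preceding Comment handling the passage from presheaf categories to locally presentable quasitopoi, exactly as you note. Your additional sketch of the internal architecture of Olschok's argument is a reasonable gloss but is not something the paper reproves.
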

\begin{proof}
    This is \cite[Theorem 3.16, Corollary 3.24, Lemma 3.30]{olschok2011left}
\end{proof}

\begin{rmk} \label{rmk:cisinski_is_olschok}
    The result \cite[Theorem 2.4.19]{cisinski2019higher} is a particular case of the previous theorem where \( \C \) is a category of presheaves.
\end{rmk}

\noindent Let \( J \) be a set of monomorphisms in \( \C \), and suppose \( \C \) is equipped with the model structure of Theorem \ref{thm:olschok_theorem} applied to \( J \).

\begin{dfn} [Local objects and equivalences]
    	Let \( S \) be a class of morphisms in \( \C \).
    	We say that a fibrant object \( W \) in \( \C \) is \emph{\( S \)\nbd local} if, for all morphisms \( f \colon X \to Y \) in \( S \), the function \( f^* \colon \slice{\C(Y, W)}{\approx} \to \slice{\C(X, W)}{\approx} \) is a bijection of sets.
    	We say that a morphism \( f \colon X \to Y \) is an \emph{\( S \)\nbd local equivalence} if, for all \( S \)\nbd local objects \( W \), the function \( f^* \colon \slice{\C(Y, W)}{\approx} \to \slice{\C(X, W)}{\approx} \) is a bijection of sets.
\end{dfn}

\begin{rmk}
    By \cite[Theorem 17.6.7]{hirschhorn2003model} and Theorem \ref{thm:olschok_theorem}, the previous definitions coincide with the usual notions of \( S \)\nbd local object and \( S \)\nbd local equivalence, see \cite[Definition 3.1.4]{hirschhorn2003model}.
\end{rmk}

\begin{dfn} [Left Bousfield localisation]
    	Let \( S \) be a class of morphisms in \( \C \). 
    	The \emph{left Bousfield localisation of \( \C \) at \( S \)} is, if it exists, the unique model structure on \( \C \) whose cofibrations are the monomorphisms, and weak equivalences are the \( S \)\nbd local equivalences.
\end{dfn}

\noindent Every model structure whose cofibrations are the monomorphisms, and weak equivalences contain the weak equivalences of \( \C \), arises as a left Bousfield localisation of \( \C \) at some class of morphisms.

\section{Marked and unmarked diagrammatic sets} \label{sec:diagrams}

\subsection{Complements on regular directed complexes}

We refer to the introduction of \cite{chanavat2024equivalences} for a brief review of the notation relative to atoms, molecules, and regular directed complexes; all the details are in \cite{hadzihasanovic2024combinatorics}.

\begin{dfn}[Atom inclusions]
    Let \( P \) be a regular directed complex. For each \( x \) in \( P \), we denote by \( \mapel x \colon \imel P x \incl P \) the unique inclusion with image \( \clset{x} \) in \( P \).
\end{dfn}

\begin{dfn}[Arrow]
	Let \( \pt \) be the point, that is, the terminal regular directed complex.
	The \emph{arrow} is the atom \( \arr \eqdef \pt \celto \pt \). 
	We denote by \( 0^- < 1 > 0^+ \) the three elements of its underlying poset \( I \), with \( \faces{}{\a} 1 = \set{0^\a} \). 
\end{dfn}

\begin{dfn} [Merger of a round molecule]
    Let \( U \) be a round molecule. 
    The \emph{merger of \( U \)} is the atom \( \compos{U} \eqdef \bd{}{-} U \celto \bd{}{+} U \).
\end{dfn}

\noindent The following definitions are from \cite[Section 1]{chanavat2024equivalences}.

\begin{dfn}[Partial cylinder]
Given a graded poset $P$ and a closed subset $K \subseteq P$, the \emph{partial cylinder on $P$ relative to $K$} is the graded poset $I \times_K P$ obtained as the pushout
\[\begin{tikzcd}
	{I \times K} & K \\
	{I \times P} & {I \times_K  P}
	\arrow[two heads, from=1-1, to=1-2]
	\arrow[hook', from=1-1, to=2-1]
	\arrow["{(-)}", hook', from=1-2, to=2-2]
	\arrow["q", two heads, from=2-1, to=2-2]
	\arrow["\lrcorner"{anchor=center, pos=0.125, rotate=180}, draw=none, from=2-2, to=1-1]
\end{tikzcd}\]
in $\Pos$.
This is equipped with a canonical projection map $\tau_K\colon I \times_K P \surj P$.
\end{dfn}

\begin{dfn}[Partial Gray cylinder]
	Let $U$ be a regular directed complex and $K \subseteq U$ a closed subset.
	The \emph{partial Gray cylinder on $U$ relative to $K$} is the oriented graded poset $\arr \gray_K U$ whose
\begin{itemize}
	\item underlying graded poset is $I \times_K U$, and
	\item orientation is specified, for all $\a \in \set{+, -}$, by
\begin{align*}
	\faces{}{\a}(x) & \eqdef \set{(y) \mid y \in \faces{}{\a}x}, \\
	\faces{}{\a}(i, x) & \eqdef \begin{cases}
		\set{(0^\a, x)} + \set{(1, y) \mid y \in \faces{}{-\a}x \setminus K} &
		\text{if $i = 1$,} \\
		\set{(i, y) \mid y \in \faces{}{\a}x \setminus K} + 
		\set{(y) \mid y \in \faces{}{\a}x \cap K} &
		\text{otherwise}.
	\end{cases}
\end{align*}
\end{itemize}
\end{dfn}

\begin{dfn}[Inverted partial Gray cylinder]
	Let $U$ be a molecule, $n \eqdef \dim{U}$, and $K \subseteq \bd{}{+}U$ a closed subset.
	The \emph{left-inverted partial Gray cylinder on $U$ relative to $K$} is the oriented graded poset $\lcyl{K} U$ whose
\begin{itemize}
	\item underlying graded poset is $I \times_K U$, and
	\item orientation is as in $\arr \gray_K U$, except for all $x \in \gr{n}{U}$ and $\a \in \set{+, -}$
\begin{align*}
	\faces{}{-}(1, x) &\eqdef \set{(0^-, x), (0^+, x)} + \set{(1, y) \mid y \in \faces{}{+}x \setminus K}, \\
	\faces{}{+}(1, x) &\eqdef \set{(1, y) \mid y \in \faces{}{-}x}, \\
	\faces{}{\a}(0^+, x) &\eqdef \set{(0^+, y) \mid y \in \faces{}{-\a}x \setminus K} + 
		\set{(y) \mid y \in \faces{}{-\a}x \cap K}.
\end{align*}
\end{itemize}
	Dually, if $K \subseteq \bd{}{-}U$, the \emph{right-inverted partial Gray cylinder on $U$ relative to $K$} is the oriented graded poset $\rcyl{K}{U}$ whose
\begin{itemize}
	\item underlying graded poset is $I \times_K U$, and
	\item orientation is as in $\arr \gray_K U$, except for all $x \in \gr{n}{U}$ and $\a \in \set{+, -}$
\begin{align*}
	\faces{}{-}(1, x) &\eqdef \set{(1, y) \mid y \in \faces{}{+}x}, \\
	\faces{}{+}(1, x) &\eqdef \set{(0^-, x), (0^+, x)} + \set{(1, y) \mid y \in \faces{}{-}x \setminus K}, \\
	\faces{}{\a}(0^-, x) &\eqdef \set{(0^-, y) \mid y \in \faces{}{-\a}x \setminus K} + 
		\set{(y) \mid y \in \faces{}{-\a}x \cap K}.
\end{align*}
\end{itemize}
\end{dfn}

\begin{rmk} \label{rmk:inverted_cylinder_well_def}
	By \cite[Lemma 1.20, Lemma 1.26]{chanavat2024equivalences}, partial Gray cylinders and inverted partial Gray cylinders respect the classes of molecules, round molecules and atoms.
	Moreover, for all molecules $U$ and closed $K \subseteq U$,
	\begin{itemize}
		\item \( \tau_K\colon \arr \gray_K U \to U \) is a cartesian map of molecules,
		\item if $p\colon U \to V$ is a cartesian map of molecules with $\dim V < \dim U$, then $p \after \tau_K\colon \lcyl{K}{U} \to V$ and $p \after \tau_K\colon \rcyl{K}{U} \to V$ are cartesian maps of molecules.
	\end{itemize}
\end{rmk}

\begin{dfn} [Higher invertor shapes]
    	Let \( U \) be a round molecule.
	The family of \emph{higher invertor shapes on \( U \)} is the family of molecules \( \hcyl s U \) indexed by strings \( s \in \set{L, R}^* \), defined inductively on the length of \( s \) by
	\begin{align*}
		\hcyl{\langle\rangle} U & \eqdef U, \\
		\hcyl{Ls} U &\eqdef \lcyl{\bd{}{+} \hcyl{s} U} (\hcyl{s} U), \\
            	\hcyl{Rs} U &\eqdef \rcyl{\bd{}{-} \hcyl{s} U} (\hcyl{s} U).
        \end{align*}
	These are equipped with cartesian maps \( \tau_s \colon \hcyl{s} U \to U \) of their underlying posets, with the property that for all cartesian maps of molecules \( p\colon U \to V \) such that \( \dim V < \dim U \), the composite \( p \after \tau_s \) is a cartesian map of molecules.
\end{dfn}


\subsection{Diagrams in diagrammatic sets}

\begin{dfn}[Diagram in a diagrammatic set]
	Let $U$ be a regular directed complex and $X$ a diagrammatic set.
	A \emph{diagram of shape $U$ in $X$} is a morphism $u\colon U \to X$.
	A diagram is a \emph{pasting diagram} if $U$ is a molecule, a \emph{round diagram} if $U$ is a round molecule, and a \emph{cell} if $U$ is an atom.
	We write $\dim{u} \eqdef \dim{U}$.
\end{dfn}

\noindent Since isomorphisms of molecules are unique when they exist, we can safely identify pasting diagrams that are isomorphic in the slice of $\dgmSet$ over $X$.
Notice that a cell of shape $U$ in $X$ is the same as an element of $X(U)$.

\begin{dfn}[Sets of cells and round diagrams]
	We let $\rd X$ denote the set of round diagrams in $X$, and $\cell X$ its subset on cells.
	The set $\rd X$ is graded by dimension; given a subset $A \subseteq \rd X$ and $k \in \mathbb{N}$, we let $\gr{k}{A} \eqdef \set{u \in A \mid \dim u = k}$.
	We also let $\gr{>k}{A} \eqdef \bigcup_{n>k} \gr{n}{A}$.
\end{dfn}

\begin{dfn}[Boundaries of pasting diagrams]
	Let $u\colon U \to X$ be a pasting diagram in a diagrammatic set, $k \in \mathbb{N}$, and $\a \in \set{ +, - }$.
	We let $\bd{k}{\a}u \eqdef \restr{u}{\bd{k}{\a}{U}}\colon \bd{k}{\a}U \to X$.
	We may omit the index $k$ when $k = \dim{u} - 1$.
\end{dfn}

\noindent Recall that the class of \emph{submolecule inclusions} is composition-generated by the inclusions \( U, V \incl U \cp{k} V \) of molecules into pastings.
A submolecule inclusion \( \iota\colon V \incl U \) is \emph{rewritable} when \( V \) is round and \( \dim U = \dim V \).

\begin{dfn}[Subdiagram]
	Let $u\colon U \to X$ be a pasting diagram.
	A \emph{subdiagram of $u$} is a pair of a pasting diagram $v\colon V \to X$ and a submolecule inclusion $\iota\colon V \incl U$ such that $v = u \after \iota$.
	A subdiagram is \emph{rewritable} when $\iota$ is a rewritable submolecule inclusion.
	We write $\iota\colon v \submol u$ for the data of a subdiagram of $u$.
\end{dfn}

\begin{dfn}[Pasting of pasting diagrams]
	Let $u\colon U \to X$ and $v\colon V \to X$ be pasting diagrams such that $\bd{k}{+}u = \bd{k}{-}v$.
	We let $u \cp{k} v\colon U \cp{k} V \to X$ be the pasting diagram determined by the universal property of the pasting $U \cp{k} V$.
	More generally, given a subdiagram $\iota\colon \bd{k}{+}u \submol \bd{k}{-}v$, we let $u \cpsub{k,\iota} v\colon U \cpsub{k,\iota} V \to X$ be the pasting diagram determined by the universal property of $U \cpsub{k,\iota} V$ as a pasting of $U$ at a submolecule of $\bd{k}{-} V$.
	Dually, if $\iota\colon \bd{k}{-}v \submol \bd{k}{+}u$, we let $u \subcp{k,\iota} v$ be the universally determined pasting diagram of shape $U \subcp{k,\iota} V$.
\end{dfn}

\noindent We often omit the index $k$ when it is equal to $\min \set{\dim{u}, \dim{v}} - 1$, and omit $\iota$ when it is irrelevant or evident from the context.

\begin{rmk}
	When $\iota$ is an isomorphism, we have $u \cpsub{k,\iota} v = u \subcp{k,\iota} v = u \cp{k} v$.
	Moreover, there are evident subdiagrams $u, v \submol u \cpsub{k,\iota} v$ and $u, v \submol u \subcp{k,\iota} v$ whenever the pastings are defined.
\end{rmk}

\begin{dfn}[Degenerate diagram]
	Let $u\colon U \to X$ be a diagram in a diagrammatic set.
	We say that $u$ is \emph{degenerate} if there exists a diagram $v\colon V \to X$ and a surjective cartesian map of regular directed complexes $p\colon U \surj V$ such that $u = v \after p$ and $\dim{v} < \dim{u}$.
 	We let 
	\begin{align*}
		\dgn X &\eqdef \set{ u \in \rd X \mid \text{$u$ is degenerate} },
		       &\dgncell X &\eqdef \dgn X \cap \cell X, \\
		\nd X &\eqdef \set{ u \in \rd X \mid \text{$u$ is not degenerate} },
			&\ndcell X &\eqdef \nd X \cap \cell X.
	\end{align*}
\end{dfn}

\noindent For each \( n \in \mathbb{N} \), there is a functor \( \dual{n}{} \) on regular directed complexes which reverses the orientation of faces of \( n \)\nbd dimensional elements.

\begin{dfn}[Reverse of a degenerate diagram]
	Let $u\colon U \to X$ be a degenerate diagram in a diagrammatic set, equal to $v \after p$ for some diagram $v\colon V \to X$ and surjective map $p\colon U \surj V$ with $n \eqdef \dim{u} > \dim{v}$.
	The \emph{reverse of $u$} is the degenerate diagram $\rev{u} \eqdef v \after \dual{n}{p}$ of shape $\dual{n}{U}$.
\end{dfn}

\begin{dfn}[Unit]
	Let $u\colon U \to X$ be a pasting diagram.
	The \emph{unit on $u$} is the degenerate pasting diagram $\un u\colon u \celto u$ defined by $u \after \tau_{\bd{}{}U}\colon \arr \otimes_{\bd{}{}U} U \to X$.
\end{dfn}

\begin{dfn}[Equivalence in a diagrammatic set]
	Let $e\colon u \celto v$ be a round diagram in a diagrammatic set $X$.
	We say that $e$ is an \emph{equivalence} if there exists a parallel pair of round diagrams $e^L, e^R\colon v \celto u$ together with equivalences $z\colon e \cp{} e^L \celto \un u$ and $h\colon \un v \celto e^R \cp{} e$.
	In this situation, $e^L$ is called a \emph{left inverse} and $e^R$ is called a \emph{right inverse of $e$}, $z$ is called a \emph{left invertor}, and $h$ is called a \emph{right invertor}.
	We let 
	\[ 
		\eqv X \eqdef \set{ e \in \rd X \mid \text{$e$ is an equivalence}}, 
			\quad 
			\eqvcell X \eqdef \eqv X \cap \cell X.
	\]
	We write $e\colon u \eqvto v$ to indicate that $e\colon u \celto v$ is an equivalence.
\end{dfn}

\begin{comm} \label{comm:coinduction_proof_technique}
	The definition of equivalence is in coinductive style.
	More formally, $\eqv X$ is the greatest fixed point of the operator $\B$ on the power set $\powerset{(\rd X)}$ which sends a set $A \subseteq \rd X$ to the set $\B(A)$ of round diagrams $e\colon u \celto v$ such that there exist round diagrams $e^L, e^R\colon v \celto u$, $z\colon e \cp{} e^L \celto \un u$, and $h\colon \un v \celto e^R \cp{} e$ with $z, h \in A$.
	The corresponding coinductive proof method is: if $A \subseteq \B(A)$, then $A \subseteq \eqv X$.
\end{comm}

\begin{comm}
	This is the \emph{bi-invertibility} definition of equivalence, which will be the most useful for the purposes of this article.
	In \cite[Section 2]{chanavat2024equivalences}, we consider two more definitions---one in terms of ``lax solutions to equations'', and one asking for a single two-sided inverse---both of which are shown to be equivalent in \cite[Theorem 2.28]{chanavat2024equivalences}.
\end{comm}

\begin{dfn}[Equivalent round diagrams]
	Let $u, v$ be a parallel pair of round diagrams in a diagrammatic set $X$.
	We write $u \simeq v$, and say that \emph{$u$ is equivalent to $v$}, if there exists an equivalence $h\colon u \eqvto v$ in $X$.
\end{dfn}

\begin{thm} \label{thm:properties_of_equivalences}
	Let $X$ be a diagrammatic set.
	Then
	\begin{enumerate}
		\item\label{enum:dgn_are_eqv} all degenerate round diagrams in $X$ are equivalences,
		\item\label{enum:congruence} for each \( n \geq 0 \), the relation $\simeq$ is an equivalence relation on \( \gr{n}{\rd X} \), and for \( n > 0 \) it is a congruence with respect to \( - \cp{n-1} - \),
		\item\label{enum:eqv_stable} the set of equivalences in $X$ is closed under $\simeq$,
		\item\label{enum:wkinv_is_equiv} the left and right inverse of an equivalence are both equivalences, and equivalent to each other.
	\end{enumerate}
	Moreover, every morphism $f\colon X \to Y$ sends equivalences to equivalences.
\end{thm}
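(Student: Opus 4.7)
The plan is to exploit the coinductive definition of equivalence (Comment \ref{comm:coinduction_proof_technique}): to show that a class $A \subseteq \rd X$ consists of equivalences, it suffices to exhibit, for each $u \in A$, inverses and invertors lying in $A$ (or in a sufficiently large coinductive set containing $A$). The basic arithmetic of degenerate diagrams, together with the fact that morphisms of diagrammatic sets preserve pastings, units and degenerate diagrams, will give both part (1) and the ``moreover'' at the same time.

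For part (1), let $A \eqdef \gr{>0}{\dgn X}$ be the set of degenerate round diagrams of positive dimension. Given $u \in A$ obtained as $v \after p$ for some cartesian $p\colon U \surj V$, the candidate inverses are $u$ itself (seen as a round diagram of the opposite orientation via $\rev{u}$), while the invertors can be built by pulling back the degenerate round diagram one dimension up, using the partial Gray cylinder construction and the fact that pastings of degenerate round diagrams over degenerate boundaries remain degenerate. This gives $A \subseteq \B(A)$, hence $A \subseteq \eqv X$. For the ``moreover'', take $A \eqdef \set{ f \after e \mid e \in \eqv X}$: if $(e^L, e^R, z, h)$ witnesses that $e$ is an equivalence, then $(f \after e^L, f \after e^R, f \after z, f \after h)$ witnesses the same for $f \after e$, because $f$ commutes with $-\cp{}-$ and $\un{-}$.

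For part (4), given an equivalence $e\colon u \eqvto v$ with data $(e^L, e^R, z, h)$, the diagram $z\colon e \cp{} e^L \celto \un u$ is itself an equivalence, hence has a left inverse providing a right invertor for $e^L$; symmetrically, $h$ furnishes a left invertor for $e^R$. The missing inverses for $e^L, e^R$ can be taken to be $e$ in both cases, and one extracts all remaining invertors by iterating this construction on the equivalences $z$ and $h$. Formally, one closes the class of two-sided inverses of equivalences under iteration and shows it is a fixed point for $\B$. The equivalence $e^L \simeq e^R$ is then obtained by ``composing inverses'', using that $e^L \simeq e^R \cp{} e \cp{} e^L \simeq e^R$ up to the invertors $z$ and $h$ (this uses part (2) too, so in practice the proof of (2) and (4) is to be done simultaneously). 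For part (3), if $e \simeq e'$ via some equivalence $k$, then reusing (4) and the ability to ``conjugate'' inverses by an equivalence $k$, one builds the inverses and invertors of $e'$ from those of $e$ and $k$; coinductively, the set $\set{e' \mid \exists e \in \eqv X,\; e \simeq e'}$ is contained in $\B$ of itself.

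For part (2), reflexivity of $\simeq$ on $\gr{n}{\rd X}$ is witnessed by the unit $\un u$, which is degenerate and hence an equivalence by (1); symmetry follows from (4) applied to any witness $h\colon u \eqvto v$, whose inverse is an equivalence $v \eqvto u$; transitivity amounts to showing that the pasting of two equivalences is an equivalence, with candidate inverses the reverse pasting of the individual inverses, and invertors built by whiskering and applying associativity of $\cp{}$ to collapse one invertor against another. The congruence property then follows by whiskering an equivalence on either side with the relevant round diagram, which preserves the defining data of an equivalence because $-\cp{n-1}-$ is well-behaved with respect to units and degenerate diagrams. The main obstacle throughout is the bookkeeping required to propagate invertors through nested pastings while staying inside $\eqv X$; this is carried out in detail in \cite{chanavat2024equivalences}, to which we defer the verification.
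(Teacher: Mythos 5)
Your proposal is correct and takes essentially the same route as the paper: the paper's entire proof of this theorem is a citation to \cite[Theorem 2.13, Proposition 2.17, Proposition 2.19, Proposition 2.31]{chanavat2024equivalences}, and your coinductive sketches (reverses of degenerate diagrams as their own inverses, transport of witnessing data along morphisms, the simultaneous treatment of parts (2) and (4)) match the arguments carried out there, to which you also ultimately defer for the detailed bookkeeping.
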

\begin{proof}
	See \cite[Theorem 2.13, Proposition 2.17, Proposition 2.19, Proposition 2.31]{chanavat2024equivalences}.
\end{proof}


\subsection{Marked diagrammatic sets}

\begin{dfn} [Marked diagrammatic set]
    A \emph{marked diagrammatic set} is a pair of 
    \begin{enumerate}
	    \item a diagrammatic set \( X \), and 
	    \item a set \( A \subseteq \gr{>0}{\cell X} \) of \emph{marked cells} such that \( \dgncell X \subseteq A \).
    \end{enumerate}
    A \emph{morphism} \( f \colon (X, A) \to (Y, B) \) of marked diagrammatic sets is a morphism \( f \colon X \to Y \) of diagrammatic sets such that \( f(A) \subseteq B \).
    This determines a category \( \mdgmSet \) of marked diagrammatic sets and morphisms.
\end{dfn}


\begin{dfn} [Flat, sharp, and natural marking]
	There is an evident forgetful functor \( \fun{U} \colon \mdgmSet \to \dgmSet \) that returns the underlying diagrammatic set.
	This functor has both a right and a left adjoint
    \begin{equation*}
        \minmark{(-)} \dashv \fun{U} \dashv \maxmark{(-)}
    \end{equation*}
    defined by \( \minmark{X} \eqdef (X, \dgncell X) \) and \( \maxmark{X} \eqdef (X, \gr{>0}{\cell X}) \).
    By Theorem \ref{thm:properties_of_equivalences}, there is also a functor \( \natmark{(-)}\colon \dgmSet \to \mdgmSet \) defined by \( \natmark{X} \eqdef (X, \eqvcell X) \).
\end{dfn}

\begin{rmk}
	The forgetful functor \( \fun{U} \) is left inverse to \( \minmark{(-)} \), \( \natmark{(-)} \), and \( \maxmark{(-)} \).
\end{rmk}

\noindent We take a small detour and give an equivalent description of the category of marked diagrammatic sets, which will imply it is a locally presentable quasitopos, so that the results of Section \ref{sec:background} apply.
Recall that maps in \( \atom \) factor uniquely as dimension-non-increasing \emph{collapses} followed by dimension-non-decreasing \emph{inclusions}.
The following generalises \cite[Notation 1.1]{ozornova2020model} from marked simplicial sets to marked diagrammatic sets.

\begin{dfn} [Category of marked atoms]
    Let \( \matom \) be the category defined as follows. 
    The set of objects is given by
    \begin{equation*}
	    \Ob(\matom) \eqdef \Ob\atom \cup \set{\markmol{U} \mid U \in \Ob\atom}.
    \end{equation*} 
    The morphisms are generated by the families of morphisms
    \begin{enumerate}
        \item \( f\colon U \to V \), indexed by cartesian maps \( f\colon U \to V \) in \( \atom \),
	\item \( t_U\colon U \to \markmol{U} \), indexed by atoms \( U \) of dimension $> 0$, and
	\item \( \markmol{p} \colon \markmol{U} \to V \), indexed by non-identity collapses \( p\colon U \surj V \) in \( \atom \),
\end{enumerate}
    subject to the following relations:
    \begin{itemize}
        \item for each composable pair \( f, g \) in \( \atom \), \( g \after f \) is the same as in \( \atom \);
	\item for each non-identity collapse \( p \colon U \surj V \), \( p = \markmol{p} \after t_U \);
	\item for each composable pair \( p, q \) of non-identity collapses, \( q \after \markmol{p} = \markmol{(q \after p)} \).
    \end{itemize}
    We denote by \( \matomSet \) the category of presheaves over \( \matom \).
\end{dfn}

\begin{prop} \label{prop:marked_dset_quasitopos}
	There exists a full and faithful functor
	\[
		i\colon \mdgmSet \incl \matomSet
	\]
	whose essential image consists of the separated presheaves for the coverage on \( \matom \) whose only non-identity covers are of the form \( \{ t_U \colon U \to \markmol{U} \} \).
\end{prop}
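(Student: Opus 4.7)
The plan is to construct $i$ explicitly on objects and morphisms, verify it is fully faithful, and then identify its essential image by direct inspection of the separated-presheaf condition.

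First I would define $i$ on objects. Given a marked diagrammatic set $(X, A)$, set $i(X, A)(U) \eqdef X(U)$ and $i(X, A)(\markmol{U}) \eqdef A \cap X(U)$. On generating morphisms, act by $X(f)$ on cartesian maps $f$ in $\atom$, by the inclusion $A \cap X(U) \incl X(U)$ on each $t_U$, and by $y \mapsto X(p)(y)$ on each $\markmol{p}$ with $p\colon U \surj V$ a non-identity collapse. The one well-definedness point is that $X(p)(y)$ must land in $A$; since $p$ is a non-identity collapse, $X(p)(y)$ is degenerate, and $\dgncell X \subseteq A$ by definition of a marked diagrammatic set. The three defining relations of $\matom$ then reduce to direct checks using functoriality of $X$ together with the fact that the inclusion $A \cap X(U) \incl X(U)$ postcomposed with $y \mapsto X(p)(y)$ gives back $X(p)(y)$.

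For morphisms, a map $f\colon (X, A) \to (Y, B)$ yields a natural transformation $i(f)$ with $i(f)_U \eqdef f_U$ and $i(f)_{\markmol{U}}$ the restriction of $f_U$ to $A \cap X(U) \to B \cap Y(U)$, well-defined because $f(A) \subseteq B$. Faithfulness is immediate from restriction to $\atom$-components. For fullness, any natural transformation $\phi\colon i(X, A) \to i(Y, B)$ restricts on $\atom$ to a morphism $f\colon X \to Y$; naturality at $t_U$ gives a commutative square
\[\begin{tikzcd}
A \cap X(U) \ar[r, "\phi_{\markmol{U}}"] \ar[d, hook] & B \cap Y(U) \ar[d, hook] \\
X(U) \ar[r, "f_U"'] & Y(U)
\end{tikzcd}\]
which simultaneously shows that $f_U(A \cap X(U)) \subseteq B$ (so $f$ respects the marking and thus lifts to $\mdgmSet$) and that $\phi_{\markmol{U}}$ is determined as the restriction of $f_U$, hence $\phi = i(f)$.

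Finally I would characterise the essential image. A presheaf $F$ on $\matom$ is separated for the given coverage exactly when each $F(t_U)\colon F(\markmol{U}) \to F(U)$ is injective, and $i(X, A)$ is evidently separated since $i(X, A)(t_U)$ is an inclusion. Conversely, given a separated $F$, let $X$ be the restriction of $F$ to $\atom$ and set $A \eqdef \bigcup_{U} F(t_U)(F(\markmol{U}))$ as a subset of cells of $X$ of dimension $> 0$. Using the relation $p = \markmol{p} \after t_U$ for each non-identity collapse $p$, every degenerate cell of $X$ of the form $X(p)(x) = F(p)(x)$ equals $F(t_U)(F(\markmol{p})(x))$ and thus lies in $A$, so $\dgncell X \subseteq A$ and $(X, A)$ is a marked diagrammatic set. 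The injectivity of each $F(t_U)$ then provides canonical bijections $F(\markmol{U}) \iso A \cap X(U)$ which assemble, together with the identity on $\atom$-components, into a natural isomorphism $F \iso i(X, A)$. The main obstacle to watch is not any single step but the bookkeeping: the condition $\dgncell X \subseteq A$ is exactly what makes the $\markmol{p}$-action of $i(X, A)$ well-defined, and, conversely, it is exactly what the factorisation $p = \markmol{p} \after t_U$ forces on any separated presheaf; once this symmetry is recognised, the rest is essentially formal.
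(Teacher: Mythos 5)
Your proposal is correct and follows essentially the same route as the paper's proof: the same explicit definition of \( i \) on objects and morphisms, well-definedness of the \( \markmol{p} \)-action via \( \dgncell X \subseteq A \), fullness via naturality at \( t_U \), and identification of separatedness with injectivity of each \( F(t_U) \). Your write-up is somewhat more detailed on the converse direction (reconstructing \( (X, A) \) from a separated presheaf and checking \( \dgncell X \subseteq A \) via the relation \( p = \markmol{p} \after t_U \)), which the paper leaves implicit, but there is no substantive difference in approach.
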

\begin{proof}
    We define \( i \) as follows: given a marked diagrammatic set \( (X, A) \), we let \( i(X, A) \) be the presheaf sending \( U \) to \( X(U) \) and \( \markmol{U} \) to \( A \cap X(U) \).
    The action of maps \( f\colon U \to V \) in \( \atom \) is the same as on \( X \), the morphisms \( t_U\colon U \to \markmol{U} \) act as the inclusions \( A \cap X(U) \incl X(U) \), and the morphisms \( \markmol{p} \colon \markmol{U} \to U \) act as the corestrictions of \( X(p)\colon X(V) \to X(U) \) to \( A \cap X(U) \), which is well-defined because $A$ contains all degenerate cells.
    Given a morphism \( f \colon (X, A) \to (Y, B) \), we let \( i(f) \colon i(X, A) \to i(Y, B) \) be the evident morphism of presheaves.
    The functor is evidently faithful, and its essential image consists of the presheaves such that \( X(t_U) \) is injective for each atom \( U \), which are precisely the separated presheaves for the given coverage.
    Finally, for any morphism \( f\colon X \to Y \) of presheaves in \( \matomSet \) such that \( Y \) is separated, the naturality square at \( t_U \) implies that \( f\colon X({\markmol{U}}) \to Y({\markmol{U}}) \) is uniquely determined by \( f\colon X(U) \to Y(U) \).
    It follows that $i$ is also full.
\end{proof}

\begin{cor} \label{cor:marked_dset_limits_colimits}
    The category \( \mdgmSet \) is a locally presentable quasitopos; in particular, it is cartesian closed, complete, and cocomplete.
    Given a diagram \( \fun{F} \colon \cls{J} \to \mdgmSet \),
    \begin{enumerate}
	    \item given a limit cone \( (\pi_j \colon X \to \fun{UF}j)_{j \in \Ob\cls{J}} \) over $\fun{UF}$ in \( \dgmSet \) and
		    \[
	A^\pi \eqdef \set{u \in \cell X \mid \text{for all $j \in \Ob \cls{J}$, $\pi_j(u)$ is marked in $\fun{F}j$}},
\]
	the cone \( (\pi_j\colon (X, A^\pi) \to \fun{F}j)_{j \in \Ob\cls{J}} \) is a limit cone over $\fun{F}$ in $\mdgmSet$;
\item given a colimit cone \( (\iota_j \colon \fun{UF}j \to X)_{j \in \Ob\cls{J}} \) under $\fun{UF}$ in \( \dgmSet \) and
	\[
		A_\iota \eqdef \bigcup_{j \in \Ob\cls{J}} \set{ \iota_j(u) \mid \text{$u$ is marked in $\fun{F}j$} },
	\]
	the cone \( (\iota_j\colon \fun{F}j \to (X, A_\iota))_{j \in \Ob\cls{J}} \) is a colimit cone under $\fun{F}$ in $\mdgmSet$.
\end{enumerate}
\end{cor}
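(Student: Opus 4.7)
The plan is to deduce everything from Proposition \ref{prop:marked_dset_quasitopos}, which exhibits \( \mdgmSet \) as the full subcategory of separated presheaves on the site \( (\matom, \tau) \), where \( \tau \) is the coverage given in the statement. First, I would invoke the standard result (for instance \cite[Proposition 23.8]{wyler1991quasitopos} or its equivalents for separated presheaves on a small site with set-indexed covers) that the category of separated presheaves on such a site is a locally presentable quasitopos, hence in particular complete, cocomplete and cartesian closed. Transporting along the equivalence of Proposition \ref{prop:marked_dset_quasitopos}, this gives the first sentence of the statement.

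Next I would describe limits. Since separatedness with respect to the coverage \( \tau \) is expressible by the condition that for each atom \( U \) the map \( X(t_U)\colon X(\markmol{U}) \to X(U) \) is a monomorphism, and monomorphisms are closed under limits computed pointwise in \( \matomSet \), the inclusion \( i \) preserves and reflects limits. It remains to unpack what this means concretely. Given a cone \( (\pi_j \colon X \to \fun{UF}j) \) computed in \( \dgmSet \), the corresponding limit in \( \matomSet \) at the object \( \markmol{U} \) is \( \{u \in X(U) \mid \pi_j(u) \in \fun{F}j(\markmol{U}) \text{ for all } j\} \), which is exactly \( A^\pi \cap X(U) \). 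So \( (X, A^\pi) \) equipped with the \( \pi_j \) is the limit in \( \mdgmSet \).

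For colimits, the situation is slightly subtler because the inclusion \( i \) does not preserve colimits: it has a left adjoint, the associated-separated-presheaf functor \( \fun{Sep} \), and colimits in \( \mdgmSet \) are computed by taking the colimit in \( \matomSet \) and then applying \( \fun{Sep} \). Concretely, the colimit in \( \matomSet \) at \( U \) is the colimit in \( \dgmSet \) with cone \( (\iota_j)\), so the underlying diagrammatic set is correctly \( X \); and the colimit at \( \markmol{U} \) is the quotient of \( \bigsqcup_j \{u \mid u \text{ marked in } \fun{F}j \} \) by the relation generated by the images of the morphisms in \( \matom \). The separation functor then identifies this with its image in \( X(U) \) under the (mono) component at \( t_U \), and this image is precisely \( A_\iota \cap X(U) \) by construction. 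So \( (X, A_\iota) \) with the \( \iota_j \) is the colimit.

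The only step that needs care is checking that \( A_\iota \) as defined genuinely yields a marked diagrammatic set, i.e.\ that \( \dgncell X \subseteq A_\iota \); but this follows because each \( \fun{F}j \) contains its own degenerate cells in its marking, and \( \iota_j \) sends degenerate cells to degenerate cells, so every degenerate cell of \( X \) is in the image of some marked degenerate cell from the diagram (using that colimits in \( \dgmSet \) of diagrams with non-empty indexing behave well; for the empty diagram, the colimit is the initial marked diagrammatic set \( (\emptyset, \emptyset) \), which trivially satisfies the condition). The main potential obstacle is the cleanliness of identifying the separation quotient with the subset \( A_\iota \subseteq \cell X \), but this is immediate from the fact that the canonical map from the colimit computed in \( \matomSet \) to its separation is, on marked cells, just the surjection onto its image in \( X(U) \).
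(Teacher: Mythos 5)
Your overall strategy is exactly the one the paper intends: the corollary is stated without an explicit proof, as a direct consequence of Proposition \ref{prop:marked_dset_quasitopos} identifying \( \mdgmSet \) with the separated presheaves on the site of marked atoms, which form a locally presentable quasitopos in which limits are computed pointwise and colimits are computed by separating the pointwise colimit. Your identification of the separation at \( \markmol{U} \) with the image factorisation of \( P(t_U) \) is correct, and is justified by the observation that \( t_U \) is the only generating morphism of \( \matom \) with codomain \( \markmol{U} \), so every morphism into \( \markmol{U} \) factors through it; unwinding this at each object gives precisely \( A^\pi \) and \( A_\iota \).

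The one place where your justification does not work as written is the verification that \( \dgncell X \subseteq A_\iota \). Knowing that each \( \iota_j \) sends degenerate cells to degenerate cells does not show that every degenerate cell of the colimit is the image of a degenerate (hence marked) cell of some \( \fun{F}j \): a priori a degenerate cell \( x \) of \( X \) is only of the form \( \iota_j(u) \) for some cell \( u \) which need not itself be degenerate, since non-degenerate cells can become degenerate after passing to a colimit. The correct one-line argument uses the definition of degeneracy directly: write \( x = v \after p \) with \( p\colon U \surj V \) a surjective cartesian map and \( \dim V < \dim U \); since colimits of presheaves are computed pointwise, \( v = \iota_j(w) \) for some \( j \) and some cell \( w \) of shape \( V \) in \( \fun{F}j \), whence \( x = \iota_j(w \after p) \), and \( w \after p \) is degenerate in \( \fun{F}j \), hence marked. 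With this substitution your proof is complete.
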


\begin{dfn} [Entire and regular monomorphisms]
    Let \( i \colon (X, A) \incl (Y, B) \) be a monomorphism of marked diagrammatic sets. 
    We say that \( i \) is
    \begin{itemize}
        \item \emph{entire} if it is the identity on the underlying diagrammatic set,
        \item \emph{regular} if \( B = i(A) \cup \dgncell Y \).
    \end{itemize}
\end{dfn}

\begin{rmk} \label{rmk:facto_mono_entire_regular}
    Any monomorphism \( i \colon (X, A) \incl (Y, B) \) of marked diagrammatic sets factors uniquely as a regular monomorphism followed by an entire monomorphism:
    \[
        \begin{tikzcd}
            {(X, A)} & {(Y, i(A) \cup \dgncell Y)} & {(Y, B).}
            \arrow[hook, from=1-1, to=1-2]
            \arrow[hook, from=1-2, to=1-3]
        \end{tikzcd}
\]
\end{rmk}

\begin{dfn} [Induced marking]
    Let \( X \) be a diagrammatic set, \( (Y, B) \) be a marked diagrammatic set, and \( i \colon X \incl Y \) be a monomorphism of diagrammatic sets.
    The \emph{marking induced by \( i \)} is the marking \( (X, \invrs{i}B) \) on \( X \).
    This determines a monomorphism \( (X, \invrs{i}B) \incl (Y, B) \) of marked diagrammatic sets.
\end{dfn}

\noindent We refer to \cite[Section 1.2]{chanavat2024diagrammatic} for the definition of the Gray product of diagrammatic sets.
The following definition is adapted from \cite[Construction 2.17]{loubaton2024inductive} to the context of marked diagrammatic sets.

\begin{dfn} [Pseudo-Gray product]
    Let \( (X, A) \) and \( (Y, B) \) be marked diagrammatic sets.
    The \emph{pseudo-Gray product of \( (X, A) \) and \( (Y, B) \)} is the marked diagrammatic set \( (X, A) \pgray (Y, B) \eqdef (X \gray Y, A \pgray B) \), where    
    \[
        A \pgray B \eqdef 
	\dgncell (X \gray Y) \cup
	\left(\gr{0}{\cell X} \gray B\right) \cup 
	\left(\gr{> 0}{\cell X} \gray \gr{> 0}{\cell Y}\right) \cup 
	\left(A \gray \gr{0}{\cell Y}\right).
\]
This construction extends to a monoidal structure on \( \mdgmSet \), whose monoidal unit is the terminal object \( \pt \), such that \( \fun{U}\colon (\mdgmSet, \pgray, \pt) \to (\dgmSet, \gray, \pt) \) is a strict monoidal functor.
\end{dfn}

\begin{lem} \label{lem:marked_gray_product_preserves_mono}
    Let \( m \) and \( m' \) be monomorphisms in \( \mdgmSet \). 
    Then \( m \pgray m' \) is a monomorphism.
\end{lem}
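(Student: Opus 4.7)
The plan is to reduce, via the forgetful functor $\fun{U}\colon \mdgmSet \to \dgmSet$, to the corresponding fact for the Gray product of unmarked diagrammatic sets.

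First, I would show that $f$ is a monomorphism in $\mdgmSet$ if and only if $\fun{U}(f)$ is a monomorphism in $\dgmSet$. Using the embedding $i\colon \mdgmSet \incl \matomSet$ from Proposition \ref{prop:marked_dset_quasitopos}, monomorphisms in $\mdgmSet$ are the componentwise injective maps in the ambient presheaf category between separated presheaves. For each atom $U$, the component of $i(f)$ at $\markmol{U}$ is nothing but the restriction of the component at $U$ to the subset of marked cells; hence injectivity at $\markmol{U}$ is forced by injectivity at $U$. Thus being a monomorphism in $\mdgmSet$ is precisely the condition that the underlying morphism of diagrammatic sets is injective on each $X(U)$.

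By the strict monoidality of $\fun{U}\colon (\mdgmSet, \pgray, \pt) \to (\dgmSet, \gray, \pt)$ recorded in the definition of pseudo-Gray product, we have $\fun{U}(m \pgray m') = \fun{U}(m) \gray \fun{U}(m')$. So it suffices to prove that the Gray product on $\dgmSet$ preserves monomorphisms in each variable; the claim for $m \gray m'$ then follows from the factorisation $m \gray m' = (m \gray \idd{Y'}) \after (\idd{X} \gray m')$.

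Finally, the Gray product on $\dgmSet$ is the Day convolution extension of the Gray product on $\atom$, and monomorphisms in $\dgmSet$ are computed pointwise. The claim then reduces to the fact that the Gray product of regular directed complexes preserves inclusions in each variable, which is a combinatorial fact about the underlying oriented posets proved in \cite{hadzihasanovic2024combinatorics}. The main potential obstacle is justifying the passage from the representable case to the general case: since however $\dgmSet$ is a Grothendieck topos, monomorphisms are stable under arbitrary colimits of monomorphisms, and the Gray product is cocontinuous in each variable (being closed monoidal), this passage is routine.
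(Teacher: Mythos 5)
Your argument is essentially the paper's proof: the paper disposes of the lemma in one line by observing that $\fun{U}$ creates monomorphisms, that it is strict monoidal (so $\fun{U}(m \pgray m') = \fun{U}(m) \gray \fun{U}(m')$), and then citing \cite[Lemma 3.5]{chanavat2024diagrammatic} for the fact that the Gray product of diagrammatic sets preserves monomorphisms. Your first two paragraphs carry out exactly this reduction, and your verification that monomorphy in $\mdgmSet$ is detected on underlying diagrammatic sets (via the embedding into $\matomSet$ and the fact that the component at $\markmol{U}$ is a restriction of the component at $U$) is a correct expansion of ``$\fun{U}$ creates monomorphisms''. The one flaw is in your final paragraph, where you attempt to reprove the unmarked statement: the principle that ``monomorphisms are stable under arbitrary colimits of monomorphisms'' in a Grothendieck topos is false --- already in $\Set$, a levelwise-mono map of coequalizer diagrams (e.g.\ $\emptyset \rightrightarrows \{a,b\}$ mapping into $\{\ast\} \rightrightarrows \{a,b\}$ with the two maps picking $a$ and $b$) induces a non-injective map $\{a,b\} \to \{\ast\}$ on colimits. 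So the passage from representables to general presheaves is not ``routine'' in the way you claim; it is precisely the content of the cited lemma and requires a genuine argument (in \cite{chanavat2024diagrammatic} it rests on the combinatorics of the Gray product of atoms and the Eilenberg--Zilber structure of $\atom$). Since the paper takes that fact as a citation rather than reproving it, your overall proof stands if you likewise cite it, but your sketched justification of it does not.
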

\begin{proof}
    The functor \( \fun{U} \) creates monomorphisms, and since it is monoidal, we conclude by \cite[Lemma 3.5]{chanavat2024diagrammatic}.  
\end{proof}

\begin{lem} \label{lem:marked_gray_preserve_colimits}
    The pseudo-Gray product preserves colimits in both variables.
\end{lem}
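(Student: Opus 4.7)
The plan is to reduce the claim to the fact that the Gray product on \( \dgmSet \) preserves colimits in each variable, combined with the explicit description of colimits in \( \mdgmSet \) provided by Corollary \ref{cor:marked_dset_limits_colimits}. By the symmetry of the definition of \( \pgray \) (swapping the roles of the two arguments in all four constituent pieces), it suffices to show that \( (Z, C) \pgray - \) preserves colimits for every fixed marked diagrammatic set \( (Z, C) \).

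I would fix a diagram \( \fun{F}\colon \cls{J} \to \mdgmSet \) with \( \fun{F}j = (Y_j, B_j) \) and colimit cocone \( \iota_j\colon (Y_j, B_j) \to (Y, A_\iota) \) as in Corollary \ref{cor:marked_dset_limits_colimits}. Applying \( (Z, C) \pgray - \) produces a cocone under \( (Z, C) \pgray \fun{F} \) with apex \( (Z, C) \pgray (Y, A_\iota) = (Z \gray Y, C \pgray A_\iota) \). To show this is a colimit cocone, again in view of Corollary \ref{cor:marked_dset_limits_colimits}, I need to verify:
\begin{enumerate}
    \item the underlying cocone \( \idd{Z} \gray \iota_j\colon Z \gray Y_j \to Z \gray Y \) is a colimit cocone in \( \dgmSet \), which follows from the fact that \( Z \gray - \) preserves colimits on unmarked diagrammatic sets (a standard property of the Gray product from \cite{chanavat2024diagrammatic});
    \item the marking \( C \pgray A_\iota \) coincides with the marking \( A' \eqdef \bigcup_{j} (\idd{Z} \gray \iota_j)(C \pgray B_j) \) induced by the cocone.
\end{enumerate}

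The inclusion \( A' \subseteq C \pgray A_\iota \) is immediate, since each \( \idd{(Z,C)} \pgray \iota_j \) is a morphism of marked diagrammatic sets. For the converse, I would decompose \( C \pgray A_\iota \) into its four constituent pieces from the definition of the pseudo-Gray product and verify each is contained in \( A' \). The three pieces \( \gr{0}{\cell Z} \gray A_\iota \), \( \gr{>0}{\cell Z} \gray \gr{>0}{\cell Y} \), and \( C \gray \gr{0}{\cell Y} \) reduce to the observation that every cell of \( Y \) is in the image of some \( \iota_j \), which follows from (1) together with the fact that colimits in \( \dgmSet \) are computed pointwise; after distributing unions over \( j \), each piece rewrites as a union of images of the analogous pieces of \( C \pgray B_j \).

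The main obstacle will be the fourth inclusion \( \dgncell(Z \gray Y) \subseteq A' \), since a degenerate cell in the colimit need not arise as the image of a cell that is already degenerate in some \( Z \gray Y_j \). To handle this, given a degenerate cell \( w = w' \after p \) of \( Z \gray Y \), I would use (1) to lift the lower-dimensional factor \( w' \) to a cell \( w''_k \) of \( Z \gray Y_k \) for some \( k \); then \( w''_k \after p \) is itself degenerate in \( Z \gray Y_k \), hence belongs to \( C \pgray B_k \), and maps to \( w \) under \( \idd{Z} \gray \iota_k \). A symmetric argument in the other variable completes the proof.
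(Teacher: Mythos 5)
Your proof is correct and follows the same route as the paper's, which simply invokes the biclosedness of the Gray product on \( \dgmSet \), Corollary \ref{cor:marked_dset_limits_colimits}, and ``inspection of the definition''; your write-up just carries out that inspection explicitly, including the one genuinely non-trivial point (degenerate cells of \( Z \gray Y \) lifting to degenerate cells of some \( Z \gray Y_k \) via pointwise computation of presheaf colimits).
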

\begin{proof}
    The Gray product of diagrammatic sets is part of a biclosed monoidal structure, so it preserves colimits in both variables.
    We conclude by Corollary \ref{cor:marked_dset_limits_colimits} and by inspection of the definition.
\end{proof}

\begin{dfn} [Marked regular directed complex]
	A \emph{marked regular directed complex} $(P, A)$ is a regular directed complex \( P \) together with a set \( A \subseteq \gr{>0}{P} \) of marked elements.
	We will identify a marked regular directed complex \( (P, A) \) with the marked diagrammatic set whose set of marked cells is
\[
	    \dgncell P \cup \set{ \mapel{x}\colon \imel{P}{x} \to P \mid x \in A }.
\]
	If \( P \) is a molecule or an atom, we speak of a \emph{marked molecule} or \emph{marked atom}.
	Given a molecule \( U \), we let \( \markmol{U} \) denote the marked molecule \( (U, {\maxel{U}}) \). 
\end{dfn}

\begin{rmk}
	The representable presheaf \( \markmol{U} \) on \( \matom \) coincides with the image of the marked atom \( \markmol{U} \) through \( i\colon \mdgmSet \to \matomSet \).
\end{rmk}



\subsection{Localisation of diagrammatic sets}

\begin{dfn} [Cellular extension]
	Let \( X \) be a diagrammatic set.
	A \emph{cellular extension of \( X \)} is a diagrammatic set \( X_{S} \) together with a pushout diagram
	\[
	\begin{tikzcd}
		{\coprod_{e \in S} \bd{}{}U_e} &&& {\coprod_{u \in S} U_e} \\
		X &&& {X_S}
		\arrow["{(\bound{}{}e)_{e \in S}}", from=1-1, to=2-1]
		\arrow["{(e)_{e \in S}}", from=1-4, to=2-4]
	\arrow[hook, from=2-1, to=2-4]
	\arrow["{\coprod_{e \in S}\bdmap_{U_e}}", hook, from=1-1, to=1-4]
	\arrow["\lrcorner"{anchor=center, pos=0.125, rotate=180}, draw=none, from=2-4, to=1-1]
\end{tikzcd}\]
in \( \dgmSet \) such that $U_e$ is an atom and $\bdmap_{U_e}$ is the inclusion of its boundary for each \( e \in S \).
	Each \( e \in S \) determines a cell $e\colon e^- \celto e^+$ in $X_{S}$.
	In turn, the pushout is determined by the set of pairs of round diagrams \( \set{(e^-, e^+)}_{e \in S} \) in $X$.
	We say that \( X_{S} \) is the result of \emph{attaching the cells \(\set{ e\colon e^- \celto e^+ }_{e \in S} \) to \( X \)}.
\end{dfn}

\begin{dfn} [Localisation of a diagrammatic set]
	Let \( (X, A) \) be a marked diagrammatic set.
    	We define \( \preloc{X}{A} \) to be the diagrammatic set obtained from \( X \) in the following two steps: for each cell $a\colon u \celto v$ in $A \cap \ndcell X$,
    \begin{enumerate}
        \item attach cells \( a^L \colon v \celto u \) and \( a^R \colon v \celto u \), then
        \item attach cells \( \hinv{L}(a) \colon a \cp{} a^L \celto \un(u) \) and \( \hinv{R}(a) \colon \un(v) \celto a^R \cp{} a \).
    \end{enumerate}
    Let \( \order{0}{X} \eqdef X \) and \( \order{0}{A} \eqdef A \).
    Inductively, for each \( n > 0 \), we let
    \begin{equation*}
    	\order{n}{X} \eqdef \preloc{\order{n-1}{X}}{\order{n - 1}{A}}, 
	\quad\quad 
	\order{n}{A} \eqdef \set{\hinv{L}(a), \hinv{R}(a) \mid a \in \order{n-1}{A}}.
    \end{equation*}
    We have a sequence \( (\order{n}{X} \incl \order{n+1}{X})_{n \geq 0} \) of inclusions of diagrammatic sets. 
    The \emph{localisation of \( X \) at \( A \)} is the colimit \( \loc{X}{A} \) of this sequence.

    For each cell \( a \) of shape \( U \) in \( A \), we define a family of cells \( \hinv{s}a \) of shape \( \hcyl{s}U \) in \( \loc{X}{A} \), indexed by strings $s \in \set{L, R}^*$, by
	\[
	\begin{array}{lc}
		\;\hinv{\langle\rangle}a  \eqdef a & \\
		\hinv{Ls}a \eqdef
		\begin{cases}
			\hinv{L}(\hinv{s}a)
				& \text{if \( a \in \ndcell X \)}, \\
			a \after \tau_s
				& \text{if \( a \in \dgncell X \)},
		\end{cases} &
		\hinv{Rs}a \eqdef 
		\begin{cases}
			\hinv{R}(\hinv{s}a)
				& \text{if \( a \in \ndcell X \)}, \\
			a \after \tau_s
				& \text{if \( a \in \dgncell X \)},
		\end{cases}
	\end{array}
	\]
	where \( \tau_s \) is the projection \( \hcyl{s}U \to U \).
	We also let, for each \( s \in \set{L, R}^* \),
	\[
	\hinv{s}^La \eqdef 
	\begin{cases}
		(\hinv{s}a)^L
			& \text{if \( a \in \ndcell X \)}, \\
		\rev{(a \after \tau_s)}
			& \text{if \( a \in \dgncell X \)},
	\end{cases} \quad 
	\hinv{s}^Ra \eqdef 
	\begin{cases}
		(\hinv{s}a)^R
			& \text{if \( a \in \ndcell X \)}, \\
		\rev{(a \after \tau_s)}
			& \text{if \( a \in \dgncell X \)}.
	\end{cases}
	\]
\end{dfn} 

\noindent We will identify diagrams in \( X \) with their image through \( X \incl \loc{X}{A} \).
By construction, every cell in \( A \) becomes an equivalence in \( \loc{X}{A} \).

\begin{prop} \label{prop:functorial_localisation}
	Let \( f\colon (X, A) \to (Y, B) \) be a morphism of marked diagrammatic sets.
	Then there is a unique morphism \( \fun{Loc}f\colon \loc{X}{A} \to \loc{Y}{B} \) such that
	\begin{enumerate}
		\item \( \fun{Loc}f \) is equal to \( f \) on \( X \incl \loc{X}{A} \),
		\item for each \( a \in A \) and \( s \in \set{L, R}^* \), \( \fun{Loc}f\colon \hinv{s}a \mapsto \hinv{s}f(a) \), \( \hinv{s}^La \mapsto \hinv{s}^Lf(a) \), and \( \hinv{s}^Ra \mapsto \hinv{s}^Rf(a) \).
	\end{enumerate}
	This assignment determines a functor \( \fun{Loc}\colon \mdgmSet \to \dgmSet \), which preserves monomorphisms and colimits.
\end{prop}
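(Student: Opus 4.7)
The plan is to construct \(\fun{Loc}f\colon \loc{X}{A} \to \loc{Y}{B}\) by induction on the stages \(\order{n}{X}\) of the sequential colimit defining \(\loc{X}{A}\), using the universal property of the pushouts that build each cellular extension. We set \(\fun{Loc}f|_{\order{0}{X}} \eqdef f\). Inductively, suppose we have a morphism \(\fun{Loc}f|_{\order{n}{X}}\colon \order{n}{X} \to \order{n}{Y}\) satisfying the prescribed values on \(\hinv{s}a\), \(\hinv{s}^La\), \(\hinv{s}^Ra\) for all strings \(s\) of length \(\leq n\) and \(a \in A\). Since \(\order{n+1}{X} = \preloc{\order{n}{X}}{\order{n}{A}}\) is obtained by attaching cells indexed by \(\order{n}{A} \cap \ndcell \order{n}{X}\), extending \(\fun{Loc}f|_{\order{n}{X}}\) to \(\order{n+1}{X}\) amounts to specifying compatible images in \(\order{n+1}{Y}\) for the freshly attached cells \(a^L, a^R, \hinv{L}(a), \hinv{R}(a)\) associated to each such \(a\).

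The case distinction mirrors the one in the original construction. Fix \(a \in \order{n}{A} \cap \ndcell \order{n}{X}\); its image \(\fun{Loc}f(a)\) lies in \(\order{n}{B}\) (by propagation of \(f(A) \subseteq B\) through the induction) and is either non-degenerate or degenerate in \(\order{n}{Y}\). In the non-degenerate case, the cellular extension defining \(\order{n+1}{Y}\) has freshly attached cells \(\fun{Loc}f(a)^L\), \(\fun{Loc}f(a)^R\), \(\hinv{L}(\fun{Loc}f(a))\), \(\hinv{R}(\fun{Loc}f(a))\), and we send the corresponding cells of \(\order{n+1}{X}\) to them; boundary compatibility with the pushout is automatic because both source and target cells have boundaries computed from \(a\) and \(\fun{Loc}f(a)\). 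In the degenerate case \(\fun{Loc}f(a) = c \after p\) for some surjective cartesian \(p\), we instead send the attached cells to the degenerate cells of the appropriate shapes obtained by precomposing with the projections \(\tau_s\colon \hcyl{s}U_a \to U_a\), exactly so as to match the formulae for \(\hinv{s}c\), \(\hinv{s}^Lc\), \(\hinv{s}^Rc\) when \(c\) is degenerate. Uniqueness at each stage follows from the pushout universal property, and passing to the sequential colimit yields \(\fun{Loc}f\) with the required properties.

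Functoriality then follows from the uniqueness clause: both \(\fun{Loc}(g \after f)\) and \(\fun{Loc}g \after \fun{Loc}f\) agree with \(g \after f\) on \(X\) and satisfy the prescribed equations on the families \(\hinv{s}a\), \(\hinv{s}^La\), \(\hinv{s}^Ra\), hence they coincide. Preservation of monomorphisms proceeds by induction on stages: pushouts of monomorphisms along arbitrary morphisms remain monomorphisms in the presheaf category \(\dgmSet\), and the indexing set \(\order{n}{A} \cap \ndcell \order{n}{X}\) for newly attached cells embeds along \(f\) into the corresponding indexing set for \(\order{n}{Y}\); the final sequential colimit preserves monomorphisms because filtered colimits do so in presheaf categories. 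Preservation of colimits is obtained by observing that \(\preloc{(-)}{(-)}\) is built as a pushout, hence preserves colimits in its argument, and that the sequential colimit defining \(\loc{X}{A}\) commutes with arbitrary colimits.

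The main technical point to verify carefully is the degenerate case of the inductive step: one must check that the maps defined by precomposition with the projections \(\tau_s\) are genuinely compatible with the pushout data attaching \(a^L, a^R, \hinv{L}(a), \hinv{R}(a)\), i.e., that the boundaries of the degenerate images in \(\order{n+1}{Y}\) match the images of the prescribed boundaries on the \(X\) side. This reduces, via the naturality of \(\tau_s\), to the boundary-compatibility properties of the higher invertor shapes \(\hcyl{s}U_a\) recorded in Remark \ref{rmk:inverted_cylinder_well_def}.
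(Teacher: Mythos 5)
Your construction of \( \fun{Loc}f \) by induction on the stages \( \order{n}{X} \), together with the uniqueness, functoriality and monomorphism arguments, is essentially the paper's argument: the paper phrases it more directly by observing that every non\nbd degenerate cell of \( \loc{X}{A} \) is either in the image of \( X \) or of the form \( \hinv{s}a \), \( \hinv{s}^La \), \( \hinv{s}^Ra \), so the prescribed values force the morphism, and then verifies well\nbd definedness by the same stage\nbd wise induction with the same case split on whether the image of \( a \) is degenerate. That part of your proposal is fine, and you correctly identify the degenerate case as the point needing care.

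The genuine gap is in the colimit\nbd preservation step. The claim that ``\( \preloc{(-)}{(-)} \) is built as a pushout, hence preserves colimits in its argument'' is a non sequitur: the pushout attaching the inverses and invertors is indexed by the set \( A \cap \ndcell X \), and the assignment \( (X, A) \mapsto A \cap \ndcell X \) is \emph{not} cocontinuous. Colimit coprojections can send a non\nbd degenerate marked cell to a degenerate cell of the colimit, and can identify distinct non\nbd degenerate marked cells, so the coproduct over which the cellular extension is taken is not computed colimit\nbd wise. Concretely, in the pushout of \( \minmark{\pt} \leftarrow \minmark{\arr} \to \markmol{\arr} \), where the left leg is induced by the collapse \( \arr \surj \pt \) and the right leg is the entire monomorphism, the unique non\nbd degenerate marked cell of \( \markmol{\arr} \) becomes degenerate in the colimit \( \minmark{\pt} \), so the colimit has nothing to invert while the diagram does; your formal argument gives no way to see what happens to the freshly attached cells \( a^L, a^R, \hinv{L}(a), \hinv{R}(a) \) under the comparison map in such a situation. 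The paper's own proof does not argue by abstract cocontinuity: it uses the classification of non\nbd degenerate cells of \( \loc{X}{A_\iota} \) to write down an explicit comparison morphism, exploiting the fact that \( \hinv{s}a \) is \emph{defined} (as the degenerate cell \( a \after \tau_s \)) even when \( a \) is degenerate, and then checks directly that it is an isomorphism. Any complete proof must engage with exactly this degeneracy bookkeeping (and, as the example above suggests, this is a delicate point even for the explicit construction); your one\nbd line reduction to cocontinuity of pushouts and sequential colimits leaves the last clause of the statement unproven.
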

\begin{proof}
	Each cell in \( \ndcell \loc{X}{A} \) is either in the image of \( X \incl \loc{X}{A} \), or it is of the form \( \hinv{s}{a} \), \( \hinv{s}^La \), or \( \hinv{s}^Ra \) for some \( a \in A \cap \ndcell X \) and \( s \in \set{ L, R }^* \) of positive length.
	Thus the assignment specifies \( \fun{Loc} f \) uniquely, and an easy induction shows that it is well-defined as a morphism of diagrammatic sets.
	Functoriality and preservation of monomorphisms are straightforward checks.
	
	Finally, let \( \fun{F}\colon \cls{J} \to \mdgmSet \) be a diagram and let \( (\iota_j\colon \fun{F}j \to (X, A_\iota))_{j \in \Ob\cls{J}} \) be a colimit cone under \( \fun{F} \) as described in Corollary \ref{cor:marked_dset_limits_colimits}; this is preserved by \( \fun{U} \).	
	Let \( (\iota'_j\colon \fun{LocF}j \to Y )_{j \in \Ob\cls{J}} \) be a colimit cone under \( \fun{LocF} \) in \( \dgmSet \).
	Then each non-degenerate cell in \( \loc{X}{A_\iota} \) is either equal to \( \iota_j(u) \) for a cell \( u \) of \( \fun{F}j \), or to \( \hinv{s}\iota_j(u) \), \( \hinv{s}^L\iota_j(u) \), or \( \hinv{s}^R\iota_j(u) \) for a marked cell \( u \) of \( \fun{F}j \), for some \( j \in \Ob\cls{J} \).
	Thus we can define a morphism \( \loc{X}{A_\iota} \to Y \) by \( u \mapsto \iota'_j(u) \) in the first case, and \( \hinv{s}\iota_j(u) \mapsto \iota'_j(\hinv{s}u) \), \( \hinv{s}^L\iota_j(u) \mapsto \iota'_j(\hinv{s}^Lu) \), and  \( \hinv{s}^R\iota_j(u) \mapsto \iota'_j(\hinv{s}^Ru) \) in the other cases.
	It is straightforward to check that this is an isomorphism.
\end{proof}

\begin{rmk}
	For all diagrammatic sets \( X \), we have \( \fun{U}(\minmark{X}) = X = \fun{Loc}(\minmark{X}) \).
\end{rmk}

\begin{dfn} [Walking equivalence]
	Let \( U \) be an atom of dimension \( > 0 \).
	The \emph{walking equivalence of shape \( U \)} is the diagrammatic set \( \selfloc U \eqdef \fun{Loc}(\markmol{U}) \).
	If \( U \) is equal to \( V \celto W \), we also write \( V \eqvto W \) for \(\selfloc{U}\).
\end{dfn}

\noindent As a particular case, we have the following object, which is the analogue of the ``coherent walking \( \omega \)\nbd equivalence'' of \cite{hadzihasanovic2024model}.

\begin{dfn} [Reversible arrow]
	The \emph{reversible arrow} is \( \locarr \eqdef \pt \eqvto \pt \).
\end{dfn}

\section{Model structures} \label{sec:model}

\subsection{Diagrammatic \inftyn-categories}

\begin{dfn}[Diagrammatic $(\infty, \infty)$-category]
	A \emph{diagrammatic \( (\infty, \infty) \)-category} is a diagrammatic set \( X \) with the following property: for all round diagrams \( u \) of shape \( U \) in \( X \), there exists a cell \( \compos{u} \) of shape \( \compos{U} \) such that \( u \simeq \compos{u} \).
	In this case, we call \( \compos{u} \) a \emph{weak composite of \( u \)}, and we call an equivalence \( c\colon u \eqvto \compos{u} \) a \emph{compositor for \( u \)}.
\end{dfn}

\begin{rmk}
	It is immediate from the definition that weak composites are unique up to equivalence.
\end{rmk}

\begin{dfn}[Functor of diagrammatic $(\infty, \infty)$-categories]
	A morphism \( f\colon X \to Y \) of diagrammatic sets is called a \emph{functor} when \( X \) and \( Y \) are diagrammatic \( (\infty, \infty) \)-categories.
\end{dfn}

\begin{dfn}[Diagrammatic $(\infty, n)$-category]
	Let \( n \in \mathbb{N} \).
	A \emph{diagrammatic \( (\infty, n) \)\nbd category} is a diagrammatic \( (\infty, \infty) \)-category with the property that every cell of dimension \( > n \) is an equivalence.
\end{dfn}

\noindent In what follows, we will speak simply of \( (\infty, \infty) \)\nbd categories and of \( (\infty, n) \)\nbd categories; we will also often let \( n \) range over \( \mathbb{N} \cup \set{\infty} \).
We will also say that an \( (\infty, \infty) \)\nbd category \emph{has weak composites}.

\begin{rmk}
	By \cite[Proposition 2.15]{chanavat2024equivalences}, in an \( (\infty, n) \)-category, not only every cell but also every round diagram of dimension \( > n \) is an equivalence.
\end{rmk}

\noindent Next, we give a characterisation of \( (\infty, \infty) \)-categories and of \( (\infty, n) \)-categories in terms of lifting properties.

\begin{lem} \label{lem:equivalences_exhibited_by_cells}
	Let \( X \) be an \( (\infty, \infty) \)-category, let \( u, v \) be parallel round diagrams in \( X \), and suppose \( u \simeq v \).
	Then there exists a cell \( e\colon u \eqvto v \) exhibiting the equivalence.
\end{lem}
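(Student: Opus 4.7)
The plan is to use the defining property of an $(\infty,\infty)$-category to promote the given equivalence, which is a priori only a round diagram, to a genuine cell via its weak composite.

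By the hypothesis $u \simeq v$, there exists an equivalence $h\colon u \eqvto v$ in $X$. Let $U$ be the round molecule that is the shape of $h$; then $\bd{}{-}U$ and $\bd{}{+}U$ are the shapes of $u$ and $v$ respectively. Since $X$ is an $(\infty,\infty)$\nbd category, there is a weak composite $\compos{h}$ of shape $\compos{U}$ together with a compositor $c\colon h \eqvto \compos{h}$, in particular $h \simeq \compos{h}$. By the definition of the merger, $\compos{U} = \bd{}{-}U \celto \bd{}{+}U$ is an atom whose source and target boundaries agree with those of $U$, so $\compos{h}$ is a cell with $\bd{}{-}\compos{h} = u$ and $\bd{}{+}\compos{h} = v$.

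It then suffices to show $\compos{h}$ is an equivalence. This follows immediately from Theorem \ref{thm:properties_of_equivalences}(\ref{enum:eqv_stable}), which states that the set of equivalences is closed under $\simeq$: since $h \in \eqv X$ and $h \simeq \compos{h}$, we conclude $\compos{h} \in \eqv X$. Taking $e \eqdef \compos{h}$ yields the desired cell $e\colon u \eqvto v$.

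There is no real obstacle here: the proof is a direct combination of the definition of an $(\infty,\infty)$\nbd category (existence of weak composites for round diagrams) with the stability of equivalences under the relation $\simeq$. The only point requiring a moment of care is verifying that the boundaries of the merger $\compos{U}$ match those of the original round molecule $U$, so that $\compos{h}$ really is a cell from $u$ to $v$ rather than between arbitrary parallel diagrams; this is built into the definition of $\compos{U}$.
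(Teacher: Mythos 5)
Your proof is correct and follows essentially the same route as the paper's: take the equivalence witnessing $u \simeq v$, replace it by its weak composite (a cell of shape $\compos{U}$ with the same boundaries), and invoke Theorem \ref{thm:properties_of_equivalences}.\ref{enum:eqv_stable} to see that the weak composite is still an equivalence. Your extra remark about the boundaries of the merger matching those of $U$ is a worthwhile check that the paper leaves implicit.
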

\begin{proof}
	By definition, there exists an equivalence \( e'\colon u \eqvto v \), which is a priori only a round diagram.
	Because \( X \) has weak composites, we can take a weak composite \( e \eqdef \compos{e'} \simeq e' \).
	By Theorem \ref{thm:properties_of_equivalences}.\ref{enum:eqv_stable}, \( e \) is still an equivalence.
\end{proof}

\begin{prop} \label{prop:universal_property_localisation}
	Let \( (X, A) \) be a marked diagrammatic set, let \( Y \) be an \( (\infty, \infty) \)-category, and let \( f\colon X \to Y \) be a morphism.
    	The following are equivalent:
    \begin{enumerate}[label=(\alph*)]
        \item for all \( a \in A \), the cell \( f(x) \) is an equivalence in \( Y \);
        \item there exists a map \( \tilde f \colon \loc{X}{A} \to Y \) such that the triangle
\[\begin{tikzcd}
	X & Y \\
	{\loc{X}{A}}
	\arrow["f", from=1-1, to=1-2]
	\arrow[hook', from=1-1, to=2-1]
	\arrow["{\tilde{f}}"', from=2-1, to=1-2]
\end{tikzcd}\]        
	commutes.
    \end{enumerate}
\end{prop}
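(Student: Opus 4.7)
The implication (b) $\Rightarrow$ (a) is immediate. By construction of $\loc{X}{A}$, for every $a \in A$ the cell $a$ is equipped with left and right inverses and with left and right invertors in $\loc{X}{A}$, so $a$ is an equivalence there. Since morphisms of diagrammatic sets preserve equivalences (Theorem \ref{thm:properties_of_equivalences}), $\tilde{f}(a) = f(a)$ is an equivalence in $Y$.

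For (a) $\Rightarrow$ (b), the plan is to build $\tilde{f}$ as the colimit of compatible extensions $\tilde{f}_n\colon \order{n}{X} \to Y$, constructed by induction on $n$ so as to satisfy the stronger property that $\tilde{f}_n(a')$ is an equivalence in $Y$ for every $a' \in \order{n}{A}$. The base case is $\tilde{f}_0 \eqdef f$, where the property holds by hypothesis (a).

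For the inductive step, assume $\tilde{f}_{n-1}$ is given. For each non-degenerate marked cell $a'\colon u \celto v$ in $\order{n-1}{A} \cap \ndcell{\order{n-1}{X}}$, the cell $\tilde{f}_{n-1}(a')$ is an equivalence in $Y$, so by definition there exist round diagrams $e^L, e^R$ in $Y$ with invertors $z\colon \tilde{f}_{n-1}(a') \cp{} e^L \celto \un(\tilde{f}_{n-1}(u))$ and $h\colon \un(\tilde{f}_{n-1}(v)) \celto e^R \cp{} \tilde{f}_{n-1}(a')$, where $z, h$ are themselves equivalences. Since $Y$ is an $(\infty, \infty)$-category, take weak composites and invoke Lemma \ref{lem:equivalences_exhibited_by_cells} to replace $e^L, e^R, z, h$ by cells $b^L, b^R, \zeta, \eta$ with the same sources and targets (up to equivalence), which remain equivalences by Theorem \ref{thm:properties_of_equivalences}.\ref{enum:eqv_stable}. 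Now define $\tilde{f}_n$ on the newly attached cells by
\[
	a'^L \mapsto b^L, \quad a'^R \mapsto b^R, \quad \hinv{L}(a') \mapsto \zeta, \quad \hinv{R}(a') \mapsto \eta;
\]
the pushout universal property of cellular extensions then produces a unique morphism $\tilde{f}_n\colon \order{n}{X} \to Y$ extending $\tilde{f}_{n-1}$. Since $\order{n}{A}$ consists of the invertors $\hinv{L}(a'), \hinv{R}(a')$, and these have been sent to equivalences $\zeta, \eta$ in $Y$, the inductive hypothesis propagates. Passing to the colimit over $n$ yields $\tilde{f}\colon \loc{X}{A} \to Y$ extending $f$.

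The main subtlety will be shape compatibility at the attaching step: the new cell $\hinv{L}(a')$ is attached with source the pasting $a' \cp{} a'^L$ and target the unit $\un(u)$, so the replacement cell $\zeta$ in $Y$ must have source exactly $\tilde{f}_{n-1}(a') \cp{} b^L$ and target exactly $\un(\tilde{f}_{n-1}(u))$. This is where Lemma \ref{lem:equivalences_exhibited_by_cells} is essential: it guarantees that, having established $\tilde{f}_{n-1}(a') \cp{} b^L \simeq \un(\tilde{f}_{n-1}(u))$ from congruence of $\simeq$ (Theorem \ref{thm:properties_of_equivalences}.\ref{enum:congruence}) and the equivalences $e^L \simeq b^L$, an actual equivalence cell of the required atom shape exists in $Y$. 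No obstruction to the induction arises, since the bi-invertibility definition of equivalence ensures that the invertors may always themselves be chosen to be equivalences, feeding the next step.
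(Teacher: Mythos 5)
Your proposal is correct and follows essentially the same route as the paper's proof: extend $f$ inductively along the filtration $\order{n}{X}$, use weak composites of the inverses together with congruence of $\simeq$ and Lemma \ref{lem:equivalences_exhibited_by_cells} to obtain invertor cells of the correct atomic shape, and note that these are again equivalences so the induction propagates. The shape-compatibility point you flag as the main subtlety is exactly the step the paper handles the same way.
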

\begin{proof}
	By definition, \( \loc{X}{A} \) is the colimit of a sequence \( \order{n}{X} \incl \order{n+1}{X} \), starting with \( \order{0}{X} \equiv X \).
	We define \( \tilde{f} \) by successive extensions from \( \order{n}{X} \) to \( \order{n+1}{X} \), starting with \( \order{0}{f} \eqdef f \), with the property that \( \order{n}{f}(a) \) is an equivalence for all \(a \in \order{n}{A} \).
	By assumption, \( \order{0}{f}(a) \) is an equivalence for all \(a \in \order{0}{A} \equiv A \cap \ndcell X \).
	Let \( n > 0 \); by the inductive hypothesis, \( g \eqdef \order{n-1}{f} \) sends all \( a \in \order{n-1}{A} \) to equivalences.
	Let \( a \in \order{n-1}{A} \).
	Then \( b \eqdef g(a)\colon u \celto v \) has a left and a right inverse \( b^L\colon v \celto u \), \( b^R\colon v \celto u \), such that \( b \cp{} b^L \simeq \un{u} \) and \( \un{v} \simeq b^R \cp{} b \).
	By assumption, there are weak composites \( b^L \simeq \compos{b^L} \) and \( b^R \simeq \compos{b^R} \), so by Theorem \ref{thm:properties_of_equivalences}.\ref{enum:congruence} we have \( b \cp{} \compos{b^L} \simeq \un{u} \) and \( \un{v} \simeq \compos{b^R} \cp{} b \).
	By Lemma \ref{lem:equivalences_exhibited_by_cells}, these equivalences are exhibited by cells \( z \) and \( h \), respectively. 
	Letting
	\[
		a^L \mapsto \compos{b^L}, \quad a^R \mapsto \compos{b^R}, \quad \hinv{L}(a) \mapsto z, \quad \hinv{R}(a) \mapsto h
	\]
	determines an extension \( \order{n}{f} \) of \( g \) to \( \order{n}{X} \), which by construction sends cells in \( \order{n}{A} \) to equivalences in \( Y \).
	Passing to the colimit, we obtain a morphism $\tilde{f}$ which extends $f$, so that the triangle commutes.
	For the other direction, by construction each \( a \in A \) is an equivalence in \( \loc{X}{A} \), so by Theorem \ref{thm:properties_of_equivalences} \( f(a) \) is an equivalence in \( Y \).
\end{proof}

\begin{prop} \label{prop:universal_property_weak_composites}
    Let \( X \) be a diagrammatic set.
    The following are equivalent:
    \begin{enumerate}[label=(\alph*)]
	    \item \label{enum:infty_a} \( X \) is an \( (\infty, \infty) \)-category;
	    \item \label{enum:infty_b} for all round diagrams \( u \) of shape \( U \) in \( X \), there exists an equivalence \( c(u) \) of shape \( U \celto \compos{U} \) such that the triangle
        \[
            \begin{tikzcd}
                U & X \\
                {U \celto \compos{U}}
                \arrow["u", from=1-1, to=1-2]
                \arrow[hook', from=1-1, to=2-1]
                \arrow["{c(u)}"', from=2-1, to=1-2]
            \end{tikzcd}
    \]
        commutes;
	\item \label{enum:infty_c} for all round diagrams \( u \) of shape \( U \) in \( X \), there exists a morphism of diagrammatic sets \( \tilde c(u) \colon (U \eqvto \compos{U}) \to X \) such that the triangle
        \[
            \begin{tikzcd}
                U & X \\
                {U \eqvto \compos{U}}
                \arrow["u", from=1-1, to=1-2]
                \arrow[hook', from=1-1, to=2-1]
                \arrow["{\tilde c(u)}"', from=2-1, to=1-2]
            \end{tikzcd}    
    \]
        commutes.
    \end{enumerate}
\end{prop}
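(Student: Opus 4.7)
The three conditions are of increasing structural strength: (a) posits a weak composite cell and an equivalence to it; (b) packages these as a single extension along the input-boundary inclusion $U \incl (U \celto \compos{U})$; and (c) packages them further into an extension along $U \incl (U \eqvto \compos{U})$, where the compositor comes equipped with all the coherence data witnessing its invertibility. The plan is to first establish (a) $\Leftrightarrow$ (b) by pure definition-chasing, then use this to close (b) $\Leftrightarrow$ (c) via the universal property of localisation (Proposition \ref{prop:universal_property_localisation}).

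For (a) $\Rightarrow$ (b), given a round diagram $u$ of shape $U$ in an \( (\infty, \infty) \)\nbd category $X$, the definition supplies a cell \(\compos{u}\colon \compos{U} \to X\) together with an equivalence $u \simeq \compos{u}$; unfolding the definition of $\simeq$ yields an equivalence round diagram $c(u)\colon u \eqvto \compos{u}$ of shape $U \celto \compos{U}$, whose restriction to $U$ is $u$. The converse is immediate: if (b) holds, the restriction $\restr{c(u)}{\compos{U}}$ furnishes a weak composite, and $c(u)$ itself exhibits $u \simeq \restr{c(u)}{\compos{U}}$.

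For (c) $\Rightarrow$ (b), I would restrict $\tilde c(u)$ along the canonical inclusion $(U \celto \compos{U}) \incl (U \eqvto \compos{U})$ coming from the first stage $X \incl \loc{X}{A}$ of the localisation construction applied to $\markmol{(U \celto \compos{U})}$. Since the top cell of $\markmol{(U \celto \compos{U})}$ is marked, it becomes an equivalence in its localisation by construction, so by Theorem \ref{thm:properties_of_equivalences} its image under $\tilde c(u)$, which is exactly the restriction $c(u)$, is an equivalence in $X$ of shape $U \celto \compos{U}$ extending $u$.

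For (b) $\Rightarrow$ (c), which closes the loop, I would apply Proposition \ref{prop:universal_property_localisation} to the marked atom $\markmol{(U \celto \compos{U})}$ and the morphism $c(u)\colon (U \celto \compos{U}) \to X$. The only marked non-degenerate cell is the top cell, which $c(u)$ sends to an equivalence by hypothesis; the proposition then produces the desired $\tilde c(u)\colon (U \eqvto \compos{U}) \to X$. The only delicate point in the whole argument is this dependency: Proposition \ref{prop:universal_property_localisation} requires the target to be an \( (\infty, \infty) \)\nbd category, and this is only available once (a) $\Leftrightarrow$ (b) has been established. Thus the implications must be arranged as (a) $\Leftrightarrow$ (b) first, then (c) $\Rightarrow$ (b) directly, and finally (b) $\Rightarrow$ (c) using both the hypothesis and the already-proven (b) $\Rightarrow$ (a).
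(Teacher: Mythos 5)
Your overall architecture matches the paper's: (a) $\Leftrightarrow$ (b) by unwinding definitions, then (b) $\Leftrightarrow$ (c) via Proposition \ref{prop:universal_property_localisation}, with the correct observation that the localisation argument only becomes available once (b) $\Rightarrow$ (a) is in hand. The (b) $\Rightarrow$ (a), (c) $\Rightarrow$ (b), and (b) $\Rightarrow$ (c) steps are all fine.

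There is, however, a genuine gap in your (a) $\Rightarrow$ (b) step. You claim that ``unfolding the definition of $\simeq$ yields an equivalence round diagram $c(u)\colon u \eqvto \compos{u}$ of shape $U \celto \compos{U}$''. This is false as stated: the definition of $u \simeq \compos{u}$ only provides an equivalence $e'\colon u \eqvto \compos{u}$ which is a \emph{round diagram} whose shape is some round molecule $W$ with $\bd{}{-}W = U$ and $\bd{}{+}W = \compos{U}$; nothing forces $W$ to be the single atom $U \celto \compos{U}$, and in general it will be a nontrivial pasting of $(\dim U + 1)$-cells. Condition (b) demands a \emph{cell} of the merger shape $\compos{W} = U \celto \compos{U}$, so you must use the weak-composite hypothesis a second time to replace $e'$ by $\compos{e'}$, and then invoke Theorem \ref{thm:properties_of_equivalences}.\ref{enum:eqv_stable} to see that $\compos{e'}$, being equivalent to the equivalence $e'$, is itself an equivalence. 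This is precisely the content of Lemma \ref{lem:equivalences_exhibited_by_cells}, which the paper proves separately and cites at this point. Once that repair is made, your proof coincides with the paper's.
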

\begin{proof}
	Suppose that \( X \) has weak composites and let \( u \) be a round diagram in \( X \).
	Then \( u \) has a weak composite \( \compos{u} \), and by Lemma \ref{lem:equivalences_exhibited_by_cells} the equivalence \( u \simeq \compos{u} \) is exhibited by a cell \( c(u) \) of shape \( U \celto \compos{U} \).
	This proves the implication from \ref{enum:infty_a} to \ref{enum:infty_b}.
	Clearly, \ref{enum:infty_b} also implies \ref{enum:infty_a}.
	Now, suppose condition \ref{enum:infty_b} holds, let \( u \) be a round diagram of shape \( U \), and let \( c(u) \) be the given equivalence in \( X \).
	Letting \( \top \) be the greatest element of \( U \celto \compos{U} \), we have, by definition, \( U \eqvto \compos{U} = \loc{(U \celto \compos{U})}{\set{\top}} \), so by Proposition \ref{prop:universal_property_localisation}, which applies by the first part of the proof, \( c(u) \) extends along the inclusion \( (U \celto \compos{U}) \incl (U \eqvto \compos{U}) \), which proves \ref{enum:infty_c}.
	Finally, assume \ref{enum:infty_c}, let \( u \) be a round diagram in \( X \), and let \( \tilde{c}(u) \) be the given morphism.
	By construction, its value on the cell \( (U \celto \compos{U}) \incl (U \eqvto \compos{U}) \) is an equivalence in \( X \), exhibiting its output boundary as a weak composite of \( u \).
\end{proof}

\begin{prop} \label{prop:inftyn_lifting_property}
	Let \( X \) be an \( (\infty, \infty) \)-category, \( n \in \mathbb{N} \).
	The following are equivalent:
	\begin{enumerate}[label=(\alph*)]
		\item \( X \) is an \( (\infty, n) \)-category;
		\item for all cells \( u \) of shape \( U \) in \( X \), if \( \dim u > n \), then there exists a morphism \( \selfloc u\colon \selfloc U \to X \) such that the triangle
        	\[
            \begin{tikzcd}
                U & X \\
                {\selfloc U}
                \arrow["u", from=1-1, to=1-2]
                \arrow[hook', from=1-1, to=2-1]
                \arrow["\selfloc u"', from=2-1, to=1-2]
            \end{tikzcd}    
    		\]
		commutes.
	\end{enumerate}
\end{prop}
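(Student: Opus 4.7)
The plan is to derive the statement as a direct corollary of Proposition \ref{prop:universal_property_localisation} applied to the marked atom $\markmol{U} = (U, \set{\maxel{U}})$, since by definition $\selfloc{U} = \fun{Loc}(\markmol{U}) = \loc{U}{\set{\maxel{U}}}$.

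For the forward direction, suppose $X$ is an $(\infty, n)$-category and $u\colon U \to X$ is a cell with $\dim u > n$. By definition, $u$ is an equivalence in $X$. Now view $u$ as a morphism from the underlying diagrammatic set of $\markmol{U}$: the unique marked cell of $\markmol{U}$ is the top cell $\mapel{\maxel{U}}\colon U \to U$, whose image is $u$ itself, an equivalence. Since $X$ is in particular an $(\infty, \infty)$-category, Proposition \ref{prop:universal_property_localisation} applies and yields the required extension $\selfloc{u}\colon \selfloc{U} = \loc{U}{\set{\maxel{U}}} \to X$ making the triangle commute.

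For the backward direction, suppose every cell $u\colon U \to X$ of dimension $> n$ admits such an extension $\selfloc{u}\colon \selfloc{U} \to X$. By construction of the localisation, the top cell $\mapel{\maxel{U}}\colon U \to \selfloc{U}$ of the atom $U$, regarded as a cell in $\selfloc{U}$, is an equivalence: this is precisely the content of the remark following the definition of the localisation, namely that every cell in the marking $A$ becomes an equivalence in $\loc{X}{A}$. By the last clause of Theorem \ref{thm:properties_of_equivalences}, the morphism $\selfloc{u}$ preserves equivalences, so $u = \selfloc{u} \after \mapel{\maxel{U}}$ is an equivalence in $X$. Since $u$ was an arbitrary cell of dimension $> n$, this shows that $X$ is an $(\infty, n)$-category.

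There is no substantial obstacle: the argument is essentially an unpacking of the universal property of the localisation $\fun{Loc}$ established in Proposition \ref{prop:universal_property_localisation}, combined with the fact that $X$ being an $(\infty, \infty)$-category is precisely the hypothesis needed to apply that proposition. The only point one needs to be careful about is that the extension in the forward direction must exist uniquely up to the construction performed in the proof of Proposition \ref{prop:universal_property_localisation}; but since the statement only asserts existence of \emph{some} extension, no further coherence is required.
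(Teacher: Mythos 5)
Your proposal is correct and is exactly the argument the paper intends: the paper's proof simply says ``Immediate from the definition and Proposition \ref{prop:universal_property_localisation}'', and your unpacking---applying that proposition to the marked atom \( \markmol{U} \), whose unique non-degenerate marked cell is the top cell mapping to \( u \) itself---is the right instantiation in both directions. The only nitpick is notational: \( \markmol{U} = (U, \maxel{U}) \), so the localisation is at the set \( \maxel{U} = \set{\top} \) of maximal elements, not at \( \set{\maxel{U}} \).
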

\begin{proof}
	Immediate from the definition and Proposition \ref{prop:universal_property_localisation}.
\end{proof}


\subsection{Marked model structures}

\begin{dfn} [Atomic and molecular horns]
	Let \( U \) be an atom, \( \dim U > 0 \).
	A \emph{molecular horn of \( U \)} is the data of
	\begin{enumerate}
		\item a rewritable submolecule \( V \submol \bd{}{\a}U \), for some \( \a \in \set{+, -} \),
		\item the inclusion \( \lambda_U^V\colon \Lambda_U^V \incl U \), where \( \Lambda_U^V \eqdef \bd{}{}U \setminus \inter{V} \).
	\end{enumerate}
	If \( V = \clset{x} \) is an atom, we write \( \lambda_U^x\colon \Lambda_U^x \incl U \) and call it an \emph{atomic horn}.
\end{dfn}

\begin{rmk}
    In \cite{chanavat2024diagrammatic}, the molecular horns were simply called horns.
\end{rmk}

\begin{dfn} [Marked horns]
	Let \( \lambda_U^x\colon \Lambda_U^x \incl U \) be an atomic horn, let \( \top \) be the greatest element of \( U \), let \( \a \in \set{+, -} \) such that \( x \in \faces{}{\a}U \), and \( k \eqdef \dim{U} - 1 \).
	We say that an inclusion 
	\[
		\lambda^x_U\colon (\Lambda_U^x, A) \incl (U, A')
	\] 
	of marked regular directed complexes is a \emph{marked horn of \( U \)} if there exist molecules \( (\order{i}{L}, \order{i}{R})_{i=1}^k \) such that
	\begin{enumerate}
		\item \( \bd{}{\a}U = \order{k}{L} \cp{k-1} (\ldots \cp{1} (\order{1}{L} \cp{0} \clset{x} \cp{0} \order{1}{R}) \cp{1} \ldots) \cp{k-1} \order{k}{R} \),
		\item \( \dim \order{i}{L}, \dim \order{i}{R} \leq i \) for each \( i \in \set{1, \ldots, k} \),
		\item \( \gr{i}{\order{i}{L}} \cup \gr{i}{\order{i}{R}} \subseteq A \) for each \( i \in \set{1, \ldots, k} \),
	\end{enumerate}
	and, moreover,
	\[
		A' = \begin{cases}
			A \cup \set{x, \top} & \text{if \( \faces{}{-\a}U \subseteq A \),} \\
			A \cup \set{\top} & \text{otherwise.}
		\end{cases}
	\]
\end{dfn}

\begin{comm} \label{comm:marked_horns_are_equations}
	Let \( \lambda^x_U\colon \Lambda_U^x \incl U \) be an atomic horn, let \( \a \in \set{+, -} \) be such that \( x \in \faces{}{\a}U \), let \( k \eqdef \dim{U} - 1 \), and let \( q\colon \Lambda_U^x \to X \) be a morphism of diagrammatic sets.
	If \( k > 0 \), with reference to the terminology of \cite{chanavat2024equivalences},
	\begin{itemize}
		\item \( v \eqdef \restr{q}{\bd{}{-}x} \) and \( w \eqdef \restr{q}{\bd{}{+}x} \) are parallel round diagrams of dimension \( k-1 \) in \( X \), as are \( v' \eqdef \restr{q}{\bd{k-1}{-}U} \) and \( w' \eqdef \restr{q}{\bd{k-1}{+}U} \);
		\item \( b \eqdef \restr{q}{\bd{}{-\a}U} \) is a round diagram of type \( v' \celto w' \) in \( X \), and
		\item \( \restr{q}{\bd{}{\a}U \cap \Lambda_U^x} \) determines a \emph{round context} \( \fun{E}\colon \rd X(v, w) \to \rd X(v', w') \).
	\end{itemize}
	Altogether, these data determine an \emph{equation \( \fun{E}x \qeq b \) in the indeterminate \( x \in \gr{k}{\rd X(v, w)} \)}.
	An extension of \( q \) along \( \lambda^x_U \) is then precisely a pair of a cell \( a\colon v \celto w \) together with a cell \( h\colon \fun{E}a \celto b \) or \( b \celto \fun{E}a \), which is a special case of what we called a \emph{lax} or \emph{colax solution}, respectively.

	Now, suppose that \( \lambda^x_U\colon (\Lambda_U^x, A) \incl (U, A') \) is a marked horn, and suppose \( q \) lifts to a morphism \( q\colon (\Lambda_U^x, A) \to \natmark{X} \) of marked diagrammatic sets.
	Then the conditions on the marking \( A \), together with the assumption that \( q \) sends marked cells to equivalences in \( X \), imply that \( \fun{E} \) is a \emph{weakly invertible context}.
	In this sense, marked horns are classifying objects for equations \( \fun{E}x \qeq b \) with \( \fun{E} \) weakly invertible.
	Moreover, because \( \top \in A' \), an extension of \( q \) along \( \lambda^x_U \) exhibits an equivalence \( \fun{E}a \simeq b \), and if all top\nbd dimensional cells in \( b \) are marked, so that \( b \) is an equivalence, then \( a \) is also an equivalence.

	In this light, the fact that a marked diagrammatic set \( (X, A) \) has the right lifting property against marked horns should be interpreted as the statement that \emph{all marked cells are equivalences in \( X \)}.
\end{comm}

\begin{dfn} [Marked walking equivalence]
    Let \( U \) be an atom of dimension \( > 0 \). 
    The \emph{marked walking equivalence of shape \( U \)} is the marked diagrammatic set
    \[
	    \selflocm{U} \eqdef \left(\selfloc{U}, \dgncell \selfloc{U} \cup \set{U \incl \selfloc{U}}\right).
\]
\end{dfn}

\begin{comm}
	There is an evident entire monomorphism \( \minmark{\selfloc{U}} \incl \selflocm{U} \).
	The fact that a marked diagrammatic set \( (X, A) \) has the right lifting property against these morphisms should be interpreted as the statement that \emph{all equivalences in \( X \) are marked}.
\end{comm}

\begin{dfn} [Walking pair of invertors]
    Let \( U \) be an atom, \( \dim U > 0 \).
    The \emph{walking pair of invertors on \( U \)} is the diagrammatic set \( \walkinv U \) obtained as the colimit in \( \dgmSet \) of the diagram
\[\begin{tikzcd}[column sep=tiny]
	& {\bd{}{+}\lcyl{\bd{}{+}U}{U}} && U && {\bd{}{-}\rcyl{\bd{}{-}U}{U}} \\
	{\bd{}{-}U} && {\lcyl{\bd{}{+}U}{U}} && {\rcyl{\bd{}{-}U}U} && {\bd{}{+}U}
	\arrow[two heads, from=1-2, to=2-1]
	\arrow[hook, from=1-2, to=2-3]
	\arrow[hook', from=1-4, to=2-3]
	\arrow[hook, from=1-4, to=2-5]
	\arrow[hook', from=1-6, to=2-5]
	\arrow[two heads, from=1-6, to=2-7]
\end{tikzcd}\]
    of maps of molecules.
    This is equipped with an evident inclusion \( U \incl \walkinv U \).
\end{dfn}

\begin{comm}
	Let \( u\colon U \to X \) be a cell of type \( v \celto w \) in a diagrammatic set.
	Then, extensions of \( u \) along \( U \incl \walkinv U \) classify pairs of cells \( u^L, u^R\colon w \celto v \) together with pairs of cells \( z \colon u \cp{} u^L \celto \un v \) and \( h \colon \un w \celto u^R \cp{} u \).
\end{comm}

\begin{dfn} [Marked walking pair of invertors]
	Let \( U \) be an atom, \( n \eqdef \dim U > 0 \).
	Then \( \walkinv U \) has exactly
	\begin{enumerate}
		\item two non-degenerate cells of dimension \( n+1 \), classified by the morphisms 
			\(\lcyl{\bd{}{+}U}U \to \walkinv U \) and \( \rcyl{\bd{}{-}U}U \to \walkinv U \) in the colimit cone presenting \( \walkinv U \),
		\item three non-degenerate cells of dimension \( n \), corresponding to the inclusion \( U \incl \walkinv U \) as well as the two morphisms \( \dual{n}{U} \incl \lcyl{\bd{}{+}U}U \to \walkinv U \) and \( \dual{n}{U} \incl \rcyl{\bd{}{-}U}U \to \walkinv U \).
	\end{enumerate}
	We let
	\begin{itemize}
		\item \( \mwalkinv U \) be \( \walkinv U \) with all cells of dimension \( \geq n+1 \) marked,
		\item \( \fmwalkinv U \) be \( \mwalkinv U \) with all cells of dimension \( \geq n \) marked.
	\end{itemize}
\end{dfn}

\begin{comm}
	There is en evident entire monomorphism \( \mwalkinv U \incl \fmwalkinv U \).
	The fact that a marked diagrammatic set \( (X, A) \) has the right lifting property against these morphisms should be interpreted as the statement that \emph{if a cell is weakly invertible up to marked cells, then it is marked}.
	This corresponds to the notion of \emph{saturation} in \cite{loubaton2024inductive}.
\end{comm}

\begin{lem} \label{lem:marked_cellular_model}
    The set of monomorphisms \( M \) defined as
    \[
	    \set{\minmark{\bdmap}_U \colon \minmark{(\bd{}{}U)} \incl \minmark{U} \mid U \in \Ob\atom } \cup \set{t_U \colon \minmark{U} \incl \markmol{U} \mid \text{$U \in \Ob\atom$, $\dim U > 0$}}
\]
    is a cellular model for the monomorphisms of \( \mdgmSet \).
\end{lem}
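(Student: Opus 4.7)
The plan is to invoke the factorisation from Remark \ref{rmk:facto_mono_entire_regular}, which splits every monomorphism $i \colon (X, A) \incl (Y, B)$ in $\mdgmSet$ as a regular monomorphism $(X, A) \incl (Y, i(A) \cup \dgncell Y)$ followed by an entire monomorphism $(Y, i(A) \cup \dgncell Y) \incl (Y, B)$. Since $l(r(M))$ is closed under transfinite composition, retracts, and pushouts, it will suffice to show separately that every regular monomorphism and every entire monomorphism lies in $l(r(M))$, one family of generators handling each case.

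For the entire case, I would build $(Y, C) \incl (Y, B)$ by marking the cells of $B \setminus C$ one at a time. Every non-degenerate marked cell $u \colon U \to Y$ has $\dim U > 0$ and is classified by a map $u \colon \minmark{U} \to (Y, C')$; the pushout of this map along $t_U \colon \minmark{U} \incl \markmol{U}$ produces $(Y, C' \cup \{u\})$, with the marking extended by exactly the cell $u$. Well-ordering the non-degenerate cells of $B \setminus C$ and iterating this pushout realises the entire monomorphism as a transfinite composition of pushouts of elements of $M$.

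For the regular case, I would use that, because $\atom$ is an Eilenberg--Zilber category, the set of boundary inclusions $\{\bdmap_U \mid U \in \Ob \atom\}$ is a cellular model for $\dgmSet$; this is established in the references \cite{chanavat2024diagrammatic,hadzihasanovic2024combinatorics} via the standard skeletal filtration argument. Hence the underlying map $\fun{U}i \colon X \incl Y$ is a transfinite composition of pushouts of boundary inclusions, obtained by attaching the non-degenerate cells of $Y$ absent from $X$ in order of dimension. Since $\minmark{(-)}$ is a left adjoint to $\fun{U}$, it preserves pushouts, so this construction lifts verbatim to $\mdgmSet$: starting from $(X, A)$, we attach each missing non-degenerate cell via a pushout of some $\minmark{\bdmap}_U \in M$. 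Such a pushout leaves the marking of the existing cells untouched and extends it only by the new cell's automatically-marked degenerates. By the explicit description of colimits in Corollary \ref{cor:marked_dset_limits_colimits}, the transfinite limit of this tower carries the marking $A \cup \dgncell Y = i(A) \cup \dgncell Y$, which is exactly the target of the regular monomorphism.

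The step I expect to demand the most care is the marking bookkeeping in the regular case: checking that the iterated pushout construction produces the marking $i(A) \cup \dgncell Y$ neither in excess nor in deficit. The former is controlled by the fact that $\minmark{(-)}$ attaches cells unmarked, while the latter requires each degenerate cell of $Y$ to be produced somewhere along the filtration, which holds because the iteration faithfully reconstructs the underlying diagrammatic set and degeneracies are determined by non-degenerate cells. With this verified, composing with the entire part settles the theorem.
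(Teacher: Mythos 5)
Your proof is correct and follows essentially the same route as the paper: both factor an arbitrary monomorphism into a regular followed by an entire one via Remark \ref{rmk:facto_mono_entire_regular}, handle the entire part by transfinite composition of pushouts of the $t_U$, and handle the regular part by lifting the cellular model $\set{\bdmap_U}$ of $\dgmSet$ through the colimit-preserving functor $\minmark{(-)}$. The only (cosmetic) difference is that the paper avoids your marking bookkeeping in the regular case by exhibiting $i$ as a single pushout of $\minmark{(\fun{U}i)}$ along the entire monomorphism $\minmark{X} \incl (X, A)$, thereby reusing the entire case instead of re-running the skeletal filtration with markings.
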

\begin{proof}
	Let \( i \colon (X, A) \incl (X, B) \) be an entire monomorphism and let \( \kappa \) be an ordinal whose size bounds \( \cell X \).
	Each cell \( a \in B \setminus A \) of shape \( U \) can be marked by means of a pushout along \( t_U \), so by \( \kappa \)\nbd indexed transfinite composition, we construct \( i \) as an element of \( l(r(M)) \).
	Next, let \( i \colon (X, A) \incl (Y, i(A) \cup \dgncell Y) \) be a regular monomorphism.
	By \cite[Proposition 1.17]{chanavat2024diagrammatic}, the set of monomorphisms \( \set{ \bdmap_U \mid U \in \Ob\atom } \) is a cellular model for the monomorphisms of \( \dgmSet \), so \( \minmark{(\fun{U}i)} \) belongs to \( l(r(M)) \).
	Then \( i \) fits in a pushout square
	\[\begin{tikzcd}
	{\minmark{X}} & {(X, A)} \\
	{\minmark{Y}} & {(Y, i(A) \cup \dgncell Y)}
	\arrow[hook, from=1-1, to=1-2]
	\arrow["{\minmark{(\fun{U}i)}}", hook', from=1-1, to=2-1]
	\arrow["i", hook', from=1-2, to=2-2]
	\arrow[hook, from=2-1, to=2-2]
	\arrow["\lrcorner"{anchor=center, pos=0.125, rotate=180}, draw=none, from=2-2, to=1-1]
\end{tikzcd}\]
	where the top morphism is entire, hence belongs to \( l(r(M)) \) by the first part of the proof. 
	We conclude by Remark \ref{rmk:facto_mono_entire_regular}.
\end{proof}

\begin{dfn} [Marked cylinder]
	The \emph{marked cylinder} is the endofunctor \( \markarr \pgray - \) on \( \mdgmSet \), together with the natural transformations \( (\iota^-, \iota^+) \) and \( \sigma \) induced by the morphisms
	\[
		(0^-, 0^+)\colon \pt \amalg \pt \incl \markarr, \quad \quad
		\varepsilon\colon \markarr \surj \pt,
	\]
	of marked regular directed complexes, respectively.
\end{dfn}

\begin{rmk} \label{rmk:marked_cell_cylinder}
    	Given a marked diagrammatic set \( (X, A) \), the set of marked cells of \( \markarr \pgray (X, A) \) is 
    	\[
        \dgncell (\markarr \gray X) \cup \set{0^-} \gray A \cup \set{1} \gray \cell X \cup \set{0^+} \gray A.
	\]
\end{rmk}

\begin{prop}\label{prop:exact_cylinder_mdset}
	The marked cylinder is an exact cylinder on \( \mdgmSet \).
\end{prop}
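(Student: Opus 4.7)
The plan is to unwind the definition of the marked cylinder and reduce each axiom to one of the two structural lemmas for the pseudo-Gray product proved earlier in this section, namely Lemma \ref{lem:marked_gray_product_preserves_mono} (preservation of monomorphisms) and Lemma \ref{lem:marked_gray_preserve_colimits} (preservation of colimits in each variable). Since pseudo-Gray product is a monoidal structure with unit \( \pt \), we have canonical natural isomorphisms \( \pt \pgray X \cong X \) that we tacitly use to identify the relevant functors.

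For axiom \ref{enum:dh0}, I would first observe that \( \iota^\a\colon X \to \markarr \pgray X \) is obtained, under the unit isomorphism, as \( 0^\a \pgray \idd{X} \). Viewed as a morphism of marked diagrammatic sets, \( 0^\a\colon \pt \incl \markarr \) is a monomorphism (as the inclusion of a subcomplex), so Lemma \ref{lem:marked_gray_product_preserves_mono} immediately gives that each \( \iota^\a \) is a monomorphism. For the codiagonal condition, the equality \( \varepsilon \after 0^\a = \idd{\pt} \) in the category of marked regular directed complexes pseudo-Gray-products with \( \idd{X} \) to yield \( \sigma \after \iota^\a = \idd{X} \); thus \( \sigma \after (\iota^-, \iota^+)\colon X \amalg X \to X \) restricts to the identity on each summand, which is exactly the codiagonal.

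For axiom \ref{enum:dh1}, preservation of monomorphisms is Lemma \ref{lem:marked_gray_product_preserves_mono} applied with the first factor fixed to be \( \idd{\markarr} \), since the identity is tautologically a monomorphism; and preservation of small colimits is the specialisation of Lemma \ref{lem:marked_gray_preserve_colimits} to the second variable.

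As all assertions reduce cleanly to prior lemmas about \( \pgray \), there is no serious obstacle; the main point of care is simply to set up the natural transformations \( (\iota^-, \iota^+) \) and \( \sigma \) as whiskerings of \( (0^-, 0^+) \) and \( \varepsilon \) by \( - \pgray X \), so that the preservation properties transfer verbatim.
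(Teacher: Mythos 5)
Your proof is correct and follows the same route as the paper: the paper disposes of \ref{enum:dh0} as holding ``by construction'' (which is exactly the unwinding you give via $\iota^\a = 0^\a \pgray \idd{X}$ and $\varepsilon \after 0^\a = \idd{\pt}$) and derives \ref{enum:dh1} from Lemma \ref{lem:marked_gray_product_preserves_mono} and Lemma \ref{lem:marked_gray_preserve_colimits}, precisely as you do.
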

\begin{proof}
	The axiom \ref{enum:dh0} is satisfied by construction, and \ref{enum:dh1} follows from Lemma \ref{lem:marked_gray_product_preserves_mono} and Lemma \ref{lem:marked_gray_preserve_colimits}.
\end{proof}

\begin{dfn} [Inductive and coinductive model structures] \label{dfn:marked_model_structures}
	Let \( \mdgmSet \) be equipped with
	\begin{enumerate}
		\item the marked cylinder \( ( \markarr \pgray -, (\iota^-, \iota^+), \sigma ) \),
		\item the cellular model \( M \) of Lemma \ref{lem:marked_cellular_model},
	\end{enumerate}
	and let \( n \in \mathbb{N} \cup \set{ \infty } \).
	We define sets of monomorphisms
    \begin{align*}
	    \Jhorn &\eqdef \set{\lambda_U^x \colon (\Lambda_U^x, A) \incl (U, A') \mid \text{$\lambda_U^x$ is a marked horn} }, \\
	    \Jn n &\eqdef \set{\minmark{U} \incl \markmol{U} \mid \text{$U \in \Ob\atom$, $\dim U > n$} }, \\
	    \Jinv &\eqdef \set{\mwalkinv U \incl \fmwalkinv U \mid \text{$U \in \Ob\atom$, $\dim U > 0$}}, \\
	    \Jloc &\eqdef \set{\minmark{\selfloc{U}} \incl \selflocm{U} \mid \text{$U \in \Ob\atom$, $\dim U > 0$}},
    \end{align*}
    and then let
    \[
        \Jind \eqdef \Jhorn \cup \Jn n \cup \Jinv, 
	\quad\quad 
	\Jcoind \eqdef \Jhorn \cup \Jn n \cup \Jloc.
	\]
	By an application of Theorem \ref{thm:olschok_theorem}, the set of monomorphisms
	\begin{itemize}
		\item \( \Jind \) determines the \emph{inductive \( (\infty, n) \)-model structure} on \( \mdgmSet \),
		\item \( \Jcoind \) determines the \emph{coinductive \( (\infty, n) \)-model structure} on \( \mdgmSet \).
	\end{itemize}
\end{dfn}


\subsection{Unmarked model structures}

\begin{dfn} [Reversible cylinder]
	The \emph{reversible cylinder} is the endofunctor
	\[
		\rgray \eqdef \fun{Loc}(\markarr \pgray \minmark{(-)})\colon \dgmSet \to \dgmSet
	\]
	together with the natural transformations \( (\riota^-, \riota^+) \) and \( \rsigma \) obtained by whiskering \( (\iota^-, \iota^+) \) and \( \sigma \) with \( \minmark{(-)} \) on the right and with \( \fun{Loc} \) on the left.
\end{dfn}

\begin{prop} \label{prop:exact_cylinder_dset}
	The reversible cylinder is an exact cylinder on \( \dgmSet \).
\end{prop}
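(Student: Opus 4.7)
The plan is to reduce the verification of the axioms \ref{enum:dh0} and \ref{enum:dh1} for the reversible cylinder to the already-established properties of its three constituent factors, namely $\minmark{(-)}$, $\markarr \pgray -$, and $\fun{Loc}$, whose composition is $\rgray$.

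For \ref{enum:dh1}, I would first argue that each of the three factors preserves small colimits: $\minmark{(-)}$ is a left adjoint to $\fun{U}$, hence cocontinuous; $\markarr \pgray -$ preserves colimits by Lemma \ref{lem:marked_gray_preserve_colimits}; and $\fun{Loc}$ preserves colimits by Proposition \ref{prop:functorial_localisation}. Thus $\rgray$ preserves small colimits. Similarly for monomorphisms: $\minmark{(-)}$ sends a mono $X \incl Y$ to the injection $\minmark{X} \incl \minmark{Y}$ (with the degenerate markings carried along), which is evidently a monomorphism of marked diagrammatic sets; $\markarr \pgray -$ preserves monomorphisms by Lemma \ref{lem:marked_gray_product_preserves_mono}; and $\fun{Loc}$ preserves monomorphisms by Proposition \ref{prop:functorial_localisation}. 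Composition gives the preservation of monomorphisms by $\rgray$.

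For \ref{enum:dh0}, I would exploit the identity $\fun{Loc} \circ \minmark{(-)} = \bigid{\dgmSet}$ noted in the remark after Proposition \ref{prop:functorial_localisation}. By construction, for each $\alpha \in \set{-, +}$, the component $\riota^\alpha_X$ is the image under $\fun{Loc}$ of $\iota^\alpha_{\minmark{X}}\colon \minmark{X} \to \markarr \pgray \minmark{X}$. Since $\iota^\alpha$ is a component-wise monomorphism by Proposition \ref{prop:exact_cylinder_mdset}, and $\fun{Loc}$ preserves monomorphisms by the previous paragraph, $\riota^\alpha_X$ is a monomorphism in $\dgmSet$. For the codiagonal condition, note that the monoidal unit $\pt$ for $\pgray$ satisfies $\pt \pgray \minmark{X} = \minmark{X}$, so the composite $\sigma \minmark{(-)} \circ \iota^\alpha \minmark{(-)}\colon \minmark{X} \to \minmark{X}$ is the identity; applying $\fun{Loc}$ yields $\rsigma \circ \riota^\alpha = \idd{X}$, from which the codiagonal property of $\rsigma \circ (\riota^-, \riota^+)$ follows.

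Essentially every step reduces to a previously stated lemma, so I do not expect a substantive obstacle: the work is purely in unwinding the whiskered definitions and tracking that $\fun{Loc}$ and $\minmark{(-)}$ compose to the identity on $\dgmSet$. The only point worth double-checking carefully is that the natural transformations $(\riota^-, \riota^+)$ and $\rsigma$ as defined by whiskering give exactly the structure maps needed, rather than ones related only up to the canonical isomorphisms $\pt \pgray \minmark{X} \cong \minmark{X}$; but the identifications are strict here, so this should cause no trouble.
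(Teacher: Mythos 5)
Your proof is correct and follows essentially the same route as the paper: \ref{enum:dh1} is obtained by composing the colimit- and monomorphism-preservation of the three factors $\minmark{(-)}$, $\markarr \pgray -$, and $\fun{Loc}$, while \ref{enum:dh0} holds by construction (the paper dispatches this in one line; you usefully spell out that it rests on the strict identity $\fun{Loc} \after \minmark{(-)} = \bigid{\dgmSet}$ and the preservation of monomorphisms by $\fun{Loc}$).
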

\begin{proof}
	The axiom \ref{enum:dh0} is satisfied by construction.
	Now, \( \minmark{(-)} \) preserves colimits and monomorphisms because it is a full and faithful left adjoint, the marked cylinder is exact by Proposition \ref{prop:exact_cylinder_mdset}, and \( \fun{Loc} \) preserves colimits and monomorphisms by Proposition \ref{prop:functorial_localisation}.
	This proves \ref{enum:dh1}.
\end{proof}

\begin{dfn} [$(\infty, n)$-model structure] \label{dfn:infty_n_model_structure}
	Let \( \dgmSet \) be equipped with
	\begin{enumerate}
		\item the reversible cylinder \( (\rgray, (\riota^-, \riota^+), \rsigma) \),
		\item the cellular model \(\set{\bd{}{} U \incl U \mid U \in \Ob\atom } \) \cite[Remark 2.9]{chanavat2024diagrammatic},
	\end{enumerate}
    	and let \( n \in \mathbb{N} \cup \set{\infty} \). 
	We define sets of monomorphisms
    \begin{align*}
	    \Jcomp &\eqdef \set{ U \incl U \eqvto \compos{U} \mid \text{$U$ is a round molecule} }, \\
            \Jn n &\eqdef \set{U \incl \selfloc{U} \mid \text{$U \in \Ob\atom$, $\dim U > n$}}.
    \end{align*}
    The \emph{\( (\infty, n) \)\nbd model structure} on \( \dgmSet \) is the model structure determined by the set \( \Jcomp \cup \Jn n \) according to Theorem \ref{thm:olschok_theorem}.
\end{dfn}

\noindent We note already that an analogue of the main theorem of \cite{hadzihasanovic2024model} holds in the \( (\infty, n) \)\nbd model structure by purely formal reasons.
Let \( U \) be an atom, and consider the evident surjection \( (U \celto U) \surj U \).
This lifts to a morphism of marked atoms \( \markmol{(U \celto U)} \surj \minmark{U} \), so it extends by Proposition \ref{prop:functorial_localisation} to a surjection of diagrammatic sets \( (U \eqvto U) \surj U \).

\begin{prop} \label{prop:coherent_localisation}
    	Let \( U \) be an atom.
	Then \( (U \eqvto U) \surj U \) is a weak equivalence in the \( (\infty, n) \)\nbd model structure.
\end{prop}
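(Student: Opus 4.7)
The plan is to apply the 2-out-of-3 property by exhibiting a section $s\colon U \incl (U \eqvto U)$ of $p$ that lies in $\Jcomp$, hence is an anodyne extension and therefore a weak equivalence.

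First, I would observe that since $U$ is an atom, it is round, and its merger $\compos{U} = \bd{}{-}U \celto \bd{}{+}U$ is the unique atom with the same boundary as $U$, hence canonically identified with $U$. Under this identification, $U \eqvto \compos{U}$ becomes $U \eqvto U$, and the $\Jcomp$-morphism $U \incl U \eqvto \compos{U}$ becomes the inclusion $s\colon U \incl U \eqvto U$ that picks out $U$ as the input face of the atom $U \celto U \incl \selfloc{(U \celto U)}$.

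Next, I would verify that $p \after s = \idd{U}$. Since $p$ is defined as $\fun{Loc}$ applied to the marked collapse $\markmol{(U \celto U)} \surj \minmark{U}$, and its underlying map is the cartesian collapse $(U \celto U) \surj U$, the composite $p \after s$ is just the restriction of this collapse to the input face $U \incl (U \celto U)$, which, under the chosen identification $\compos{U} = U$, is the identity. Combined with $s \in \Jcomp \subseteq \An(\Jcomp \cup \Jn n)$ being a trivial cofibration, 2-out-of-3 applied to $p \after s = \idd{U}$ then yields that $p$ is a weak equivalence.

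The only real obstacle is the bookkeeping around the canonical identification $\compos{U} \cong U$ for atoms, which must be chosen coherently so that $s$ is really the relevant $\Jcomp$-element and $p \after s$ is really $\idd{U}$ on the nose rather than merely up to some nontrivial isomorphism; but since atoms are determined by their boundary in $\atom$, this is routine, and matches the ``purely formal'' nature of the claim.
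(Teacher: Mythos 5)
Your proof is correct and follows exactly the paper's argument: identify $\compos{U} = U$ for an atom $U$, observe that the inclusion $U \incl (U \eqvto U)$ is a $\Jcomp$-morphism and hence an acyclic cofibration, note that it is a section of the surjection, and conclude by 2-out-of-3. The extra bookkeeping you flag about the identification $\compos{U} \cong U$ is indeed routine and the paper does not belabour it either.
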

\begin{proof}
	Since \( U \) is an atom, \( \compos{U} = U \), so the inclusion \( \iota\colon U \incl (U \eqvto U) \) is in \( \Jcomp \), and is in particular an acyclic cofibration.
	Now \( \iota \) is a section of \( (U \eqvto U) \surj U \), so we conclude by the 2-out-of-3 property.
\end{proof}

\noindent We readily show that all fibrant objects are \( (\infty, n) \)\nbd categories.

\begin{lem} \label{lem:fibrant_is_inftyn}
	Let \( n \in \mathbb{N} \cup \set{\infty} \) and let \( X \) be fibrant in the \( (\infty, n) \)\nbd model structure.
	Then \( X \) is an \( (\infty, n) \)\nbd category.
\end{lem}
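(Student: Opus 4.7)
The plan is to read off the two defining properties of an $(\infty, n)$-category directly from fibrancy, using the two subsets $\Jcomp$ and $\Jn{n}$ that constitute the generating anodynes, together with the characterisations already proved in Propositions \ref{prop:universal_property_weak_composites} and \ref{prop:inftyn_lifting_property}. Since $X$ is fibrant, the unique morphism $X \to \pt$ is in $r(\An(\Jcomp \cup \Jn{n}))$, and in particular it has the right lifting property against every element of $\Jcomp \cup \Jn{n}$.

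First I would establish that $X$ is an $(\infty, \infty)$-category. Let $u \colon U \to X$ be a round diagram, so $U$ is a round molecule. The inclusion $U \incl (U \eqvto \compos{U})$ belongs to $\Jcomp$, so by fibrancy there is an extension $\tilde c(u) \colon (U \eqvto \compos{U}) \to X$ of $u$. By the implication \ref{enum:infty_c} $\Rightarrow$ \ref{enum:infty_a} of Proposition \ref{prop:universal_property_weak_composites}, this suffices to conclude that $X$ has weak composites, i.e.\ is an $(\infty, \infty)$-category.

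Next, I would upgrade this to an $(\infty, n)$-category by verifying the invertibility condition in dimensions $> n$. Let $u \colon U \to X$ be a cell with $\dim U > n$; then the inclusion $U \incl \selfloc{U}$ belongs to $\Jn{n}$, so fibrancy gives an extension $\selfloc{u} \colon \selfloc{U} \to X$. Since we have already shown $X$ is an $(\infty, \infty)$-category, Proposition \ref{prop:inftyn_lifting_property} applies and tells us precisely that $u$ is an equivalence. Combining the two steps, $X$ satisfies both defining conditions, so it is an $(\infty, n)$-category.

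There is essentially no obstacle here: the generating set $\Jcomp \cup \Jn{n}$ was defined exactly so that the lifting conditions match the clauses (b) of Propositions \ref{prop:universal_property_weak_composites} and \ref{prop:inftyn_lifting_property}. The only subtlety worth flagging is the order of the two steps --- the characterisation of equivalences by extension along $U \incl \selfloc{U}$ in Proposition \ref{prop:inftyn_lifting_property} presupposes that $X$ already has weak composites, so one must first dispatch the $\Jcomp$-part of fibrancy before invoking the $\Jn{n}$-part.
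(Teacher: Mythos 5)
Your proposal is correct and is essentially the paper's own argument: the paper's proof is the one-line observation that Propositions \ref{prop:universal_property_weak_composites} and \ref{prop:inftyn_lifting_property} together show that the right lifting property against \( \Jcomp \cup \Jn n \) is equivalent to being an \( (\infty, n) \)\nbd category, which is exactly what you spell out. Your remark about the order of the two steps (establishing weak composites before invoking Proposition \ref{prop:inftyn_lifting_property}) is a correct and worthwhile observation that the paper leaves implicit.
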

\begin{proof}
	By Proposition \ref{prop:universal_property_weak_composites} combined with Proposition \ref{prop:inftyn_lifting_property}, a diagrammatic set \( X \) has the right lifting property against \( \Jcomp \cup \Jn n \) if and only if it is an \( (\infty, n) \)\nbd category.
\end{proof}

\noindent In Theorem \ref{thm:fibrant_iff_infty_category}, we will also establish the converse.

\section{Characterisation and comparison} \label{sec:comparison}

\subsection{Inductive and coinductive model structures} \label{subsec:ind_coind}

Throughout this section, we fix \( n \in \mathbb{N} \cup \set{\infty} \).

\begin{lem} \label{lem:marked_cell_is_equivalence}
    Let \( (X, A) \) be a marked diagrammatic set and suppose that \( (X, A) \) has the right lifting property against \( \Jhorn \).
    Then \( A \subseteq \eqvcell X \).
\end{lem}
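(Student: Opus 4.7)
The plan is to prove this by coinduction, using the technique of \ref{comm:coinduction_proof_technique}: it suffices to show that $A \subseteq \B(A)$, that is, for each marked cell $a \colon u \celto v$ in $A$, to exhibit round diagrams $a^L, a^R \colon v \celto u$ together with witnesses $z \colon a \cp{} a^L \celto \un u$ and $h \colon \un v \celto a^R \cp{} a$ such that $z, h \in A$. The strategy is to extract each datum as the image of a distinguished cell in a lift of an appropriately chosen marked horn, in the spirit of the informal reading of \ref{comm:marked_horns_are_equations}: the horn encodes the left-invertibility (respectively right-invertibility) equation, with $a$ playing the role of the weakly invertible context.

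Concretely, for $a$ of shape $U_a$ and positive dimension $n$, I would construct an $(n+1)$\nbd atom $U^L$ whose negative boundary is a round pasting $U_a \cp{n-1} V^L$, with $V^L$ an $n$\nbd atom of type $v \celto u$, and whose positive boundary is the shape of the unit $\un u$. Letting $x^L$ denote the codimension-one element of $U^L$ corresponding to $V^L$, the inclusion $\Lambda^{x^L}_{U^L} \incl U^L$, equipped with the marking that contains only the top cell of the $U_a$-component among the non-degenerate cells, is a marked horn belonging to $\Jhorn$: the required pasting decomposition of $\bd{}{-}U^L$ is realised with $\order{n}{L} = U_a$, and with $\order{n}{R}$ and $\order{i}{L}, \order{i}{R}$ for $i < n$ all taken as appropriate lower-dimensional boundary components so that the inner pasting collapses onto $V^L$. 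The evident morphism $\Lambda^{x^L}_{U^L} \to X$ sending $U_a$ to $a$ and the rest of the horn consistently to $\un u$ is a morphism of marked diagrammatic sets, since $a \in A$ and degenerate cells of $X$ are always in $A$. Applying the right lifting property of $(X, A)$ against $\Jhorn$ then yields an extension $U^L \to X$ whose values on $x^L$ and on the top element of $U^L$ produce the desired $a^L$ and $z$; the latter lies in $A$ because the top element of $U^L$ belongs to the post-lift marking of any marked horn.

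A symmetric construction, with $\un v$ in place of $\un u$ and the orientations reversed, gives a right inverse $a^R$ and an invertor $h \in A$, whence $a \in \B(A)$. Since $a \in A$ was arbitrary, $A \subseteq \B(A)$, and the coinductive proof method yields $A \subseteq \eqv X$; combined with $A \subseteq \cell X$, this gives $A \subseteq \eqvcell X$. I expect the main obstacle to be the first step: constructing the atoms $U^L, U^R$ and verifying formally that the attendant inclusions satisfy the pasting-decomposition conditions of a marked horn. This reduces to checking that $U_a \cp{n-1} V^L$ is a round molecule parallel to the unit shape on $u$ (so that the associated atom exists), and that the lower-dimensional $\order{i}{L}, \order{i}{R}$ can be chosen trivially so that the inner pasting collapses onto $V^L$; both are routine exercises within the combinatorics of \cite{hadzihasanovic2024combinatorics}.
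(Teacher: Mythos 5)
Your proposal is correct and follows essentially the same route as the paper: coinduction via \( A \subseteq \B(A) \), realised by lifting against a marked horn of an \( (n+1) \)\nbd atom whose input boundary is \( a \cp{} a^L \) and whose output boundary is the unit shape, with the symmetric construction for the right inverse. The only difference is that the atom \( U^L \) you propose to build by hand, and whose construction you flag as the main obstacle, is exactly the left-inverted partial Gray cylinder \( \lcyl{\bd{}{+}U_a}{U_a} \) already set up in Section \ref{sec:diagrams} (with the horn taken at \( x = (0^+,\top) \)), so that step is available off the shelf and the verification of the marked-horn conditions is as routine as you expect.
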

\begin{proof}
	By coinduction, it suffices to show that \( A \subseteq \B(A) \).
	Let \( e\colon V \to X \) be a marked cell and let \( \top \) be the greatest element of \( V \).
	We let
	\[
		U \eqdef \lcyl{\bd{}{+}V}V,
		\quad 
		B \eqdef \set{(0^-, \top)} \cup \faces{}{+}U,
		\quad 
		B' \eqdef B \cup \set{(0^+, \top), (1, \top)}.
	\]
	Then, letting \( x \eqdef (0^+, \top) \), the inclusion \( \lambda^x_U\colon (\Lambda^x_U, B) \incl (U, B') \) is a marked horn, and we have a morphism \( q\colon (\Lambda^x_U, B) \to (X, A) \) restricting to \( \un(\bd{}{-}e) \) on \( \bd{}{+}U \) and to \( e \) on \( V = \clset{(0^-, \top)} \); this is well-defined because all top-dimensional cells in \( \un(\bd{}{-}e) \) are degenerate, so they are marked.
	Extending this morphism along \( \lambda^x_U \) produces marked cells \( e^L \) and \( z\colon e \cp{} e^L \celto \un(\bd{}{-}e) \).
	Dually, we construct a marked horn of \( \rcyl{\bd{}{-}V} \), and the lifting property produces marked cells \( e^R \) and \( h\colon \un(\bd{}{+}e) \celto e^R \cp{} e \).
	This proves \( A \subseteq \B(A) \).
\end{proof}

\begin{lem} \label{lem:fibrant_is_category}
	Let \( (X, A) \) be a marked diagrammatic set and suppose \( (X, A) \) has the right lifting property against \( \Jhorn \cup \Jn n \).
	Then \( X \) is an \( (\infty, n) \)\nbd category.
\end{lem}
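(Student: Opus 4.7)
\emph{Proof plan.}
The goal is to establish both conditions defining an \( (\infty, n) \)\nbd category: (i) every round diagram in \( X \) has a weak composite, and (ii) every cell of dimension \( > n \) is an equivalence. In both cases the idea is to produce the required cells by the right lifting property, and to invoke Lemma \ref{lem:marked_cell_is_equivalence} (which applies under the standing hypothesis on \(\Jhorn\)) to conclude that any cell shown to lie in \( A \) is an equivalence in \( X \).

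For (ii), take a cell \( u\colon U \to X \) with \( \dim U > n \). The induced morphism \( \minmark{U} \to (X, A) \) is automatically a map of marked diagrammatic sets, since only degenerate cells are marked in \( \minmark{U} \). The RLP against the element \( \minmark{U} \incl \markmol{U} \) of \( \Jn{n} \) yields an extension, meaning \( u \in A \), so \( u \) is an equivalence by Lemma \ref{lem:marked_cell_is_equivalence}.

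For (i), take a round diagram \( u\colon U \to X \) and consider the atom \( V \eqdef U \celto \compos{U} \) with top element \( \top \). Let \( y \) denote the top of \( \bd{}{+}V = \compos{U} \). A direct calculation shows that \( \Lambda^y_V \cong U \): the boundary \( \bd{}{+}V = \compos{U} \) differs from \( \bd{}{-}V = U \) only by the addition of \( y \), and removing the interior of \( \clset{y} \) exactly cancels this. The plan is to present the atomic horn \( \lambda^y_V \) as a marked horn \( (U, \dgncell U) \incl (V, A') \) with \( \top \in A' \), by exhibiting a ``trivial'' decomposition of \( \bd{}{+}V = \clset{y} \) in which each \( L^i \) and \( R^i \) is the \( (i-1) \)\nbd dimensional boundary of \( \clset{y} \). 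These molecules have dimension strictly less than \( i \), so the marking condition \( \gr{i}{L^i} \cup \gr{i}{R^i} \subseteq A \) imposes no constraint, and each boundary pasting reduces to the identity, leaving \( \clset{y} \). Then the morphism \( u\colon (U, \dgncell U) \to (X, A) \) lifts against \( \lambda^y_V \), producing a filler \( c\colon (V, A') \to (X, A) \) extending \( u \). Because \( \top \in A' \), the cell \( c \) is marked in \( A \), hence an equivalence \( u \eqvto \bd{}{+}c \) by Lemma \ref{lem:marked_cell_is_equivalence}; setting \( \compos{u} \eqdef \bd{}{+}c \) provides the required weak composite.

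The main delicate step is checking that the atomic horn \( \lambda^y_V \) does admit the structure of a marked horn via the trivial decomposition above (and isolating the \( \dim U = 0 \) case, for which the weak composite exists trivially); once this is granted, the rest is purely formal.
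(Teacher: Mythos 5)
Your proposal is correct and follows essentially the same route as the paper: part (ii) is verbatim the paper's argument (lift against \( \minmark{U} \incl \markmol{U} \), then apply Lemma \ref{lem:marked_cell_is_equivalence}), and part (i) realises the boundary inclusion \( \minmark{U} \incl \markmol{(U \celto \compos{U})} \) as a marked horn and lifts against it, exactly as in the paper. The only difference is that you spell out the trivial decomposition witnessing the marked-horn structure, which the paper leaves implicit, and you conclude directly rather than via Proposition \ref{prop:universal_property_weak_composites}; both are fine.
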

\begin{proof}
	Let \( u\colon U \to X \) be a round diagram in \( X \).
	Then \( \minmark{U} \incl \markmol{(U \celto \compos{U})} \) is a marked horn, so by assumption \( u\colon \minmark{U} \to (X, A) \) extends along the horn to \( c(u)\colon \markmol{(U \celto \compos{U})} \to (X, A) \).
	By Lemma \ref{lem:marked_cell_is_equivalence}, \( c(u) \) is an equivalence in \( X \), so by Proposition \ref{prop:universal_property_weak_composites}, \( X \) is an \( (\infty, \infty) \)\nbd category.
	If \( n = \infty \), we are done.
	Otherwise, let \( u\colon U \to X \) be a cell of dimension \( > n \) in \( X \).
	Then, because \( (X, A) \) has the right lifting property against \( \minmark{U} \incl \markmol{U} \), \( u \) is marked.
	By Lemma \ref{lem:marked_cell_is_equivalence}, \( u \) is an equivalence, so \( X \) is an \( (\infty, n) \)\nbd category.
\end{proof}

\begin{lem} \label{lem:equivalences_marked_jloc}
	Let \( (X, A) \) be a marked diagrammatic set and suppose that \( (X, A) \) has the right lifting property against \( \Jhorn \).
	The following are equivalent:
	\begin{enumerate}[label=(\alph*)]
		\item \( (X, A) \) has the right lifting property against \( \Jloc \);
		\item \( A = \eqvcell X \).
	\end{enumerate}
\end{lem}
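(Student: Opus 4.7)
The plan is to unfold both sides into statements about cells and equivalences, then use Proposition \ref{prop:universal_property_localisation} to mediate between them. The key observation is that the morphisms in \( \Jloc \) are entire inclusions, so a lifting problem \( \minmark{\selfloc{U}} \incl \selflocm{U} \) against \( (X, A) \) is equivalent to asking: for every morphism \( f\colon \selfloc{U} \to X \), the image of the distinguished cell \( U \incl \selfloc{U} \) lies in \( A \). With this reformulation, the equivalence becomes quite transparent.

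First, I would record two preliminary facts that follow from the standing hypothesis that \( (X, A) \) has the right lifting property against \( \Jhorn \). On one hand, Lemma \ref{lem:marked_cell_is_equivalence} gives \( A \subseteq \eqvcell X \). On the other, the first part of the proof of Lemma \ref{lem:fibrant_is_category}---which only uses \( \Jhorn \), not \( \Jn{n} \)---shows that \( X \) is an \( (\infty, \infty) \)-category, so that Proposition \ref{prop:universal_property_localisation} is available with codomain \( X \).

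For the implication from (b) to (a), take any \( f\colon \minmark{\selfloc{U}} \to (X, A) \). By construction of \( \selfloc{U} \), the cell \( U \incl \selfloc{U} \) is an equivalence in \( \selfloc{U} \), and by Theorem \ref{thm:properties_of_equivalences} its image in \( X \) is an equivalence, hence lies in \( \eqvcell X = A \). This supplies the required extension along \( \minmark{\selfloc{U}} \incl \selflocm{U} \). Conversely, to prove (a) implies (b), it suffices to show \( \eqvcell X \subseteq A \) since the reverse inclusion is the preliminary fact above. Given an equivalence cell \( e\colon U \to X \), apply Proposition \ref{prop:universal_property_localisation} to the marked atom \( \markmol{U} \) (noting \( \loc{U}{\set{U}} = \selfloc{U} \)) and the morphism \( e \), which sends the unique marked non-degenerate cell to an equivalence by hypothesis; this produces an extension \( \tilde{e}\colon \selfloc{U} \to X \). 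Viewed as a morphism \( \minmark{\selfloc{U}} \to (X, A) \), the lifting property against \( \Jloc \) forces \( \tilde{e}(U \incl \selfloc{U}) = e \) to lie in \( A \).

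There is no real obstacle here; the only point of care is verifying that the hypothesis on \( \Jhorn \) is enough to deploy Proposition \ref{prop:universal_property_localisation}, which we handled in the preliminary step. The proof is essentially a bookkeeping exercise reconciling the two descriptions of equivalences---as cells that behave like equivalences (via \( \eqvcell X \)) and as cells that extend to walking equivalences (via maps out of \( \selfloc{U} \)).
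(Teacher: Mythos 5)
Your proof is correct and follows essentially the same route as the paper's: both directions reduce, via the observation that \( \Jloc \) consists of entire inclusions, to whether the image of the distinguished cell of \( \selfloc{U} \) lies in \( A \), with Lemma \ref{lem:marked_cell_is_equivalence}, Lemma \ref{lem:fibrant_is_category}, and Proposition \ref{prop:universal_property_localisation} supplying the same preliminary facts. If anything, your treatment of the (b)\( \Rightarrow \)(a) direction (handling an arbitrary morphism \( \minmark{\selfloc{U}} \to (X,A) \) and noting its distinguished cell is automatically an equivalence) is slightly more explicit than the paper's, which folds both directions into a single ``if and only if \( e \) is marked'' clause.
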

\begin{proof}
	By Lemma \ref{lem:marked_cell_is_equivalence}, \( A \subseteq \eqvcell X \), and by Lemma \ref{lem:fibrant_is_category}, \( X \) is an \( (\infty, \infty) \)\nbd category.
	Let \( e \) be an equivalence of shape \( U \) in \( X \).
	By Proposition \ref{prop:universal_property_localisation}, \( e \) extends along the inclusion \( U \incl \selfloc{U} \) to a morphism \( \selfloc{U} \to X \), which has a transpose of type \( \minmark{\selfloc{U}} \to (X, A) \).
	This extends to a morphism \( \selflocm{U} \to (X, A) \) if and only if \( e \) is marked, and in either case, \( \eqvcell X \subseteq A \), so \( A = \eqvcell X \).
\end{proof}

\begin{lem} \label{lem:inftyn_natmark_has_rlp}
	Let \( X \) be an \( (\infty, n) \)\nbd category.
	Then \( \natmark{X} \) has the right lifting property against \( \Jcoind \).
\end{lem}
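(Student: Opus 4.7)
The plan is to verify that $\natmark{X}$ has the right lifting property against each of the three generating families making up $\Jcoind = \Jhorn \cup \Jn n \cup \Jloc$. Recall that the marking of $\natmark{X}$ is precisely $\eqvcell X$, and a morphism of marked diagrammatic sets into $\natmark{X}$ is the same as an unmarked morphism that sends marked cells to equivalences in $X$.

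I would handle $\Jn n$ and $\Jloc$ first, since both cases reduce quickly to statements about equivalences. For a generator $\minmark{U} \incl \markmol{U}$ in $\Jn n$ with $\dim U > n$, a lift against $\natmark{X}$ amounts to asking that the underlying cell $U \to X$ is an equivalence, which holds by the hypothesis that $X$ is an $(\infty, n)$\nbd category. For a generator $\minmark{\selfloc{U}} \incl \selflocm{U}$ in $\Jloc$, a morphism $\minmark{\selfloc{U}} \to \natmark{X}$ is just a morphism $f\colon \selfloc{U} \to X$, and a lift to $\selflocm{U}$ exists if and only if $f$ sends the canonical inclusion $U \incl \selfloc{U}$ to an equivalence in $X$. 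By construction of the localisation, this inclusion is already an equivalence in $\selfloc{U}$, and equivalences are preserved by morphisms of diagrammatic sets by Theorem \ref{thm:properties_of_equivalences}.

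The main case is $\Jhorn$. Given a marked horn $\lambda^x_U\colon (\Lambda^x_U, A) \incl (U, A')$ and a morphism $q\colon (\Lambda^x_U, A) \to \natmark{X}$, I would unpack the problem using the equational reading of Commentary \ref{comm:marked_horns_are_equations}: the horn data together with $q$ present an equation $\fun{E}x \qeq b$ in $X$, where $\fun{E}$ is a round context extracted from the restriction of $q$ to $\bd{}{\a}U \cap \Lambda^x_U$ and $b$ is the round diagram $\restr{q}{\bd{}{-\a}U}$. Because $q$ lands in $\natmark{X}$, the marking conditions on $A$ force $\fun{E}$ to be a weakly invertible context. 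Since $X$ has weak composites as an $(\infty, \infty)$\nbd category, I can take $a$ to be a cell representing a weak composite of the image of $b$ under a weak inverse of $\fun{E}$, so that $\fun{E}a \simeq b$; Lemma \ref{lem:equivalences_exhibited_by_cells} realises this equivalence as a cell $h$ of the appropriate shape, to be assigned to the top element $\top$ of $U$. The pair $(a, h)$ defines an extension, and since $h$ is by construction an equivalence, the marking condition $\top \in A'$ is respected. When $\faces{}{-\a}U \subseteq A$, the diagram $b$ is itself an equivalence, and the closure properties of equivalences under weakly invertible contexts then force $a$ to be an equivalence as well, meeting the additional marking $x \in A'$.

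The main obstacle is the equation-solving step for marked horns: both the translation of the horn data into a weakly invertible equation and the consequent construction of a solution lean on results from \cite{chanavat2024equivalences} about weakly invertible contexts and the closure of equivalences under them, which must be invoked carefully, and the choice of $\a$ together with the orientation of $h$ requires splitting into the two symmetric subcases.
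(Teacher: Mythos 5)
Your proof is correct and follows essentially the same route as the paper: the $\Jhorn$ case is handled identically via the equational reading of marked horns, weak invertibility of the context $\fun{E}$, weak composites, and Lemma \ref{lem:equivalences_exhibited_by_cells}, with the same case split on the orientation of $x$, and the $\Jn{n}$ case is the same one-line observation. The only cosmetic difference is that for $\Jloc$ you argue directly that the cell $U \incl \selfloc{U}$ is an equivalence in $\selfloc{U}$ and hence has equivalence image, whereas the paper delegates this to Lemma \ref{lem:equivalences_marked_jloc}; the content is the same.
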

\begin{proof}
	First, consider a marked horn \( \lambda^x_U\colon (\Lambda^x_U, A) \to (U, A') \) and a morphism \( q\colon (\Lambda^x_U, A) \to (U, A') \).
	As detailed in Comment \ref{comm:marked_horns_are_equations}, \( q \) determines an equation \( \fun{E}x \qeq b \), where \( \fun{E} \) is a weakly invertible context; moreover, if \( A' = A \cup \set{\top, x} \), then \( b \) is an equivalence.
	By \cite[Lemma 5.10]{chanavat2024equivalences}, this equation has a weakly unique solution \( a \), given by \( \fun{E}^*b \) where \( \fun{E}^* \) is a weakly invertible weak inverse to \( \fun{E} \).
	Since \( X \) has weak composites, we can assume that \( a \) is a cell, and by Lemma 
	\ref{lem:equivalences_exhibited_by_cells} there exist cells \( h \colon \fun{E}a \eqvto b \) and \( h^* \colon b \eqvto \fun{E}a \) exhibiting the equivalence.
	Moreover, if \( b \) is an equivalence, then \( a \simeq \fun{E}^*b \) is also an equivalence.
	Thus, depending on whether \( x \in \faces{}{-} U \) or \( x \in \faces{}{+} U \), \( h \) or \( h^* \) is an extension of \( q \) along \( \lambda^x_U \).
	This proves the right lifting property of \( \natmark{X} \) against \( \Jhorn \).
	Moreover, by Lemma \ref{lem:equivalences_marked_jloc}, \( \natmark{X} \) has the right lifting property against \( \Jloc \).
	If \( n = \infty \), we are done.
	Otherwise, let \( u \) be a cell of dimension \( > n \) and shape \( U \).
	Then \( u \) is an equivalence in \( X \), hence it is marked.
	It follows that \( u \) extends along \( \minmark{U} \incl \markmol{U} \).
	This proves the right lifting property against \( \Jn n \).
\end{proof}

\begin{lem} \label{lem:pseudo_gray_entire_is_id}
    Let \( m \colon (X, A) \incl (X, B) \) be entire.
    Then \( (\iota^-, \iota^+) \ppnat m \) is the identity on \( \markarr \pgray (X, B) \).
\end{lem}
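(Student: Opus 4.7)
The plan is to reduce the statement to a straightforward verification at the levels of the underlying diagrammatic set and of the marking, exploiting the fact that \( \fun{U} \) is strict monoidal and creates monomorphisms and colimits.

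First, I would unwind what ``entire'' means: the underlying morphism \( \fun{U}m\colon X \to X \) is the identity, and \( A \subseteq B \). Since \( \fun{U} \) is strict monoidal (as noted after the definition of the pseudo-Gray product) and preserves colimits, applying \( \fun{U} \) to the pushout-product yields the pushout-product \( (\iota^-, \iota^+) \ppnat \idd{X} \) in \( \dgmSet \), which by the remark after the pushout-product definition is just \( (\iota^-, \iota^+)_X\colon X \amalg X \to \arr \gray X \) factored through itself. More precisely, the pushout underlying \( (\iota^-, \iota^+) \ppnat m \) has underlying diagrammatic set \( \arr \gray X \), and the induced morphism to \( \markarr \pgray (X,B) \) is the identity on underlying diagrammatic sets.

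Next, I would check that the markings coincide. By Remark \ref{rmk:marked_cell_cylinder}, the marking on \( \markarr \pgray (X, B) \) is
\[
    \dgncell(\arr \gray X) \cup \set{0^-} \gray B \cup \set{1} \gray \cell X \cup \set{0^+} \gray B.
\]
By Corollary \ref{cor:marked_dset_limits_colimits}, the marking on the pushout forming the domain of \( (\iota^-, \iota^+) \ppnat m \) is the union of the markings coming from \( \markarr \pgray (X, A) \) and the two copies of \( (X, B) \) glued in along \( \iota^- \) and \( \iota^+ \). Using Remark \ref{rmk:marked_cell_cylinder} again for \( \markarr \pgray (X, A) \) and the fact that the two copies of \( (X,B) \) embed via \( \iota^\a \) as \( \set{0^\a} \gray \cell X \), the marking on the pushout is
\[
    \dgncell(\arr \gray X) \cup \set{0^-} \gray (A \cup B) \cup \set{1} \gray \cell X \cup \set{0^+} \gray (A \cup B).
\]
Since \( A \subseteq B \), we have \( A \cup B = B \), so the two markings agree.

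The combination of these two observations gives that \( (\iota^-, \iota^+) \ppnat m \) is the identity on \( \markarr \pgray (X, B) \). No real obstacle is expected; the only point requiring care is the correct bookkeeping of the pushout marking via Corollary \ref{cor:marked_dset_limits_colimits}, in particular tracking where the \( \iota^\a \) components land in \( \markarr \pgray (X,B) \).
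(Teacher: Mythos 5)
Your proposal is correct and follows essentially the same route as the paper's proof: observe that the underlying morphism is a pushout-product with an identity (hence an identity of diagrammatic sets), then compute the marking on the pushout via the two coprojections and match it against Remark \ref{rmk:marked_cell_cylinder} using \( A \subseteq B \). The bookkeeping of the induced marking via Corollary \ref{cor:marked_dset_limits_colimits} is exactly what the paper does, just phrased slightly more explicitly.
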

\begin{proof}
    First of all, \( (\iota^-, \iota^+) \ppnat m \) is entire on \( \arr \gray X \), as its underlying morphism of diagrammatic sets is a pushout-product with an identity morphism.
    Let \( (\arr \gray X, B') \) denote the domain of \( (\iota^-, \iota^+) \ppnat m \). 
    The pushout coprojection \( (X, B) \coprod (X, B) \to (\arr \gray X, B') \) ensures that the cells \( \set{0^\a} \gray B \) belong to \( B' \), for all \( \a \in \set{-, +} \). 
    The other pushout coprojection \( \markarr \pgray (X, A) \to (\arr \gray X, B') \) ensures that the cells \( \set{1} \gray \gr{>0}{\cell X} \) belong to \( B' \). 
    By Remark \ref{rmk:marked_cell_cylinder}, these are all the non-degenerate marked cells in \( \markarr \pgray (X, B) \).
    We conclude that \( (\iota^-, \iota^+) \ppnat m \) is the identity.
\end{proof}

\begin{lem} \label{lem:marked_horns_closed_pushout_product}
	Let \( \lambda^x_U\colon (\Lambda^x_U, A) \incl (U, A') \) be a marked horn.
	Then the pushout-product \( (\iota^-, \iota^+) \ppnat \lambda^x_U \) is a marked horn.
\end{lem}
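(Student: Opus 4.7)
The plan is to identify $(\iota^-, \iota^+) \ppnat \lambda_U^x$ as an atomic horn of the atom $\arr \gray U$ and to equip it with marked horn structure by lifting the given structure on $\lambda_U^x$ via the Gray product.

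First, I would compute the underlying morphism of diagrammatic sets. The pushout's domain equals $(\{0^-, 0^+\} \gray U) \cup (\arr \gray \Lambda_U^x)$ as a subcomplex of $\arr \gray U$, which misses precisely $(1, x)$ and $(1, \top)$, where $\top$ is the greatest element of $U$. The partial Gray cylinder formula yields $(1, x) \in \faces{}{-\a}(1, \top)$ whenever $x \in \faces{}{\a}U$, so $(\iota^-, \iota^+) \ppnat \lambda_U^x$ is, on underlying diagrammatic sets, the atomic horn $\lambda^{(1, x)}_{\arr \gray U}$. Next, I would compute the markings using Remark~\ref{rmk:marked_cell_cylinder} and the pseudo-Gray formula: the codomain marking is $\dgncell(\arr \gray U) \cup (\{0^-, 0^+\} \gray A') \cup (\{1\} \gray \cell U)$, while the domain marking differs only by having $\{1\} \gray \cell \Lambda^x_U$ in place of $\{1\} \gray \cell U$. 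Thus the newly marked cells in the codomain are exactly $(1, x)$ and $(1, \top)$. Since $\top \in A'$ (from the marked horn hypothesis) and $x \notin \faces{}{-\a}U$ (as the top-dimensional input and output faces of an atom are disjoint), the opposite face $\faces{}{\a}(1, \top) = \{(0^\a, \top)\} \cup \{(1, y) \mid y \in \faces{}{-\a}U\}$ lies entirely in the domain marking, putting us in the ``stronger'' case of the marked horn definition.

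Finally, I would exhibit the required rewritable submolecule decomposition of $\bd{}{-\a}(\arr \gray U)$ with $\clset{(1, x)}$ at the centre. The Gray product boundary formula expresses $\bd{}{-\a}(\arr \gray U)$ as a pasting along $\cp{n-1}$ of $\{0^{-\a}\} \gray U$ with $\arr \gray \bd{}{\a}U$, where $n \eqdef \dim U$ and the order of the factors depends on $\a$. Applying the identity $\arr \gray (a \cp{k} b) = (\arr \gray a) \cp{k+1} (\arr \gray b)$, the given decomposition of $\bd{}{\a}U$ lifts to a decomposition of $\arr \gray \bd{}{\a}U$ with $\arr \gray \clset{x} = \clset{(1, x)}$ at the centre. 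Combining, I can set $L'_{i+1} \eqdef \arr \gray L_i$ and $R'_{i+1} \eqdef \arr \gray R_i$ for $i = 1, \ldots, n-2$, take $L'_1, R'_1$ as the 0-source and 0-target of $(1, x)$ (so that $L'_1 \cp{0} \clset{(1, x)} \cp{0} R'_1 = \clset{(1, x)}$ trivially), and fold $\{0^{-\a}\} \gray U$ together with $\arr \gray L_{n-1}$ or $\arr \gray R_{n-1}$ (depending on $\a$) into $L'_n$ or $R'_n$, with the other being a suitable unit-like $(n-1)$-molecule. Dimension bounds follow from $\dim(\arr \gray Z) = \dim Z + 1$, and markedness of the top cells is immediate since each is of the form $(1, y) \in \{1\} \gray \cell U$ or $(0^{-\a}, \top) \in \{0^-, 0^+\} \gray A'$. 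The main obstacle is the combinatorial bookkeeping at this last step: correctly matching the lifted decomposition to the template of the marked horn definition, handling the asymmetry between $L'_n$ and $R'_n$, and ensuring the unit-like filler molecules are valid at the innermost and outermost layers.
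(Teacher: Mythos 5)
Your first two paragraphs are correct and coincide with the paper's argument: the underlying morphism is the atomic horn \( \lambda^{(1,x)}_{\arr \gray U} \), the cells marked in the codomain but not the domain are exactly \( (1,\top) \) and \( (1,x) \), and since \( \top \in A' \) and \( x \notin \faces{}{-\a}U \), the face set \( \faces{}{\a}(\arr \gray U) = \set{(0^\a,\top)} \cup \set{(1,y) \mid y \in \faces{}{-\a}U} \) is entirely marked in the domain, so the stronger clause of the definition is the one that must be verified.

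The gap is in your third step. The identity \( \arr \gray (a \cp{k} b) = (\arr \gray a) \cp{k+1} (\arr \gray b) \) is false. Because \( \faces{}{-}(1,z) \ni (1,y) \) for \( y \in \faces{}{+}z \), one has \( \arr \gray \bd{}{+}V \subseteq \bd{}{-}(\arr \gray V) \): the Gray cylinder \emph{reverses} the pasting order in the swept codimension, so \( \arr \gray b \) is pasted \emph{before} \( \arr \gray a \); moreover the interfaces \( \bd{k+1}{+}(\arr \gray b) = \set{0^+} \gray b \cup \arr \gray \bd{k}{-}b \) and \( \bd{k+1}{-}(\arr \gray a) = \set{0^-} \gray a \cup \arr \gray \bd{k}{+}a \) are not equal, so what one obtains is a generalised pasting \( \gencp{k+1} \) along a submolecule, not a plain pasting. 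Consequently your assignment \( L'_{i+1} \eqdef \arr \gray \order{i}{L} \), \( R'_{i+1} \eqdef \arr \gray \order{i}{R} \) puts each factor on the wrong side, and the layers \( \set{0^-} \gray U \) and \( \set{0^+} \gray \bd{}{\a}U \) cannot be confined to the outermost level: when the generalised pastings are expanded, their boundary elements are distributed through every level of the onion. In particular the innermost factors are not trivial points: in the correct decomposition (for \( \a = + \)) one has \( \gr{1}{\order{1}{\overbar{L}}} = \set{0^-} \gray \gr{1}{\order{1}{L}} \), \( \gr{1}{\order{1}{\overbar{R}}} = \set{0^+} \gray \gr{1}{\order{1}{R}} \), and at level \( i \geq 2 \) the left factor collects \( \set{0^-} \gray \gr{i}{\order{i}{L}} \cup \set{1} \gray \gr{i-1}{\order{i-1}{R}} \) --- note the crossing of \( L \) and \( R \). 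The paper gets this by taking the generalised-pasting decomposition of \( \bd{}{-}(\arr \gray U) \) from \cite[Proposition 7.2.16]{hadzihasanovic2024combinatorics} and expanding it via \cite[Lemma 7.1.4]{hadzihasanovic2024combinatorics}; since every resulting top cell is of the form \( (1,y) \) or \( (0^\pm, y) \) with \( y \) marked, the dimension and markedness conditions then do hold. So your overall strategy is the right one, but the construction as written does not produce a valid pasting decomposition, and this is exactly the step you flagged as the main obstacle.
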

\begin{proof}
	It follows from \cite[Lemma 3.16]{chanavat2024diagrammatic} that the underlying morphism of \( (\iota^-, \iota^+) \ppnat \lambda^x_U \) is the atomic horn \( \lambda^{(1, x)}_{\arr \gray U}\colon \Lambda^{(1, x)}_{\arr \gray U} \incl \arr \gray U \).
	Assume, without loss of generality, that \( x \in \faces{}{+}U \), the other situation being dual, and let \( (\order{i}{L}, \order{i}{R})_{i=1}^k \) be the molecules in a decomposition of \( \bd{}{+} U \) as by the definition of marked horn.
	Let \( V W \eqdef V \gray W \) for \( V \submol \arr \) and \( W \submol U \).
	By \cite[Proposition 7.2.16]{hadzihasanovic2024combinatorics}, \( \bd{}{-} \arr U \) decomposes into a ``generalised pasting''
	\[
		\set{0^-}U \gencp{k} (\arr\order{k}{R} \gencp{k} (\ldots \gencp{2} (\arr\order{1}{R} \gencp{1} \clset{(1, x)} \gencp{1} \arr\order{1}{L}) \gencp{2} \ldots) \gencp{k} \arr\order{k}{L}),
	\]
	which can be turned into a decomposition of the form
	\[
		\order{k+1}{\overbar{L}} \cp{k} (\ldots \cp{1} (\order{1}{\overbar{L}} \cp{0} \clset{(1, x)} \cp{0} \order{1}{\overbar{R}}) \cp{1} \ldots) \cp{k} \order{k+1}{\overbar{R}}
	\]
	by expanding the generalised pastings as in \cite[Lemma 7.1.4]{hadzihasanovic2024combinatorics}.
	We have
	\begin{itemize}
		\item \( \gr{1}{\order{1}{\overbar{L}}} = \set{0^-} \gray \gr{1}{\order{1}{L}} \) and \( \gr{1}{\order{1}{\overbar{R}}} = \set{0^+} \gray \gr{1}{\order{1}{R}} \),
		\item \( \gr{i}{\order{i}{\overbar{L}}} = \set{0^-} \gray \gr{i}{\order{i}{L}} \cup \set{1} \gray \gr{i-1}{\order{i-1}{R}} \) and \( \gr{i}{\order{i}{\overbar{R}}} = \set{0^+} \gray \gr{i}{\order{i}{R}} \cup \set{1} \gray \gr{i-1}{\order{i-1}{L}} \) for each \( i \in \set{2, \ldots, k} \),
		\item \( \gr{k+1}{\order{k+1}{\overbar{L}}} = \set{0^-} \gray \set{\top_U} \cup \set{1} \gray \gr{k}{\order{k}{R}} \) and \( \gr{k+1}{\order{k+1}{\overbar{R}}} = \set{1} \gray \gr{k}{\order{k}{L}} \),
	\end{itemize}
	all of whose elements are marked in \( \markarr \pgray (U, A') \) by Remark \ref{rmk:marked_cell_cylinder}.
	Moreover, while \( (1, x) \) is also marked regardless of whether \( x \) was, we have
	\[
		\faces{}{+}(\arr \gray U) = \set{0^+} \gray \set{\top} \cup \set{1} \gray \faces{}{-}U,
	\]
	all of whose elements are marked in \( \markarr \pgray (U, A') \).
	We conclude that the pushout-product \( (\iota^-, \iota^+) \ppnat \lambda^x_U \) is of the form
	\[
		\lambda^{(1, x)}_{\arr \gray U}\colon \left(\Lambda^{(1, x)}_{\arr \gray U}, B\right) 
		\incl \left(\arr \gray U, B \cup \set{(1, \top), (1, x)}\right)
	\]
	with \( (1, x) \in \faces{}{-}(\arr \gray U) \) and \( \faces{}{+}(\arr \gray U) \subseteq B \), so it is a marked horn.
\end{proof}

\begin{lem} \label{lem:pushout_product_boundary_is_marked_horn}
	Let \( U \) be an atom, \( \a \in \set{+, -} \), and consider the marked boundary inclusions \( \minmark{\bdmap}_U\colon \minmark{\bd{}{}U} \incl \minmark{U} \) and \( \bdmap_{\markmol{U}}\colon \minmark{\bd{}{}U} \incl \markmol{U} \).
	Then the pushout-products \( \iota^\a \ppnat \minmark{\bdmap}_U \) and \( \iota^\a \ppnat \bdmap_{\markmol{U}} \) are marked horns.
\end{lem}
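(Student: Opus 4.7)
The plan is to treat both pushout-products together: first identify the underlying atomic horn of \( V \eqdef \arr \gray U \) (common to both cases), then compute the induced markings and check the conditions of the marked-horn definition, and finally exhibit the required decomposition of \( \bd{}{-\a}V \) around the missing face.

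\emph{Underlying map.} The pushout-product of \( \iota^\a \) with \( \bdmap_U\colon \bd{}{}U \incl U \) on underlying diagrammatic sets is the inclusion of \( \set{0^\a} \gray U \cup \arr \gray \bd{}{}U \) into \( \arr \gray U \); its complement consists precisely of \( \top_V \eqdef (1, \top_U) \) and \( x \eqdef (0^{-\a}, \top_U) \), and the formulas for the partial Gray cylinder give \( x \in \faces{}{-\a}\top_V \). Hence in both cases the underlying map is the atomic horn \( \lambda^x_V\colon \Lambda^x_V \incl V \).

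\emph{Markings.} By Remark \ref{rmk:marked_cell_cylinder} and Corollary \ref{cor:marked_dset_limits_colimits}, the marking \( A \) on the domain and \( A' \) on the codomain differ exactly in the two missing cells. For \( \iota^\a \ppnat \minmark{\bdmap}_U \), the codomain \( \markarr \pgray \minmark{U} \) marks only \( \set{1} \gray \cell U \) (up to degenerates), so \( x \) remains unmarked and \( A' \setminus A = \set{\top_V} \). For \( \iota^\a \ppnat \bdmap_{\markmol{U}} \), the codomain additionally marks \( \set{0^\pm} \gray \set{\top_U} \); of these, \( \set{0^\a} \gray \set{\top_U} \) already lies in the domain via the \( \markmol{U} \) coprojection, leaving \( A' \setminus A = \set{x, \top_V} \). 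Writing \( \faces{}{\a}V = \set{(0^\a, \top_U)} \cup \set{(1, y) \mid y \in \faces{}{-\a}U} \), I verify the marked-horn conditions: in the first case \( (0^\a, \top_U) \notin A \) because \( \top_U \) is non-degenerate, so \( \faces{}{\a}V \not\subseteq A \) and we are in the pattern \( A' = A \cup \set{\top} \); in the second case \( (0^\a, \top_U) \) is marked via \( \markmol{U} \) and each \( (1, y) \) with \( y \in \bd{}{}U \) lies in \( \set{1} \gray \bd{}{}U \subseteq A \), so \( \faces{}{\a}V \subseteq A \) and we are in the pattern \( A' = A \cup \set{x, \top} \).

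\emph{Main obstacle: the decomposition.} It remains to decompose \( \bd{}{-\a}V \) as \( \order{n}{L} \cp{n-1} \ldots \cp{0} \clset{x} \cp{0} \ldots \cp{n-1} \order{n}{R} \) with \( n \eqdef \dim U \), \( \dim \order{i}{L}, \dim \order{i}{R} \leq i \), and \( \gr{i}{\order{i}{L}} \cup \gr{i}{\order{i}{R}} \subseteq A \). I would write \( \bd{}{-\a}V = \set{0^{-\a}} \gray U \cup \arr \gray \bd{}{\a}U \), with the summands glued along \( \set{0^{-\a}} \gray \bd{}{\a}U \), apply \cite[Proposition 7.2.16]{hadzihasanovic2024combinatorics} to express \( \bd{}{-\a}V \) as a generalised pasting centred on \( \clset{x} = \set{0^{-\a}} \gray \clset{\top_U} \), and then invoke \cite[Lemma 7.1.4]{hadzihasanovic2024combinatorics} to unfold it into the required regular pasting, in the same spirit as the proof of Lemma \ref{lem:marked_horns_closed_pushout_product}. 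The key observation that makes the marking condition automatic is that the top-dimensional cells of the resulting \( \order{i}{L}, \order{i}{R} \) all arise from the cylinder summand \( \arr \gray \bd{}{\a}U \) and are of the form \( (1, y) \) for some \( y \in \gr{i-1}{\bd{}{\a}U} \subseteq \bd{}{}U \), hence belong to \( \set{1} \gray \bd{}{}U \subseteq A \); the ``copy'' cells \( (0^{-\a}, y) \) with \( y \in \bd{}{}U \) appear only as sub-cells of \( \bd{}{}\clset{x} \) and contribute no top-dimensional marking constraint.
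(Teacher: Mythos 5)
Your proposal is correct and follows essentially the same route as the paper: identify the underlying map as the atomic horn $\lambda^{(0^{-\a},\top)}_{\arr\gray U}$, obtain the decomposition of $\bd{}{-\a}(\arr\gray U)$ from \cite[Proposition 7.2.16]{hadzihasanovic2024combinatorics} with top-dimensional factor cells of the form $(1,y)$, $y\in\bd{}{}U$ (hence marked), and distinguish the two marking patterns by whether $(0^\a,\top)$ is marked. The paper simply writes the decomposition out explicitly as $(\ldots(\clset{(0^{-\a},\top)}\cp{0}\order{1}{R})\cp{1}\ldots)\cp{k-1}\order{k}{R}$ with $\order{i}{R}=\arr\gray\bd{i-1}{\a}U$, which is the one-sided special case of what you sketch.
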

\begin{proof}
	Suppose without loss of generality that \( \a = + \), the other case being dual.
	Let \( \top \) be the greatest element of \( U \) and \( k \eqdef \dim{U} \).
	It follows from \cite[Lemma 3.16]{chanavat2024diagrammatic} that the underlying morphism of \( \iota^+ \ppnat \minmark{\bdmap}_U \) is the atomic horn \( \lambda^{(0^-, \top)}_U\colon \Lambda^{(0^-, \top)}_{\arr \gray U} \to \arr \gray U \).
	Then \( (0^-, \top) \in \bd{}{-}(\arr \gray U) \), and expanding the equations of \cite[Proposition 7.2.16]{hadzihasanovic2024combinatorics} and letting \( \order{i}{R} \eqdef \arr \gray \bd{i-1}{+}U \) for each \( i \in \set{1, \ldots, k} \), we find that
	\[
		\bd{}{-}(\arr \gray U) = (\ldots(\clset{(0^-, \top)} \cp{0} \order{1}{R})\cp{1} \ldots)\cp{k-1} \order{k}{R}.
	\]
	For each \( i \in \set{1, \ldots, k} \), we have \( \gr{i}{\order{i}{R}} = \set{1} \gray \faces{i-1}{+}U \), all of whose elements are marked both in \( \markarr \pgray \minmark{U} \) and in \( \markarr \pgray \markmol{U} \).
	Since \( (1, \top) \) is the only marked element of \( \markarr \pgray \minmark{U} \) which is not in \( \Lambda^{(0^-, \top)}_{\arr \gray U} \), we conclude that \( \iota^+ \ppnat \minmark{\bdmap}_U \) is a marked horn.
	On the other hand, \( (0^-, \top) \) and \( (0^+, \top) \) are also marked in \( \markarr \pgray \markmol{U} \), which implies that all elements of \( \faces{}{+}(\arr \gray U) \) are marked.
	We conclude that \( \iota^+ \ppnat \bdmap_{\markmol{U}} \) is also a marked horn.
\end{proof}

\begin{lem} \label{lem:fibrant_iff_rlp}
	Let \( (X, A) \) be a marked diagrammatic set and \( J \in \set{\Jind, \Jcoind} \).
	The following are equivalent:
	\begin{enumerate}[label=(\alph*)]
		\item \( (X, A) \) is \( J \)-fibrant;
		\item \( (X, A) \) has the right lifting property against \( J \).
	\end{enumerate}
\end{lem}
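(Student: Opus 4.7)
The approach is to use the classification of elements of \( \An(J) \) afforded by Lemmas \ref{lem:pseudo_gray_entire_is_id}, \ref{lem:marked_horns_closed_pushout_product}, and \ref{lem:pushout_product_boundary_is_marked_horn}, and argue that every element of \( \An(J) \) either lies in \( J \) or is handled by a small number of easy cases. The implication from (a) to (b) is immediate since \( J \subseteq \An(J) \) by construction.

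For the converse, I will proceed by induction on the recursive construction of \( \An(J) \), tracking at each stage which of three categories an element belongs to: marked horns, entire monomorphisms, or identity morphisms. On the base, every \( j \in J \) is either a marked horn (if \( j \in \Jhorn \)) or an entire monomorphism (if \( j \) lies in \( \Jn{n} \), \( \Jloc \), or \( \Jinv \)); Lemma \ref{lem:pushout_product_boundary_is_marked_horn} shows that \( \iota^\a \ppnat \minmark{\bdmap}_U \) is a marked horn, while \( \iota^\a \ppnat t_U \) is readily checked to be an entire monomorphism marking exactly one cell. By Lemma \ref{lem:marked_horns_closed_pushout_product}, applying \( (\iota^-, \iota^+) \ppnat - \) to a marked horn produces a marked horn, and by Lemma \ref{lem:pseudo_gray_entire_is_id}, applying it to an entire monomorphism produces an identity, so the classification is stable under the inductive step.

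It remains to verify that \( (X, A) \) has the right lifting property against each class. Marked horns lie in \( \Jhorn \subseteq J \) and identity morphisms lift trivially, so the only non-trivial case is the corner case \( \iota^\a \ppnat t_U \), which is the ``easily handled'' exception alluded to in the introduction. Here I will argue directly: a morphism from the domain of \( \iota^\a \ppnat t_U \) into \( (X, A) \) sends the cylinder cell \( (1, \top_U) \) and the boundary cell \( (0^\a, \top_U) \)---both marked in the domain---to marked cells of \( (X, A) \), which by Lemma \ref{lem:marked_cell_is_equivalence} are equivalences in \( X \). The cylinder cell then exhibits the image of \( (0^{-\a}, \top_U) \) as equivalent to an equivalence, hence as itself an equivalence; Lemma \ref{lem:equivalences_marked_jloc} (in the coinductive case) or the analogous saturation argument via \( \Jinv \) (in the inductive case) then ensures that this cell is marked in \( A \), completing the extension.

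The main obstacle is precisely this corner case: one needs to carefully analyse the marking of the pseudo-Gray product \( \markarr \pgray \markmol{U} \) to identify the boundary of \( (1, \top_U) \) and verify that it exhibits the image of \( (0^{-\a}, \top_U) \) as an equivalence in the bi-invertibility sense. This relies on the specific interaction between the pseudo-Gray marking and the cylinder structure, which is implicit in the classification lemmas but requires direct unpacking.
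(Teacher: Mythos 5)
Your overall strategy coincides with the paper's: both proofs reduce the right lifting property against \( \An(J) \) to the right lifting property against \( J \) by observing, via Lemmas \ref{lem:pushout_product_boundary_is_marked_horn}, \ref{lem:marked_horns_closed_pushout_product} and \ref{lem:pseudo_gray_entire_is_id}, that the saturation only ever produces marked horns (which lie in \( \Jhorn \subseteq J \)), entire monomorphisms already belonging to \( J \), identities, and the single genuine corner case \( \iota^\a \ppnat t_U \). Up to that point your classification is correct and agrees with the paper's proof.

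The divergence, and the gap, is in how the corner case is resolved. The paper disposes of \( \iota^\a \ppnat t_U \) formally: since its underlying morphism is the identity on \( \arr \gray U \), it is related to the marked horn \( \iota^\a \ppnat \bdmap_{\markmol{U}} \), which factors through it, and the argument never needs to know that \( A \) is closed under any equivalence-theoretic operation. You instead argue that the one newly marked cell \( (0^{-\a}, \top) \) is sent to an equivalence of \( X \) and then conclude that its image lies in \( A \). That conclusion is fine for \( J = \Jcoind \), where Lemma \ref{lem:equivalences_marked_jloc} gives \( A = \eqvcell X \). But for \( J = \Jind \) with \( n = \infty \) it fails: an object with the right lifting property against \( \Jind \) need not have all equivalences marked --- this is exactly the point of the inductive/coinductive distinction (cf.\ Proposition \ref{prop:ind_bousfield_coind} and the surrounding discussion) --- and the saturation encoded by \( \Jinv \) only marks a cell once one has exhibited weak inverses together with \emph{marked} invertors, i.e.\ a morphism \( \mwalkinv V \to (X, A) \). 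Knowing merely that the image of \( (0^{-\a}, \top) \) is an equivalence does not produce such data, and your proposal does not construct it; the invertors one obtains from the coinductive definition of equivalence carry no marking information. (For \( n < \infty \) your route can be completed by the downward induction that appears later as Lemma \ref{lem:ind_fibrant_iff_coind_fibrant}, but that is extra work the paper's formal argument avoids.) So the proof is incomplete precisely in the inductive \( (\infty, \infty) \) case.
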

\begin{proof}
	One implication is obvious.
	Conversely, suppose that \( (X, A) \) has the right lifting property against \( J \); we need to show that it has the right lifting property against \( \An(J) \).
	Let \( J_M \eqdef \set{ \iota^\a \ppnat m \mid \a \in \set{+, -}, m \in M } \), where \( M \) is the cellular model of Lemma \ref{lem:marked_cellular_model}, and let \( m \in M \).
	Suppose that \( m = \minmark{\bdmap}_U \) for some atom \( U \); by Lemma \ref{lem:pushout_product_boundary_is_marked_horn}, the pushout-product \( \iota^\a \ppnat \minmark{\bdmap}_U \) is in \( \Jhorn \subseteq J \).
	Next, suppose that \( m = t_U \) for some atom \( U \) with greatest element \( \top \).
	Then \( \iota^\a \ppnat t_U \) is an entire monomorphism with codomain \( \markarr \pgray \markmol{U} \).
	The induced monomorphism along the restriction to \( \Lambda^{(0^{-\a}, \top)}_{\arr \gray U} \) is equal to \( \iota^\a \ppnat \bdmap_{\markmol{U}} \), which is a marked horn by Lemma \ref{lem:pushout_product_boundary_is_marked_horn}.
	Since the underlying morphism of \( \iota^\a \ppnat t_U \) is the identity, the right lifting property against this marked horn implies the right lifting property against \( \iota^\a \ppnat t_U \).
	Finally, let \( j \in J \cup J_M \), and consider the morphism \( j' \eqdef (\iota^-, \iota^+) \ppnat j \).
	Then either \( j \in \Jhorn \), in which case \( j' \in \Jhorn \) by Lemma 
	\ref{lem:marked_horns_closed_pushout_product}, or \( j \) is an entire monomorphism, in which case \( j' \) is an identity by Lemma \ref{lem:pseudo_gray_entire_is_id}.
	Thus, closure under \( j \mapsto (\iota^-, \iota^+) \ppnat j \) only saturates \( J \cup J_M \) with identities, and we conclude.
\end{proof}

\begin{thm} \label{thm:coind_fibrant_characterisation}
	Let \( (X, A) \) be a marked diagrammatic set.
	The following are equivalent:
	\begin{enumerate}[label=(\alph*)]
		\item \( (X, A) \) is fibrant in the coinductive \( (\infty, n) \)\nbd model structure;
		\item \( X \) is an \( (\infty, n) \)\nbd category and \( A = \eqvcell X \).
	\end{enumerate}
\end{thm}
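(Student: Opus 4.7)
The plan is to observe that the theorem is essentially a recombination of the preceding lemmas, organised around the characterisation of fibrancy via the right lifting property against the generating set \( \Jcoind = \Jhorn \cup \Jn n \cup \Jloc \).

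First, I would invoke Lemma \ref{lem:fibrant_iff_rlp} to reduce the condition of being fibrant in the coinductive model structure to having the right lifting property against \( \Jcoind \) itself, rather than against the full set of generating anodyne extensions. From here the implications are straightforward applications of what has already been proved.

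For the implication from (a) to (b), assuming \( (X, A) \) has the right lifting property against \( \Jcoind \), I would first apply Lemma \ref{lem:fibrant_is_category} to the restriction of this lifting property against \( \Jhorn \cup \Jn n \), obtaining that \( X \) is an \( (\infty, n) \)\nbd category. Then, since \( (X, A) \) in particular has the right lifting property against \( \Jhorn \) and \( \Jloc \), Lemma \ref{lem:equivalences_marked_jloc} yields \( A = \eqvcell X \).

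For the converse, assuming \( X \) is an \( (\infty, n) \)\nbd category and \( A = \eqvcell X \), I would identify \( (X, A) \) with \( \natmark{X} \) by definition of the natural marking. Then Lemma \ref{lem:inftyn_natmark_has_rlp} directly gives that \( \natmark{X} \) has the right lifting property against \( \Jcoind \), and Lemma \ref{lem:fibrant_iff_rlp} concludes that \( (X, A) \) is fibrant. Since every ingredient has already been established, there is no serious obstacle; the proof is a short assembly.
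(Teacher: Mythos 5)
Your proof is correct and follows exactly the same route as the paper's: one direction via Lemma \ref{lem:fibrant_is_category} and Lemma \ref{lem:equivalences_marked_jloc}, the converse via Lemma \ref{lem:inftyn_natmark_has_rlp} combined with Lemma \ref{lem:fibrant_iff_rlp}. The only difference is that you spell out the reduction to the right lifting property against \( \Jcoind \) slightly more explicitly, which is harmless.
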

\begin{proof}
	One implication is the content of Lemma \ref{lem:fibrant_is_category} and Lemma \ref{lem:equivalences_marked_jloc}.
	Conversely, suppose \( X \) is an \( (\infty, n) \)\nbd category.
	By Lemma \ref{lem:inftyn_natmark_has_rlp}, \( \natmark{X} \) has the right lifting property against \( \Jcoind \).
	We conclude by Lemma \ref{lem:fibrant_iff_rlp}.
\end{proof}

\begin{lem} \label{lem:coind_fibrant_implies_ind_fibrant}
	Let \( X \) be an \( (\infty, n) \)\nbd category.
	Then \( \natmark{X} \) is fibrant in the inductive \( (\infty, n) \)\nbd model structure.
\end{lem}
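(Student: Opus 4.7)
The plan is to use Lemma \ref{lem:fibrant_iff_rlp} to reduce fibrancy in the inductive \( (\infty, n) \)\nbd model structure to the right lifting property of \( \natmark{X} \) against \( \Jind = \Jhorn \cup \Jn{n} \cup \Jinv \). Since \( \Jhorn \) and \( \Jn{n} \) are shared with \( \Jcoind \), Lemma \ref{lem:inftyn_natmark_has_rlp} already handles those two subsets. The entire content of the lemma therefore reduces to checking the right lifting property of \( \natmark{X} \) against the ``saturation'' maps \( \mwalkinv{U} \incl \fmwalkinv{U} \) in \( \Jinv \).

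First I would unpack what such a lift requires. Fix an atom \( U \) of dimension \( k > 0 \) and a morphism \( q\colon \mwalkinv{U} \to \natmark{X} \). By the colimit presentation of \( \walkinv{U} \) and the comment following its definition, the underlying morphism \( \fun{U}q\colon \walkinv{U} \to X \) classifies a cell \( u \) of shape \( U \), cells \( u^L, u^R \) of shape \( \dual{k}{U} \), and invertors \( z\colon u \cp{} u^L \celto \un(\bd{}{-}u) \) and \( h\colon \un(\bd{}{+}u) \celto u^R \cp{} u \); because \( z \) and \( h \) are marked in \( \mwalkinv{U} \) and \( q \) respects markings, both \( z \) and \( h \) are equivalences in \( X \). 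Since the inclusion \( \mwalkinv{U} \incl \fmwalkinv{U} \) is entire, an extension of \( q \) along it exists precisely when the three additional marked cells \( u \), \( u^L \), and \( u^R \) are equivalences in \( X \).

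The main (and essentially only) step is then to verify this saturation. For \( u \), I invoke the coinductive characterization of Comment \ref{comm:coinduction_proof_technique}: since \( z, h \in \eqv X \), the data \( (u^L, u^R, z, h) \) witness \( u \in \B(\eqv X) = \eqv X \). For \( u^L \) and \( u^R \), once \( u \) is known to be an equivalence, Theorem \ref{thm:properties_of_equivalences}.\ref{enum:wkinv_is_equiv} supplies the conclusion that its left and right inverses are equivalences as well. I do not expect any obstacle beyond this routine application of the coinductive fixed-point characterization combined with the established closure properties of \( \eqv X \); the only point requiring attention is to read off the classifying data for a map out of \( \mwalkinv{U} \) correctly, which is already done in the exposition preceding the definition of \( \Jinv \).
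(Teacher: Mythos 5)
Your proposal is correct and follows essentially the same route as the paper: reduce via Lemma \ref{lem:fibrant_iff_rlp} to the right lifting property against \( \Jinv \) (the \( \Jhorn \cup \Jn{n} \) part being covered by Lemma \ref{lem:inftyn_natmark_has_rlp}), unpack a map out of \( \mwalkinv{U} \) as a cell with candidate inverses and marked invertors, observe that the marked invertors are equivalences so the cell is an equivalence by definition, and conclude for \( u^L, u^R \) via Theorem \ref{thm:properties_of_equivalences}.\ref{enum:wkinv_is_equiv}. The only cosmetic difference is that you phrase the key step through the fixed-point operator \( \B \) where the paper simply appeals to the definition of equivalence.
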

\begin{proof}
	By Lemma \ref{lem:fibrant_iff_rlp}, we only need to show that \( \natmark{X} \) has the right lifting property against \( \Jinv \).
	Let \( U \) be an atom and consider a morphism \( f\colon \mwalkinv U \to \natmark{X} \).
	This corresponds exactly to the data of a pair of marked cells \( z\colon u \cp{} u^L \celto \un{v} \) and \( h\colon \un{w} \celto u^R \cp{} u \) for some cell \( u\colon v \celto w \) and parallel cells \( u^L, u^R\colon w \celto v \).
	Then \( z \) and \( h \) are equivalences, which by definition means that \( u \) is an equivalence, and by Theorem \ref{thm:properties_of_equivalences}.\ref{enum:wkinv_is_equiv} implies that \( u
^L \) and \( u^R \) are also equivalences.
	We conclude that \( f \) extends along \( \mwalkinv U \incl \fmwalkinv U \).
\end{proof}

\begin{prop} \label{prop:ind_bousfield_coind}
	The coinductive \( (\infty, n) \)\nbd model structure is a left Bousfield localisation of the inductive \( (\infty, n) \)\nbd model structure.
\end{prop}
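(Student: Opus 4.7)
The plan is to invoke the observation at the end of Section \ref{sec:background}: any model structure on \( \mdgmSet \) whose cofibrations are the monomorphisms and whose weak equivalences contain those of the inductive \( (\infty,n) \)-model structure automatically arises as a left Bousfield localisation of the latter. Since both model structures of Definition \ref{dfn:marked_model_structures} have the monomorphisms as cofibrations, it suffices to prove that every weak equivalence of the inductive structure is a weak equivalence of the coinductive structure.

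By Theorem \ref{thm:olschok_theorem}, a morphism \( f\colon X \to Y \) is a weak equivalence (in either structure) precisely when, for every fibrant object \( W \) of that structure, the induced map \( f^*\colon \slice{\mdgmSet(Y,W)}{\approx} \to \slice{\mdgmSet(X,W)}{\approx} \) is a bijection. The homotopy relation \( \approx \) is defined via the marked cylinder, which is the exact cylinder used in \emph{both} structures, so the two notions of \( \approx \) coincide. Consequently, if every coinductive-fibrant object is also inductive-fibrant, then the defining condition for being a coinductive weak equivalence is a \emph{weaker} requirement than that for being an inductive weak equivalence, and the desired inclusion of weak equivalence classes follows.

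So the core step is to show that every coinductive-fibrant object is inductive-fibrant. This is immediate from the results already proven: by Theorem \ref{thm:coind_fibrant_characterisation}, any coinductive-fibrant object is of the form \( \natmark{X} \) for some \( (\infty,n) \)-category \( X \); and by Lemma \ref{lem:coind_fibrant_implies_ind_fibrant}, any such \( \natmark{X} \) is inductive-fibrant. Combining these two facts yields the required inclusion of fibrant objects, hence the inclusion of weak equivalences, hence the Bousfield localisation statement.

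There is no serious obstacle here: the substantive content has already been distilled into Theorem \ref{thm:coind_fibrant_characterisation} and Lemma \ref{lem:coind_fibrant_implies_ind_fibrant}. The only subtle point to articulate cleanly is that the \emph{same} cylinder is used to define \( \approx \) in both model structures (so that the characterisation of weak equivalences in terms of homotopy classes of maps into fibrant objects may be compared directly), which is evident from Definition \ref{dfn:marked_model_structures}.
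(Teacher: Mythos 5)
Your proposal is correct and follows essentially the same route as the paper: both reduce the claim to showing that coinductive-fibrant objects are inductive-fibrant (via Theorem \ref{thm:coind_fibrant_characterisation} and Lemma \ref{lem:coind_fibrant_implies_ind_fibrant}), then use the shared exact cylinder and the characterisation of weak equivalences in Theorem \ref{thm:olschok_theorem} to conclude that inductive weak equivalences are coinductive ones. The only difference is that you spell out the appeal to the closing remark of Section \ref{sec:background} explicitly, which the paper leaves implicit.
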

\begin{proof}
	The coinductive \( (\infty, n) \)\nbd model structure has the same cofibrations as the inductive \( (\infty, n) \)\nbd model structure.
	Moreover, by Theorem \ref{thm:coind_fibrant_characterisation} and Lemma \ref{lem:coind_fibrant_implies_ind_fibrant}, a fibrant object for the coinductive \( (\infty, n) \)\nbd model structure is also fibrant for the inductive \( (\infty, n) \)\nbd model structure.
        Since the two share the same exact cylinder, by the characterisation in Theorem \ref{thm:olschok_theorem}, weak equivalences in the inductive model structure are also weak equivalences in the coinductive.
\end{proof}

\begin{comm}
	With a little effort, it can in fact be shown that the coinductive \( (\infty, n) \)\nbd model structure is precisely the left Bousfield localisation of the inductive \( (\infty, n) \)\nbd model structure at the set \( \Jloc \); we omit the proof to avoid overwhelming this section, as it relies on a number of technical results that will not be used elsewhere in the article.
\end{comm}

\noindent In the case \( n < \infty \), we strengthen this result by showing that the two model structures actually coincide.

\begin{lem} \label{lem:ind_fibrant_iff_coind_fibrant}
	Suppose that \( n < \infty \) and let \( (X, A) \) be a marked diagrammatic set.
	The following are equivalent:
	\begin{enumerate}[label=(\alph*)]
		\item\label{enum:indcoind1} \( (X, A) \) is fibrant in the inductive \( (\infty, n) \)\nbd model structure;
		\item\label{enum:indcoind2} \( X \) is an \( (\infty, n) \)\nbd category and \( A = \eqvcell X \).
	\end{enumerate}
\end{lem}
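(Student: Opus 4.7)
The plan is to prove \ref{enum:indcoind2} \( \Rightarrow \) \ref{enum:indcoind1} by direct appeal to earlier results for the coinductive structure, and to tackle \ref{enum:indcoind1} \( \Rightarrow \) \ref{enum:indcoind2} by a downward induction on dimension, crucially using \( n < \infty \) to supply a finite base case and the lifting property against \( \Jinv \) to propagate the marking downwards one dimension at a time.

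For \ref{enum:indcoind2} \( \Rightarrow \) \ref{enum:indcoind1}, if \( X \) is an \( (\infty, n) \)\nbd category and \( A = \eqvcell X \), then \( (X, A) = \natmark{X} \), which is fibrant for the coinductive \( (\infty, n) \)\nbd model structure by Theorem \ref{thm:coind_fibrant_characterisation} and hence also for the inductive one by Lemma \ref{lem:coind_fibrant_implies_ind_fibrant}.

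For the converse, suppose \( (X, A) \) is inductively fibrant, so by Lemma \ref{lem:fibrant_iff_rlp} it has the right lifting property against \( \Jind = \Jhorn \cup \Jn{n} \cup \Jinv \). Then Lemma \ref{lem:fibrant_is_category} yields that \( X \) is an \( (\infty, n) \)\nbd category, and Lemma \ref{lem:marked_cell_is_equivalence} yields \( A \subseteq \eqvcell X \), so it remains to prove \( \eqvcell X \subseteq A \). I would argue by downward induction on the dimension \( k \) of an equivalence cell in \( X \). For the base case, the lifting property against \( \Jn{n} \) applied to \( \minmark{V} \incl \markmol{V} \) for atoms \( V \) with \( \dim V > n \) forces every cell of \( X \) of dimension \( > n \) to lie in \( A \), and this base case exists precisely because \( n < \infty \). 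For the inductive step, fix \( 1 \leq k \leq n \), assume every equivalence of dimension \( > k \) lies in \( A \), and let \( e\colon U \to X \) be an equivalence cell with \( \dim U = k \). By Proposition \ref{prop:universal_property_localisation} applied to the marked diagrammatic set \( (U, \set{\top}) \), where \( \top \) denotes the top element of \( U \), the morphism \( e \) extends to \( \selfloc e\colon \selfloc U \to X \). The first stage of the construction of \( \selfloc U \) provides a canonical morphism \( \walkinv U \to \selfloc U \) hitting the base cell \( U \), two \( k \)\nbd dimensional inverse cells, and two \( (k+1) \)\nbd dimensional invertor cells of shapes \( \lcyl{\bd{}{+}U}{U} \) and \( \rcyl{\bd{}{-}U}{U} \); composing this with \( \selfloc e \) gives \( f\colon \walkinv U \to X \). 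Because morphisms of diagrammatic sets preserve equivalences by Theorem \ref{thm:properties_of_equivalences} and every cell introduced in the localisation becomes an equivalence in \( \selfloc U \) thanks to the infinite tower of subsequent layers, the two invertors in the image of \( f \) are equivalences of dimension \( k + 1 > k \), hence lie in \( A \) by the inductive hypothesis. Thus \( f \) lifts to a morphism \( \mwalkinv U \to (X, A) \), and the right lifting property against \( \mwalkinv U \incl \fmwalkinv U \in \Jinv \) produces an extension \( \fmwalkinv U \to (X, A) \), which in particular marks \( e \).

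The hard part will be pinning down the canonical identification between \( \walkinv U \) and the first stage of \( \selfloc U \), specifically verifying that the invertor atoms \( \lcyl{\bd{}{+}U}{U} \) and \( \rcyl{\bd{}{-}U}{U} \) appearing in the walking pair of invertors really coincide with the atoms carrying \( \hinv{L}(\top) \) and \( \hinv{R}(\top) \) in the cellular extension defining \( \selfloc U \). Once this combinatorial bookkeeping is in place, the rest of the argument is essentially formal, requiring only preservation of equivalences by morphisms and the right lifting property against \( \Jinv \).
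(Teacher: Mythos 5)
Your proof is correct and follows essentially the same strategy as the paper's: downward induction on the dimension of an equivalence cell, with the base case supplied by the lifting property against \( \Jn{n} \) (which is where \( n < \infty \) enters) and the inductive step by producing a morphism \( \mwalkinv U \to (X, A) \) and lifting against \( \mwalkinv U \incl \fmwalkinv U \in \Jinv \). The only divergence is in how that morphism is assembled: the paper unfolds the coinductive definition of equivalence directly to obtain the inverses and invertors \( z, h \) (implicitly taking weak composites so that they are cells of the shapes \( \hcyl{L}{U} \), \( \hcyl{R}{U} \)), whereas you detour through \( \selfloc{U} \) via Proposition \ref{prop:universal_property_localisation}; the identification you flag as the remaining hard part is exactly the shape bookkeeping already built into the definition of the cells \( \hinv{s} \) of shape \( \hcyl{s}{U} \) in the localisation, so it poses no real obstacle.
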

\begin{proof}
	The implication from \ref{enum:indcoind2} to \ref{enum:indcoind1} is a particular case of Lemma \ref{lem:coind_fibrant_implies_ind_fibrant}.
	Conversely, \( A \subseteq \eqvcell X \) by Lemma \ref{lem:marked_cell_is_equivalence} and \( X \) is an \( (\infty, n) \)\nbd category by Lemma \ref{lem:fibrant_is_category}, so it suffices to show that \( \eqvcell X \subseteq A \).
	Let \( e\colon U \to X \) be an equivalence of type \( v \celto w \) in \( X \).
	If \( k\eqdef \dim{U} > n \), then we know that \( e \) is marked.
	Otherwise, we proceed by downward induction on \( k - (n+1) \geq 0 \), the base case having already been proven.
	By definition, there exist weak inverses \( e^L, e^R\colon w \celto v \) and equivalences \( z\colon e \cp{} e^L \celto \un{v} \), \( h\colon \un{w} \celto e^R \cp{} e \).
	By the inductive hypothesis, \( z \) and \( h \) are marked, so these data uniquely determine a morphism \( \mwalkinv U \to X \).
	Because \( (X, A) \) has the right lifting property against \( \Jinv \), this extends along \( \mwalkinv U \incl \fmwalkinv U \), that is, \( e, e^L, e^R \) are all marked.
\end{proof}

\begin{prop} \label{prop:n_inductive_equal_n_coinductive}
	Suppose that \( n < \infty \).
	Then the inductive and coinductive \( (\infty, n) \)\nbd model structures coincide.
\end{prop}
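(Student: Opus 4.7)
The plan is to reduce the statement to a comparison of fibrant objects, leveraging the fact that both model structures are produced via Theorem \ref{thm:olschok_theorem} from the same exact cylinder on the same underlying category, and differ only in the generating set of anodyne extensions.

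First I would observe that by construction (Definition \ref{dfn:marked_model_structures}) the inductive and coinductive \( (\infty, n) \)\nbd model structures share the same class of cofibrations (the monomorphisms) and the same exact cylinder (the marked cylinder \( \markarr \pgray - \)), so in particular they induce the same homotopy relation \( \approx \) on morphisms. Combining Theorem \ref{thm:coind_fibrant_characterisation} with Lemma \ref{lem:ind_fibrant_iff_coind_fibrant}, under the assumption \( n < \infty \) the two model structures have precisely the same fibrant objects, namely those marked diagrammatic sets \( (X, A) \) with \( X \) an \( (\infty, n) \)\nbd category and \( A = \eqvcell X \).

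Next, I would apply the characterisation of weak equivalences from Theorem \ref{thm:olschok_theorem}: a morphism \( f \colon X \to Y \) is a weak equivalence in the model structure produced by Olschok's theorem if and only if, for every fibrant object \( W \), the map \( f^* \colon \slice{\mdgmSet(Y, W)}{\approx} \to \slice{\mdgmSet(X, W)}{\approx} \) is a bijection. Because the homotopy relation \( \approx \) and the class of fibrant objects coincide for the inductive and coinductive \( (\infty, n) \)\nbd model structures, the resulting classes of weak equivalences coincide as well.

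Finally, two model structures on the same category sharing the same cofibrations and the same weak equivalences are identical, since these two classes determine the third (fibrations) via the right lifting property. The argument is therefore essentially formal once Lemma \ref{lem:ind_fibrant_iff_coind_fibrant} is in hand, with no genuine obstacle remaining; the substantive content was already shouldered by that lemma, whose downward induction on dimension relied crucially on the finiteness of \( n \) to provide a base case at dimension \( > n \).
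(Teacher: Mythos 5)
Your proof is correct and follows essentially the same route as the paper, which likewise deduces the result from Lemma \ref{lem:ind_fibrant_iff_coind_fibrant} by noting that the two model structures share cofibrations and exact cylinder and then applying the characterisation of weak equivalences in Theorem \ref{thm:olschok_theorem} (the paper phrases this as "dualising" the proof of Proposition \ref{prop:ind_bousfield_coind}). No gaps.
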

\begin{proof}
	By Lemma \ref{lem:ind_fibrant_iff_coind_fibrant}, the two model structures have the same fibrant objects, so the proof of Proposition \ref{prop:ind_bousfield_coind} can be dualised.
\end{proof}


\subsection{Marked and unmarked model structures} \label{subsec:marked_unmarked}

Throughout this section, we fix \( n \in \mathbb{N} \cup \set{\infty} \).

\begin{lem} \label{lem:marked_incl_walk_inv_acyclic}
	Let \( U \) be an atom, \( \dim U > 0 \).
    	Then \( \markmol{U} \incl \fmwalkinv U \) is an acyclic cofibration in the inductive and coinductive \( (\infty, n) \)\nbd model structures.
\end{lem}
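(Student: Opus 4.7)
The strategy is to exhibit $\markmol{U} \incl \fmwalkinv{U}$ as the composite of two pushouts of morphisms in $\Jhorn$, which is contained in both $\Jind$ and $\Jcoind$. Since a pushout of a generating anodyne extension is an anodyne extension, the composite will then be an acyclic cofibration in both model structures.

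Let $n \eqdef \dim U$ and write $u \colon v \celto w$ for the top cell of $U$. For the first pushout, I would use the atom $V_L \eqdef \lcyl{\bd{}{+}U}{U}$ of dimension $n+1$, whose top cell $(1, \top_U)$ is destined to play the role of a left invertor $z \colon u \cp{} u^L \celto \un(v)$. Exactly as in the proof of Lemma \ref{lem:marked_cell_is_equivalence}, the inclusion
\[
    \lambda^{(0^+, \top_U)}_{V_L} \colon \bigl(\Lambda^{(0^+, \top_U)}_{V_L}, B_L\bigr) \incl (V_L, B'_L),
\]
with $B_L \eqdef \set{(0^-, \top_U)} \cup \faces{}{+}V_L$ and $B'_L \eqdef B_L \cup \set{(0^+, \top_U), (1, \top_U)}$, is a marked horn. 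There is a canonical map $q_L \colon \bigl(\Lambda^{(0^+, \top_U)}_{V_L}, B_L\bigr) \to \markmol{U}$ sending $(0^-, \top_U)$ to the marked top cell $u$ and the $+$-boundary of $V_L$ into the degenerate diagram $\un(v)$; this is well-defined on markings because $u$ is marked in $\markmol{U}$ and degenerate cells are always marked. By Corollary \ref{cor:marked_dset_limits_colimits}, pushing $\lambda^{(0^+, \top_U)}_{V_L}$ out along $q_L$ produces an intermediate $P_L$ which is $\markmol{U}$ with new marked cells $u^L$ and $z$ freely attached.

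I would then carry out the dual pushout using $V_R \eqdef \rcyl{\bd{}{-}U}{U}$ together with the analogous marked horn at $(0^-, \top_U)$, attaching a marked right inverse $u^R$ and a marked right invertor $h \colon \un(w) \celto u^R \cp{} u$ to $P_L$ to produce $P_R$. Comparing $P_R$ with the colimit diagram defining $\walkinv{U}$, in which $\bd{}{+}\lcyl{\bd{}{+}U}{U}$ collapses onto $\bd{}{-}U$ and dually on the right, together with the specification of $\fmwalkinv{U}$ as marking all cells of dimension $\geq n$, I would identify $P_R \cong \fmwalkinv{U}$ as marked diagrammatic sets. This would exhibit $\markmol{U} \incl \fmwalkinv{U}$ as the composite of two pushouts of marked horns, concluding the argument.

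The main obstacle will be the bookkeeping around the orientations of the partial Gray cylinders: one must confirm that $(0^-, \top_U)$ and $(0^+, \top_U)$ in $V_L$ really correspond to $u$ and the new cell $u^L$ in the intended order, that the boundaries of the attached invertors compose in the orientations required by the bi-invertibility definition of an equivalence, and that the two collapse identifications in the colimit defining $\walkinv{U}$ agree with the pushout identifications forced by $q_L$ and its dual $q_R$. Once these compatibilities are checked against the defining formulas for $\lcyl{}{}$ and $\rcyl{}{}$, the isomorphism $P_R \cong \fmwalkinv{U}$ is immediate from the universal properties of the two pushouts and of the colimit defining $\walkinv{U}$.
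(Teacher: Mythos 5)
Your proposal is correct and follows essentially the same route as the paper: both exhibit $\markmol{U} \incl \fmwalkinv{U}$ via pushouts of the two marked horns $(\Lambda^{(0^+,\top)}_{\lcyl{\bd{}{+}U}U}, A) \incl (\lcyl{\bd{}{+}U}U, A')$ and $(\Lambda^{(0^-,\top)}_{\rcyl{\bd{}{-}U}U}, B) \incl (\rcyl{\bd{}{-}U}U, B')$ along the collapses onto $\markmol{U}$, with the same marking data; the paper merely organises the two pushouts in parallel (presenting $\fmwalkinv{U}$ as the pushout of $\markmol{U} \incl \markmol{\mathrm{L}}U$ and $\markmol{U} \incl \markmol{\mathrm{R}}U$ over $\markmol{U}$) where you compose them sequentially, which is the same thing.
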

\begin{proof}
	Let \( \top \) be the greatest element of \( U \), consider the inverted partial Gray cylinders \( \lcyl{\bd{}{+}U}U \) and \( \rcyl{\bd{}{-}U}U \), and let
	\begin{align*}
		A & \eqdef \set{(0^-, \top)} \cup \faces{}{+}\lcyl{\bd{}{+}U}U, 
		  & A' & \eqdef A \cup \set{(0^+, \top), (1, \top)}, \\
		B & \eqdef \set{(0^+, \top)} \cup \faces{}{-}\rcyl{\bd{}{-}U}U,
		  & B' & \eqdef B \cup \set{(0^-, \top), (1, \top)}.
	\end{align*}
	Letting \( \Lambda_L \eqdef \Lambda^{(0^+, \top)}_{\lcyl{\bd{}{+}U}U} \) and \( \Lambda_R \eqdef \Lambda^{(0^-, \top)}_{\rcyl{\bd{}{-}U}U} \), we have that
	\[
		(\Lambda_L, A) \incl (\lcyl{\bd{}{+}U}U, A'), 
		\quad \quad
		(\Lambda_R, B) \incl (\rcyl{\bd{}{-}U}U, B')
	\]
	are both marked horns, hence acyclic cofibrations.
	Then their pushouts 
\[\begin{tikzcd}
	{(\Lambda_L, A)} & {\markmol{U}} \\
	{(\lcyl{\bd{}{+}U}U, A')} & {\markmol{\mathrm{L}} U}
	\arrow[two heads, from=1-1, to=1-2]
	\arrow[hook', from=1-1, to=2-1]
	\arrow[hook', from=1-2, to=2-2]
	\arrow[two heads, from=2-1, to=2-2]
	\arrow["\lrcorner"{anchor=center, pos=0.125, rotate=180}, draw=none, from=2-2, to=1-1]
\end{tikzcd}
	\quad \quad
\begin{tikzcd}
	{(\Lambda_R, B)} & {\markmol{U}} \\
	{(\rcyl{\bd{}{-}U}U, B')} & {\markmol{\mathrm{R}} U}
	\arrow[two heads, from=1-1, to=1-2]
	\arrow[hook', from=1-1, to=2-1]
	\arrow[hook', from=1-2, to=2-2]
	\arrow[two heads, from=2-1, to=2-2]
	\arrow["\lrcorner"{anchor=center, pos=0.125, rotate=180}, draw=none, from=2-2, to=1-1]
\end{tikzcd}\]
	along the surjections that collapse \( \bd{}{+}\lcyl{\bd{}{+}U}U \) and \( \bd{}{-}\rcyl{\bd{}{-}U}U \) onto \(\bd{}{-}U \) and \( \bd{}{+}U \), respectively, are both acyclic cofibrations.
	Since \( \fmwalkinv U \) can be constructed as the pushout of \( \markmol{U} \incl \markmol{\mathrm{L}}U \) and \( \markmol{U} \incl \markmol{\mathrm{R}}U \), we conclude.
\end{proof}

\begin{lem} \label{lem:marked_incl_selflocm_acyclic}
    Let \( U \) be an atom, \( \dim U > 0 \). 
    Then \( \markmol{U} \incl \selflocm{U} \) is an acyclic cofibration in the coinductive \( (\infty, n) \)\nbd model structure.
\end{lem}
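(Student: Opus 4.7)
My approach is to exhibit \( \markmol{U} \incl \selflocm{U} \) as a transfinite composition of pushouts of generating acyclic cofibrations from \( \Jcoind = \Jhorn \cup \Jn{n} \cup \Jloc \). To do so, I would mirror the stage-by-stage construction of \( \selfloc{U} \) as the colimit \( \order{0}{X} \incl \order{1}{X} \incl \ldots \) of the localisation procedure. Correspondingly, I would define a filtration \( Z_{0} \incl Z_{1} \incl \ldots \) of marked diagrammatic sets, where each \( Z_{k} \) has underlying \( \order{k}{X} \) and carries the marking \( \set{U} \cup \dgncell{\order{k}{X}} \) (matching the marking of \( \selflocm{U} \)); in particular \( Z_{0} = \markmol{U} \) and \( \colim_{k} Z_{k} = \selflocm{U} \), so that by closure of acyclic cofibrations under transfinite composition it suffices to show each \( Z_{k} \incl Z_{k+1} \) is an acyclic cofibration.

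For each transition \( Z_{k} \incl Z_{k+1} \), which attaches the inverses \( a^L, a^R \) and invertors \( \hinv{L}(a), \hinv{R}(a) \) (all unmarked) for every \( a \in \order{k}{A} \), I would proceed cell by cell. Using that the underlying shapes of the new invertors are the inverted partial Gray cylinders \( \lcyl{\bd{}{+}U_{a}}U_{a} \) and \( \rcyl{\bd{}{-}U_{a}}U_{a} \), I would exhibit each attachment as a pushout of a marked horn in \( \Jhorn \) whose top cell is the invertor and whose distinguished face is the inverse. The marking condition on the opposite boundary of such a horn, combined with the structure already present in \( Z_{k} \), determines which cells become marked; by careful choice of horn it should be possible to attach the inverse without marking it while marking the invertor. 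To remove the residual marking on the invertors (which are not marked in \( \selflocm{U} \)), I would interleave these with pushouts of \( \Jloc \)-generators \( \minmark{\selfloc{V}} \incl \selflocm{V} \), which supply the coherent localisation data needed at the next stage without adding any cells unmarked in \( \selflocm{U} \).

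The main obstacle is the bookkeeping of markings through these combined moves, since marked horns whose opposite boundary is degenerate (and thus automatically contained in the marking) propagate the marking onto the newly attached face in an undesired way; this is precisely the difference between the argument for \( \fmwalkinv{U} \) in Lemma \ref{lem:marked_incl_walk_inv_acyclic}, where accumulating markings is harmless, and the present setting, where we need to preserve the minimal marking \( \set{U} \) throughout. A fallback route, should the combinatorial analysis prove too intricate, is to show directly via Theorem \ref{thm:olschok_theorem} that \( \markmol{U} \incl \selflocm{U} \) is a weak equivalence by verifying that for every fibrant \( W \) in the coinductive model structure---which, by Theorem \ref{thm:coind_fibrant_characterisation}, is \( \natmark{Y} \) for an \( (\infty, n) \)\nbd category \( Y \)---the restriction map \( [\selflocm{U}, W] \to [\markmol{U}, W] \) is a bijection on \( \approx \)-classes: surjectivity is immediate from Proposition \ref{prop:universal_property_localisation}, and injectivity would follow from a coherence argument showing that any two extensions of the same equivalence \( U \to Y \) to \( \selfloc{U} \to Y \) are connected by an explicit homotopy built stage-by-stage using the weak composites and Theorem \ref{thm:properties_of_equivalences}.
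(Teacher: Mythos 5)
Your primary route runs into an obstruction that cannot be repaired by ``careful choice of horn'': in \( \mdgmSet \), morphisms send marked cells to marked cells, so a transfinite composite of pushouts of generating anodyne maps can only ever accumulate markings. Every marked horn \( (\Lambda^x_U, A) \incl (U, A') \) has the top element in \( A' \), so each horn pushout marks the unique top-dimensional cell it attaches; and every map in \( \Jloc \) and \( \Jn{n} \) is an entire monomorphism that adds markings. Since \( \selflocm{U} \) has non-degenerate cells in every dimension above \( \dim U \) and \emph{none} of them is marked, while the only generators that attach cells always mark one of the cells they attach, the inclusion \( \markmol{U} \incl \selflocm{U} \) is provably not a relative \( \An(\Jcoind) \)-cell complex. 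Your proposed fix --- interleaving \( \Jloc \)-pushouts ``to remove the residual marking on the invertors'' --- is not an available move: no pushout of a monomorphism removes a marking. The paper resolves exactly this tension by giving up on hitting \( \selflocm{U} \) on the nose: it introduces the over-marked object \( \selfloc{U}^\infty \) (all cells of dimension \( \geq \dim U \) marked) and reaches it by acyclic cofibrations from \emph{both} ends --- from \( \markmol{U} \) via a transfinite composition of pushouts of the maps \( \markmol{V} \incl \fmwalkinv V \), which are acyclic by Lemma \ref{lem:marked_incl_walk_inv_acyclic}, and from \( \minmark{\selfloc{U}} \) via a transfinite composition of pushouts of \( \Jloc \)-generators --- and then extracts the acyclicity of \( \markmol{U} \incl \selflocm{U} \) by two applications of 2-out-of-3. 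This ``saturate the marking, then cancel by 2-out-of-3'' device is the missing idea.

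Your fallback is legitimate in principle but defers the entire mathematical content. Surjectivity of \( [\selflocm{U}, W] \to [\markmol{U}, W] \) is indeed immediate from Proposition \ref{prop:universal_property_localisation} together with Theorem \ref{thm:coind_fibrant_characterisation}; all the work is in injectivity, i.e.\ in showing that any two extensions to \( \selfloc{U} \) of homotopic equivalence cells \( U \to Y \) are themselves homotopic. That is precisely the coherence statement the lemma encodes, and building ``an explicit homotopy stage-by-stage'' would require extending partial homotopies across each attachment of inverses and invertors --- which is essentially the acyclicity you are trying to establish. As written, neither route closes the argument.
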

\begin{proof}
	Let \( u \) denote the cell \( U \incl \selfloc{U} \) and let \( k \eqdef \dim U \).
	For each \( \ell \geq 0 \), \( s \in \set{L, R}^\ell \), and \( \epsilon \in \set{L, R} \), let \( U_s \eqdef \hcyl{s}U \) and \( U^\epsilon_s \eqdef \dual{k+\ell}{\hcyl{s}U} \).
	There are monomorphisms of diagrammatic sets
	\[
        	\phi_s \colon \selfloc{U}_s \incl \selfloc{U}, \quad\quad \phi_s^\epsilon \colon \selfloc{U}_s^\epsilon \incl \selfloc{U},
	\]
	which, respectively, send \( U_s \incl \selfloc{U}_s \) to \( \hinv{s}u \) and \( U_s^\epsilon \incl \selfloc{U}_s^\epsilon \) to \( \hinv{s}^\epsilon u \).
	Let
	\[
		\selfloc{U}^\ell \eqdef \left(\selfloc{U}, \dgncell \selfloc{U} \cup \bigcup_{i=k}^{k+\ell-1} \gr{i}{(\cell \selfloc{U})} \right),
	\]
	so \( \tilde{U}^0 \equiv \minmark{\tilde{U}} \).
	For each \( s \in \set{L, R}^\ell \) and \( \epsilon \in \set{L, R} \), there is an entire acyclic cofibration with domain \( \selfloc{U}^\ell \) obtained by pushing out the acyclic cofibrations
	\[ 
		\minmark{(\selfloc{U}_s)} \incl \markmol{(\selfloc{U}_s)}, \quad \quad
		\minmark{(\selfloc{U}_s^\epsilon)} \incl \markmol{(\selfloc{U}_s^\epsilon)}
	\]
	respectively along \( j_{\ell - 1} \after \ldots j_{0} \after \minmark{(\phi_s)} \) and along \( j_{\ell - 1} \after \ldots j_{0} \after \minmark{(\phi_s^\epsilon)} \).
	The colimit of this family, which can be obtained by iterative composition of pushouts, is the evident entire monomorphism \( j_\ell\colon \selfloc{U}^\ell \incl \selfloc{U}^{\ell+1} \), which is therefore an acyclic cofibration.
	It follows that the transfinite composite \( j_\infty\colon \selfloc{U} \incl \selfloc{U}^\infty \) of the \( (j_\ell)_{\ell \geq 0} \) is also an acyclic cofibration.
	We have a commutative diagram of cofibrations
    \[
        \begin{tikzcd}
		{\markmol{U}} & {\selflocm{U}} & {\minmark{\selfloc{U}}} \\
            & {\selfloc{U}^\infty}
            \arrow[hook, from=1-1, to=1-2]
            \arrow["i"', hook, from=1-1, to=2-2]
            \arrow["j", hook', from=1-2, to=2-2]
            \arrow["j'"', hook', from=1-3, to=1-2]
            \arrow["{j_\infty}", hook', from=1-3, to=2-2]
        \end{tikzcd}
\]
	where \( j' \in \Jloc \) and \( j_\infty \) are acyclic, so by 2-out-of-3 \( j \) is also acyclic.
	Finally, by construction of the localisation \( U \mapsto \selfloc{U} \), the cofibration \( i \colon \markmol{U} \incl \selfloc{U}^{\infty} \) can be obtained via a transfinite composition of pushouts of the morphisms \( \markmol{(U_s)} \incl \fmwalkinv U_s \) and \( \markmol{(U_s^\epsilon)} \incl \fmwalkinv U_s^\epsilon \), which are all acyclic cofibrations by Lemma \ref{lem:marked_incl_walk_inv_acyclic}.
    	By 2-out-of-3 again, \( \markmol{U} \incl \selflocm{U} \) is an acyclic cofibration.
\end{proof}

\begin{lem} \label{lem:inclusion_into_loc_is_acof}
	Let \( (X, A) \) be a marked diagrammatic set.
	Then the monomorphism \( (X, A) \incl (\loc{X}{A}, A \cup \dgncell\loc{X}{A}) \) is an acyclic cofibration in the coinductive \( (\infty, n) \)\nbd model structure.
\end{lem}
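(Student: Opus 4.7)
My plan is to exhibit the inclusion $(X, A) \incl (\loc{X}{A}, A \cup \dgncell\loc{X}{A})$ as a transfinite composition of pushouts of the acyclic cofibrations $\markmol{U_i} \incl \selflocm{U_i}$ furnished by Lemma \ref{lem:marked_incl_selflocm_acyclic}, with one pushout for each non-degenerate marked cell $a_i \in A \cap \ndcell X$. Since acyclic cofibrations in the coinductive \( (\infty, n) \)\nbd model structure are closed under pushout and transfinite composition, the conclusion will then follow.

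Concretely, I would enumerate \( A \cap \ndcell X \) as \( (a_i\colon U_i \to X)_{i < \kappa} \) for some ordinal \( \kappa \), and define a transfinite sequence \( (Y_i)_{i \leq \kappa} \) in \( \mdgmSet \) by setting \( Y_0 \eqdef (X, A) \), taking colimits at limit stages, and at successor stages defining \( Y_{i+1} \) as the pushout of \( \markmol{U_i} \incl \selflocm{U_i} \) along \( a_i\colon \markmol{U_i} \to Y_i \). This pushout is well-defined because, as I would verify inductively, the marking of \( Y_i \) always contains the image of \( A \), and in particular of \( a_i \). Each \( Y_i \incl Y_{i+1} \) is then a pushout of an acyclic cofibration, and so is the transfinite composite \( (X, A) \incl Y_\kappa \).

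It then remains to identify \( Y_\kappa \) with \( (\loc{X}{A}, A \cup \dgncell\loc{X}{A}) \). On underlying diagrammatic sets, the proof of Proposition \ref{prop:functorial_localisation} provides an explicit description of the non-degenerate cells of \( \loc{X}{A} \) outside \( X \): they are precisely the cells \( \hinv{s}a, \hinv{s}^L a, \hinv{s}^R a \) indexed by \( a \in A \cap \ndcell X \) and nonempty strings \( s \in \set{L, R}^* \). By construction, \( \selfloc{U_i} = \fun{Loc}(\markmol{U_i}) \) contains precisely these cells for \( a = U_i \), together with their boundary structure, so the stage-\( (i+1) \) pushout attaches to \( Y_i \) exactly the cells \( \hinv{s}a_i, \hinv{s}^L a_i, \hinv{s}^R a_i \); passing to the transfinite colimit reproduces all of \( \loc{X}{A} \) outside \( X \). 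For the marking, the only non-degenerate marked cell of \( \selflocm{U_i} \) is \( U_i \incl \selfloc{U_i} \), which the pushout identifies with the already-marked \( a_i \); hence the non-degenerate marking never grows beyond \( A \), and passing to the colimit yields the marking \( A \cup \dgncell\loc{X}{A} \).

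The main obstacle is reconciling this cell-by-cell pushout construction with the iterated two-step recursion defining \( \loc{X}{A} \), where at each stage one simultaneously attaches inverses for all currently marked cells and then their invertors. Both constructions yield the same cells with the same boundaries, but matching the two indexings rigorously demands careful bookkeeping: one must verify that the cascade of cells added by the \( i \)\nbd th pushout corresponds exactly to the subset of cells of \( \loc{X}{A} \) indexed by the original cell \( a_i \), and that the boundary data agree across both descriptions.
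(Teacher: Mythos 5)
Your proposal is correct and follows essentially the same route as the paper: the paper's proof likewise exhibits the monomorphism as a transfinite composition of pushouts along $\markmol{U} \incl \selflocm{U}$ indexed by cells in $A \cap \ndcell X$ and concludes by Lemma \ref{lem:marked_incl_selflocm_acyclic}. The bookkeeping you flag as the ``main obstacle'' (matching the cell-by-cell pushouts with the two-step recursion defining $\loc{X}{A}$) is left implicit in the paper as well, and works because the tower of inverses and invertors over each marked cell is self-contained relative to $X$.
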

\begin{proof}
	The monomorphism \( (X, A) \incl (\loc{X}{A}, A \cup \dgncell\loc{X}{A}) \) can be constructed as a transfinite composition of pushouts along \( \markmol{U} \incl \selflocm{U} \) indexed by cells \( a\colon U \to X \) in \( \ndcell X \cap A \).
	By Lemma \ref{lem:marked_incl_selflocm_acyclic}, it is an acyclic cofibration.
\end{proof}

\begin{dfn}[Marked reversible cylinder]
	The \emph{marked reversible cylinder} is the functor \( \mrgray\colon \dgmSet \to \mdgmSet \) which sends a diagrammatic set \( X \) to the marked diagrammatic set \( \mrgray X \) which fits in the pushout square
\[\begin{tikzcd}
	{\minmark{(\arr \gray X)}} & {\minmark{(\rgray{X})}} \\
	{\markarr \pgray \minmark{X}} & {\mrgray X}
	\arrow[hook, from=1-1, to=1-2]
	\arrow[hook', from=1-1, to=2-1]
	\arrow["{\psi_X}", hook', from=1-2, to=2-2]
	\arrow["{\varphi_X}", hook, from=2-1, to=2-2]
	\arrow["\lrcorner"{anchor=center, pos=0.125, rotate=180}, draw=none, from=2-2, to=1-1]
\end{tikzcd}\]
	in \( \mdgmSet \), whose vertical morphisms are entire and horizontal morphisms are regular.
	The monomorphisms \( \varphi_X \) and \( \psi_X \) are components of natural transformations, which fit into a commutative diagram
	\[\begin{tikzcd}[column sep=large]
	& {\minmark{(-)} \amalg \minmark{(-)}} \\
	{\markarr \pgray \minmark{(-)}} & \mrgray & {\minmark{(\rgray-)}}
	\arrow["{(\iota^-, \iota^+)}"', hook', from=1-2, to=2-1]
	\arrow["{(\mriota^-, \mriota^+)}", hook', from=1-2, to=2-2]
	\arrow["{(\riota^-, \riota^+)}", hook, from=1-2, to=2-3]
	\arrow["\varphi", hook, from=2-1, to=2-2]
	\arrow["\psi"', hook', from=2-3, to=2-2]
\end{tikzcd}\]
	such that the components of \( (\mriota^-, \mriota^+) \) are induced by \( (\riota^-, \riota^+) \).
\end{dfn}

\begin{lem} \label{lem:marked_rev_cyl_acycof}
	All components of \( \varphi\colon \markarr \pgray \minmark{(-)} \incl \mrgray \) and \( \psi\colon \minmark{(\rgray-)} \incl \mrgray \) are acyclic cofibrations in the coinductive \( (\infty, n) \)\nbd model structure.
\end{lem}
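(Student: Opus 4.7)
The plan is to identify each of \( \varphi_X \) and \( \psi_X \) with a morphism, or a transfinite composition of pushouts of morphisms, already shown to be an acyclic cofibration in the coinductive \( (\infty, n) \)\nbd model structure.

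Let \( A \) denote the set of marked cells of \( \markarr \pgray \minmark{X} \). I would first unpack the pushout defining \( \mrgray X \) using Corollary \ref{cor:marked_dset_limits_colimits}: since the morphism \( \minmark{(\arr \gray X)} \incl \markarr \pgray \minmark{X} \) is entire, and \( \minmark{(\arr \gray X)} \incl \minmark{(\rgray X)} \) is the morphism underlying the inclusion \( \arr \gray X \incl \rgray X = \fun{Loc}(\markarr \pgray \minmark{X}) \), the underlying diagrammatic set of \( \mrgray X \) is \( \rgray X \), and its marked cells are \( A \cup \dgncell \rgray X \). Under this identification, \( \varphi_X \) coincides with the canonical inclusion \( (\arr \gray X, A) \incl (\loc{(\arr \gray X)}{A}, A \cup \dgncell \rgray X) \), and is therefore an acyclic cofibration by direct application of Lemma \ref{lem:inclusion_into_loc_is_acof}.

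For \( \psi_X \), which is entire and extends the marking of \( \minmark{\rgray X} \) by the non-degenerate cells in \( A \), I would exhibit it as a transfinite composition of pushouts of morphisms in \( \Jloc \). The key observation is that, for each non-degenerate cell \( c\colon U \to \arr \gray X \) in \( A \), the construction of \( \fun{Loc} \) attaches to \( c \) in \( \rgray X \) both left and right inverses together with the entire tower of higher invertors, so that \( c \), viewed as a cell of \( \rgray X \), extends along \( U \incl \selfloc{U} \) to a morphism \( \tilde{c}\colon \selfloc{U} \to \rgray X \). I would then push out the acyclic cofibration \( \minmark{\selfloc{U}} \incl \selflocm{U} \), which lies in \( \Jloc \), along \( \minmark{\tilde{c}} \); by inspection of how pushouts of marked diagrammatic sets are computed, the result is exactly \( (\rgray X, \dgncell \rgray X \cup \set{c}) \). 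Iterating this transfinitely over all non-degenerate cells in \( A \) exhausts the difference in marking between \( \minmark{\rgray X} \) and \( \mrgray X \), exhibiting \( \psi_X \) as a transfinite composition of acyclic cofibrations, hence as an acyclic cofibration.

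The main obstacle is bookkeeping: verifying that the transfinite composition yields exactly \( \mrgray X \), with neither fewer nor more marked cells than predicted. This reduces to the characterisation of non-degenerate marked cells of \( \markarr \pgray \minmark{X} \) via Remark \ref{rmk:marked_cell_cylinder}, together with the elementary observation, obtained from Corollary \ref{cor:marked_dset_limits_colimits}, that each pushout of \( \minmark{\selfloc{U}} \incl \selflocm{U} \) along an extension of a non-degenerate cell adds only that single cell to the marking among previously unmarked non-degenerate cells.
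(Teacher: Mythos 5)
Your proposal is correct and follows essentially the same route as the paper: $\varphi_X$ is recognised as an instance of the inclusion $(X,A) \hookrightarrow (\loc{X}{A}, A \cup \dgncell \loc{X}{A})$ handled by Lemma \ref{lem:inclusion_into_loc_is_acof}, and $\psi_X$ is built as a transfinite composition of pushouts along $\minmark{\selfloc{U}} \hookrightarrow \selflocm{U}$ indexed by the non-degenerate marked cells of $\markarr \pgray \minmark{X}$. The extra bookkeeping you supply (identifying the underlying object of $\mrgray X$ with $\rgray X$ and checking that each pushout marks exactly one new cell) is implicit in the paper's terser argument.
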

\begin{proof}
	Let \( X \) be a diagrammatic set.
	Then \( \varphi_X \) is a regular monomorphism of the form considered in Lemma \ref{lem:inclusion_into_loc_is_acof}, while the entire monomorphism \( \psi_X \) can be constructed as a transfinite composition of pushouts along \( \minmark{\selfloc{U}} \incl \selflocm{U} \) indexed by cells \( a\colon U \to \arr \gray X \) that are marked in \( \markarr \pgray \minmark{X} \).
\end{proof}

\begin{prop} \label{prop:minmark_is_quillen}
	The adjunction \( \minmark{(-)} \dashv \fun{U} \) is a Quillen adjunction between the \( (\infty, n) \)\nbd model structure and the coinductive \( (\infty, n) \)\nbd model structure.
\end{prop}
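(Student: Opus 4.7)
The plan is to verify that the left adjoint \( \minmark{(-)} \) preserves cofibrations and acyclic cofibrations. Preservation of cofibrations is immediate: \( \minmark{(-)} \) acts as the identity on the underlying diagrammatic set, so it sends monomorphisms to monomorphisms. For acyclic cofibrations, since those in the \( (\infty, n) \)\nbd model on \( \dgmSet \) are retracts of transfinite compositions of pushouts of maps in \( \An(\Jcomp \cup \Jn n) \), and \( \minmark{(-)} \) preserves colimits and retracts, it suffices to show that \( \minmark{j} \) is an acyclic cofibration in the coinductive \( (\infty, n) \)\nbd model on \( \mdgmSet \) for every \( j \in \An(\Jcomp \cup \Jn n) \). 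I argue by induction on the construction of this set from \( \Jcomp \cup \Jn n \cup \set{\riota^\alpha \ppnat m \mid m \in M,\, \alpha \in \set{+, -}} \) under the closure \( j \mapsto (\riota^-, \riota^+) \ppnat j \).

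For the base generators in \( \Jcomp \) and \( \Jn n \), write \( V \eqdef U \celto \compos{U} \) in the first case and \( V \eqdef U \) in the second, so that \( \minmark{j} \) has the form \( \minmark{U} \incl \minmark{\selfloc{V}} \). The plan is to factor it through \( \selflocm{V} \) via \( \minmark{U} \incl \markmol{V} \incl \selflocm{V} \), and compare with the \( \Jloc \)\nbd inclusion \( \minmark{\selfloc{V}} \incl \selflocm{V} \). The first inclusion is an acyclic cofibration: in the \( \Jn n \) case it is directly the marked generator from \( \Jn n \) (since \( \dim U > n \)); in the \( \Jcomp \) case it is realised as a marked horn \( \minmark{\Lambda^{\compos{U}}_V} \incl \markmol{V} \) induced by the atomic horn \( U = \Lambda^{\compos{U}}_V \) via the trivial decomposition of \( \bd{}{+}V = \clset{\compos{U}} \) into boundary pieces of \( \compos{U} \). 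The second inclusion is acyclic by Lemma \ref{lem:marked_incl_selflocm_acyclic}, and the \( \Jloc \)\nbd leg is also an acyclic cofibration. Applying 2-out-of-3 to the triangle with apex \( \selflocm{V} \) then yields that \( \minmark{U} \incl \minmark{\selfloc{V}} \) is an acyclic cofibration.

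For the cylinder-based base elements \( \riota^\alpha \ppnat m \) and the inductive closure step, I exploit the comparison natural transformations \( \varphi\colon \markarr \pgray \minmark{(-)} \to \mrgray \) and \( \psi\colon \minmark{(\rgray-)} \to \mrgray \), whose components are acyclic cofibrations by Lemma \ref{lem:marked_rev_cyl_acycof}. Given \( j \) in \( \dgmSet \) with \( \minmark{j} \) either a cofibration (base case) or an acyclic cofibration (inductive hypothesis), the domains of \( \minmark{(\riota^\alpha \ppnat j)} \) and \( \iota^\alpha \ppnat \minmark{j} \) admit canonical comparison maps into the domain of \( \mriota^\alpha \ppnat \minmark{j} \); these maps are pushouts of components of \( \psi \) and \( \varphi \) respectively, hence acyclic cofibrations. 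The marked pushout-product \( \iota^\alpha \ppnat \minmark{j} \) is an acyclic cofibration by exactness of the marked cylinder (Proposition \ref{prop:exact_cylinder_mdset}) together with the closure properties of anodyne extensions in the marked coinductive model. Two applications of 2-out-of-3 then yield that \( \minmark{(\riota^\alpha \ppnat j)} \) is an acyclic cofibration. The same scheme, with \( \riota^\alpha \) replaced by the pair \( (\riota^-, \riota^+) \), handles the inductive closure step and completes the induction.

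The main technical obstacle will be the diagram bookkeeping in the cylinder comparison step: carefully identifying the canonical morphisms between the pushouts formed with the three cylinders \( \rgray \), \( \markarr \pgray \minmark{(-)} \), and \( \mrgray \) as pushouts of components of \( \varphi, \psi \). Once this identification is made, Lemma \ref{lem:marked_rev_cyl_acycof} ensures that the relevant 2-out-of-3 arguments go through in a diagram of acyclic cofibrations, and the whole construction passes through \( \minmark{(-)} \) as desired.
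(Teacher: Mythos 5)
Your proposal is correct and follows essentially the same route as the paper's proof: reduce to showing that \( \minmark{(-)} \) sends \( \An(\Jcomp \cup \Jn n) \) to acyclic cofibrations, handle the base generators by factoring through \( \selflocm{V} \) via a marked horn (resp.\ a \( \Jn{n} \) generator) and the \( \Jloc \)-inclusion using Lemma \ref{lem:marked_incl_selflocm_acyclic} and 2-out-of-3, and handle the cylinder cases via the comparison transformations \( \varphi, \psi \) and Lemma \ref{lem:marked_rev_cyl_acycof}. The only cosmetic difference is that the paper invokes \cite[Proposition 2.4.40]{cisinski2019higher} for the reduction step where you argue directly with retracts and transfinite compositions.
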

\begin{proof}
	We will show that \( \minmark{(-)} \) is a left Quillen functor. 
	It is clear that \( \minmark{(-)} \) preserves monomorphisms, hence cofibrations, so it remains to show that it preserves acyclic cofibrations.
	By \cite[Proposition 2.4.40]{cisinski2019higher}, which applies by Remark \ref{rmk:cisinski_is_olschok}, it suffices to show that \( \minmark{(-)} \) sends all morphisms \( j \in \An(\Jcomp \cup \Jn n) \) to acyclic cofibrations.
	First, we consider the case that \( j \in \Jcomp \), so it is of the form \( U \incl U \eqvto \compos{U} \) for some round molecule \( U \).
	We have a commutative diagram of cofibrations
\[\begin{tikzcd}
	{\minmark{U}} & {\minmark{(U \eqvto \compos{U})}} \\
	{\markmol{(U \celto \compos{U})}} & {\markmol{(U \eqvto \compos{U})}}
	\arrow[hook, from=1-1, to=1-2]
	\arrow[hook', from=1-1, to=2-1]
	\arrow[hook', from=1-2, to=2-2]
	\arrow[hook, from=2-1, to=2-2]
\end{tikzcd}\]
	where the left vertical morphism is in \( \Jhorn \), the right vertical morphism is in \( \Jloc \), and the bottom morphism is an acyclic cofibration by Lemma \ref{lem:marked_incl_selflocm_acyclic}.
	By 2-out-of-3, the top morphism is also an acyclic cofibration.
	If \( n = \infty \), then \( \Jn n \) is empty; otherwise, let \( j \in \Jn n \) be of the form \( U \incl \selfloc{U} \).
	We have a commutative diagram of cofibrations
\[\begin{tikzcd}
	{\minmark{U}} & {\minmark{\selfloc{U}}} \\
	{\markmol{U}} & {\selflocm{U}}
	\arrow[hook, from=1-1, to=1-2]
	\arrow[hook', from=1-1, to=2-1]
	\arrow[hook', from=1-2, to=2-2]
	\arrow[hook, from=2-1, to=2-2]
\end{tikzcd}\]
	where the left vertical morphism is in \( \Jn n \), the right vertical morphism is in \( \Jloc \), and the bottom morphism is an acyclic cofibration by Lemma \ref{lem:marked_incl_selflocm_acyclic}, so the top morphism is also an acyclic cofibration.
	Next, let \( j = \riota^\a \ppnat \bdmap_U \) for some atom \( U \) and \( \a \in \set{+, -} \).
	We have a commutative diagram
\[\begin{tikzcd}[column sep=huge, row sep=large]
	\bullet & \bullet & \bullet \\
	\bullet & \bullet & \bullet
	\arrow["{\varphi_{\bd{}{}U} \cup \idd{U}}", hook, from=1-1, to=1-2]
	\arrow["{\iota^\a \ppnat \minmark{\bdmap}_U}", hook', from=1-1, to=2-1]
	\arrow["{\mriota^\a \ppnat \minmark{\bdmap}_U}", hook', from=1-2, to=2-2]
	\arrow["{\psi_{\bd{}{}U} \cup \idd{U}}"', hook', from=1-3, to=1-2]
	\arrow["{\minmark{(\riota^\a \ppnat \bdmap_U)}}", hook', from=1-3, to=2-3]
	\arrow["{\varphi_U}", hook, from=2-1, to=2-2]
	\arrow["{\psi_U}"', hook', from=2-3, to=2-2]
\end{tikzcd}\]
	where the leftmost vertical morphism is in \( \An(\Jcoind) \), and all the horizontal morphisms are acyclic cofibrations by Lemma \ref{lem:marked_rev_cyl_acycof}.
	By 2-out-of-3, we conclude that all the morphisms in the diagram are acyclic cofibrations.
	Finally, let \( j\colon X \incl Y \) be in \( \An(\Jcomp \cup \Jn n) \), and suppose inductively that \( \minmark{j} \) is an acyclic cofibration.
	Then we have a commutative diagram
\[\begin{tikzcd}[column sep=huge, row sep=large]
	\bullet & \bullet & \bullet \\
	\bullet & \bullet & \bullet
	\arrow["{\varphi_X \cup \idd{Y \amalg Y}}", hook, from=1-1, to=1-2]
	\arrow["{(\iota^-, \iota^+) \ppnat \minmark{j}}", hook', from=1-1, to=2-1]
	\arrow["{(\mriota^-, \mriota^+) \ppnat \minmark{j}}", hook', from=1-2, to=2-2]
	\arrow["{\psi_X \cup \idd{Y \amalg Y}}"', hook', from=1-3, to=1-2]
	\arrow["{\minmark{((\riota^-, \riota^+) \ppnat j)}}", hook', from=1-3, to=2-3]
	\arrow["{\varphi_Y}", hook, from=2-1, to=2-2]
	\arrow["{\psi_Y}"', hook', from=2-3, to=2-2]
\end{tikzcd}\]
	where the leftmost vertical morphism is an acyclic cofibration, and all the horizontal morphisms are acyclic cofibrations by Lemma \ref{lem:marked_rev_cyl_acycof}.
	By 2-out-of-3, we conclude that all the morphisms in the diagram are acyclic cofibrations.
	This completes the proof.
\end{proof}

\begin{thm} \label{thm:fibrant_iff_infty_category}
	Let \( X \) be a diagrammatic set.
	The following are equivalent:
	\begin{enumerate}[label=(\alph*)]
		\item \( X \) is fibrant in the \( (\infty, n) \)\nbd model structure;
		\item \( X \) is an \( (\infty, n) \)\nbd category.
	\end{enumerate}
\end{thm}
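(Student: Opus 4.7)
The forward direction (fibrant implies $(\infty,n)$-category) is already handled by Lemma \ref{lem:fibrant_is_inftyn}, so the substance of the proof lies in the converse.

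For the converse, my plan is to route the argument through the Quillen adjunction $\minmark{(-)} \dashv \fun{U}$ of Proposition \ref{prop:minmark_is_quillen}. Since $\minmark{(-)}$ is left Quillen between the $(\infty,n)$-model structure on $\dgmSet$ and the coinductive $(\infty,n)$-model structure on $\mdgmSet$, the forgetful functor $\fun{U}$ is right Quillen, hence it preserves fibrant objects. The idea is simply to exhibit $X$ as $\fun{U}$ of a fibrant marked diagrammatic set.

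Concretely, suppose $X$ is an $(\infty,n)$-category. By Theorem \ref{thm:coind_fibrant_characterisation}, the natural marking $\natmark{X} = (X, \eqvcell X)$ is fibrant in the coinductive $(\infty,n)$-model structure on $\mdgmSet$. Since $\fun{U}$ is right Quillen, $\fun{U}(\natmark{X})$ is fibrant in the $(\infty,n)$-model structure on $\dgmSet$. By the remark that $\fun{U}$ is left inverse to $\natmark{(-)}$, we have $\fun{U}(\natmark{X}) = X$, and we are done.

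There is no real obstacle left: the heavy lifting has been carried out in the earlier results, most notably in the characterisation of coinductively fibrant marked diagrammatic sets (Theorem \ref{thm:coind_fibrant_characterisation}) and in establishing that $\minmark{(-)} \dashv \fun{U}$ is a Quillen adjunction (Proposition \ref{prop:minmark_is_quillen}). The only thing to double-check while writing is the direction of the Quillen adjunction---namely that the coinductive model structure sits on the marked side so that fibrant objects transport along $\fun{U}$ to the unmarked side---which is exactly what the statement of Proposition \ref{prop:minmark_is_quillen} asserts.
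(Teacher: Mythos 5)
Your proof is correct and follows exactly the paper's own argument: the forward direction is Lemma \ref{lem:fibrant_is_inftyn}, and the converse passes through Theorem \ref{thm:coind_fibrant_characterisation} to get fibrancy of $\natmark{X}$ in the coinductive model structure and then applies the right Quillen functor $\fun{U}$ from Proposition \ref{prop:minmark_is_quillen}. No differences worth noting.
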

\begin{proof}
	One implication is Lemma \ref{lem:fibrant_is_inftyn}.
	For the other implication, suppose that \( X \) is an \( (\infty, n) \)\nbd category.
	By Theorem \ref{thm:coind_fibrant_characterisation}, \( \natmark{X} \) is fibrant in the coinductive \( (\infty, n) \)\nbd model structure, so by Proposition \ref{prop:minmark_is_quillen} \( X = \fun{U}\minmark{X} \) is fibrant in the \( (\infty, n) \)\nbd model structure.
\end{proof}

\begin{lem} \label{lem:forgetful_creates_hpty}
    	Let \( f \), \( g \) be parallel morphisms of marked diagrammatic sets whose codomain is fibrant in the coinductive \( (\infty, n) \)\nbd model structure.
	Then \( f \) is homotopic to \( g \) with the exact cylinder \( \markarr \pgray - \) if and only if \( \fun{U}f \) is homotopic to \( \fun{U}g \) with the exact cylinder \( \rgray \).
\end{lem}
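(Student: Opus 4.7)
The plan is to establish a bijection between the two kinds of homotopies, exploiting the fact that by definition $\rgray X = \fun{Loc}(\markarr \pgray \minmark{X})$, together with the universal property of localisation applied to the fibrant codomain. Since \( (Y, B) \) is fibrant in the coinductive \( (\infty, n) \)\nbd model structure, Theorem \ref{thm:coind_fibrant_characterisation} tells us that \( Y \) is an \( (\infty, n) \)\nbd category --- hence in particular an \( (\infty, \infty) \)\nbd category --- and that \( B = \eqvcell Y \); this is what allows Proposition \ref{prop:universal_property_localisation} to apply.

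For the forward direction, I would take a homotopy \( \beta\colon \markarr \pgray (X, A) \to (Y, B) \) from \( f \) to \( g \) and apply \( \fun{U} \) to obtain a morphism \( \fun{U}\beta\colon \arr \gray X \to \fun{U}Y \). By Remark \ref{rmk:marked_cell_cylinder}, the non-degenerate marked cells of \( \markarr \pgray \minmark{X} \) are the cells \( \set{1} \gray x \) for \( x \in \cell X \); these are also marked in \( \markarr \pgray (X, A) \), so \( \beta \) sends them to elements of \( B = \eqvcell Y \), i.e.\ to equivalences. By Proposition \ref{prop:universal_property_localisation}, \( \fun{U}\beta \) therefore extends uniquely along the inclusion \( \arr \gray X \incl \rgray X \) to a morphism \( \gamma\colon \rgray X \to \fun{U}Y \). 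Since \( \riota^\a \) is the whiskering of \( \iota^\a \), we have \( \gamma \after \riota^\a = \fun{U}\beta \after \iota^\a = \fun{U}(\beta \after \iota^\a) \), so \( \gamma \) is a homotopy from \( \fun{U}f \) to \( \fun{U}g \).

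For the reverse direction, I would take a homotopy \( \gamma\colon \rgray X \to \fun{U}Y \) from \( \fun{U}f \) to \( \fun{U}g \), restrict along \( \arr \gray X \incl \rgray X \) to obtain \( \gamma'\colon \arr \gray X \to \fun{U}Y \), and show that \( \gamma' \) lifts to a morphism \( \beta\colon \markarr \pgray (X, A) \to (Y, B) \) of \( \mdgmSet \). By Remark \ref{rmk:marked_cell_cylinder}, the non-degenerate marked cells of \( \markarr \pgray (X, A) \) not already marked in \( \markarr \pgray \minmark{X} \) are the cells \( \set{0^\a} \gray a \) for \( \a \in \set{-, +} \) and \( a \in A \); these are sent by \( \gamma' \) to \( \fun{U}f(a) \) or \( \fun{U}g(a) \), which lie in \( B \) because \( f, g \) are morphisms of marked diagrammatic sets. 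The remaining marked cells \( \set{1} \gray x \) are sent to equivalences by the universal property used to define \( \gamma \), hence also lie in \( B = \eqvcell Y \). Thus \( \gamma' \) lifts to a morphism \( \beta \) in \( \mdgmSet \), and checking endpoints shows that \( \beta \) is a homotopy from \( f \) to \( g \).

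The content of the argument is almost entirely bookkeeping: the only substantive inputs are the identification \( B = \eqvcell Y \) from Theorem \ref{thm:coind_fibrant_characterisation} and the universal property of \( \fun{Loc} \) from Proposition \ref{prop:universal_property_localisation}. The place where one must be careful is tracking which marked cells of \( \markarr \pgray (X, A) \) and \( \markarr \pgray \minmark{X} \) are present, to see that the two conditions --- ``sends marked cells of \( \markarr \pgray (X, A) \) to \( B \)'' and ``sends marked cells of \( \markarr \pgray \minmark{X} \) to equivalences'' --- are indeed equivalent under the hypotheses on \( f, g \) and \( (Y, B) \).
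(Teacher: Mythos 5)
Your argument is correct, and the forward direction is essentially the paper's: apply \( \fun{U} \), observe that the marked cells of \( \markarr \pgray \minmark{X} \) go to equivalences, and extend along \( \arr \gray X \incl \rgray X \) by Proposition \ref{prop:universal_property_localisation} (note the proposition only gives existence of the extension, not uniqueness, but existence is all you need). The reverse direction is where you genuinely diverge: the paper transposes \( \gamma \) to \( \minmark{(\rgray X)} \to \natmark{Y} \), extends along the entire acyclic cofibration \( \psi_X \colon \minmark{(\rgray X)} \incl \mrgray X \) using fibrancy of the codomain, and restricts along \( \varphi_X \); you instead restrict \( \gamma \) directly to \( \arr \gray X \) and verify the marking condition by hand. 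Your route is more elementary and avoids the marked reversible cylinder entirely, at the cost of an explicit case analysis of the marked cells; the paper's route packages that analysis into Lemma \ref{lem:marked_rev_cyl_acycof}. One justification in your version needs repair: you say the cells \( \set{1} \gray x \) are ``sent to equivalences by the universal property used to define \( \gamma \)'', but in this direction \( \gamma \) is an arbitrary given homotopy, not one produced by that universal property. The correct reason is that these cells are marked in \( \markarr \pgray \minmark{X} \), hence by construction of \( \fun{Loc} \) they become equivalences in \( \rgray X = \fun{Loc}(\markarr \pgray \minmark{X}) \), and every morphism of diagrammatic sets preserves equivalences (Theorem \ref{thm:properties_of_equivalences}), so \( \gamma \) sends them into \( \eqvcell Y = B \). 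With that one-line fix the proof is complete.
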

\begin{proof}
	By Theorem \ref{thm:coind_fibrant_characterisation}, the codomain of \( f \) and \( g \) is of the form \( \natmark{Y} \) where \( Y \) is an \( (\infty, n) \)\nbd category.
	Let \( \beta\colon \markarr \pgray (X, A) \to \natmark{Y} \) be a homotopy between \( f \) and \( g \).
	By Proposition \ref{prop:universal_property_localisation}, its underlying morphism extends to a morphism \( \tilde{\beta}\colon \rgray X \to Y \), which is a homotopy between \( \fun{U}f \) and \( \fun{U}g \).
	Conversely, let \( \beta\colon \rgray X \to Y \) be a homotopy between \( \fun{U}f \) and \( \fun{U}g \), and let \( \beta'\colon \minmark{(\rgray X)} \to \natmark{Y} \) be its transpose.
	Because its codomain is fibrant, \( \beta' \) extends along the acyclic cofibration \( \psi_X\colon \minmark{(\rgray X)} \incl \mrgray X \), then restricts along \( \varphi_X \) to \( \markarr \pgray \minmark{X} \).
	The resulting morphism sends cells in \( \iota^-(A) \) and \( \iota^+(A) \) to equivalences, hence extends to a homotopy \( \beta''\colon \markarr \pgray (X, A) \to \natmark{Y} \) between \( f \) and \( g \).
\end{proof}

\begin{thm} \label{thm:marked_quillen_eq_unmarked}
	The adjunction \( \minmark{(-)} \dashv \fun{U} \) is a Quillen equivalence between the \( (\infty, n) \)\nbd model structure and the coinductive \( (\infty, n) \)\nbd model structure.
\end{thm}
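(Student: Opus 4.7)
The plan is to verify one of the standard criteria for Quillen equivalence: since every object of \( \dgmSet \) is cofibrant and, by Theorem \ref{thm:coind_fibrant_characterisation}, every fibrant object in the coinductive \( (\infty, n) \)\nbd model structure is of the form \( \natmark{Y} \) for an \( (\infty, n) \)\nbd category \( Y \), it suffices to show that for every \( X \in \dgmSet \) and every \( (\infty, n) \)\nbd category \( Y \), a morphism \( g \colon \minmark{X} \to \natmark{Y} \) is a weak equivalence in the coinductive \( (\infty, n) \)\nbd model structure if and only if its adjoint transpose \( f \eqdef \fun{U}g \colon X \to Y \) is a weak equivalence in the \( (\infty, n) \)\nbd model structure. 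By Theorem \ref{thm:fibrant_iff_infty_category}, the codomain \( Y \) is itself fibrant, so we will be in the setting of Theorem \ref{thm:olschok_theorem}, which characterises weak equivalences via bijectivity of the induced maps on homotopy classes of morphisms into fibrant objects.

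The central observation will be that \( \fun{U} \) induces natural bijections of hom-sets
\[
	\mdgmSet(\natmark{Y}, \natmark{W}) = \dgmSet(Y, W), \quad\quad \mdgmSet(\minmark{X}, \natmark{W}) = \dgmSet(X, W)
\]
for every \( (\infty, n) \)\nbd category \( W \). Indeed, by Theorem \ref{thm:properties_of_equivalences}, every morphism of diagrammatic sets preserves equivalences, so every \( Y \to W \) lifts uniquely to \( \natmark{Y} \to \natmark{W} \); and since every degenerate cell is automatically an equivalence, every \( X \to W \) likewise lifts to \( \minmark{X} \to \natmark{W} \). A direct check from the definition of the adjoint transpose will show that the precomposition maps \( g^* \) and \( f^* \) correspond to one another under these bijections.

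It will then remain to lift the equalities of hom-sets to equalities of homotopy quotients, and this is precisely what Lemma \ref{lem:forgetful_creates_hpty} provides: the \( \markarr \pgray - \)\nbd homotopy relation on morphisms into a fibrant marked diagrammatic set is created by \( \fun{U} \) from the \( \rgray \)\nbd homotopy relation on morphisms of the underlying diagrammatic sets. Applying this to both hom-sets above and combining with Theorem \ref{thm:olschok_theorem} will yield the desired equivalence: \( f^* \) is a bijection for every \( (\infty, n) \)\nbd category \( W \) if and only if \( g^* \) is a bijection for every fibrant \( \natmark{W} \). I do not expect a genuine obstacle here---essentially all the substantive content has already been isolated in Theorem \ref{thm:properties_of_equivalences}, Theorem \ref{thm:coind_fibrant_characterisation}, Proposition \ref{prop:minmark_is_quillen} and Lemma \ref{lem:forgetful_creates_hpty}---and the remaining task is mostly to assemble these inputs under the right formal criterion for a Quillen equivalence.
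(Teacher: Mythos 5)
Your argument is correct, and it takes a slightly different route from the paper's. The paper invokes Hovey's criterion (Corollary 1.3.16.(b)): since all objects are cofibrant, it suffices that (i) the counit \( \minmark{X} \to \natmark{X} \) at each fibrant object is a weak equivalence, and (ii) \( \minmark{(-)} \) reflects weak equivalences. Point (i) requires a genuine computation: the counit is exhibited as a transfinite composition of pushouts of the maps \( \minmark{\selfloc{U}} \incl \selflocm{U} \in \Jloc \), hence an acyclic cofibration. Point (ii) is proved exactly with your ingredients --- the adjointness bijection \( \mdgmSet(\minmark{X}, \natmark{W}) \cong \dgmSet(X, W) \), Lemma \ref{lem:forgetful_creates_hpty}, and the local characterisation of weak equivalences in Theorem \ref{thm:olschok_theorem}. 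You instead verify the definition of Quillen equivalence directly on adjoint pairs \( g \colon \minmark{X} \to \natmark{Y} \) versus \( \fun{U}g \colon X \to Y \), which lets you dispense with the counit computation entirely; the price is the additional identification \( \mdgmSet(\natmark{Y}, \natmark{W}) \cong \dgmSet(Y, W) \), but that is cheap, being exactly the statement that morphisms of diagrammatic sets preserve equivalences (Theorem \ref{thm:properties_of_equivalences}) together with faithfulness of \( \fun{U} \). Both identifications are compatible with precomposition by \( g \) (since \( \fun{U}(h \after g) = \fun{U}h \after \fun{U}g \)) and, by Lemma \ref{lem:forgetful_creates_hpty} applied to arbitrary parallel morphisms with fibrant codomain, with the relations \( \approx \) induced by \( \markarr \pgray - \) and \( \rgray \); since the fibrant objects on the two sides correspond under \( \natmark{(-)} \) by Theorems \ref{thm:coind_fibrant_characterisation} and \ref{thm:fibrant_iff_infty_category}, the two families of maps on homotopy classes are bijective simultaneously, and Theorem \ref{thm:olschok_theorem} concludes. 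In short: your decomposition trades the geometric input (the counit as an acyclic cofibration, via Lemma \ref{lem:marked_incl_selflocm_acyclic}) for a second, essentially formal, hom-set identification; both are sound.
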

\begin{proof}
	By \cite[Corollary 1.3.16.(b)]{hovey2007model}, Theorem \ref{thm:coind_fibrant_characterisation}, and the fact that all objects in \( \dgmSet \) and \( \mdgmSet \) are cofibrant, it suffices to show that
	\begin{enumerate}
		\item for all \( (\infty, n) \)\nbd categories \( X \), the counit \( \minmark{X} \to \natmark{X} \) of the adjunction is a weak equivalence,
		\item \( \minmark{(-)} \) reflects weak equivalences.
	\end{enumerate}
	For the first point, observe that the entire monomorphism \( \minmark{X} \to \natmark{X} \) can be constructed as a transfinite composition of pushouts along \( \minmark{\selfloc{U}} \incl \selflocm{U} \) indexed by non-degenerate equivalences in \( X \), so it is an acyclic cofibration.
	For the second point, by Theorem \ref{thm:fibrant_iff_infty_category}, \( \natmark{(-)} \) determines a bijection between fibrants in the two model structures, and by adjointness, for each diagrammatic set \( X \) and \( (\infty, n) \)\nbd category \( W \), there is a bijection between \( \dgmSet(X, W) \) and \( \mdgmSet(\minmark{X}, \natmark{W}) \).
	Moreover, by Lemma \ref{lem:forgetful_creates_hpty}, the relations \( \approx \) determined on the two sets, respectively, by the reversible and the marked cylinder coincide.
	We conclude by the characterisation of weak equivalences in Theorem \ref{thm:olschok_theorem}.
\end{proof}


\subsection{Weak equivalences between \inftyn-categories} \label{sub:characterisation_ms_diag_sets}

Throughout this section, we fix \( n \in \mathbb{N} \cup \set{\infty} \).
Unless otherwise specified, the terminology we use is relative to the \( (\infty, n) \)\nbd model structure on \( \dgmSet \).

\begin{dfn} [\( \omega \)\nbd equivalence] \label{dfn:omega_equivalence}
    Let \( X \), \( Y \) be \( (\infty, n) \)\nbd categories. 
    A functor \( f\colon X \to Y \) is an \emph{\( \omega \)\nbd equivalence} if
    \begin{enumerate}
	    \item for all \( v \in \gr{0}{\cell Y} \), there exists \( u \in \gr{0}{\cell X} \) such that \( v \simeq f(u) \), and
	    \item for all \( n > 0 \), parallel pairs \( u^-, u^+ \) in \( \gr{n-1}{\rd X} \), and cells \( v \colon f(u^-) \celto f(u^+) \) in \( Y \), there exists a cell \( u \colon u^- \celto u^+ \) in \( X \) such that \( v \simeq f(u) \). 
    \end{enumerate}
\end{dfn}

\noindent We will prove that the weak equivalences between \( (\infty, n) \)\nbd categories are exactly the \( \omega \)\nbd equivalences.
To do so, we adopt the same strategy used for strict \( \omega \)\nbd categories in \cite[Section 20.3]{ara2023polygraphs}, which can be formally translated into the setting of diagrammatic sets, provided we give an appropriate version of the ``transport lemma'' \cite[Lemma 20.3.5]{ara2023polygraphs}, which we do in Lemma \ref{lem:transport_lemma}.
When the proof of a statement is a formal analogue of its strict counterpart, we do not reproduce it but simply indicate a precise reference, trusting the reader to make the appropriate small changes of notations.

\begin{lem} \label{lem:acyclic_fib_is_omega_equivalence}
	Let \( f\colon X \to Y \) be an acyclic fibration of \( (\infty, n) \)\nbd categories.
	Then \( f \) is an \( \omega \)\nbd equivalence.
\end{lem}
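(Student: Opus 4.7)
The plan is to unfold what it means for \( f \) to be an acyclic fibration. In the \( (\infty, n) \)\nbd model structure (Definition \ref{dfn:infty_n_model_structure}), cofibrations are the monomorphisms, so \( f \) has the right lifting property against every monomorphism, and in particular against the cellular model \( \set{\bd{}{}U \incl U \mid U \in \Ob\atom} \). Each of the two clauses of Definition \ref{dfn:omega_equivalence} will be verified by producing a lift against one of these generating cofibrations. Since such a lift will yield the equality \( f(u) = v \) rather than just \( v \simeq f(u) \), the equivalence conclusion follows for free from reflexivity of \( \simeq \).

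For the first clause, given a \( 0 \)\nbd cell \( v\colon \pt \to Y \), the right lifting property of \( f \) against the cofibration \( \bd{}{}\pt = \emptyset \incl \pt \) produces a \( 0 \)\nbd cell \( u \) in \( X \) with \( f(u) = v \). For the second, fix \( n > 0 \), parallel round \( (n-1) \)\nbd diagrams \( u^\pm\colon V^\pm \to X \), and a cell \( v\colon W \to Y \) of type \( f(u^-) \celto f(u^+) \); then \( W \) is an atom of dimension \( n \) with \( \bd{}{\a}W = V^\a \) for each \( \a \in \set{+, -} \), and the fact that \( u^-, u^+ \) are parallel ensures they assemble into a morphism \( q\colon \bd{}{}W \to X \) satisfying \( f \after q = \bd{}{} v \). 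Lifting \( v \) against the cofibration \( \bd{}{}W \incl W \) then yields a cell \( u\colon W \to X \) extending \( q \) with \( f(u) = v \), that is, a cell \( u\colon u^- \celto u^+ \) with \( v \simeq f(u) \).

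No real obstacle is expected: the proof is essentially a direct translation of the lifting property of acyclic fibrations into the combinatorial language of diagrammatic sets. The only points requiring a moment of care are the identification of the shape \( W \) of \( v \) as an \( n \)\nbd atom whose input and output boundaries are the shapes of \( u^- \) and \( u^+ \), and the verification that \( u^- \) and \( u^+ \) glue along their common boundary to yield \( q \); both follow immediately from the definitions.
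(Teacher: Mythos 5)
Your proposal is correct and is exactly the paper's argument: the paper's proof is the one-line observation that the right lifting property of \( f \) against boundary inclusions of atoms yields the conditions of an \( \omega \)\nbd equivalence up to equality, and your write-up simply spells out the two lifting problems (against \( \emptyset \incl \pt \) and against \( \bd{}{}W \incl W \)) in detail. No differences in substance.
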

\begin{proof}
	The right lifting property of \( f \) against boundary inclusions of atoms implies that the conditions of an \( \omega \)\nbd equivalence are satisfied up to equality.
\end{proof}

\begin{lem} \label{lem:omega_eq_reflect_equivalence}
    Let \( f \colon X \to Y \) be an \( \omega \)\nbd equivalence between \( (\infty, n) \)\nbd categories and let \( u, v \) be parallel cells in \( X \).
    Then \( f(u) \simeq f(v) \) implies \( u \simeq v \).
\end{lem}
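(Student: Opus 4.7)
The plan is to use the coinductive characterisation of equivalences from Comment \ref{comm:coinduction_proof_technique}. First, I would apply Lemma \ref{lem:equivalences_exhibited_by_cells} to the hypothesis \( f(u) \simeq f(v) \) to obtain a cell \( e \colon f(u) \eqvto f(v) \) in \( Y \); this is possible because \( Y \) is an \( (\infty, \infty) \)\nbd category. Since \( u, v \) are parallel cells in \( X \), the second clause of Definition \ref{dfn:omega_equivalence} at dimension \( \dim u + 1 \) then yields a cell \( \tilde e \colon u \celto v \) in \( X \) with \( f(\tilde e) \simeq e \). By Theorem \ref{thm:properties_of_equivalences}.\ref{enum:eqv_stable}, \( f(\tilde e) \in \eqv Y \), so it suffices to show \( \tilde e \in \eqv X \).

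For this, I would prove the stronger statement that the set
\[
	A \eqdef \set{ d \in \rd X \mid f(d) \in \eqv Y }
\]
is contained in \( \eqv X \), by establishing \( A \subseteq \B(A) \) and invoking the coinduction principle of Comment \ref{comm:coinduction_proof_technique}. Since \( \tilde e \in A \), this implies \( \tilde e \in \eqv X \) and hence \( u \simeq v \).

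To verify \( A \subseteq \B(A) \), I would fix \( d \in A \) of type \( u' \celto v' \). By definition, the equivalence \( f(d) \) comes equipped with weak inverses \( \ell, r \) and witnesses \( z \colon f(d) \cp{} \ell \eqvto \un{f(u')} \), \( h \colon \un{f(v')} \eqvto r \cp{} f(d) \); using that \( Y \) has weak composites and Lemma \ref{lem:equivalences_exhibited_by_cells}, we may replace each of \( \ell, r, z, h \) by an equivalent cell, which by Theorem \ref{thm:properties_of_equivalences}.\ref{enum:eqv_stable} remains an equivalence. Essential surjectivity of \( f \) on cells between \( v' \) and \( u' \) then produces cells \( d^L, d^R \colon v' \celto u' \) in \( X \) with \( f(d^L) \simeq \ell \) and \( f(d^R) \simeq r \), both of which lie in \( A \) by Theorem \ref{thm:properties_of_equivalences}.\ref{enum:wkinv_is_equiv} combined with \ref{enum:eqv_stable}. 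Because morphisms preserve units and \( \simeq \) is a congruence (Theorem \ref{thm:properties_of_equivalences}.\ref{enum:congruence}),
\[
	f(d \cp{} d^L) = f(d) \cp{} f(d^L) \simeq f(d) \cp{} \ell \simeq \un{f(u')} = f(\un{u'}).
\]
Applying Lemma \ref{lem:equivalences_exhibited_by_cells} once more exhibits this by a cell in \( Y \), which essential surjectivity pulls back to a cell \( z' \colon d \cp{} d^L \celto \un{u'} \) in \( X \) with \( f(z') \in \eqv Y \), so \( z' \in A \); a symmetric construction yields \( h' \colon \un{v'} \celto d^R \cp{} d \) in \( A \). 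Together, these witness \( d \in \B(A) \).

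The main obstacle is really a bookkeeping subtlety: each invocation of essential surjectivity demands an honest cell in \( Y \) between the images of a parallel pair of round diagrams in \( X \), so one must repeatedly combine preservation of units, the congruence of \( \simeq \), Lemma \ref{lem:equivalences_exhibited_by_cells}, and passage to weak composites in order to present the equivalences of \( Y \) as cells on which essential surjectivity can act. Once this is in place, the coinductive step simply decomposes the equivalence data of \( f(d) \) into its four components and pulls each back separately.
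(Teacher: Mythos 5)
Your proof is correct, and it is essentially the argument the paper intends: the paper's proof is just the citation ``formal analogue of \cite[Proposition 20.1.14]{ara2023polygraphs}'', and your coinductive reflection argument (showing that \( A = \set{d \in \rd X \mid f(d) \in \eqv Y} \) satisfies \( A \subseteq \B(A) \), with weak composites and Lemma \ref{lem:equivalences_exhibited_by_cells} used to turn round diagrams into cells so that essential surjectivity applies) is precisely the translation of that strict-case proof into the diagrammatic setting. No gaps.
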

\begin{proof}
    Formal analogue of \cite[Proposition 20.1.14]{ara2023polygraphs}.
\end{proof}

\begin{dfn} [Reversible path space]
	By standard facts about locally presentable categories \cite[1.66]{adamek1994locally}, the colimit-preserving endofunctor \( \rgray \) on \( \dgmSet \) has a right adjoint \( \rGamma \), defined on a diagrammatic set \( X \) by
	\[
		\rGamma X\colon U \mapsto \dgmSet(\rgray U, X).
	\]
	This is equipped with natural transformations \( (\rpi^-, \rpi^+) \) and \( \rnu \) obtained as transposes of \( (\riota^-, \riota^+) \) and \( \rsigma \), respectively.
	We call \( \rGamma X \) the \emph{reversible path space of \( X \)}.
	Given a diagram \( \gamma\colon U \to \rGamma X \), we call \( \gamma \) a \emph{reversible cylinder of shape \( U \)}, and write \( \gamma\colon u \cyl v \) for \( u \eqdef \rpi_X^- \after \gamma \) and \( v \eqdef \rpi_X^+ \after \gamma \).
\end{dfn}

\begin{lem} \label{lem:rev_cylinder_is_infty_cat}
    	Let \( X \) be an \( (\infty, n) \)\nbd category.
	Then \( \rGamma X \) is an \( (\infty, n) \)\nbd category.
\end{lem}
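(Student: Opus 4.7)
The plan is to show that $\rGamma X$ is fibrant in the $(\infty, n)$-model structure; the conclusion then follows from Theorem \ref{thm:fibrant_iff_infty_category}. Since $\rGamma$ is by construction right adjoint to $\rgray$, it suffices to prove that $\rgray$ is a left Quillen endofunctor of $\dgmSet$ equipped with the $(\infty, n)$-model structure: then $\rGamma$ is right Quillen and in particular preserves fibrant objects.

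Preservation of cofibrations is immediate, since cofibrations are exactly the monomorphisms and axiom \ref{enum:dh1} guarantees that $\rgray$ preserves monomorphisms. The key step is to show that $\rgray$ preserves weak equivalences. For this, I would first observe that each component $\rsigma_Y\colon \rgray Y \to Y$ is a weak equivalence. Indeed, $\riota^-_Y$ may be identified with $\riota^- \ppnat (\emptyset \hookrightarrow Y)$, which by the definition of a class of anodyne extensions and Proposition \ref{prop:smallest_class_of_anodynes} is an acyclic cofibration in the $(\infty, n)$-model structure. Since $\rsigma_Y \after \riota^-_Y = \idd{Y}$ by the codiagonal condition in axiom \ref{enum:dh0}, the 2-out-of-3 property forces $\rsigma_Y$ to be a weak equivalence.

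Given any weak equivalence $f\colon X \to Y$ in $\dgmSet$, the naturality square
\[\begin{tikzcd}
	{\rgray X} & {\rgray Y} \\
	X & Y
	\arrow["{\rgray f}", from=1-1, to=1-2]
	\arrow["{\rsigma_X}"', from=1-1, to=2-1]
	\arrow["{\rsigma_Y}", from=1-2, to=2-2]
	\arrow["f"', from=2-1, to=2-2]
\end{tikzcd}\]
exhibits $\rsigma_Y \after \rgray f = f \after \rsigma_X$ as a composite of weak equivalences, and since $\rsigma_Y$ is a weak equivalence, a further application of 2-out-of-3 yields that $\rgray f$ is a weak equivalence. Thus $\rgray$ preserves monomorphisms and weak equivalences, hence sends acyclic cofibrations to acyclic cofibrations, which completes the proof that $\rgray$ is left Quillen.

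There is no genuine obstacle in this argument beyond correctly identifying the abstract structure: the only substantive ingredient is the slick observation that $\rsigma$ is already a pointwise weak equivalence by virtue of having the sections $\riota^\a$, which are anodyne by the exact cylinder axioms. Everything else is formal nonsense about Quillen adjunctions. A possible subtlety to check is that the appeal to $\riota^- \ppnat (\emptyset \hookrightarrow Y) = \riota^-_Y$ is indeed covered by the definition of class of anodyne extensions (which quantifies over all monomorphisms, including initial maps), but this is routine.
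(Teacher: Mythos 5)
Your proof is correct and takes essentially the same route as the paper's: the paper transposes the lifting problem for $\rGamma X$ against $\Jcomp \cup \Jn n$ along the adjunction $\rgray \dashv \rGamma$ and invokes the fact that $\rgray$, being a cylinder object for the model structure that preserves cofibrations, preserves acyclic cofibrations --- which is precisely the left-Quillen-ness you establish, with the $\rsigma$/2-out-of-3 argument made explicit.
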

\begin{proof}
	It suffices to show that \( \rGamma X \) has the right lifting property against the set \( \Jcomp \cup \Jn n \).
	Transposing along the adjunction \( \rgray \dashv \rGamma \), this is equivalent to showing that \( X \) has the right lifting property against \( \rgray j \) for all \( j \in \Jcomp \cup \Jn n \).
	Since \( \rgray \) is a cylinder object for the \( (\infty, n) \)\nbd model structure and preserves cofibrations, it preserves acyclic cofibrations, so we conclude by fibrancy of \( X \).
\end{proof}

\begin{lem}[Transport lemma] \label{lem:transport_lemma}
    Let \( X \) be an \( (\infty, n) \)\nbd category, \( U \) be an atom, and
    \( \ppair{\gamma^- \colon u^- \cyl v^-}{\gamma^+ \colon u^+ \cyl v^+}\colon \bd{}{} U \to \rGamma X \) a diagram.
    Then
    \begin{enumerate}
        \item for all cells \( u \colon u^- \celto u^+ \), there exists a cell \( v \colon v^- \celto v^+ \) together with a reversible cylinder \( \gamma \colon u \cyl v \) restricting to \( \ppair{\gamma^-}{\gamma^+} \), and
        \item for all cells \( v \colon v^- \celto v^+ \), there exists a cell \( u \colon u^- \celto u^+ \) together with a reversible cylinder \( \gamma \colon u \cyl v \) restricting to \( \ppair{\gamma^-}{\gamma^+} \).
    \end{enumerate}
    Moreover, the two cells \( u, v \) determine each other uniquely up to equivalence.
\end{lem}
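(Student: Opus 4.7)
The plan is to translate both halves of the lemma into lifting problems against anodyne extensions in the \( (\infty, n) \)\nbd model structure on \( \dgmSet \), exploiting that \( X \) is fibrant by Theorem \ref{thm:fibrant_iff_infty_category}.

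\textit{Existence.} By the adjunction \( \rgray \dashv \rGamma \), the boundary datum \( \ppair{\gamma^-}{\gamma^+}\colon \bd{}{}U \to \rGamma X \) transposes to a morphism \( \rgray \bd{}{}U \to X \), while the cell \( u\colon U \to X \) provides data on \( \{0^-\} \gray U \) via \( \riota^- \). These data glue to a morphism
\[
	q\colon (\{0^-\} \gray U) \coprod\nolimits_{\{0^-\} \gray \bd{}{}U} \rgray \bd{}{}U \to X
\]
whose domain embeds into \( \rgray U \) via the pushout-product \( \riota^- \ppnat \bdmap_U \). By the axioms of an exact cylinder combined with the definition of anodyne extensions, every pushout-product \( \riota^\a \ppnat m \) with a monomorphism \( m \) is anodyne, hence an acyclic cofibration. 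Since \( X \) is fibrant, \( q \) extends to \( \gamma\colon \rgray U \to X \); setting \( v \eqdef \gamma \after \riota^+ \) proves the first claim, and the second is dual, using \( \riota^+ \ppnat \bdmap_U \).

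\textit{Uniqueness via a double cylinder.} Given two cylinders \( \gamma_1\colon u \cyl v_1 \) and \( \gamma_2\colon u \cyl v_2 \) restricting to \( \ppair{\gamma^-}{\gamma^+} \), I plan to produce an equivalence \( v_1 \simeq v_2 \) via a lift in \( \rgray(\rgray U) \). Write the ``outer'' cylinder direction as \( t \) and the ``inner'' direction as \( s \). I assemble a morphism from the subobject \( \Omega \subseteq \rgray(\rgray U) \) consisting of the two outer faces at \( t = 0^\pm \), the inner face at \( s = 0^- \), and the boundary component \( \rgray(\rgray \bd{}{}U) \) into \( X \), given respectively by \( \gamma_1 \), \( \gamma_2 \), the degenerate outer cylinder \( u \after \rsigma_U \), and the degenerate outer cylinder on \( \ppair{\gamma^-}{\gamma^+} \). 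Consistency on overlaps is immediate from \( \gamma_i \after \riota^- = u \) and \( \gamma_i|_{\rgray \bd{}{}U} = \ppair{\gamma^-}{\gamma^+} \). The inclusion \( \Omega \incl \rgray(\rgray U) \) is built by iterated pushout-products of cylinder inclusions with \( \bdmap_U \), hence is anodyne, and a lift through fibrancy of \( X \) restricts along the \( s = 0^+ \) face to a reversible cylinder \( h\colon v_1 \cyl v_2 \) whose \( U \)\nbd boundary is degenerate.

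\textit{From cylinder to equivalence.} The top cell of \( \rgray U \) is, by construction of the localisation \( \fun{Loc} \), an equivalence in \( \rgray U \); thus its image under \( h \) is an equivalence in \( X \) by Theorem \ref{thm:properties_of_equivalences}. Because its boundary is degenerate and \( X \) has weak composites, one extracts an explicit equivalence \( v_1 \simeq v_2 \) by whiskering with units and composing inside the \( (\infty, \infty) \)\nbd categorical structure of \( X \).

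\textit{Main obstacle.} The main technical obstacle is the uniqueness step: specifically, verifying that the inclusion \( \Omega \incl \rgray(\rgray U) \) genuinely arises as an iterated pushout-product of cylinder inclusions with cofibrations --- which requires careful Gray-product boundary calculus for \( \rgray \rgray U \) --- and extracting a bona fide equivalence \( v_1 \simeq v_2 \) in the sense of Section \ref{sec:diagrams} from the cylinder \( h \) with degenerate boundary.
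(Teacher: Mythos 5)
Your proof is essentially correct, but it takes a genuinely different route from the paper's, which is a two-line argument: the paper translates existence into a lifting problem for \( \natmark{X} \) against the marked horns \( \Jhorn \) (using Theorem \ref{thm:coind_fibrant_characterisation}), and disposes of weak uniqueness entirely by citing the external result on weakly unique solutions of equations \( \fun{E}x \qeq b \) with \( \fun{E} \) weakly invertible ([Lemma 5.10] of the equivalences paper). Your existence step is fine and arguably more direct: the lifting problem against \( \riota^\a \ppnat \bdmap_U \) is solvable because these pushout-products lie in \( \An(\Jcomp \cup \Jn n) \) by construction of the generating anodyne extensions, and \( X \) is fibrant by Theorem \ref{thm:fibrant_iff_infty_category} (already available at this point, so no circularity). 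For uniqueness, your ``double cylinder'' is the classical strategy from the strict setting; note that the obstacle you flag largely dissolves, since your subobject \( \Omega \incl \rgray\rgray U \) is exactly \( (\riota^-, \riota^+) \ppnat (\riota^- \ppnat \bdmap_U) \), which is anodyne because classes of anodyne extensions are closed under \( j \mapsto (\iota^-, \iota^+) \ppnat j \); your compatibility checks on overlaps all reduce to naturality of \( \riota^\pm \) and \( \rsigma \). What remains genuinely to be filled is the final extraction of \( v_1 \simeq v_2 \) from the cylinder \( h \colon v_1 \cyl v_2 \) with degenerate boundary: the image of the top cell \( (1, \top) \) of \( \rgray U \) is an equivalence \( v_1 \cp{} \zeta^+ \eqvto \zeta^- \cp{} v_2 \) with \( \zeta^\pm \) degenerate, and concluding \( v_1 \simeq v_2 \) requires unitality up to equivalence (cancellation of units under \( \simeq \)), which is a nontrivial input from the background on equivalences rather than a formal consequence of Theorem \ref{thm:properties_of_equivalences} as stated. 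The paper's appeal to the weak-uniqueness-of-solutions lemma packages precisely this computation, which is why its proof is so short; your version buys self-containedness at the cost of redoing that unit calculus by hand.
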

\begin{proof}
    The existence part of the statement translates to a right lifting problem for \( \natmark{X} \) against maps in \( \Jhorn \), hence follows from Theorem \ref{thm:coind_fibrant_characterisation}, and the weak uniqueness part follows from \cite[Lemma 5.10]{chanavat2024equivalences}.
\end{proof}

\begin{lem} \label{lem:rev_cylinder_maps_properties}
    	Let \( X \) be an \( (\infty, n) \)\nbd category. 
    	Then
    	\begin{enumerate}
        	\item for all \( \a \in \set{-, +} \), the functor \( \rpi_X^\a \colon \rGamma X \to X \) is an acyclic fibration,
        	\item the functor \( \rnu \colon X \to \rGamma X \) is an \( \omega \)\nbd equivalence.
    	\end{enumerate}
\end{lem}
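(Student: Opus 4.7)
The plan is to handle the two parts in order, with (2) following directly from (1).

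For part (1), I would use the adjunction \( \rgray \dashv \rGamma \) to transpose any lifting problem of \( \rpi_X^\a \) against a monomorphism \( m \colon A \incl B \) into the equivalent lifting problem of the pushout-product \( \riota^\a \ppnat m \) against \( X \to \pt \). By the very definition of a class of anodyne extensions, applied to the class generated by \( \Jcomp \cup \Jn{n} \) via Proposition \ref{prop:smallest_class_of_anodynes}, every such pushout-product lies in \( \An(\Jcomp \cup \Jn{n}) \) and is therefore an acyclic cofibration. Since the \( (\infty, n) \)-category \( X \) is fibrant by Theorem \ref{thm:fibrant_iff_infty_category}, it has the right lifting property against all acyclic cofibrations; transposing back yields the required lift.

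For part (2), the key algebraic identity is \( \rpi_X^\a \after \rnu_X = \idd{X} \) for \( \a \in \set{-, +} \), which is the transpose of the axiom that each \( \sigma \after \iota^\a \) is the identity (whiskered through \( \minmark{(-)} \) and \( \fun{Loc} \)). By part (1) together with Lemma \ref{lem:acyclic_fib_is_omega_equivalence}, \( \rpi^- \) is itself an \( \omega \)-equivalence. To verify essential surjectivity of \( \rnu \) on higher cells, take a cell \( v \colon \rnu(u^-) \celto \rnu(u^+) \) in \( \rGamma X \) and set \( u \eqdef \rpi^-(v) \); the identity \( \rpi^- \after \rnu = \idd{X} \) applied on the boundary forces \( u \) to be a cell of type \( u^- \celto u^+ \) in \( X \), so that \( v \) and \( \rnu(u) \) are parallel cells in the \( (\infty, n) \)-category \( \rGamma X \) (using Lemma \ref{lem:rev_cylinder_is_infty_cat}). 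Since \( \rpi^-(v) = u = \rpi^-(\rnu(u)) \), in particular \( \rpi^-(v) \simeq \rpi^-(\rnu(u)) \), and Lemma \ref{lem:omega_eq_reflect_equivalence} applied to the \( \omega \)-equivalence \( \rpi^- \) then gives \( v \simeq \rnu(u) \). Essential surjectivity on 0-cells is exactly the same argument, with parallelism vacuous.

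The main obstacle is purely conceptual bookkeeping: identifying precisely that \( (\rpi^-, \rpi^+) \) and \( \rnu \) are the adjunction transposes of \( (\iota^-, \iota^+) \) and \( \sigma \) for the reversible cylinder---so that \( \rpi^\a \after \rnu = \idd{X} \)---and checking that the pushout-products in part (1) do land in the generated class of anodyne extensions. Once this is in place, part (1) reduces immediately to the definitions, and part (2) is a one-line deduction from Lemma \ref{lem:omega_eq_reflect_equivalence}: the substantive content of essential surjectivity for \( \rnu \) has already been packaged into the general reflection property of \( \omega \)-equivalences.
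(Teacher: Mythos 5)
Your proof is correct, but part (1) takes a genuinely different route from the paper's. The paper proves the lemma as a formal analogue of \cite[Proposition 20.3.8]{ara2023polygraphs}, and the translation is exactly what Lemma \ref{lem:transport_lemma} was set up for: a lifting problem for \( \rpi^\a_X \) against a generating cofibration \( \bd{}{}U \incl U \) unwinds to precisely the statement of the transport lemma, and the right lifting property against all monomorphisms then follows by saturation. You instead transpose along \( \rgray \dashv \rGamma \) and observe that \( \riota^\a \ppnat m \) lies in the class of anodyne extensions generated by \( \Jcomp \cup \Jn{n} \) (by the second clause in the definition of a class of anodyne extensions together with Proposition \ref{prop:smallest_class_of_anodynes}), so that fibrancy of \( X \) (Theorem \ref{thm:fibrant_iff_infty_category}) supplies the lift; in fact you need even less than acyclicity of these pushout-products, only that \( X \) has the right lifting property against \( l(r(\An(\Jcomp \cup \Jn{n}))) \). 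This is slicker and exhibits part (1) as a purely formal consequence of the exact-cylinder axioms, bypassing the transport lemma entirely---the combinatorial content having been absorbed into the proof that \( (\infty,n) \)-categories are fibrant. Your part (2)---the section identity \( \rpi^\a_X \after \rnu_X = \idd{X} \), Lemma \ref{lem:acyclic_fib_is_omega_equivalence}, and reflection of \( \simeq \) via Lemma \ref{lem:omega_eq_reflect_equivalence}---is the same argument as in the strict-categorical original, and correctly avoids appealing to the 2-out-of-3 property, which is only established afterwards (Proposition \ref{prop:2_out_of_3_omega_eq}).
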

\begin{proof}
	Formal analogue of \cite[Proposition 20.3.8]{ara2023polygraphs}.
\end{proof}

\begin{dfn}[Reversible mapping path space]
    Let \( f \colon X \to Y \) be a functor between \( (\infty, n) \)\nbd categories.
    The \emph{reversible mapping path space of \( f \)} is the diagrammatic set \( \rGamma_f \) defined by the pullback
    \[
        \begin{tikzcd}
            {\rGamma_f} & {\rGamma Y} \\
            X & Y
            \arrow["q", from=1-1, to=1-2]
            \arrow["p"', from=1-1, to=2-1]
            \arrow["{\rpi^-_Y}", from=1-2, to=2-2]
            \arrow["f"', from=2-1, to=2-2]
    	\arrow["\lrcorner"{anchor=center, pos=0.125}, draw=none, from=1-1, to=2-2]
        \end{tikzcd}
\]
	in \( \dgmSet \).
    Explicitly, cells of shape \( U \) in \( \rGamma_f \) are pairs \( (x, \gamma \colon f(x) \cyl y) \), where \( x \) and \( y \) are cells of shape \( U \) in \( X \) and \( Y \), respectively.
    We let \( p_f \colon \rGamma_f \to Y \) be the functor defined by \( (x, \gamma \colon f(x) \cyl y) \mapsto y \).
\end{dfn}

\begin{lem} \label{lem:omega_eq_iff_rGamma_trivial}
    Let \( f \colon X \to Y \) be a functor of \( (\infty, n) \)\nbd categories.
    Then \( f \) is an \( \omega \)\nbd equivalence if and only if \( p_f \) is an acyclic fibration.
\end{lem}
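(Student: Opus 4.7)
The plan is to invoke the cellular characterisation of acyclic fibrations: \( p_f \) is an acyclic fibration if and only if it has the right lifting property against every boundary inclusion \( \bdmap_U\colon \bd{}{}U \incl U \), \( U \in \Ob\atom \). Unpacking the pullback defining \( \rGamma_f \), such a lifting problem amounts to the following data: a boundary \( x^\partial\colon \bd{}{}U \to X \), a boundary cylinder \( \gamma^\partial\colon f(x^\partial) \cyl y^\partial \) in \( Y \), and a cell \( y\colon U \to Y \) extending \( y^\partial \); solving it means producing \( x\colon U \to X \) extending \( x^\partial \) together with a cylinder \( \gamma\colon f(x) \cyl y \) extending \( \gamma^\partial \). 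Note at the outset that \( p_f \) is always a fibration, being the pullback of the fibration \( \rpi^-_Y \) from Lemma \ref{lem:rev_cylinder_maps_properties}.

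For the implication ``\( p_f \) acyclic \( \Rightarrow f \) is an \( \omega \)-equivalence'', the base case \( U = \pt \) is immediate: lifting against \( \emptyset \incl \pt \) over \( y \in \gr{0}{\cell Y} \) yields \( (x, \gamma) \) with \( \gamma\colon \rgray\pt \to Y \); since \( \rgray\pt = \locarr \), this gives an equivalence \( f(x) \eqvto y \), whence \( y \simeq f(x) \). For higher dimensions, given parallel round diagrams \( x^\pm \) and a cell \( y\colon f(x^-) \celto f(x^+) \), take the atom \( U \) with \( \bd{}{\pm}U \) of shape \( x^\pm \), and form the square whose upper edge is the boundary datum \( (x^\partial, \rnu f(x^\partial)) \)---using the unit \( \rnu \) to manufacture the trivial boundary cylinder---and whose lower edge is \( y \). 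A lift gives \( x\colon x^- \celto x^+ \) in \( X \) and a cylinder \( \gamma\colon f(x) \cyl y \) with degenerate boundary. Its transpose \( \rgray U \to Y \) sends the distinguished top cell of \( \rgray U \) (coming from the top marked cell of \( \markarr \pgray \minmark U \), hence an equivalence in \( \rgray U \) by construction of the localisation) to an equivalence cell in \( Y \) between \( f(x) \) and \( y \); this exhibits \( y \simeq f(x) \), by a direct analysis analogous to the proof of Proposition \ref{prop:coherent_localisation}.

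For the converse, assume \( f \) is an \( \omega \)-equivalence. In dimension \( 0 \), essential surjectivity yields \( x \) with \( y \simeq f(x) \); by Proposition \ref{prop:universal_property_localisation} the exhibiting equivalence extends to \( \locarr \to Y \), the required cylinder. For \( \dim U > 0 \), I apply Lemma \ref{lem:transport_lemma}.(2) in \( Y \) to \( \gamma^\partial \) and \( y \) to obtain a parallel cell \( y'\colon f(x^-) \celto f(x^+) \) together with a cylinder \( \gamma_1\colon y' \cyl y \) extending \( \gamma^\partial \). By essential surjectivity applied to \( y' \), together with Lemma \ref{lem:equivalences_exhibited_by_cells}, there is a cell \( x\colon x^- \celto x^+ \) in \( X \) and an equivalence cell \( f(x) \eqvto y' \) in \( Y \); passing through Proposition \ref{prop:universal_property_localisation}, this equivalence promotes to a cylinder \( \gamma_2\colon f(x) \cyl y' \) with trivial boundary. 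Finally, one vertically composes \( \gamma_2 \) and \( \gamma_1 \) along the shared intermediate endpoint \( y' \) to obtain \( \gamma\colon f(x) \cyl y \) extending \( \gamma^\partial \), completing the lift.

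The main obstacle is this last step: vertical composition of reversible cylinders along a common endpoint. In the strict \( \omega \)-category setting this is a primitive operation on cylinders, but in our weak setting it must be effected via the \( (\infty, n) \)-category structure on \( \rGamma Y \) (Lemma \ref{lem:rev_cylinder_is_infty_cat}), using weak composition plus an extra application of the transport lemma to reconcile the mismatched boundary cylinders \( \bd{}{}\gamma_2 = \rnu f(x^\partial) \) and \( \bd{}{}\gamma_1 = \gamma^\partial \). Once this composition is in place, together with the equivalence-to-cylinder translation used throughout, the proof is a formal analogue of the strict counterpart, following the pattern announced in the paragraph preceding Lemma \ref{lem:acyclic_fib_is_omega_equivalence}.
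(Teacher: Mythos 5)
The paper's own proof of this lemma is the single line ``Formal analogue of \cite[Proposition 20.3.10]{ara2023polygraphs}'', so your writeup is essentially a faithful expansion of exactly the argument the paper intends: reduce to lifting against boundary inclusions, unpack the pullback, use trivial-boundary cylinders and the fact that the top cell of \( \rgray U \) is an equivalence for the forward direction, and combine Lemma \ref{lem:transport_lemma}.(2) with essential surjectivity for the converse. The skeleton is correct. Two remarks. First, your aside that \( p_f \) is a fibration ``being the pullback of \( \rpi^-_Y \)'' is misattributed: the pullback of \( \rpi^-_Y \) along \( f \) is the other projection \( p\colon \rGamma_f \to X \), whereas \( p_f = \rpi^+_Y \after q \); the claim is true, but it requires \( (\rpi^-_Y, \rpi^+_Y)\colon \rGamma Y \to Y \times Y \) to be a fibration, and in any case you never use it. Second, and more substantively: you correctly isolate the crux as the concatenation of \( \gamma_2\colon f(x) \cyl y' \) and \( \gamma_1\colon y' \cyl y \) along the shared end \( y' \), but the mechanism you propose --- weak composition in the \( (\infty, n) \)\nbd category \( \rGamma Y \) of Lemma \ref{lem:rev_cylinder_is_infty_cat} --- is not the right one: composing cells of \( \rGamma Y \) glues cylinders along their restrictions to \( \bd{}{}U \), i.e.\ in the \( U \)\nbd direction, whereas the concatenation you need is in the \( \arr \)\nbd direction, along \( \rpi^\pm \). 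That operation (together with the correction of one end of a cylinder by a reversible cell, needed because concatenating a unit boundary with \( \gamma^\partial \) only returns \( \gamma^\partial \) up to equivalence) is precisely the piece of the strict cylinder calculus of \cite{ara2023polygraphs} that the phrase ``formal analogue'' imports; in the present setting it is obtained by a further marked-horn filling in \( \natmark{Y} \) in the style of Comment \ref{comm:marked_horns_are_equations} and the weak uniqueness of \cite[Lemma 5.10]{chanavat2024equivalences}, not from the fibrancy of \( \rGamma Y \). With that substitution your proof matches the intended one.
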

\begin{proof}
    Formal analogue of \cite[Proposition 20.3.10]{ara2023polygraphs}.
\end{proof}

\begin{prop} \label{prop:2_out_of_3_omega_eq}
    The class of \( \omega \)\nbd equivalences of \( (\infty, n) \)\nbd categories satisfies the 2-out-of-3 property.
\end{prop}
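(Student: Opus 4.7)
The plan is to verify the 2-out-of-3 property case-by-case for functors \( f\colon X \to Y \) and \( g\colon Y \to Z \) of \( (\infty, n) \)\nbd categories with composite \( h \eqdef g \after f \), following the blueprint of \cite[Section 20.3]{ara2023polygraphs}. All three cases proceed by induction on the dimension of the cells appearing in Definition \ref{dfn:omega_equivalence}, and rely on three ingredients: functors preserve equivalences (Theorem \ref{thm:properties_of_equivalences}); an \( \omega \)\nbd equivalence reflects equivalence of cells (Lemma \ref{lem:omega_eq_reflect_equivalence}); and reversible cylinders can be rectified via the transport lemma (Lemma \ref{lem:transport_lemma}).

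For the case that \( f \) and \( g \) are \( \omega \)\nbd equivalences, I would chain essential surjectivity of \( g \) and of \( f \), using preservation of equivalences by \( g \) to ensure the witnesses combine correctly: at dimension \( 0 \), given \( z \in \gr{0}{\cell Z} \), successively lift to \( y \in \gr{0}{\cell Y} \) with \( z \simeq g(y) \) and \( x \in \gr{0}{\cell X} \) with \( y \simeq f(x) \); in higher dimensions, given parallel round \( x_1, x_2 \) in \( X \) and \( z\colon h(x_1) \celto h(x_2) \) in \( Z \), first lift \( z \) across \( g \) applied to the parallel pair \( f(x_1), f(x_2) \) to obtain \( y\colon f(x_1) \celto f(x_2) \) in \( Y \) with \( g(y) \simeq z \), then lift \( y \) across \( f \) to get \( x\colon x_1 \celto x_2 \) in \( X \) with \( f(x) \simeq y \), whence \( h(x) \simeq z \). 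For the case that \( h \) and \( g \) are \( \omega \)\nbd equivalences, I would push each problem for \( f \) forward through \( g \) to invoke essential surjectivity of \( h \), then use Lemma \ref{lem:omega_eq_reflect_equivalence} applied to \( g \) to transfer the resulting equivalence back from \( Z \) to \( Y \).

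The main obstacle is the case that \( h \) and \( f \) are \( \omega \)\nbd equivalences, where on \( 0 \)\nbd cells essential surjectivity of \( g \) follows immediately by taking \( y \eqdef f(x) \) in the \( h \)\nbd essential surjectivity witness for a given \( z \), but on higher cells we cannot in general lift parallel round diagrams \( y_1, y_2 \) in \( Y \) to strictly parallel preimages in \( X \). The strategy is to use \( \omega \)\nbd essential surjectivity of \( f \) at lower dimensions together with Lemma \ref{lem:transport_lemma} to produce strictly parallel cells \( x_1, x_2 \) in \( X \) and reversible cylinders \( \gamma_i\colon f(x_i) \cyl y_i \) in \( Y \); whiskering these cylinders with \( z \) under \( g \) gives a reversible-cylinder datum exhibiting a cell \( h(x_1) \celto h(x_2) \), which \( \omega \)\nbd essential surjectivity of \( h \) lifts to a cell \( x\colon x_1 \celto x_2 \) in \( X \); a final application of Lemma \ref{lem:transport_lemma} transports \( f(x) \) along the \( \gamma_i \) to yield a cell \( y\colon y_1 \celto y_2 \) in \( Y \) with \( g(y) \simeq z \). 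The technical heart is the coherent organisation of this inductive transport rectification, which is the diagrammatic analogue of the strict construction in \cite[Proposition 20.3.11]{ara2023polygraphs}.
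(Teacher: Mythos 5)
Your proposal is correct and takes essentially the same approach as the paper: the paper's proof is literally the one-line citation ``formal analogue of \cite[Theorem 20.3.11]{ara2023polygraphs}'', and your case analysis---chaining essential surjectivity, reflecting equivalences along \( g \) via Lemma \ref{lem:omega_eq_reflect_equivalence}, and handling the hard case (\( gf \) and \( f \) equivalences) by rectifying non-parallel lifts with Lemma \ref{lem:transport_lemma}---is precisely the translation of that strict argument which the paper leaves to the reader. The only point worth spelling out further is the final step \( g(y) \simeq z \), which rests on the uniqueness-up-to-equivalence clause of the transport lemma together with the compatibility of \( g \) with reversible cylinders.
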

\begin{proof}
    Formal analogue of \cite[Theorem 20.3.11]{ara2023polygraphs}.
\end{proof}

\begin{rmk}
    We note that, by \cite[Lemma 2.4.24]{cisinski2019higher}, the relation of being homotopic is already an equivalence relation between fibrant objects, hence, by Theorem \ref{thm:fibrant_iff_infty_category}, an equivalence relation between \( (\infty, n) \)\nbd categories.
\end{rmk}

\begin{lem} \label{lem:left_right_htpy_inverse_implies_htpy}
	Let \( f \colon X \to Y \) be a functor of \( (\infty, n) \)\nbd categories, and suppose \( f^L, f^R\colon Y \to X \) are functors such that \( f^L \after f \) is homotopic to \( \idd{X} \) and \( f \after f^R \) is homotopic to \( \idd{Y} \).
	Then \( f \) is a homotopy equivalence and both \( f^L \) and \( f^R \) are homotopy inverses of \( f \).
\end{lem}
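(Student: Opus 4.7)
The plan is to run the standard ``left inverse equals right inverse'' calculation, using only the fact that $\approx$ is a congruence with respect to composition of morphisms, as recorded in the remark following the definition of homotopy. The hypotheses on $(\infty, n)$-categories and the rest of the machinery are not needed here; the statement is a purely formal consequence of having a congruence-compatible homotopy relation.

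First, I would show that $f^L \approx f^R$ via the chain
\[
    f^L \approx f^L \after \idd{Y} \approx f^L \after (f \after f^R) = (f^L \after f) \after f^R \approx \idd{X} \after f^R \approx f^R,
\]
where the two nontrivial steps substitute $\idd{Y}$ with the homotopic $f \after f^R$ and $f^L \after f$ with the homotopic $\idd{X}$, both of which are allowed because $\approx$ is a congruence with respect to composition and contains the identity relation.

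Second, I would deduce from this that either of $f^L, f^R$ is a two-sided homotopy inverse of $f$. Indeed, taking $g \eqdef f^L$, the hypothesis gives $g \after f \approx \idd{X}$, while $f \after g \approx f \after f^R \approx \idd{Y}$ by the first step combined with the congruence property. Hence $f$ is a homotopy equivalence with homotopy inverse $f^L$. Since $f^R \approx f^L$, the same calculation shows $f^R$ is also a homotopy inverse of $f$.

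No step poses any real difficulty; the only thing to be careful about is that the proof uses $\approx$ (the equivalence relation generated by the existence of a single homotopy) rather than the existence of an actual homotopy, so the congruence property is exactly what is needed and the direct homotopies in the hypotheses suffice as input.
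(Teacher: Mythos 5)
Your proof is correct and is essentially the paper's argument made explicit: the paper simply observes that $f$ becomes an isomorphism in the category of fibrant objects up to homotopy (i.e.\ it invokes the standard ``left inverse equals right inverse'' fact that your chain of $\approx$'s spells out). The only point worth pinning down is that your chain produces $f \after f^L \approx \idd{Y}$ rather than a single homotopy, but the remark preceding the lemma notes that between fibrant objects---hence between $(\infty,n)$\nbd categories---being homotopic is already an equivalence relation, so $\approx$ coincides with the existence of an actual homotopy and the definition of homotopy equivalence is met.
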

\begin{proof}
    The statement implies that \( f \) is an isomorphism in the category of fibrant objects up to homotopy, hence it is a homotopy equivalence with the specified homotopy inverses by Theorem \ref{thm:olschok_theorem} and the Yoneda lemma.
\end{proof}

\begin{lem} \label{lem:hpty_eq_preserves_weak_equivalences}
    Let \( f, g : X \to Y \) be homotopic functors of \( (\infty, n) \)\nbd categories.
    Then \( f \) is an \( \omega \)\nbd equivalence if and only if \( g \) is an \( \omega \)\nbd equivalence.
\end{lem}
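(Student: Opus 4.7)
The plan is to transpose the homotopy along the adjunction $\rgray \dashv \rGamma$ and then deduce the statement from the 2-out-of-3 property of $\omega$-equivalences.

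Given a homotopy $\beta\colon \rgray X \to Y$ witnessing $f \approx g$, let $\bar{\beta}\colon X \to \rGamma Y$ be its transpose. Since $(\rpi^-_Y, \rpi^+_Y)$ is constructed as the transpose of $(\riota^-_Y, \riota^+_Y)$ via the unit-counit formulas of the adjunction, we have
\[
	\rpi^-_Y \after \bar{\beta} = f, \qquad \rpi^+_Y \after \bar{\beta} = g.
\]
By Lemma \ref{lem:rev_cylinder_is_infty_cat}, $\rGamma Y$ is an $(\infty, n)$\nbd category, so $\bar{\beta}$ is a functor, and this factorisation takes place within the class of functors between $(\infty, n)$\nbd categories.

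By Lemma \ref{lem:rev_cylinder_maps_properties}, both $\rpi^-_Y$ and $\rpi^+_Y$ are acyclic fibrations, so by Lemma \ref{lem:acyclic_fib_is_omega_equivalence} they are $\omega$\nbd equivalences. Applying Proposition \ref{prop:2_out_of_3_omega_eq}: if $f = \rpi^-_Y \after \bar{\beta}$ is an $\omega$\nbd equivalence, then 2-out-of-3 forces $\bar{\beta}$ to be an $\omega$\nbd equivalence, and a further application yields that $g = \rpi^+_Y \after \bar{\beta}$ is one as well. The converse implication is obtained by swapping the roles of $f$ and $g$ (or, equivalently, of $\riota^-$ and $\riota^+$).

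There is no real obstacle, as all ingredients are already in place: the transposition across the adjunction is formal, the fibrancy of $\rGamma Y$ and the acyclic fibration property of the projections have been verified in the preceding lemmas, and 2-out-of-3 for $\omega$\nbd equivalences is Proposition \ref{prop:2_out_of_3_omega_eq}. The only point worth noting is that the argument only needs a single direct homotopy, which is exactly what the hypothesis provides.
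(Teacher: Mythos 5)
Your proof is correct and follows essentially the same route as the paper: transpose the homotopy to a map $X \to \rGamma Y$, observe that the two projections $\rpi^\pm_Y$ are acyclic fibrations (hence $\omega$\nbd equivalences) with $\rGamma Y$ an $(\infty,n)$\nbd category, and conclude by 2-out-of-3. You merely spell out the intermediate citations (Lemmas \ref{lem:rev_cylinder_is_infty_cat}, \ref{lem:rev_cylinder_maps_properties}, \ref{lem:acyclic_fib_is_omega_equivalence}) that the paper leaves implicit.
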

\begin{proof}
    Let \( \beta \colon X \to \rGamma Y \) be the transpose of a homotopy from \( f \) to \( g \).
    Then \( f = \rpi^- \after \beta \) and \( g = \rpi^+ \after \beta \), so we conclude by 2-out-of-3 for \( \omega \)\nbd equivalences, which applies by Lemma \ref{lem:rev_cylinder_is_infty_cat} and Lemma \ref{lem:rev_cylinder_maps_properties}.
\end{proof}

\begin{thm} \label{thm:w_eq_is_hpty_eq}
    Let \( f \colon X \to Y \) be a functor between \( (\infty, n) \)\nbd categories.
    The following are equivalent:
	\begin{enumerate}[label=(\alph*)]
		\item \( f \) is a weak equivalence;
		\item \( f \) is a homotopy equivalence;
		\item \( f \) is an \( \omega \)\nbd equivalence.
    \end{enumerate}
\end{thm}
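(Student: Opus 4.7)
The plan is to close the three-way equivalence via \( (a) \Leftrightarrow (b) \) and \( (a) \Leftrightarrow (c) \), with both nontrivial pieces pivoting on the factorisation \( f = p_f \circ i_f \) through the reversible mapping path space. Here \( i_f \colon X \to \rGamma_f \) is the canonical section \( x \mapsto (x, \rnu_Y(f(x))) \), which satisfies \( p \circ i_f = \idd{X} \) for the pullback projection \( p \colon \rGamma_f \to X \), as well as \( p_f \circ i_f = f \). The equivalence \( (a) \Leftrightarrow (b) \) is immediate: by Theorem \ref{thm:fibrant_iff_infty_category} both \( X \) and \( Y \) are fibrant, so Theorem \ref{thm:olschok_theorem} identifies weak equivalences between them with homotopy equivalences.

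A step used by both remaining directions is that \( i_f \) is always a weak equivalence. Indeed, \( \rpi^-_Y \) is an acyclic fibration by Lemma \ref{lem:rev_cylinder_maps_properties}, and acyclic fibrations are pullback-stable, so \( p \) is an acyclic fibration; applying 2-out-of-3 to \( \idd{X} = p \circ i_f \) gives the claim. For \( (c) \Rightarrow (a) \), if \( f \) is an \( \omega \)\nbd equivalence then Lemma \ref{lem:omega_eq_iff_rGamma_trivial} gives that \( p_f \) is an acyclic fibration, and \( f = p_f \circ i_f \) is then a composition of weak equivalences.

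For \( (a) \Rightarrow (c) \), I will run the same decomposition in reverse: if \( f \) is a weak equivalence, 2-out-of-3 applied to \( f = p_f \circ i_f \) gives that \( p_f \) is a weak equivalence. To invoke Lemma \ref{lem:omega_eq_iff_rGamma_trivial}, it remains to show that \( p_f \) is a fibration. The key is to identify \( \rGamma_f \) with the pullback of the endpoint evaluation \( (\rpi^-_Y, \rpi^+_Y) \colon \rGamma Y \to Y \times Y \) along \( (f, \idd{Y}) \colon X \times Y \to Y \times Y \), under which the pullback projection corresponds to \( (p, p_f) \colon \rGamma_f \to X \times Y \). Assuming \( (\rpi^-_Y, \rpi^+_Y) \) is a fibration, its pullback \( (p, p_f) \) is one, and composing with \( X \times Y \to Y \) --- a fibration since \( X \) is fibrant --- yields that \( p_f \) is a fibration. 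Being both a fibration and a weak equivalence, \( p_f \) is an acyclic fibration, and Lemma \ref{lem:omega_eq_iff_rGamma_trivial} concludes that \( f \) is an \( \omega \)\nbd equivalence.

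The main obstacle is verifying that \( (\rpi^-_Y, \rpi^+_Y) \colon \rGamma Y \to Y \times Y \) is a fibration when \( Y \) is fibrant. Via the adjunction \( \rgray \dashv \rGamma \), a lifting problem for \( (\rpi^-_Y, \rpi^+_Y) \) against an acyclic cofibration \( j \) transposes to a lifting problem for \( (\riota^-, \riota^+) \ppnat j \) against \( Y \to \pt \). Since the class of acyclic cofibrations in the \( (\infty, n) \)\nbd model structure is a class of anodyne extensions in the sense of Section \ref{sec:background} (by Proposition \ref{prop:smallest_class_of_anodynes} applied to \( \Jcomp \cup \Jn n \)), it is closed under the operation \( j \mapsto (\riota^-, \riota^+) \ppnat j \), so \( (\riota^-, \riota^+) \ppnat j \) is itself an acyclic cofibration, and fibrancy of \( Y \) supplies the lift. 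I expect to package this last verification as a short auxiliary lemma preceding the main proof.
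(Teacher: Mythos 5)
Your architecture is genuinely different from the paper's. The paper never factors \( f \) through \( \rGamma_f \): for (c)\( \Rightarrow \)(b) it picks a section \( s \) of the acyclic fibration \( p_f \) (using cofibrancy of \( Y \)), reads off \( g = p \after s \) together with a homotopy \( f \after g \approx \idd{Y} \), promotes this to a two-sided homotopy inverse via 2-out-of-3 for \( \omega \)\nbd equivalences and Lemma \ref{lem:left_right_htpy_inverse_implies_htpy}; for (b)\( \Rightarrow \)(c) it verifies the definition of \( \omega \)\nbd equivalence directly using Lemmas \ref{lem:hpty_eq_preserves_weak_equivalences} and \ref{lem:omega_eq_reflect_equivalence}. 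Your (c)\( \Rightarrow \)(a) via \( f = p_f \after i_f \), with \( i_f \) a weak equivalence because \( p \after i_f = \idd{X} \) and \( p \) is a pullback of the acyclic fibration \( \rpi^-_Y \), is correct and shorter than the paper's route. Your (a)\( \Rightarrow \)(c) buys directness at the price of having to prove that \( p_f \) is a fibration, a fact the paper never needs; the pullback description of \( (p, p_f) \) along \( (\rpi^-_Y, \rpi^+_Y) \) and the composition with the projection \( X \times Y \to Y \) are both fine.

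The one genuine gap is in the final verification. You assert that the class of acyclic cofibrations of the \( (\infty, n) \)\nbd model structure is a class of anodyne extensions, citing Proposition \ref{prop:smallest_class_of_anodynes}; but that proposition only produces the \emph{smallest} class of anodyne extensions containing \( l(r(\Jcomp \cup \Jn n)) \), namely \( l(r(\An(\Jcomp \cup \Jn n))) \), and in the Cisinski--Olschok setting this class is in general strictly contained in the acyclic cofibrations (Theorem \ref{thm:olschok_theorem} only identifies fibrations with naive fibrations when the codomain is fibrant). So closure of the acyclic cofibrations under \( j \mapsto (\riota^-, \riota^+) \ppnat j \) does not follow from what you cite. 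The repair is short and stays inside your argument: the codomain \( Y \times Y \) of \( (\rpi^-_Y, \rpi^+_Y) \) is fibrant, so by Theorem \ref{thm:olschok_theorem} it suffices to check the right lifting property against the generating set \( \An(\Jcomp \cup \Jn n) \), which \emph{is} closed under \( j \mapsto (\riota^-, \riota^+) \ppnat j \) by construction; transposing each such lifting problem then lands on a lift against the fibrant object \( Y \), as you intended. (Alternatively, one can prove that acyclic cofibrations are closed under this operation by factoring \( \rgray j \) as a pushout of \( j \amalg j \) followed by \( (\riota^-, \riota^+) \ppnat j \) and using that \( \rgray \) preserves weak equivalences, but the first fix is shorter.) With that correction the proposal is sound.
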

\begin{proof}
	The equivalence of the first two follows from Theorem \ref{thm:olschok_theorem} and Theorem \ref{thm:fibrant_iff_infty_category}.
	Suppose that \( f \) is an \( \omega \)\nbd equivalence.
	By Lemma \ref{lem:omega_eq_iff_rGamma_trivial}, \( p_f \colon \rGamma_f \to Y \) is an acyclic fibration.
	Since \( Y \) is cofibrant, \( p_f \) has a section \( s \colon Y \to \rGamma_f \); let \( g \colon Y \to X \) denote the composite of \( s \) with the pullback projection \( p \colon \rGamma_f \to X \), and \( \beta \colon Y \to \rGamma Y \) denote the composite of \( s \) with the other pullback projection \( q \colon \rGamma_f \to \rGamma Y \).
	By construction, for all cells \( v \) in \( Y \),
    	\begin{equation*}
        	s(v) = (g(v), \beta(v) \colon f(g(v)) \cyl v),
    	\end{equation*}
	so the transpose of \( \beta \) is a homotopy from \( f \after g \) to \( \idd{Y} \).
	By Lemma \ref{lem:hpty_eq_preserves_weak_equivalences}, \( f \after g \) is an \( \omega \)\nbd equivalence, so by 2-out-of-3 \( g \) is also an \( \omega \)\nbd equivalence.
	Reproducing the first part of the proof with \( g \) in place of \( f \), we obtain a functor \( f' \colon X \to Y \) such that \( g \after f' \) is homotopic to \( \idd{X} \).
	By Lemma \ref{lem:left_right_htpy_inverse_implies_htpy}, we deduce that \( g \) is a homotopy equivalence with inverse \( f \), so \( f \) is a homotopy equivalence.

	Conversely, suppose that \( f \) is a homotopy equivalence, and let \( g\colon Y \to X \) be a homotopy inverse. 
	Since \( g \after f \) is homotopic to \( \idd{X} \) and \( f \after g \) is homotopic to \( \idd{Y} \), by Lemma \ref{lem:hpty_eq_preserves_weak_equivalences} both \( g \after f \) and \( f \after g \) are \( \omega \)\nbd equivalences.
	We will show that \( f \) is an \( \omega \)\nbd equivalence.
	Let \( v\colon \pt \to Y \), and let \( \beta\colon \rgray Y \to Y \) be a homotopy between \( f \after g \) and \( \idd{Y} \).
	Then \( \beta \after \rgray v \colon \locarr \to Y \) exhibits \( f(g(v)) \simeq v \).
	Next, let \( n > 0 \), let \( u^-, u^+ \in \gr{n-1}{\rd X} \) be parallel, and let \( v\colon f(u^-) \celto f(u^+) \) be a cell in \( Y \).
	Then \( g(v)\colon g(f(u^-)) \celto g(f(u^+)) \), and since \( g \after f \) is an \( \omega \)\nbd equivalence, there exists a cell \( u \colon u^- \celto u^+ \) such that \( g(v) \simeq g(f(u)) \).
	By Theorem \ref{thm:properties_of_equivalences}, \( f(g(v)) \simeq f(g(f(u))) \), and since \( f \after g \) is an \( \omega \)\nbd equivalence, Lemma \ref{lem:omega_eq_reflect_equivalence} implies that \( v \simeq f(u) \).
	This completes the proof.
\end{proof}

\section{Homotopy hypothesis} \label{sec:homotopy}

\noindent Recall from \cite{chanavat2024diagrammatic} that the \emph{Cisinski model structure} on \( \dgmSet \) is the model structure obtained by applying Theorem \ref{thm:olschok_theorem} to the set of molecular horns with the Gray cylinder \( \arr \gray - \) and the cellular model \( \set{\bd{}{} U \incl U \mid U \in \Ob\atom} \).
In this section, we will prove that the Cisinski model structure and the \( (\infty, 0) \)\nbd model structure coincide.
Our proof will go through an intermediate presentation of the same model structure, based on atomic instead of molecular horns.

\begin{dfn}[Atomic Cisinski model structure]
	The \emph{atomic Cisinski model structure} on \( \dgmSet \) is the model structure obtained by applying Theorem \ref{thm:olschok_theorem} to the set \( \Jat \) of atomic horns with the exact cylinder \( \arr \gray - \) and the cellular model \( \set{\bd{}{} U \incl U \mid U \in \atom} \).
\end{dfn}

\begin{lem} \label{lem:atomic_horns_are_saturated}
	The set \( \Jat \) is a generating set of anodyne extensions for the atomic Cisinski model structure.
\end{lem}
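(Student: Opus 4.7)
The plan is to show the stronger fact that $\An(\Jat) \subseteq \Jat$, from which it follows at once that $r(\Jat) = r(\An(\Jat))$ and therefore $l(r(\Jat)) = l(r(\An(\Jat)))$, which by Theorem \ref{thm:olschok_theorem} is the class of acyclic cofibrations of the atomic Cisinski model structure. Since this class is a class of anodyne extensions by construction, $\Jat$ will then be a generating set for it. Concretely, I unwind the inductive definition of $\An(\Jat)$ and verify that both of its generating closure operations keep us inside $\Jat$.

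First, for any atom $U$ and $\alpha \in \set{-, +}$, I would show that $\iota^\alpha \ppnat \bdmap_U$ is itself an atomic horn. This is the direct unmarked analogue of Lemma \ref{lem:pushout_product_boundary_is_marked_horn}: the underlying computation performed there exhibits the pushout-product as the atomic horn $\lambda^{(0^{-\alpha}, \top)}_{\arr \gray U}\colon \Lambda^{(0^{-\alpha}, \top)}_{\arr \gray U} \incl \arr \gray U$, via \cite[Lemma 3.16]{chanavat2024diagrammatic} combined with the boundary decomposition of \cite[Proposition 7.2.16]{hadzihasanovic2024combinatorics}. Since $\set{\bdmap_U \mid U \in \Ob\atom}$ is precisely the cellular model for \( \dgmSet \), this handles the ``seed'' morphisms in the definition of $\An(\Jat)$.

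Second, for any atomic horn $\lambda^x_U$, I would show that $(\iota^-, \iota^+) \ppnat \lambda^x_U$ is an atomic horn. This is the unmarked analogue of Lemma \ref{lem:marked_horns_closed_pushout_product}, whose proof already identifies the underlying morphism as the atomic horn $\lambda^{(1, x)}_{\arr \gray U}\colon \Lambda^{(1, x)}_{\arr \gray U} \incl \arr \gray U$, again via the combinatorics of the Gray product. This closes $\Jat$ under the operation $j \mapsto (\iota^-, \iota^+) \ppnat j$. Combining the two steps, $\Jat$ already satisfies both defining closure conditions of $\An(\Jat)$, so $\An(\Jat) \subseteq \Jat$ and the proof concludes.

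The only real obstacle is bookkeeping in the combinatorial identifications of the two pushout-products with specific atomic horns on $\arr \gray U$. However, these identifications are exactly the content of the underlying morphism arguments in Lemmas \ref{lem:pushout_product_boundary_is_marked_horn} and \ref{lem:marked_horns_closed_pushout_product}: those lemmas establish more, namely that the marked pushout-products are \emph{marked} horns, and dropping the marking hypotheses leaves the unmarked statement we need with no additional work. In particular, no new facts about molecules or the Gray product need to be developed.
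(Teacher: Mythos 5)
Your proposal is correct and follows essentially the same route as the paper: the paper's proof simply observes that the saturation argument of \cite[Proposition 3.18]{chanavat2024diagrammatic} for molecular horns restricts to atomic horns, which is exactly the content of your two verifications that $\iota^\a \ppnat \bdmap_U$ and $(\iota^-,\iota^+)\ppnat\lambda^x_U$ are again atomic horns. Your observation that the needed combinatorial identifications are already present as the underlying-morphism computations in Lemmas \ref{lem:pushout_product_boundary_is_marked_horn} and \ref{lem:marked_horns_closed_pushout_product} is accurate.
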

\begin{proof}
	The proof of \cite[Proposition 3.18]{chanavat2024diagrammatic}, showing that the set of molecular horns is saturated under \( \An(-) \), restricts to its subset \( \Jat \).
	We conclude by Theorem \ref{thm:olschok_theorem}.
\end{proof}

\noindent The following proof is based on the proof of \cite[Lemma 6.3.2]{henry2018regular}.

\begin{lem} \label{lem:submol_acyclic_atomic_cisinski}
	Let \( U \) be a molecule and \( \iota\colon V \incl U \) be a submolecule inclusion.
	Then, in the atomic Cisinski model structure,
    	\begin{enumerate}
        	\item \( \iota \) is an acyclic cofibration,
		\item if \( U \) is round and \( \iota \) is rewritable, then \( \bd{}{}U \incl U \setminus \inter{V} \) is an acyclic cofibration.
    	\end{enumerate}
\end{lem}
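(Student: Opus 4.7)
The plan is to prove both parts by exhibiting each inclusion as a composition of pushouts of atomic horns, which are anodyne extensions by Lemma \ref{lem:atomic_horns_are_saturated}. Since submolecule inclusions are composition-generated by \( V, W \incl V \cp{k} W \), and since the class of acyclic cofibrations is closed under composition and cobase change, part (1) reduces to handling the inclusion of a single summand into a single pasting. Applying the duality \( \dual{n}{} \), which preserves the class of atomic horns and hence the atomic Cisinski model structure, will further reduce the case \( W \incl V \cp{k} W \) to the case \( V \incl V \cp{k} W \). A secondary induction on the number of top-dimensional atoms of \( W \) will then reduce to the case in which \( W \) is itself an atom.

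The key combinatorial step will be to identify \( V \incl V \cp{k} W \), for \( W \) an atom, as a pushout along an atomic horn inclusion \( \lambda^y_W\colon \Lambda^y_W \incl W \), where the horn center \( y \) is chosen so that \( \Lambda^y_W \) is precisely the portion of \( \bd{}{} W \) which is not identified with a subset of \( \bd{k}{+}V \) in the pasting. Once this identification is verified, the inclusion is an acyclic cofibration by cobase change.

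For part (2), I plan to build \( \bd{}{}U \incl U \setminus \inter{V} \) by successively attaching the top-dimensional atoms of \( U \) that do not belong to \( V \). The roundness of \( U \) together with the rewritability of \( \iota \) should guarantee that these atoms can be ordered so that each one is glued along all but one of its top-dimensional boundary faces, yielding at each step a pushout of an atomic horn. This will exhibit \( \bd{}{}U \incl U \setminus \inter{V} \) as a finite composition of such pushouts, and therefore as an acyclic cofibration.

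The main obstacle I anticipate is the explicit combinatorial verification that each elementary attachment is indeed a pushout along an atomic horn, with the correct choice of horn center, and that no other cells are implicitly added or removed along the way. This requires a careful analysis of face relations and orientations in the oriented face poset; it is where roundness and rewritability in part (2) become essential, as these hypotheses are what guarantee the existence of a suitable linear sequencing of the attached top-dimensional atoms of \( U \setminus V \) compatible with the atomic-horn structure.
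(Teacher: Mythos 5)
Your plan for part (1) breaks down at the step you identify as the key one. Presenting the pasting as a pushout of \( V \hookleftarrow \bd{k}{+}V = \bd{k}{-}W \incl W \), the inclusion \( V \incl V \cp{k} W \) is the cobase change of the \emph{boundary} inclusion \( \bd{k}{-}W \incl W \), not of an atomic horn: for such an identification you would need the glued part \( \bd{k}{-}W \) to coincide with \( \Lambda^y_W = \bd{}{}W \setminus \inter{\clset{y}} \), and since \( \bd{}{+}W \setminus \bd{}{-}W = \inter{(\bd{}{+}W)} \) by roundness, this forces \( k = \dim W - 1 \) and \( \bd{}{+}W = \clset{y} \) to be a single atom. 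A general atom \( W \) has an output boundary with many maximal cells, none of which lie in \( V \), so no choice of horn centre works (and no pasting at codimension \( \geq 2 \) does either). The statement one actually has to prove is that the boundary inclusions \( \bd{\ell}{\a}W \incl W \) are acyclic cofibrations; for an atom this is done by factoring \( \bd{}{\a}W \incl W \) through \( \Lambda^x_W \) for a single \( x \in \maxel{\bd{}{-\a}W} \), and the first factor \( \bd{}{\a}W \incl \Lambda^x_W \) is a cobase change of \( \bd{k-2}{}W \incl \bd{}{-\a}W \setminus \inter{\clset{x}} \) --- which is exactly an instance of statement (2) in dimension \( \dim W - 1 \). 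The two statements therefore must be established by a mutual induction on dimension; your architecture, which proves (1) without ever invoking (2), cannot be completed.

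The shelling proposed for part (2) is also not viable as stated. Attaching a top-dimensional atom along an atomic horn adds exactly two cells (its top element and the interior of one codimension\nbd\( 1 \) face), whereas \( \inter{U} \setminus \inter{V} \) can contain cells of codimension \( \geq 2 \) in \( U \) (for instance, a \( 1 \)\nbd cell interior to the glued \( 2 \)\nbd boundary in a \( \cp{2} \)\nbd pasting of \( 3 \)\nbd atoms), so no sequence of such attachments builds \( U \setminus \inter{V} \) from \( \bd{}{}U \). The paper's argument is structural rather than cell-by-cell: rewritability of \( \iota \) yields a decomposition \( U = L \cp{} V' \cp{} R \) with \( L \cap R = \bd{}{\a}V' \setminus \inter{\bd{}{\a}V} \), and \( \bd{}{}U \incl U \setminus \inter{V} \) is then an induced map of pushouts whose three comparison maps are acyclic cofibrations by the inductive hypotheses (statement (1) for the submolecules \( L \) and \( R \), statement (2) one dimension down for \( L \cap R \)), so one concludes by the gluing lemma for weak equivalences. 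Your preliminary reductions --- composition-generation of submolecule inclusions, duality, induction on maximal cells --- are fine in spirit, but the base case they lead to is a boundary inclusion rather than a horn, and that is where the actual content of the lemma lies.
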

\begin{proof}
	We proceed by induction on \( k \eqdef \dim U \geq 0\); the base case is trivial, so let \( k > 0 \).
    	Because all isomorphisms are acyclic cofibrations, and acyclic cofibrations are closed under composition, for the first statement it suffices to show that the inclusions of the form \( V \incl V \cp{\ell} W \) and \( W \incl V \cp{\ell} W \) are acyclic cofibrations for all \( \ell \geq 0 \) and submolecules \( V, W \submol U \).
	The pasting \( V \cp{\ell} W \) is defined by a pushout of the form
    \[
        \begin{tikzcd}
            {\bd{\ell}{+} V = \bd{\ell}{-} W} & W \\
            V & {V \cp{\ell} W}
            \arrow[hook, from=1-1, to=1-2]
            \arrow[hook', from=1-1, to=2-1]
            \arrow[hook', from=1-2, to=2-2]
            \arrow[hook, from=2-1, to=2-2]
            \arrow["\lrcorner"{anchor=center, pos=0.125, rotate=180}, draw=none, from=2-2, to=1-1]
        \end{tikzcd}
\]
	and since acyclic cofibrations are closed under pushouts, it suffices to prove that \( \bd{\ell}{\a} V \incl V \) is an acyclic cofibration for all \( \ell \geq 0 \), \( \a \in \set{ +, - } \), and \( V \submol U \).
	We proceed by induction on submolecules of \( U \), the base case having already been settled.
	Suppose \( U \) is an atom.
	If \( \ell \geq k \), then the inclusion is an identity.
	Suppose \( \ell = k - 1 \).
	Pick an arbitrary element \( x \in \maxel{\bd{}{-\a}U} \), and construct the atomic horn \( \Lambda^x_U \) as the pushout of cofibrations
\[
	\begin{tikzcd}[column sep=large]
            {\bd{k - 2}{} U} & {\bd{}{-\a} U \setminus \set{x}} \\
            {\bd{}{\a}U} & {\Lambda^x_U}
            \arrow[hook, from=1-1, to=1-2]
            \arrow[hook', from=1-1, to=2-1]
            \arrow[hook', from=1-2, to=2-2]
            \arrow["j", hook, from=2-1, to=2-2]
            \arrow["\lrcorner"{anchor=center, pos=0.125, rotate=180}, draw=none, from=2-2, to=1-1]
        \end{tikzcd}
\]
	in \( \dgmSet \).
	By the inductive hypothesis, since \(\clset{x} \submol \bd{}{-\a}U \) is always rewritable, the top cofibration is acyclic, so \( j \) is acyclic.
	Since the atomic horn \( \lambda^x_U\colon \Lambda^x_U \incl U \) is an anodyne extension, the boundary inclusion \( \bd{}{\a}U \incl U \), which factors as \( \lambda^x_U \after j \), is an acyclic cofibration.
	Finally, if \( \ell < k - 1 \), then by globularity the boundary inclusion factors through \( \bd{\ell}{\a}U \incl \bd{}{\a}U \), and we conclude by the inductive hypothesis.

	Next, suppose that \( U \) splits into proper submolecules \( V \cp{i} W \).
	If \( \ell \leq i \), the the inclusion \( \bd{\ell}{\a}U \) factors through a boundary inclusion into \( V \) or \( W \), and we can apply the inductive hypothesis.
	Suppose \( \ell > i \).
	Then the squares
\[
        \begin{tikzcd}
            {\bd{\ell}{\a} V} & {\bd{\ell}{\a} V \cp i \bd{\ell}{\a} W} && {\bd{\ell}{\a} W} & {V \cp i \bd{\ell}{\a} W} \\
            V & {V \cp i \bd{\ell}{\a} W} && W & {V \cp i W}
            \arrow[hook, from=1-1, to=1-2]
            \arrow[hook', from=1-1, to=2-1]
            \arrow["{j_1}", hook', from=1-2, to=2-2]
            \arrow[hook, from=1-4, to=1-5]
            \arrow[hook', from=1-4, to=2-4]
            \arrow["{j_2}", hook', from=1-5, to=2-5]
            \arrow[hook, from=2-1, to=2-2]
            \arrow[hook, from=2-4, to=2-5]
            \arrow["\lrcorner"{anchor=center, pos=0.125, rotate=180}, draw=none, from=2-2, to=1-1]
            \arrow["\lrcorner"{anchor=center, pos=0.125, rotate=180}, draw=none, from=2-5, to=1-4]
        \end{tikzcd}
\]
	are pushouts in \( \dgmSet \), and by the inductive hypothesis the left vertical morphisms are acyclic cofibrations.
	Then \( j_1, j_2 \) and their composite, which is equal to the inclusion \( \bd{\ell}{\a} U \incl U \), are also acyclic cofibrations.

	Finally, suppose \( U \) is round and \( \iota \) is rewritable.
	As a consequence of \cite[Lemma 4.5.11]{hadzihasanovic2024combinatorics}, there exists a decomposition \( U = L \cp{} V' \cp{} R \) such that 
	\[
		\bd{}{-}V \submol \bd{}{+}L, \quad 
		\bd{}{+}V \submol \bd{}{-}R, \quad
		V' = \bd{}{+}L \subcp{k-1} V = V \cpsub{k-1} \bd{}{-}R,
	\]
	where the first two inclusions are rewritable.
	Moreover, \( \bd{}{\a}V' \) is round and \( L \cap R = \bd{}{\a}V' \setminus \inter{\bd{}{\a}V} \), so in the commutative diagram
\[\begin{tikzcd}
	{\bd{}{-}U = \bd{}{-}L} & {\bd{k-2}{}U} & {\bd{}{+}U = \bd{}{+}R} \\
	L & {L \cap R} & R
	\arrow[hook', from=1-1, to=2-1]
	\arrow[hook', from=1-2, to=1-1]
	\arrow[hook, from=1-2, to=1-3]
	\arrow[hook', from=1-2, to=2-2]
	\arrow[hook', from=1-3, to=2-3]
	\arrow[hook', from=2-2, to=2-1]
	\arrow[hook, from=2-2, to=2-3]
\end{tikzcd}\]
	all the vertical cofibrations are acyclic by the inductive hypothesis.
   	By \cite[Corollary 2.3.17]{cisinski2019higher}, the induced cofibration \( \bd{}{}U \incl L \cup R = U \setminus \inter{V} \) between pushouts of the horizontal cofibrations is also acyclic.
\end{proof}

\begin{lem} \label{lem:molecular_horn_are_acyclic}
	Each molecular horn is an acyclic cofibration in the atomic Cisinski model structure.
\end{lem}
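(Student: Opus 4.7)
The plan is to factor each molecular horn $\lambda_U^V \colon \Lambda_U^V \incl U$ as
\[
\Lambda_U^V \incl \bd{}{}U \incl U,
\]
and then argue that both factors are acyclic cofibrations, reducing everything to Lemma \ref{lem:submol_acyclic_atomic_cisinski} and 2-out-of-3.

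First I would handle the boundary inclusion $\bd{}{}U \incl U$. Write $k \eqdef \dim U$. Globularity gives the pushout presentation $\bd{}{}U = \bd{}{-}U \cup_{\bd{k-2}{}U} \bd{}{+}U$. The arguments in the proof of Lemma \ref{lem:submol_acyclic_atomic_cisinski}.(1) already establish that $\bd{\ell}{\a}V \incl V$ is an acyclic cofibration for every $V \submol U$, $\ell \geq 0$, and $\a \in \set{+, -}$, so both $\bd{k-2}{}U \incl \bd{}{+}U$ and $\bd{}{-}U \incl U$ are acyclic cofibrations. Pushing out the first along $\bd{k-2}{}U \incl \bd{}{-}U$ yields an acyclic cofibration $\bd{}{-}U \incl \bd{}{}U$, which composes with $\bd{}{-}U \incl U$ to give acyclicity of $\bd{}{}U \incl U$ by 2-out-of-3.

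For the first factor $\Lambda_U^V \incl \bd{}{}U$, fix $\a \in \set{+, -}$ such that $V \submol \bd{}{\a}U$. Because $V$ is rewritable, $\inter{V}$ consists of top-dimensional cells of $\bd{}{\a}U$, which by globularity are disjoint from $\bd{}{-\a}U$. Hence $\Lambda_U^V = \bd{}{}U \setminus \inter{V}$ realises as a pushout
\[
\begin{tikzcd}
\bd{k-2}{}U & \bd{}{\a}U \setminus \inter{V} \\
\bd{}{-\a}U & \Lambda_U^V
\arrow[hook, from=1-1, to=1-2]
\arrow[hook', from=1-1, to=2-1]
\arrow[hook', from=1-2, to=2-2]
\arrow[hook, from=2-1, to=2-2]
\arrow["\lrcorner"{anchor=center, pos=0.125, rotate=180}, draw=none, from=2-2, to=1-1]
\end{tikzcd}
\]
and comparing with the analogous presentation $\bd{}{}U = \bd{}{-\a}U \cup_{\bd{k-2}{}U} \bd{}{\a}U$ exhibits $\Lambda_U^V \incl \bd{}{}U$ as the pushout of $\bd{}{\a}U \setminus \inter{V} \incl \bd{}{\a}U$ along $\bd{k-2}{}U \incl \bd{}{-\a}U$.

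It thus remains to show that $\bd{}{\a}U \setminus \inter{V} \incl \bd{}{\a}U$ is an acyclic cofibration. Since $\bd{}{\a}U$ is a round molecule and $V \submol \bd{}{\a}U$ is rewritable, Lemma \ref{lem:submol_acyclic_atomic_cisinski}.(2) applied to the pair $(V, \bd{}{\a}U)$ supplies an acyclic cofibration $\bd{k-2}{}U = \bd{}{}\bd{}{\a}U \incl \bd{}{\a}U \setminus \inter{V}$. Its composite with $\bd{}{\a}U \setminus \inter{V} \incl \bd{}{\a}U$ is the boundary inclusion $\bd{}{}\bd{}{\a}U \incl \bd{}{\a}U$, which is acyclic by the same argument used above for $\bd{}{}U \incl U$; a final 2-out-of-3 closes the case. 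No substantial obstacle is anticipated; the whole proof is a combinatorial unfolding of the definitions combined with Lemma \ref{lem:submol_acyclic_atomic_cisinski}, and the one step that requires a little care is verifying the pushout presentation of $\Lambda_U^V$ from globularity.
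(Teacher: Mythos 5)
Your factorisation \( \Lambda_U^V \incl \bd{}{}U \incl U \) cannot work, because the second factor is never an acyclic cofibration when \( \dim U \geq 1 \): the boundary inclusions \( \bd{}{}U \incl U \) are exactly the cellular model, i.e.\ the generating cofibrations, and geometrically they are sphere-into-ball inclusions; if they were all acyclic the model structure would be trivial. The error enters where you claim that \( \bd{k-2}{}U \incl \bd{}{+}U \) is an acyclic cofibration ``by the arguments in the proof of Lemma \ref{lem:submol_acyclic_atomic_cisinski}.(1)''. That argument only gives acyclicity of the \emph{signed} boundary inclusions \( \bd{\ell}{\a}V \incl V \) (hemisphere into ball); the unsigned boundary \( \bd{k-2}{}U = \bd{k-2}{-}U \cup \bd{k-2}{+}U \) is the \emph{full} boundary sphere of the round molecule \( \bd{}{+}U \), and its inclusion is again of sphere-into-ball type, hence not a weak equivalence. (Already for \( U = \arr \) your first factor is \( \pt \incl \pt \amalg \pt \) and your second is \( \pt \amalg \pt \incl \arr \); neither is a weak equivalence although the composite is.) The same mistake recurs at the end when you assert that \( \bd{}{}\bd{}{\a}U \incl \bd{}{\a}U \) is acyclic in order to deduce acyclicity of \( \bd{}{\a}U \setminus \inter{V} \incl \bd{}{\a}U \); in fact Lemma \ref{lem:submol_acyclic_atomic_cisinski}.(2) says precisely that \( \bd{}{\a}U \setminus \inter{V} \) is weakly equivalent to the sphere \( \bd{}{}\bd{}{\a}U \), not to the ball \( \bd{}{\a}U \), so that map is not acyclic either.

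The repair is to factor a \emph{signed} boundary inclusion through the horn rather than factoring the horn through the full boundary. You already have the right pushout presentation: \( \Lambda_U^V \) is the pushout of \( \bd{}{-\a}U \hookleftarrow \bd{k-2}{}U \incl \bd{}{\a}U \setminus \inter{V} \), and the right-hand leg is acyclic by Lemma \ref{lem:submol_acyclic_atomic_cisinski}.(2). Hence its pushout \( j\colon \bd{}{-\a}U \incl \Lambda_U^V \) is an acyclic cofibration, and since \( \bd{}{-\a}U \incl U = \lambda_U^V \after j \) is acyclic by part (1) of the same lemma, 2-out-of-3 gives that \( \lambda_U^V \) is acyclic. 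This is the paper's argument; you have all the ingredients but composed them against the wrong boundary inclusion.
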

\begin{proof}
	Let \( U \) be an atom and let \( V \submol \bd{}{\a}U \) be a rewritable submolecule.
	The horn \( \Lambda^V_U \) can be constructed as the pushout
\[
        \begin{tikzcd}
		{\bd{}{} U} & {\bd{}{\a} U \setminus \inter V} \\
            	{\bd{}{-\a}U} & {\Lambda^V_U} 
            \arrow[hook, from=1-1, to=1-2]
            \arrow[hook', from=1-1, to=2-1]
            \arrow[hook', from=1-2, to=2-2]
            \arrow["j", hook, from=2-1, to=2-2]
            \arrow["\lrcorner"{anchor=center, pos=0.125, rotate=180}, draw=none, from=2-2, to=1-1]
        \end{tikzcd}
\]
	in \( \dgmSet \), where the top cofibration is acyclic by Lemma \ref{lem:submol_acyclic_atomic_cisinski}.
	Thus the cofibration \( j \) is acyclic.
	Since the boundary inclusion \( \bd{}{-}U \incl U \), which by the same result is an acyclic cofibration, factors as \( \lambda^V_U \after j \), we conclude by 2-out-of-3 that \( \lambda^V_U \) is an acyclic cofibration.
\end{proof}

\begin{prop} \label{prop:atomic_cisinski_equal_molecular_cisinski}
	The atomic Cisinski model structure and the Cisinski model structure coincide.
\end{prop}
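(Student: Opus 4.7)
The plan is to show that the atomic Cisinski model structure and the Cisinski model structure have the same class of acyclic cofibrations; since both are obtained from Theorem \ref{thm:olschok_theorem} with the same exact cylinder \( \arr \gray - \) and the same cellular model \( \set{\bd{}{} U \incl U \mid U \in \Ob\atom} \), and since they already share the same cofibrations (the monomorphisms), this will force them to coincide.

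First, I would note that each atomic horn \( \lambda_U^x\colon \Lambda_U^x \incl U \) is, by definition, the special case of a molecular horn with \( V = \clset{x} \) an atom, so \( \Jat \) is included in the set of molecular horns. This immediately yields the inclusion of the atomic anodyne class into the molecular anodyne class, hence every acyclic cofibration of the atomic Cisinski model structure is an acyclic cofibration of the Cisinski model structure.

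For the reverse inclusion, I would invoke Lemma \ref{lem:molecular_horn_are_acyclic}, which states that every molecular horn is an acyclic cofibration in the atomic Cisinski model structure. Since the class of acyclic cofibrations of the atomic Cisinski model structure is itself a class of anodyne extensions in the sense of the definition, Proposition \ref{prop:smallest_class_of_anodynes} implies that it must contain the smallest anodyne class containing the molecular horns, which is precisely the anodyne class of the Cisinski model structure. Combining the two inclusions gives equality of the classes of acyclic cofibrations, and hence of the model structures.

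The main difficulty has already been discharged upstream: the substantive induction on dimension and on pasting decompositions is carried out in Lemma \ref{lem:submol_acyclic_atomic_cisinski}, and then bootstrapped to molecular horns by a single pushout argument in Lemma \ref{lem:molecular_horn_are_acyclic}. The proposition itself is thus a short, formal conclusion that packages these results together.
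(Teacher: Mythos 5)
Your two substantive inputs are the right ones and match the paper's: the inclusion \( \Jat \subseteq \set{\text{molecular horns}} \) for one direction, and Lemma \ref{lem:molecular_horn_are_acyclic} for the other. The problem is in the final packaging. Your second step, even granting that the acyclic cofibrations of the atomic Cisinski model structure form a class of anodyne extensions (which is true but itself requires checking closure under \( j \mapsto (\iota^-,\iota^+) \ppnat j \), via a 2-out-of-3 argument), only yields that the \emph{anodyne class} \( l(r(\An(J_{\mathrm{mol}}))) \) of the Cisinski model structure is contained in the atomic acyclic cofibrations. To conclude ``equality of the classes of acyclic cofibrations'' you would additionally need that the molecular anodyne class \emph{equals} the class of acyclic cofibrations of the Cisinski model structure. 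This is not automatic in a Cisinski/Olschok model structure: the anodyne extensions generated by the defining set can be a proper subclass of the acyclic cofibrations (compare inner anodyne maps versus acyclic cofibrations in the Joyal model structure). A symmetric remark applies to your first step: the inclusion of anodyne classes does not formally give an inclusion of acyclic cofibrations; one has to pass through the characterisation of weak equivalences via fibrant objects (more anodynes means fewer fibrant objects, hence more weak equivalences), which your ``hence'' conceals.

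The paper closes exactly this gap by citing \cite[Theorem 3.22]{chanavat2024diagrammatic}, which states that the molecular horns are a \emph{generating set of acyclic cofibrations} for the Cisinski model structure; with that input your argument does go through, since then \( \mathrm{AC}_{\mathrm{mol}} = l(r(J_{\mathrm{mol}})) \subseteq \mathrm{AC}_{\mathrm{at}} \) follows from Lemma \ref{lem:molecular_horn_are_acyclic} and saturation. Alternatively, and this is how the paper phrases it, one can avoid manipulating acyclic cofibrations altogether: the two structures share cofibrations and the exact cylinder \( \arr \gray - \), so by the description of weak equivalences in Theorem \ref{thm:olschok_theorem} it suffices to show they have the same fibrant objects; one inclusion is immediate from \( \Jat \subseteq J_{\mathrm{mol}} \), and the other follows because an atomic-fibrant object has the right lifting property against all atomic acyclic cofibrations, hence against the molecular horns by Lemma \ref{lem:molecular_horn_are_acyclic}, which generate by the cited result. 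Either way, you need that external generation statement (or an equivalent saturation argument); as written, your proof stops short of it.
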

\begin{proof}
	The two model structures have the same cofibrations and the same exact cylinder.
	By Theorem \ref{thm:olschok_theorem}, it suffices to show that they have the same fibrant objects.
	Since the set of atomic horns is a subset of the set of molecular horns, it is immediate that fibrant objects in the Cisinski model structure are fibrant in the atomic Cisinski model structure.
	Conversely, by \cite[Theorem 3.22]{chanavat2024diagrammatic} molecular horns are a generating set of acyclic cofibrations in the Cisinski model structure, so we conclude by Lemma \ref{lem:molecular_horn_are_acyclic}.
\end{proof}

\begin{lem} \label{lem:infty_fibrant_iff_cisinski_fibrant}
	Let \( X \) be a diagrammatic set.
	The following are equivalent:
	\begin{enumerate}[label=(\alph*)]
		\item \( X \) is fibrant in the atomic Cisinski model structure;
		\item \( X \) is an \( (\infty, 0) \)\nbd category.
	\end{enumerate}
\end{lem}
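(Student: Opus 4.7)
The plan is to prove (a) \(\Leftrightarrow\) (b) by showing both implications separately, in close parallel with the pair Theorem \ref{thm:coind_fibrant_characterisation} / Theorem \ref{thm:fibrant_iff_infty_category} but working directly with unmarked atomic horns.

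For (a) \(\Rightarrow\) (b), I would first invoke Proposition \ref{prop:atomic_cisinski_equal_molecular_cisinski} to replace right lifting against atomic horns by right lifting against all molecular horns. Weak composites then come for free: for any round diagram \( u\colon U \to X \) the inclusion \( U \incl (U \celto \compos{U}) \) is precisely the molecular horn \( \lambda^{\compos{U}}_{U \celto \compos{U}} \), whose lift is a cell \( c(u)\colon u \celto \compos{u} \). The remaining content is the invertibility of all positive-dimensional cells, which is the unmarked analogue of Lemma \ref{lem:marked_cell_is_equivalence}. Taking \( A \eqdef \gr{>0}{\cell X} \), for each cell \( e\colon V \to X \) in \( A \), I would construct the analogous horns in \( \lcyl{\bd{}{+}V}V \) and \( \rcyl{\bd{}{-}V}V \), which are atoms by Remark \ref{rmk:inverted_cylinder_well_def}, with boundary data \( \un(\bd{}{-}e) \), \( \un(\bd{}{+}e) \), and \( e \). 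Their lifts supply parallel cells \( e^L, e^R \) and invertors \( z, h \), all of positive dimension; hence \( A \subseteq \B(A) \), so \( A \subseteq \eqv{X} \) by coinduction. The compositors \( c(u) \) are then automatically equivalences, and \( X \) is an \( (\infty, 0) \)\nbd category.

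For (b) \(\Rightarrow\) (a), I would leverage the marked theory. By Theorem \ref{thm:coind_fibrant_characterisation}, \( \natmark{X} \) is fibrant in the coinductive \( (\infty, 0) \)\nbd model structure, with every positive-dimensional cell marked. By Lemma \ref{lem:atomic_horns_are_saturated}, it suffices to check the right lifting property of \( X \) against every atomic horn \( \lambda^x_U\colon \Lambda^x_U \incl U \). When \( \dim{U} = 1 \), the missing 1-cell is supplied by a unit on an endpoint. When \( \dim{U} \geq 2 \), marking every positive-dimensional cell of \( \Lambda^x_U \) identifies \( \lambda^x_U \) with a marked horn in the sense of Definition \ref{dfn:marked_model_structures}, provided that \( \bd{}{\a}U \) admits the layered decomposition around \( x \) demanded there. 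Any morphism \( q\colon \Lambda^x_U \to X \) then lifts canonically to a morphism of marked diagrammatic sets into \( \natmark{X} \), since every cell it might attempt to mark is already marked in \( \natmark{X} \); the right lifting property of \( \natmark{X} \) against \( \Jhorn \) then yields an extension whose underlying morphism extends \( q \) along \( \lambda^x_U \).

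The main obstacle I foresee is this last combinatorial point in (b) \(\Rightarrow\) (a): verifying that for every atom \( U \), sign \( \a \), and \( x \in \faces{}{\a}U \), the round molecule \( \bd{}{\a}U \) admits a layered ``onion'' decomposition \( \order{k}{L} \cp{k-1} (\ldots \cp{0} \clset{x} \cp{0} \ldots) \cp{k-1} \order{k}{R} \) with \( \dim \order{i}{L}, \dim \order{i}{R} \leq i \). This should follow from the frame-acyclicity of atoms and the pasting calculus of \cite{hadzihasanovic2024combinatorics}, but will need to be spelled out; the other ingredients---horn-filling for weak composites, coinductive invertibility, and the transpose-and-lift step through \( \natmark{X} \)---are routine once the correspondence between atomic and marked horns has been set up.
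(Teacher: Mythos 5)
Your proof is correct, and one half of it coincides with the paper's; the other half takes a more hands-on route. For (b) \(\Rightarrow\) (a) you do exactly what the paper does: pass to \( \natmark{X} = \maxmark{X} \) via Theorem \ref{thm:coind_fibrant_characterisation}, realise each atomic horn as (the underlying morphism of) a marked horn with the maximal marking on its domain, lift there, and invoke Lemma \ref{lem:atomic_horns_are_saturated}. For (a) \(\Rightarrow\) (b) the paper is shorter: it observes that fibrancy in the atomic Cisinski model structure gives \( \maxmark{X} \) the right lifting property against \( \Jhorn \) (since every marked horn has an atomic horn underneath and \( \maxmark{X} \) marks everything) and against \( \Jn 0 \) (trivially), and then just quotes Lemma \ref{lem:fibrant_is_category}. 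You instead redo the content of Lemma \ref{lem:marked_cell_is_equivalence} in the unmarked setting --- the coinductive argument with \( A = \gr{>0}{\cell X} \) and the horns of \( \lcyl{\bd{}{+}V}V \) and \( \rcyl{\bd{}{-}V}V \) --- and obtain weak composites from the molecular horn \( U \incl (U \celto \compos{U}) \) after invoking Proposition \ref{prop:atomic_cisinski_equal_molecular_cisinski}. Both work; the paper's detour through \( \maxmark{X} \) buys you the already-proven marked lemmas for free (and note that \( U \incl (U \celto \compos{U}) \) is already an \emph{atomic} horn, since \( \compos{U} \) is an atom, so the appeal to the molecular Cisinski structure is avoidable).

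On the obstacle you flag for (b) \(\Rightarrow\) (a): you are right that the crux is that every atomic horn \( \lambda^x_U \), equipped with the maximal marking on \( \Lambda^x_U \), underlies a marked horn, i.e.\ that \( \bd{}{\a}U \) admits the layered decomposition around \( \clset{x} \) with \( \dim \order{i}{L}, \dim \order{i}{R} \leq i \) (condition (3) being vacuous under maximal marking). The paper's own proof relies on exactly the same fact when it says ``transposing the first part of the proof'', without further elaboration; it is a structural property of molecules from \cite{hadzihasanovic2024combinatorics} rather than something established in this article, so your instinct that it deserves to be spelled out is sound, but it is not a gap specific to your argument.
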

\begin{proof}
	Suppose that \( X \) is fibrant in the atomic Cisinski model structure.
	Since the underlying morphism of each marked horn is an atomic horn, it follows that \( \maxmark{X} \) has the right lifting property against \( \Jhorn \).
	Moreover, \( \maxmark{X} \) always has the right lifting property against \( \Jn 0 \).
	It follows from Lemma \ref{lem:fibrant_is_category} that \( X \) is an \( (\infty, 0) \)\nbd category.
	Conversely, suppose that \( X \) is an \( (\infty, 0) \)\nbd category.
	By Theorem \ref{thm:coind_fibrant_characterisation}, \( \natmark{X} \) is fibrant in the coinductive \( (\infty, 0) \)\nbd model structure, and since \( \eqvcell X = \gr{>0}{\cell X} \), we have \( \natmark{X} = \maxmark{X} \).
	Then, transposing the first part of the proof, we see that \( X \) has the right lifting property against \( \Jat \).
	It follows from Lemma \ref{lem:atomic_horns_are_saturated} that \( X \) is fibrant in the atomic Cisinski model structure.
\end{proof}

\begin{thm} \label{thm:cisinski_equal_infty}
    The \( (\infty, 0) \)\nbd model structure and the Cisinski model structure coincide.
\end{thm}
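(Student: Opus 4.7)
The plan is to reduce, via Proposition \ref{prop:atomic_cisinski_equal_molecular_cisinski}, to showing that the \( (\infty, 0) \)\nbd model structure coincides with the atomic Cisinski model structure. Both share the monomorphisms as cofibrations, and by Lemma \ref{lem:infty_fibrant_iff_cisinski_fibrant} combined with Theorem \ref{thm:fibrant_iff_infty_category} they share the same class of fibrant objects, namely the \( (\infty, 0) \)\nbd categories. In view of the characterisation of weak equivalences in Theorem \ref{thm:olschok_theorem}, it then suffices to show that the homotopy relations induced by the two exact cylinders, \( \arr \gray - \) and \( \rgray \), agree on parallel maps into an \( (\infty, 0) \)\nbd category.

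Let \( W \) be an \( (\infty, 0) \)\nbd category and \( f, g \colon X \to W \) a parallel pair. In one direction, any homotopy \( \tilde\beta \colon \rgray X \to W \) restricts along the canonical morphism \( \arr \gray X \to \rgray X \) (the underlying map of the localisation inclusion \( \markarr \pgray \minmark{X} \hookrightarrow \fun{Loc}(\markarr \pgray \minmark{X}) = \rgray X \)) to a homotopy \( \beta \colon \arr \gray X \to W \) between \( f \) and \( g \) in the atomic Cisinski sense. In the other direction, given such a \( \beta \), I would observe that \( \fun{U}(\markarr \pgray \minmark{X}) = \arr \gray X \) and that every marked cell of \( \markarr \pgray \minmark{X} \) has dimension \( > 0 \), as a quick inspection of the definition of the pseudo-Gray product and of \( \minmark{(-)} \) shows. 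Since \( W \) is an \( (\infty, 0) \)\nbd category, every cell of \( W \) of dimension \( > 0 \) is an equivalence, so \( \beta \) lifts to a morphism \( \markarr \pgray \minmark{X} \to \natmark{W} \) of marked diagrammatic sets, which by Proposition \ref{prop:universal_property_localisation} extends to \( \tilde\beta \colon \rgray X \to W \), a homotopy in the \( (\infty, 0) \) sense restricting to \( f \) and \( g \) at the endpoints.

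Since the two homotopy relations on maps into fibrant objects coincide, the equivalence relations \( \approx \) they generate also coincide, so by Theorem \ref{thm:olschok_theorem} the two model structures share the same weak equivalences and hence are equal. I do not anticipate any serious obstacle here: all the heavy lifting has already been done in the preceding lemmas characterising the fibrant objects, and the remainder is a straightforward translation between the two cylinders mediated by the localisation functor \( \fun{Loc} \).
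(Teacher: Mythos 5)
Your proof is correct and follows essentially the same route as the paper: reduce to the atomic Cisinski model structure via Proposition \ref{prop:atomic_cisinski_equal_molecular_cisinski}, match cofibrations and fibrant objects, then compare the two homotopy relations. The only cosmetic difference is that where the paper cites Lemma \ref{lem:forgetful_creates_hpty} together with the fibrancy of \( \maxmark{Y} \) in the coinductive \( (\infty,0) \)\nbd model structure, you inline the relevant direction of that lemma by invoking Proposition \ref{prop:universal_property_localisation} directly, which amounts to the same argument.
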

\begin{proof}
	By Proposition \ref{prop:atomic_cisinski_equal_molecular_cisinski}, we may instead compare the atomic Cisinski model structure and the \( (\infty, 0) \)\nbd model structure.
	By definition they have the same cofibrations, and by Lemma \ref{lem:infty_fibrant_iff_cisinski_fibrant}, the same fibrant objects.
	By Theorem \ref{thm:olschok_theorem}, it suffices to show that, for any two parallel morphisms \( f, g \) of diagrammatic sets whose codomain \( Y \) is an \( (\infty, 0) \)\nbd category, \( f \) is homotopic to \( g \) with the exact cylinder \( \rgray \) if and only if \( f \) is homotopic to \( g \) with the exact cylinder \( \arr \gray - \).
	One direction is evident; the other follows from Lemma \ref{lem:forgetful_creates_hpty} and the observation that \( \maxmark{Y} \) is fibrant in the coinductive \( (\infty, 0) \)\nbd model structure.
\end{proof}

\begin{cor}[Homotopy hypothesis] \label{cor:homotopy_hypothesis}
	There is a pair of Quillen equivalences
\[
        \begin{tikzcd}
		\sSet && \dgmSet && \sSet
            \arrow[""{name=0, anchor=center, inner sep=0}, "{i_\Delta}", curve={height=-12pt}, from=1-1, to=1-3]
            \arrow[""{name=1, anchor=center, inner sep=0}, "{(-)_\Delta}", curve={height=-12pt}, from=1-3, to=1-1]
            \arrow[""{name=2, anchor=center, inner sep=0}, "{\Sd_\smallatom}", curve={height=-12pt}, from=1-3, to=1-5]
            \arrow[""{name=3, anchor=center, inner sep=0}, "{\Ex_\smallatom}", curve={height=-12pt}, from=1-5, to=1-3]
            \arrow["\dashv"{anchor=center, rotate=-90}, draw=none, from=0, to=1]
            \arrow["\dashv"{anchor=center, rotate=-90}, draw=none, from=2, to=3]
        \end{tikzcd}
\]
	between the classical model structure on \( \sSet \) and the \( (\infty, 0) \)\nbd model structure on \( \dgmSet \).
\end{cor}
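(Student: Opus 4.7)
The plan is to derive this corollary directly from Theorem \ref{thm:cisinski_equal_infty} combined with the theory of strict test categories. By Theorem \ref{thm:cisinski_equal_infty}, the \( (\infty, 0) \)\nbd model structure on \( \dgmSet \) coincides with the Cisinski model structure; and, as established in \cite{chanavat2024diagrammatic}, this Cisinski model structure is the test model structure associated to \( \atom \) as a strict test category.

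Given this identification, both Quillen equivalences are instances of Cisinski's general machinery for strict test categories \cite{cisinski2006prefaisceaux}. On the one hand, the fully faithful inclusion of the simplex category into \( \atom \) that identifies each simplex with the corresponding atom induces the adjunction \( i_\Delta \dashv (-)_\Delta \) by left Kan extension and restriction; for any strict test category equipped with such an inclusion, this is a Quillen equivalence between the Kan--Quillen model structure on \( \sSet \) and the test model structure on the presheaf category. On the other hand, the Yoneda embedding \( \atom \to \dgmSet \) followed by the nerve \( \dgmSet \to \sSet \) associated to the standard cosimplicial object in \( \dgmSet \) yields the adjunction \( \Sd_\smallatom \dashv \Ex_\smallatom \), which by the same theory is again a Quillen equivalence. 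Combining these with Theorem \ref{thm:cisinski_equal_infty} to pass from the Cisinski model structure to the \( (\infty, 0) \)\nbd model structure on \( \dgmSet \) gives the desired pair.

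The substantive work has already been carried out, both in proving that \( \atom \) is a strict test category (in \cite{chanavat2024diagrammatic}) and in identifying the Cisinski model structure defined via molecular horns and the Gray cylinder with the one \( (\infty, 0) \)\nbd model structure (in Theorem \ref{thm:cisinski_equal_infty}). Consequently there is no real obstacle at this stage; the only bookkeeping consists in confirming that our explicit descriptions of the four functors \( i_\Delta \), \( (-)_\Delta \), \( \Sd_\smallatom \), \( \Ex_\smallatom \) match the canonical ones constructed from the test-category data on \( \atom \), which is transparent from the definitions given in \cite{chanavat2024diagrammatic}.
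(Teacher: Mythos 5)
Your proposal is correct and follows essentially the same route as the paper: the paper's proof is a one-line citation of Theorem \ref{thm:cisinski_equal_infty} together with the two Quillen equivalences between the Cisinski model structure and the classical model structure on \( \sSet \) already established in \cite{chanavat2024diagrammatic} (Propositions 3.12 and 3.25 there). Your additional remarks about how the adjunctions arise from the test-category machinery are accurate but are exactly the content of those cited propositions, so nothing new is needed.
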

\begin{proof}
	Follows from Theorem \ref{thm:cisinski_equal_infty} combined with \cite[Proposition 3.12, Proposition 3.25]{chanavat2024diagrammatic}.
\end{proof}

\noindent To conclude, we note that our results confirm that the ``bi-invertible'' localisation is a homotopy coherent operation, strengthening the results of \cite{hadzihasanovic2024model} before an explicit comparison with other models of \( (\infty, n) \)\nbd categories for \( n > 0 \).

\begin{prop}
    	Let \( U \) be an atom.
	Then the unique morphism \( \selfloc{U} \surj \pt \) is a weak equivalence in the \( (\infty, 0) \)\nbd model structure.
\end{prop}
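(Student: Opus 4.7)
The plan is to reduce the statement to the assertion that the terminal map \( U \surj \pt \) is itself a weak equivalence, and then to verify this by transporting the problem across the Quillen equivalence with classical simplicial sets.

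First I would note that \( \selfloc{U} \) is only defined when \( \dim U > 0 \), which I take as a hypothesis; the case \( \dim U = 0 \) makes the statement tautological with \( U = \pt \). By Definition \ref{dfn:infty_n_model_structure}, the inclusion \( U \incl \selfloc{U} \) belongs to \( \Jn 0 \) and is therefore an acyclic cofibration in the \( (\infty, 0) \)\nbd model structure. Since the composition \( U \incl \selfloc{U} \surj \pt \) is the terminal map \( U \surj \pt \), the 2-out-of-3 property reduces the problem to showing that \( U \surj \pt \) is itself a weak equivalence.

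Next I would apply Theorem \ref{thm:cisinski_equal_infty} to translate the problem into the Cisinski model structure, and then Corollary \ref{cor:homotopy_hypothesis} to transport it across the Quillen equivalence \( \Sd_\smallatom \dashv \Ex_\smallatom \) with the classical model structure on simplicial sets. Because every object of \( \dgmSet \) is cofibrant (cofibrations being monomorphisms), the left Quillen equivalence \( \Sd_\smallatom \) both preserves and reflects weak equivalences, so it suffices to prove that \( \Sd_\smallatom U \) is a contractible simplicial set.

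The main obstacle is establishing this contractibility. I would handle it via test-category-theoretic considerations: since \( \atom \) is a strict test category (\cite{chanavat2024diagrammatic}), the simplicial set \( \Sd_\smallatom U \) is homotopy equivalent to the nerve of the slice \( \slice{\atom}{U} \), which has \( \idd{U} \) as a terminal object and hence a contractible nerve. A more geometric reading, equivalent in content, is that an atom \( U \) presents a regular CW ball, so any reasonable simplicial realisation of it is visibly contractible.
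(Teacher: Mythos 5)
Your proof is correct and follows essentially the same route as the paper: both arguments apply 2-out-of-3 to the factorisation \( U \incl \selfloc{U} \surj \pt \), using that \( U \incl \selfloc{U} \) is an anodyne extension (an element of \( \Jn 0 \)) and that \( U \surj \pt \) is a weak equivalence via the identification of the \( (\infty,0) \)\nbd model structure with the Cisinski model structure (Theorem \ref{thm:cisinski_equal_infty}). The only difference is that where the paper simply cites \cite[Lemma 3.8, Theorem 3.10]{chanavat2024diagrammatic} for the weak contractibility of \( U \), you re-derive it from the test-category machinery (the slice \( \slice{\atom}{U} \) having a terminal object), which is precisely the content of those citations.
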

\begin{proof}
	The cofibration \( U \incl \selfloc{U} \) is an anodyne extension in the \( (\infty, 0) \)\nbd model structure, while the unique morphism \( U \surj \pt \) is a weak equivalence in the Cisinski model structure by \cite[Lemma 3.8, Theorem 3.10]{chanavat2024diagrammatic}, hence also in the \( (\infty, 0) \)\nbd model structure by Theorem \ref{thm:cisinski_equal_infty}.
	We conclude by 2-out-of-3.
\end{proof}

\section{Outlook and conjectures} \label{sec:further}

\subsection{Equivalence with complicial model structures}

We recall a few basic facts about complicial sets as a model of \( (\infty, n) \)\nbd categories.
A \emph{marked simplicial set}---also known as \emph{stratified simplicial set}---is a pair \( (K, A) \) of a simplicial set \( K \) and a subset \( A \) of \emph{marked simplices} of dimension $> 0$, including all degenerate simplices.
Marked simplicial sets and morphisms that respect the marking form a category \( \msSet \).

Let \( \markmol{\Delta[1]} \) be the 1\nbd simplex with a marking whose only non\nbd degenerate marked simplex is classified by \( \idd{\Delta[1]} \).
In \cite[Definition 1.21]{ozornova2020model}, for each \( n \in \mathbb{N} \cup \set{\infty} \), an \emph{\( n \)\nbd complicial set} is defined as a marked simplicial set which has the right lifting property against a certain set \( \Jn n^\Delta \) of \emph{elementary anodyne extensions}.
In the same article, a model structure is constructed on \( \msSet \) whose fibrant objects are precisely the \( n \)\nbd complicial sets.
While this is obtained with an appeal to \cite[Theorem 100]{verity2008weak}, we believe that it coincides with the one determined by Theorem \ref{thm:olschok_theorem} applied to \( \Jn n^\Delta \) together with
\begin{itemize}
	\item the exact cylinder \( \markmol{\Delta[1]} \times - \) with the natural transformations induced by the coface maps \( (d^0, d^1)\colon \Delta[0] \amalg \Delta[0] \to \Delta[1] \) and codegeneracy map \( s^0\colon \Delta[1] \surj \Delta[0] \), and
	\item the cellular model \( \set{ \bd{}{}\Delta[n] \incl \Delta[n] \mid n \in \mathbb{N} } \).
\end{itemize}
Let \( \simplexcat \) be the simplex category, and recall from \cite[Proposition 9.2.14]{hadzihasanovic2024combinatorics} that we may represent \( \simplexcat \) as the full subcategory of \( \atom \) on the oriented simplices.
The adjunction \( i_\Delta \dashv (-)_\Delta \) of Corollary \ref{cor:homotopy_hypothesis} is obtained by left Kan extension of this representation along the Yoneda embedding.
Now, if \( (K, A) \) is a marked simplicial set, then one checks that the pair \( (i_\Delta K, i_\Delta A \cup \dgncell i_\Delta K ) \) is a marked diagrammatic set, and conversely, if \( (X, A) \) is a marked diagrammatic set, then \( (X_\Delta, A \cap \cell X_\Delta ) \) is a marked simplicial set.
This assignment lifts the adjunction \( i_\Delta \dashv (-)_\Delta \) to an adjunction between \( \msSet \) and \( \mdgmSet \).

We make the following conjecture, which, in the light of \cite{loubaton2023theory}, would imply that our model is equivalent to the models of \( (\infty, n) \)\nbd categories in the geometric cluster. 
\begin{conj} \label{conj:complicial_quillen_eq}
    For all \( n \in \mathbb{N} \cup \set{\infty} \), the adjunction \( i_\Delta \dashv (-)_\Delta \) is a Quillen equivalence between the \( n \)\nbd complicial model structure on \( \msSet \) and the inductive \( (\infty, n) \)\nbd model structure on \( \mdgmSet \).
\end{conj}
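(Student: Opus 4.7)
The plan is to establish the Quillen equivalence in two stages: first showing that $i_\Delta \dashv (-)_\Delta$ is a Quillen adjunction, then upgrading to a Quillen equivalence via a compatibility of fibrant replacements. For the Quillen adjunction step, I would begin by verifying that $i_\Delta$ preserves monomorphisms, which reduces via colimit-preservation to the fact that $i_\Delta$ sends boundary inclusions of simplices to boundary inclusions of oriented simplices (using the full and faithful embedding $\simplexcat \incl \atom$ from \cite[Proposition 9.2.14]{hadzihasanovic2024combinatorics}). The heart of this step is to show that $i_\Delta$ sends each generating $n$-complicial anodyne extension in $\Jn n^\Delta$ to an acyclic cofibration in the inductive $(\infty, n)$-model structure on $\mdgmSet$. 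The elementary complicial anodyne extensions of \cite{ozornova2020model} fall into a few families: complicial inner horn inclusions, ``thinness extensions'' that mark the top simplex of a horn filler, and ``invertibility'' extensions forcing marked simplices to have inverses up to marked cells. I would build the image of each inner horn inclusion as a transfinite composite of pushouts along maps in $\Jhorn$, using the atomic-horn decomposition machinery of \cite{chanavat2024diagrammatic} adapted to the oriented simplex case; the thinness extensions should factor through appropriate maps in $\Jhorn \cup \Jn n$; and the invertibility extensions should factor through $\Jinv$, matching the ``if weakly invertible then marked'' reading of Comment \ref{comm:marked_horns_are_equations}.

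For the Quillen equivalence step, I would use the criterion that all objects of $\msSet$ are cofibrant, so it suffices to check that for every marked simplicial set $(K, A)$ and every fibrant replacement $i_\Delta(K, A) \incl R$ in the inductive $(\infty, n)$-model structure on $\mdgmSet$, the induced morphism $(K, A) \to R_\Delta$ is a weak equivalence in the $n$-complicial model structure. As a preliminary, I would establish the analogue of Lemma \ref{lem:ind_fibrant_iff_coind_fibrant} and Theorem \ref{thm:coind_fibrant_characterisation} for all $n$ (possibly restricted to $n < \infty$ in a first pass): a marked diagrammatic set $(X, A)$ is fibrant in the inductive $(\infty, n)$-model structure precisely when $X$ is an $(\infty, n)$-category and $A = \eqvcell X$. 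Granted this, the fact that $\simplexcat \incl \atom$ is fully faithful implies that $R_\Delta$ inherits the structure of an $n$-complicial set: weak composites in $R$ of simplicial diagrams may not be simplicial, but one can replace them up to equivalence using compositors, and the resulting cells remain simplicial because roundness is preserved. The derived unit would then be exhibited as an equivalence via a Segal-style small object argument, building a tower of extensions in $\msSet$ mapping coherently into the cellular decomposition of the fibrant replacement in $\mdgmSet$.

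The main obstacle, I expect, is the first step: matching the intricate marking conditions that govern the complicial elementary anodyne extensions against the marked horns, marked walking equivalences, and marked invertors of Definition \ref{dfn:marked_model_structures}. In particular, the complicial conventions distinguish between ``left'' and ``right'' thinness in ways that do not obviously correspond to a single marked horn in the diagrammatic sense, and one must likely pass through intermediate cellular extensions that are anodyne but not in the generating set. A secondary obstacle, on the Quillen equivalence side, is that fibrant replacement in $\mdgmSet$ typically introduces many non-simplicial atoms (weak composites of round simplicial diagrams need not have simplicial shape), so one cannot argue purely via full faithfulness of $\simplexcat \incl \atom$; one must genuinely compare homotopical data across the two shape categories, presumably by exhibiting simplicial cofinality of some relevant subdivision, along the lines of the $n=0$ case handled in Corollary \ref{cor:homotopy_hypothesis}.
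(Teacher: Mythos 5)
This statement is explicitly a \emph{conjecture} in the paper: the authors do not prove it, and only offer a strategy sketch in the surrounding discussion. Your proposal follows that sketch quite closely --- showing the adjunction is Quillen by reducing (via \cite[Proposition 2.4.40]{cisinski2019higher} and the fact that \( \Jn n^\Delta \) is a generating set of anodyne extensions) to sending the elementary complicial anodyne extensions to acyclic cofibrations, then attempting to upgrade to a Quillen equivalence along the lines of Theorem \ref{thm:marked_quillen_eq_unmarked} --- but, like the paper, it stops at the level of a plan. The central technical content of both steps is deferred: you do not actually exhibit the image of a complicial horn, thinness, or saturation extension as a composite of pushouts along \( \Jhorn \cup \Jn n \cup \Jinv \), and the paper itself only gestures at \cite[Lemma 4.49]{loubaton2024inductive} as a possible template. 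So there is no proof here to compare against, and your proposal cannot be counted as one either.

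Two concrete gaps in your plan deserve naming. First, your ``preliminary'' characterisation of inductive fibrant objects as \( (\infty, n) \)-categories with \( A = \eqvcell X \) is \emph{false} for \( n = \infty \): Lemma \ref{lem:ind_fibrant_iff_coind_fibrant} is proved only for \( n < \infty \), and Proposition \ref{prop:ind_bousfield_coind} shows the coinductive structure is a genuine left Bousfield localisation of the inductive one at \( n = \infty \), so inductively fibrant objects there carry markings that need not consist of all equivalences. Since the conjecture is stated for all \( n \in \mathbb{N} \cup \set{\infty} \), and the inductive (not coinductive) model structure is the one named, your hedge ``possibly restricted to \( n < \infty \) in a first pass'' leaves the hardest and most interesting case unaddressed. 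Second, the Quillen-equivalence step as modelled on Theorem \ref{thm:marked_quillen_eq_unmarked} hinges on comparing the homotopy relations induced by the two cylinders, i.e.\ comparing \( \markmol{\Delta[1]} \times - \) with \( \markarr \pgray - \); this is a comparison of a cartesian product against a Gray product, which the paper flags as the expected sticking point and which is currently known only for \( n = 0 \) (via \cite[Corollary 3.27]{chanavat2024diagrammatic}). Your appeal to a ``Segal-style small object argument'' and ``simplicial cofinality of some relevant subdivision'' does not engage with this: the non-simplicial atoms introduced by fibrant replacement are not the obstruction so much as the mismatch of monoidal structures governing the two notions of homotopy.
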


\noindent The first step towards proving the conjecture would be to establish that the adjunction is Quillen.
Since \( i_\Delta \) preserves monomorphisms, which are the cofibrations in both model structures, it would suffice to show that it preserves acyclic cofibrations.
By \cite[Proposition 2.4.40]{cisinski2019higher} combined with \cite[Proposition 1.26]{ozornova2020model} showing that \( \Jn n^\Delta \) is a generating set of anodyne extensions, this would follow from sending morphisms in \( \Jn n^\Delta \) to acyclic cofibrations, for which an argument similar to the one used in \cite[Lemma 4.49]{loubaton2024inductive} for marked strict \( \omega \)\nbd categories could be applicable.

Once a Quillen adjunction is established, we would hope to prove that it is a Quillen equivalence along the same lines used to prove Theorem \ref{thm:marked_quillen_eq_unmarked}.
One step towards the proof would be to show that for any two morphisms \( f, g \) with fibrant codomain, \( f \) is homotopic to \( g \) with the exact cylinder \( \markarr \pgray - \) if and only if \( f_\Delta \) is homotopic to \( g_\Delta \) with the exact cylinder \( \markmol{\Delta[1]} \times - \).
We can expect some difficulty in comparing the Gray tensor product \( \otimes \) with the cartesian product \( \times \). 
For the reader who might wonder if the two products are even plausibly homotopic, we refer to \cite[Corollary 3.27]{chanavat2024diagrammatic}, where we showed that this is the case at least when \( n = 0 \).

We note that Corollary \ref{cor:homotopy_hypothesis}, together with Proposition \ref{prop:n_inductive_equal_n_coinductive}, Theorem \ref{thm:marked_quillen_eq_unmarked}, and the Quillen equivalence between the 0\nbd complicial model structure and the classical model structure on simplicial sets imply the conjecture for \( n = 0 \).


\subsection{Comparison with the folk model structure on \omegatit-categories} \label{sec:folk}

We recall from \cite{lafont2010folk} that there is a model structure on the category \( \omegacat \) of strict \( \omega \)\nbd categories and strict functors, called the \emph{folk model structure}, where
\begin{itemize}
	\item cofibrations are cellular extensions, and cofibrant objects are polygraphs,
	\item all objects are fibrant,
	\item weak equivalences are precisely \( \omega \)\nbd equivalences, that is, functors that are essentially surjective on cells of each dimension,
\end{itemize}
where ``essentially'' means ``up to coinductively weakly invertible cells''.
It should be apparent to the reader that this model structure has much in common with our \( (\infty, \infty) \)\nbd model structure.

Let \( \rdcpx \) be the category of regular directed complexes and cartesian maps.
It follows from \cite[Theorem 6.2.35]{hadzihasanovic2024combinatorics} that there is a functor 
\[
	\molecin{-}\colon \rdcpx \to \omegacat 
\]
with the property that, given a regular directed complex \( P \), the strict \( \omega \)\nbd category \( \molecin{P} \) has a minimal set of composition-generators in bijection with the elements of \( P \).
Restricting this functor to \( \atom \), then extending it along the Yoneda embedding produces an adjunction between \( \dgmSet \) and \( \omegacat \), and one could then expect that the adjunction is Quillen between the \( (\infty, \infty) \)\nbd model structure and the folk model structure.
This is \emph{not} the case.
Since all diagrammatic sets are cofibrant, it would be necessary for \( \molecin{U} \) to be a polygraph for all atoms \( U \).
However, we know from \cite[Example 8.2.20]{hadzihasanovic2024combinatorics} that there exists a 4\nbd dimensional molecule \( V \) such that \( \molecin{V} \) is not a polygraph, and this can be embedded into the boundary of a 5\nbd dimensional atom.
We note that 5 is minimal: \( \molecin{U} \) is a polygraph for all atoms \( U \) of dimension \( \leq 4 \).

While there are well-known problems with strict \( \omega \)\nbd categories as models of homotopy types, due mainly to ``strict Eckmann--Hilton'' \cite{simpson1998homotopy, henry2019non}, our results show that the ``extra coherences'' satisfied by pasting of molecules, but not implied by the axioms of strict \( \omega \)\nbd categories in dimension \( \geq 4 \), are actually homotopically sound.
Thus one is led to the conclusion that strict \( 4 \)\nbd categories are, at once, too strict and not strict enough, so they should perhaps be placed at a further ``precategorical'' level, similar to sesquicategories which satisfy associativity but not interchange.
Incidentally, the same issue is going to affect all algebraic models based on a weakening of the algebra of strict \( \omega \)\nbd categories, such as Batanin--Leinster models, and it seems likely to us that none of these models are going to be Quillen equivalent to diagrammatic \( (\infty, n) \)\nbd categories---or, if Conjecture \ref{conj:complicial_quillen_eq} holds, to any of the accepted non-algebraic models of \( (\infty, n) \)\nbd categories---\emph{as long as the analogue of the folk model structure} is used; some further localisation will be necessary.

We take some steps into the description of such a localisation in the strict case.
Recall that, given a regular directed complex \( P \), for all \( x \in P \) we have a unique inclusion \( \mapel{x} \colon \imel{P}{x} \incl P \) with image \( \clset{x} \), and that the canonical map 
\[
        \colim_{x \in P} \imel{P}{x} \to P
\] 
is an isomorphism of regular directed complexes.

\begin{dfn} [Stricter \( \omega \)\nbd category]
	Let \( X \) be a strict \( \omega \)\nbd category.
	We say that \( X \) is a \emph{stricter \( \omega \)\nbd category} if, for all molecules \( U \), the canonical map
	\[
        	\omegacat(\molecin{U}, X) \to \lim_{x \in U} \omegacat(\molecin{\imel{U}{x}}, X)
	\]
	is an isomorphism.
	We denote by \( \omegacat^> \) the full subcategory of \( \omegacat \) on the stricter \( \omega \)\nbd categories.
\end{dfn}
\noindent
Alternatively, the category of stricter \( \omega \)\nbd categories could be defined as the localisation of \( \omegacat \) at the set of morphisms
\[
	\colim_{x \in U} \molecin{\imel{U}{x}} \to \molecin{U}
\] 
indexed by molecules \( U \); we expect that this should exhibit \( \omegacat^> \) as a reflective localisation of \( \omegacat \), and that \( \omegacat^> \) should be locally finitely presentable.
Because stricter \( \omega \)\nbd categories satisfy ``extra equations'', their internal notion of polygraph, that we may call \emph{stricter polygraph}, is actually looser: some objects which are not free with respect to the algebra of strict \( \omega \)\nbd categories will be free with respect to the algebra of stricter \( \omega \)\nbd categories.

By construction, the functor \( \molecin{-} \) has image in \( \omegacat^> \), so we have a ``diagrammatic nerve'' functor \( \fun{N}_\smallatom\colon \omegacat^> \to \dgmSet \) defined by
\[
	X \mapsto \omegacat^>(\molecin{-}, X).
\]
We make the following conjecture.

\begin{conj} \label{conj:folk_quillen_diag}
	There is a ``folk model structure'' on \( \omegacat^> \) where
	\begin{itemize}
		\item cofibrant objects are stricter polygraphs,
		\item all objects are fibrant,
		\item weak equivalences are \( \omega \)\nbd equivalences,
	\end{itemize}
	and such that
	\begin{enumerate}
		\item the subcategory inclusion \( \omegacat^> \incl \omegacat \) is right Quillen with respect to the folk model structure on \( \omegacat \),
		\item the diagrammatic nerve \( \fun{N}_\smallatom\colon \omegacat^> \to \dgmSet \) is right Quillen with respect to the \( (\infty, \infty) \)\nbd model structure on \( \dgmSet \).
	\end{enumerate}
\end{conj}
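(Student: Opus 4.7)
The plan is to construct the model structure on $\omegacat^>$ by transfer along the reflection, then verify the two Quillen-ness statements. First, I would confirm that $\omegacat^> \incl \omegacat$ is a reflective subcategory inclusion, using the Orthogonal Subcategory Theorem with the orthogonality class generated by the set of canonical maps $\colim_{x \in U} \molecin{\imel{U}{x}} \to \molecin{U}$ indexed by molecules $U$; this is a small set, and since $\omegacat$ is locally finitely presentable, the reflection $L \dashv i$ exists and $\omegacat^>$ inherits local presentability. I would then apply Kan's transfer theorem to move the folk model structure from $\omegacat$ across $L \dashv i$: the generating (acyclic) cofibrations of $\omegacat^>$ are $L$ applied to those of the folk model structure on $\omegacat$, fibrations and weak equivalences are created by $i$, and one must check the usual ``acyclicity'' condition that any relative $L(J)$-cell complex is a weak equivalence.

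With the model structure in hand, the three bullet-point characterisations follow quickly. Cofibrant objects are retracts of transfinite compositions of pushouts of $L(\bd{}{}\molecin{D^n} \incl \molecin{D^n}) \cong L(\molecin{\bd{}{}D^n} \incl \molecin{D^n})$, where $D^n$ is the $n$-globe; since cell attachments in $\omegacat^>$ modulo the extra equations are precisely what defines a stricter polygraph, these cofibrant objects are stricter polygraphs. All objects are fibrant because fibrations in the transferred structure are those maps whose image under $i$ is a folk fibration, and every object in $\omegacat$ is folk-fibrant. The coinductive definition of weak invertibility is stable under the extra equations, so $\omega$-equivalences in $\omegacat^>$ coincide with those of $\omegacat$ restricted to $\omegacat^>$, giving the weak-equivalence characterisation.

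The first Quillen adjunction is automatic from the transfer construction: $i$ preserves fibrations and weak equivalences by definition, so it is right Quillen. For the second, I would show that the left adjoint $L_\smallatom$ to $\fun{N}_\smallatom$, which is the left Kan extension of $\molecin{-}\colon \atom \to \omegacat^>$ along the Yoneda embedding, is left Quillen with respect to the $(\infty, \infty)$-model structure. Preservation of cofibrations reduces to showing that each $\molecin{\bd{}{}U} \incl \molecin{U}$ is a cofibration in $\omegacat^>$, which holds because in $\omegacat^>$ this inclusion realises a single stricter polygraphic cell attachment. For preservation of acyclic cofibrations, it suffices by the form of $\An(\Jcomp)$ to show that $L_\smallatom$ sends each compositor inclusion $U \incl U \eqvto \compos{U}$ to an $\omega$-equivalence in $\omegacat^>$: since the pasting of $U$ already has a strict composite in any stricter $\omega$-category, $L_\smallatom(U \celto \compos{U})$ is the free stricter $\omega$-category on a cell $u \celto u$ from a composite to itself, and $L_\smallatom(U \eqvto \compos{U})$ is its bi-invertible localisation, which we must prove is contractible in the folk sense.

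The main obstacle is precisely this last claim. Establishing it requires a version of the main theorem of \cite{hadzihasanovic2024model} adapted to $\omegacat^>$, showing that the coherent bi-invertible walking equivalence of a cell is folk-weakly-equivalent to a point. The strict analogue is known, but transporting the argument requires checking that the inductive tower of invertors and higher invertors used to build $\selfloc{U}$ behaves well under the extra stricter equations, which in turn rests on showing that stricter polygraphs are stable under the cell attachments used in the localisation construction. A secondary obstacle is the acyclicity condition of Kan's transfer: since the folk model structure on $\omegacat$ is not known to be combinatorial in a fully explicit way adapted to this reflection, one may need to construct a path object in $\omegacat^>$ directly, using the reversible cylinder $\fun{N}_\smallatom(\rgray(-))$ on the diagrammatic side and transporting via the nerve, which circularly depends on the Quillen property we are trying to prove.
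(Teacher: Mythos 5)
This statement is Conjecture \ref{conj:folk_quillen_diag}: the paper offers no proof of it, and indeed leaves even the preliminary facts (that \( \omegacat^> \) is a reflective, locally presentable subcategory of \( \omegacat \)) as expectations rather than results. So there is no proof in the paper to compare your proposal against, and your proposal should be judged as an attack on an open problem. As such it is a reasonable strategy --- orthogonal-subcategory reflection, Kan transfer of the folk model structure along \( L \dashv i \), then checking the two right Quillen functors --- and you are candid that it is not a complete argument. But the two places where you stop are exactly where the mathematical content lies, so the proposal has genuine gaps rather than being a proof by a different route.

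Concretely: (1) the acyclicity condition for the transfer is not a routine verification here, since the folk cofibrations are not monomorphisms and no fibrant replacement or functorial path object in \( \omegacat^> \) is available without already knowing something like the contractibility statement you defer; your suggested workaround via \( \fun{N}_\smallatom(\rgray(-)) \) is, as you note, circular. (2) For the second adjunction, your reduction of ``preserves acyclic cofibrations'' to ``sends each \( U \incl U \eqvto \compos{U} \) to a weak equivalence'' is too quick: by the paper's own mechanism (\cite[Proposition 2.4.40]{cisinski2019higher}, as used in Proposition \ref{prop:minmark_is_quillen}), one must send all of \( \An(\Jcomp \cup \Jn{\infty}) \) to acyclic cofibrations, and this set is closed under pushout-products with the reversible cylinder and contains the pushout-products \( \riota^\a \ppnat \bdmap_U \). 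Handling those requires compatibility of \( L_\smallatom \) with the Gray cylinder on the \( \omegacat^> \) side, which is essentially the content of Conjecture \ref{conj:gray_mon} and is itself open. (3) The remaining key step --- that the bi-invertible localisation \( L_\smallatom(U \eqvto \compos{U}) \) is folk-contractible in \( \omegacat^> \) --- is the stricter analogue of the main theorem of \cite{hadzihasanovic2024model} and would need to be proved from scratch; you identify it but do not supply it. Until these three points are resolved, the statement remains, as in the paper, a conjecture.
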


\noindent Of course, neither adjunction can be a Quillen equivalence.


\subsection{Compatibility with the Gray product}

\noindent We know from \cite{ara2020folkmon} that the folk model structure is monoidal with the Gray product of strict \( \omega \)\nbd categories.
We also already know that the Gray product of diagrammatic sets is biclosed and preserves monomorphisms, hence cofibrations.
Thus, we make the following conjecture.
\begin{conj} \label{conj:gray_mon}
	Let \( n \in \mathbb{N} \cup \set{\infty} \). 
	The \( (\infty, n) \)\nbd model structure on diagrammatic sets is monoidal with the Gray product.
\end{conj}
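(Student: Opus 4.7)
The plan is to verify the pushout-product axiom and the unit axiom for the monoidal structure \( (\dgmSet, \gray, \pt) \) with respect to the \( (\infty, n) \)-model structure. The monoidal unit \( \pt \) is representable, hence cofibrant, so the unit axiom is automatic; the work lies entirely in showing that \( - \gray - \) is a left Quillen bifunctor.

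The cofibration half of the pushout-product axiom is essentially immediate: cofibrations are monomorphisms, and the Gray product preserves monomorphisms in each variable by \cite[Lemma 3.5]{chanavat2024diagrammatic}, from which a standard diagram chase shows that pushout-products of monomorphisms are monomorphisms. For the acyclic half, one must show \( j \pp\gray i \) is an acyclic cofibration whenever \( j \) is so and \( i \) is a cofibration. By the analogue of \cite[Proposition 2.4.40]{cisinski2019higher} already used in the proof of Proposition \ref{prop:minmark_is_quillen}, this reduces to verifying the statement for \( j \) in the generating set \( \An(\Jcomp \cup \Jn n) \) and \( i \) in the cellular model \( \set{\bdmap_U \mid U \in \Ob\atom} \).

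The approach I would take is to reduce further to \( j \in \Jcomp \cup \Jn n \) by induction along the inductive definition of \( \An \). This requires two compatibility checks with the operation \( - \pp\gray \bdmap_V \): one for base elements of the form \( \riota^\a \pp\gray \bdmap_U \), and one for the closure operation \( j \mapsto (\riota^-, \riota^+) \pp\gray j \). Both are made tractable by the key observation that the reversible cylinder \( \rgray = \fun{Loc}(\markarr \pgray \minmark{(-)}) \) is itself built from the Gray product, yielding a canonical natural isomorphism
\begin{equation*}
	\rgray(X) \gray V \cong \fun{Loc}(\markarr \pgray \minmark{(X \gray V)})
\end{equation*}
from the strict monoidality of \( \minmark{(-)} \) and \( \fun{U} \) together with the colimit-preservation of \( \fun{Loc} \) (Proposition \ref{prop:functorial_localisation}). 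These interchange data should let one transport any acyclic cofibration produced by the closure operations on the left into one produced by the same operations on the right of \( - \gray V \).

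The main obstacle is expected to lie in the two base cases. For \( j\colon U \incl (U \eqvto \compos{U}) \) in \( \Jcomp \) and \( j\colon U \incl \selfloc{U} \) in \( \Jn n \), one unwinds \( - \eqvto \compos{-} \) and \( \selfloc{-} \) as transfinite cellular extensions attaching cells of higher-invertor shapes \( \hcyl{s}W \), and then decomposes \( j \pp\gray \bdmap_V \) as a corresponding transfinite composite whose elementary steps are pushouts along Gray products of the form \( \bdmap_{\hcyl{s}W} \gray V \) and \( \hcyl{s}W \gray \bdmap_V \). The combinatorial task of identifying each such elementary step with a member of \( \An(\Jcomp \cup \Jn n) \) will rely on the explicit pasting decompositions of Gray products established in \cite[Chapter 7]{hadzihasanovic2024combinatorics}, together with the central technical claim that the Gray product of a partial Gray cylinder on a round molecule with an atom is again, up to an anodyne extension, a composite of partial Gray cylinders on round molecules in the target; this claim has not been verified in the literature and constitutes the heart of the proof. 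Given this ingredient, the induction can be completed uniformly in \( n \), covering both the \( n < \infty \) and the \( n = \infty \) cases of the conjecture simultaneously.
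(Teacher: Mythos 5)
This statement is Conjecture \ref{conj:gray_mon}: the paper does not prove it. The authors only record that the case \( n = 0 \) follows from Theorem \ref{thm:cisinski_equal_infty} together with \cite[Theorem 3.23]{chanavat2024diagrammatic}, and for \( n > 0 \) they merely suggest a strategy modelled on \cite[Lemma 4.2]{ara2020folkmon} for the folk model structure on strict \( \omega \)\nbd categories. So there is no proof in the paper to compare yours against, and your proposal is not one either: you yourself flag that the ``central technical claim'' --- that the Gray product of a partial Gray cylinder on a round molecule with an atom is, up to anodyne extension, a composite of partial Gray cylinders in the target --- is unverified. That claim is exactly where the mathematical content of the conjecture sits; everything before it (pushout-product formalism, reduction to generators via the analogue of \cite[Proposition 2.4.40]{cisinski2019higher}, unit axiom from cofibrancy of \( \pt \)) is routine scaffolding that the authors would also take for granted. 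You should also note that your proposal misses the low-hanging fruit the paper does harvest: for \( n = 0 \) the result is already a theorem, by transporting monoidality of the Cisinski model structure across Theorem \ref{thm:cisinski_equal_infty}.

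Beyond the acknowledged gap, one of your intermediate assertions is false as stated: there is no natural isomorphism \( \rgray(X) \gray V \cong \fun{Loc}(\markarr \pgray \minmark{(X \gray V)}) = \rgray(X \gray V) \). First, \( \minmark{(X \gray V)} \neq \minmark{X} \pgray \minmark{V} \) (the pseudo-Gray product marks all pairs of positive-dimensional cells, not just degenerate ones), so strict monoidality of \( \minmark{(-)} \) does not give what you want. More seriously, \( \fun{Loc} \) does not commute with \( - \gray V \): the cells freely attached in \( \rgray(X) \) acquire, after Gray-multiplying by \( V \), shapes of the form \( \hcyl{s}(\arr \gray U) \gray W \), whereas \( \rgray(X \gray V) \) attaches cells of the higher-invertor shapes \( \hcyl{s}(\arr \gray U \gray W) \); the inverted partial Gray cylinder construction only alters orientations on top-dimensional elements of its input, so these oriented graded posets are genuinely different. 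At best one can hope for a comparison map which is a weak equivalence --- but proving that is again essentially the content of the conjecture, so the ``interchange data'' you rely on cannot be obtained for free.
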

By Theorem \ref{thm:cisinski_equal_infty} combined with \cite[Theorem 3.23]{chanavat2024diagrammatic}, this is already settled in the case \( n = 0 \).
We expect a proof in the case \( n > 0 \) to closely follow the one for strict \( \omega \)\nbd categories; in particular, after private communication with Dimitri Ara, we believe that a formal analogue of \cite[Lemma 4.2]{ara2020folkmon} may be applicable to diagrammatic sets.

Further, we expect that stricter \( \omega \)\nbd categories also have a Gray product, defined by the same methods used in \cite{ara2020joint}.
We note that the results of \cite[Chapter 11]{hadzihasanovic2024combinatorics} show that there is a class of regular directed complexes, the \emph{acyclic} ones, which is closed under Gray products, and on which \( \molecin{-} \) factors up to natural isomorphism as
\begin{enumerate}
	\item a monoidal functor landing in the category of \emph{strong Steiner complexes},
	\item followed by Steiner's right adjoint \( \nu \) functor.
\end{enumerate}
We can then strengthen Conjecture \ref{conj:folk_quillen_diag} with the expectation that the hypothetical left adjoint from \( \dgmSet \) to \( \omegacat^> \) is a strong monoidal left Quillen functor between monoidal model categories.

\bibliographystyle{alpha}
\small \bibliography{main.bib}

\newcommand{\etalchar}[1]{$^{#1}$}
\begin{thebibliography}{ABG{\etalchar{+}}23}

\bibitem[ABG{\etalchar{+}}23]{ara2023polygraphs}
D.~Ara, A.~Burroni, Y.~Guiraud, P.~Malbos, F.~M{\'e}tayer, and S.~Mimram.
\newblock {\em Polygraphs: from rewriting to higher categories}.
\newblock Online preprint arXiv:2312.00429, 2023.

\bibitem[AL20]{ara2020folkmon}
D.~Ara and M.~Lucas.
\newblock The folk model category structure on strict {$\omega$}-categories is monoidal.
\newblock {\em Theory and Applications of Categories}, 35(21):745--808, 2020.

\bibitem[AM20]{ara2020joint}
D.~Ara and G.~Maltsiniotis.
\newblock {\em Joint et tranches pour les $\infty$-cat{\'e}gories strictes}.
\newblock Soci\'et\'e Math\'ematique de France, 2020.

\bibitem[AR94]{adamek1994locally}
J.~Ad\'amek and J.~Rosick\'y.
\newblock {\em Locally presentable and accessible categories}.
\newblock Cambridge University Press, 1 edition, 1994.

\bibitem[Ara10]{ara2010groupoides}
D.~Ara.
\newblock {\em Sur les $\infty$-groupo{\"\i}des de {G}rothendieck et une variante $\infty$-cat{\'e}gorique}.
\newblock PhD thesis, Universit{\'e} Paris 7, 2010.

\bibitem[Bat98]{batanin1998weak}
M.A. Batanin.
\newblock Monoidal globular categories as a natural environment for the theory of weak {$n$}-categories.
\newblock {\em Advances in Mathematics}, 136(1):39--103, 1998.

\bibitem[BD98]{baez1998higher}
J.C. Baez and J.~Dolan.
\newblock Higher-dimensional algebra {III}. $n$-categories and the algebra of opetopes.
\newblock {\em Advances in Mathematics}, 135(2):145--206, 1998.

\bibitem[BFM21]{mimram2021globular}
T.~Benjamin, E.~Finster, and S.~Mimram.
\newblock {Globular weak $\omega$-categories as models of a type theory}.
\newblock To appear in Higher Structures, 2021.

\bibitem[BM24]{benjamin2024invertible}
T.~Benjamin and I.~Markakis.
\newblock Invertible cells in $\omega$-categories.
\newblock Online preprint arXiv:2406.12127, 2024.

\bibitem[Bou20]{bourke2020iterated}
J.~Bourke.
\newblock Iterated algebraic injectivity and the faithfulness conjecture.
\newblock {\em Higher Structures}, 4(2):183--210, 2020.

\bibitem[BSP21]{barwick2020unicity}
C.~Barwick and C.~Schommer-Pries.
\newblock On the unicity of the theory of higher categories.
\newblock {\em Journal of the American Mathematical Society}, 34(4):1011–1058, 2021.

\bibitem[CH24a]{chanavat2024diagrammatic}
C.~Chanavat and A.~Hadzihasanovic.
\newblock Diagrammatic sets as a model of homotopy types.
\newblock Online preprint arXiv:2407.06285, 2024.

\bibitem[CH24b]{chanavat2024equivalences}
C.~Chanavat and A.~Hadzihasanovic.
\newblock Equivalences in diagrammatic sets.
\newblock Online preprint arXiv:2410.00123, 2024.

\bibitem[Che07]{cheng2007omega}
E.~Cheng.
\newblock An $\omega$-category with all duals is an $\omega$-groupoid.
\newblock {\em Applied Categorical Structures}, 15:439--453, 2007.

\bibitem[CHH{\etalchar{+}}24]{corbyn2024homotopy}
N.~Corbyn, L.~Heidemann, N.~Hu, C.~Sarti, C.~Tataru, and J.~Vicary.
\newblock homotopy.io: a proof assistant for finitely-presented globular {$n$}-categories.
\newblock Online preprint arXiv:2402.13179, 2024.

\bibitem[Cis06]{cisinski2006prefaisceaux}
D.-C. Cisinski.
\newblock {\em Les pr\'{e}faisceaux comme mod\`{e}les des types d'homotopie}.
\newblock Number 308 in Astérisque. Soci\'{e}t\'{e} math\'{e}matique de France, Paris, 2006.

\bibitem[Cis19]{cisinski2019higher}
D.-C. Cisinski.
\newblock {\em Higher categories and homotopical algebra}.
\newblock Cambridge {Studies} in {Advanced} {Mathematics}. Cambridge University Press, 2019.

\bibitem[CKM20]{campion2020cubical}
T.~Campion, C.~Kapulkin, and Y.~Maehara.
\newblock A cubical model for $(\infty, n)$-categories.
\newblock Online preprint arXiv:2005.07603, 2020.

\bibitem[DKM23]{doherty2023equivalence}
B.~Doherty, K.~Kapulkin, and Y.~Maehara.
\newblock Equivalence of cubical and simplicial approaches to {$(\infty, n)$}-categories.
\newblock {\em Advances in Mathematics}, 416:108902, 2023.

\bibitem[Dor18]{dorn2018associative}
C.~Dorn.
\newblock {\em Associative $n$-categories}.
\newblock PhD thesis, University of Oxford, 2018.

\bibitem[FHM24]{fujii2024omega}
S.~Fujii, K.~Hoshino, and Y.~Maehara.
\newblock $\omega$-weak equivalences between weak $\omega$-categories.
\newblock Online preprint arXiv:2406.13240, 2024.

\bibitem[Gui19]{guiraud2019rewriting}
Y.~Guiraud.
\newblock \emph{Rewriting methods in higher algebra}.
\newblock Th\`ese d'habilitation \`a diriger des recherches, Universit\'e Paris 7, 2019.

\bibitem[Had20]{hadzihasanovic2020diagrammatic}
A.~Hadzihasanovic.
\newblock Diagrammatic sets and rewriting in weak higher categories.
\newblock Online preprint arXiv:2007.14505, 2020.

\bibitem[Had24]{hadzihasanovic2024combinatorics}
A.~Hadzihasanovic.
\newblock {\em Combinatorics of higher-categorical diagrams}.
\newblock Online preprint arXiv:2404.07273, 2024.

\bibitem[Hen18]{henry2018regular}
S.~Henry.
\newblock Regular polygraphs and the {S}impson conjecture.
\newblock Online preprint arXiv:1807.02627, 2018.

\bibitem[Hen19]{henry2019non}
S.~Henry.
\newblock Non-unital polygraphs form a presheaf category.
\newblock {\em Higher structures}, 3(1):248--291, 2019.

\bibitem[Hir03]{hirschhorn2003model}
P.S. Hirschhorn.
\newblock {\em Model categories and their localizations}.
\newblock Number~99 in Mathematical surveys and monographs. American Mathematical Society, 2003.

\bibitem[HK23a]{hadzihasanovic2023data}
A.~Hadzihasanovic and D.~Kessler.
\newblock Data structures for topologically sound higher-dimensional diagram rewriting.
\newblock {\em Electronic Proceedings in Theoretical Computer Science}, 380:111–127, 2023.

\bibitem[HK23b]{hadzihasanovic2023higher}
A.~Hadzihasanovic and D.~Kessler.
\newblock Higher-dimensional subdiagram matching.
\newblock In {\em 2023 38th Annual ACM/IEEE Symposium on Logic in Computer Science (LICS)}, pages 1--13. IEEE, 2023.

\bibitem[HL23a]{henry2023homotopy}
S.~Henry and E.~Lanari.
\newblock On the homotopy hypothesis for 3-groupoids.
\newblock {\em Theory and Applications of Categories}, 39(26):735--768, 2023.

\bibitem[HL23b]{loubaton2024inductive}
S.~Henry and F.~Loubaton.
\newblock An inductive model structure for strict {$\infty$}-categories.
\newblock Online preprint arXiv:2301.11424, 2023.

\bibitem[HLOR24]{hadzihasanovic2024model}
A.~Hadzihasanovic, F.~Loubaton, V.~Ozornova, and M.~Rovelli.
\newblock A model for the coherent walking $\omega$-equivalence.
\newblock Online preprint arXiv:2404.14509, 2024.

\bibitem[Hov07]{hovey2007model}
M.~Hovey.
\newblock {\em Model categories}.
\newblock Number~63 in Mathematical surveys and monographs. American Mathematical Society, 2007.

\bibitem[JK07]{joyal2007weak}
A.~Joyal and J.~Kock.
\newblock Weak units and homotopy 3-types.
\newblock {\em Contemporary Mathematics}, 431:257--276, 2007.

\bibitem[Lei04]{leinster2004higher}
T.~Leinster.
\newblock {\em Higher operads, higher categories}.
\newblock London Mathematical Society Lecture Note Series. Cambridge University Press, 2004.

\bibitem[LMW10]{lafont2010folk}
Y.~Lafont, F.~M{\'{e}}tayer, and K.~Worytkiewicz.
\newblock A folk model structure on omega-cat.
\newblock {\em Advances in Mathematics}, 224(3):1183--1231, 2010.

\bibitem[Lou23]{loubaton2023theory}
F.~Loubaton.
\newblock {\em Theory and models of {$(\infty, \omega)$}-categories}.
\newblock PhD thesis, Universit{\'e} C{\^o}te d'Azur, 2023.

\bibitem[Mal10]{maltsiniotis2010grothendieck}
G.~Maltsiniotis.
\newblock Grothendieck {$\infty$}-groupoids, and still another definition of {$\infty$}-categories.
\newblock Online preprint arXiv:1009.2331, 2010.

\bibitem[Ols11]{olschok2011left}
M.~Olschok.
\newblock Left determined model structures for locally presentable categories.
\newblock {\em Applied Categorical Structures}, 19(6):901--938, 2011.

\bibitem[OR20]{ozornova2020model}
V.~Ozornova and M.~Rovelli.
\newblock Model structures for {$(\infty, n)$}-categories on (pre)stratified simplicial sets and prestratified simplicial spaces.
\newblock {\em Algebraic \& Geometric Topology}, 20(3):1543–1600, 2020.

\bibitem[OR23]{ozornova2022quillen}
V.~Ozornova and M.~Rovelli.
\newblock A {Q}uillen adjunction between globular and complicial approaches to {$(\infty, n)$}-categories.
\newblock {\em Advances in Mathematics}, 421:108980, 2023.

\bibitem[OR24]{ozornova2024equivalence}
V.~Ozornova and M.~Rovelli.
\newblock What is an equivalence in a higher category?
\newblock {\em Bulletin of the London Mathematical Society}, 56(1):1--58, 2024.

\bibitem[Pao19]{paoli2019simplicial}
S.~Paoli.
\newblock {\em Simplicial methods for higher categories: {S}egal-type models of weak {$n$}-categories}, volume~26.
\newblock Springer, 2019.

\bibitem[Rez10]{rezk2010cartesian}
C.~Rezk.
\newblock A cartesian presentation of weak $n$-categories.
\newblock {\em Geometry \& Topology}, 14(1):521--571, 2010.

\bibitem[Sim98]{simpson1998homotopy}
C.~Simpson.
\newblock Homotopy types of strict 3-groupoids.
\newblock Online preprint arXiv:math/9810059, 1998.

\bibitem[Sim09]{simpson2009homotopy}
C.~Simpson.
\newblock {\em Homotopy theory of higher categories}.
\newblock Cambridge University Press, 2009.

\bibitem[Ver08]{verity2008weak}
D.R.B. Verity.
\newblock Weak complicial sets {I}. {Basic} homotopy theory.
\newblock {\em Advances in Mathematics}, 219(4):1081--1149, 2008.

\bibitem[Wyl91]{wyler1991quasitopos}
O.~Wyler.
\newblock {\em Lecture notes on topoi and quasitopoi}.
\newblock World Scientific, 1991.

\end{thebibliography}

\end{document}